\newtheorem{theorem}{Theorem}
\numberwithin{theorem}{section}
\newtheorem{corollary}[theorem]{Corollary}
\newtheorem{lemma}[theorem]{Lemma}
\newtheorem{proposition}[theorem]{Proposition}
\theoremstyle{definition}
\newtheorem{definition}[theorem]{Definition}
\newtheorem{remark}[theorem]{Remark}
\newtheorem{example}[theorem]{Example}
\newtheorem{notation}[theorem]{Notation}
\theoremstyle{plain}
\newtheorem*{theorem*}{Theorem}
\newcommand{\lpr}[1]{\operatorname{pr_{\mathbf L}^{#1}}}
\newcommand{\dom}{\operatorname{dom}}
\newcommand{\rng}{\operatorname{rng}}
\newcommand{\len}{\operatorname{len}}
\newcommand{\hth}{\operatorname{ht}}
\newcommand{\ordi}{{\operatorname{Ord}}}
\newcommand{\wop}{\operatorname{WOP}}
\newcommand{\bh}{\xrightarrow{{\operatorname{BH}}}}
\newcommand{\bhp}{\operatorname{BH}}
\newcommand{\ax}{{\operatorname{Ax}}}
\newcommand{\cut}{{\operatorname{Cut}}}
\newcommand{\rep}{{\operatorname{Rep}}}
\newcommand{\rref}{{\operatorname{Ref}}}
\newcommand{\prog}{\operatorname{Prog}}
\title{A Higher Bachmann-Howard Principle}
\author{Anton Freund}
\thanks{This paper is no longer up to date: It is superseded by the author's PhD thesis (available at \url{http://etheses.whiterose.ac.uk/20929/}) and the streamlined presentation in \href{https://arxiv.org/abs/1809.06759}{arXiv:1809.06759}. In contrast to the present abstract, we have now found a computable version of our well-ordering principle. Thus the conjecture by Montalb\'an and Rathjen can be considered as fully solved.}
\begin{document}

\begin{abstract}
We present a higher well-ordering principle which is equivalent (over Simpson's set theoretic version of $\mathbf{ATR_0}$) to the existence of transitive models of Kripke-Platek set theory, and thus to $\Pi_1^1$-comprehension. This is a partial solution to a conjecture of Montalb\'an and Rathjen: partial in the sense that our well-ordering principle is less constructive than demanded in the conjecture.
\end{abstract}

\maketitle 
\tableofcontents

The present work is situated at the intersection of set theory, proof theory and reverse mathematics. Accordingly, we motivate our result from two different viewpoints: Beginning with the set theoretic perspective, we invite the reader to recall axiom beta (see~e.g.~\cite[Definition~I.9.5]{barwise-admissible}): It states that any well-founded relation can be collapsed to the $\in$-relation. In particular this turns well-foundedness into a $\Delta$-notion. Not surprisingly, then, axiom beta adds considerable strength to theories which admit $\Delta$-separation and $\Sigma$-collection, such as Kripke-Platek set theory: We can now form the set of arithmetical well-orderings on the natural numbers. This implies the existence of the Church-Kleene ordinal $\omega_1^{\operatorname{CK}}$ and of the set $\mathbb L_{\omega_1^{\operatorname{CK}}}$, which is a transitive model of Kripke-Platek set theory with infinity (see~\cite[Corollary~V.5.11]{barwise-admissible}; such models will henceforth be called admissible sets). Also, the combination of axiom beta and $\Sigma$-collection is known to imply $\Delta_2^1$-comprehension for subsets of the natural numbers (see \cite[Theorem~3.3.4.7]{pohlers98}). For weaker set theories, which do not contain $\Delta$-separation, the contribution of axiom beta can be quite different: Consider for example the set-theoretic version of $\mathbf{ATR_0}$ introduced by Simpson (see~\cite[Section~VII.3]{simpson09}). This set theory contains axiom beta but does not prove $\Pi_1^1$-comprehension or the existence of admissible sets. From a set theoretic perspective, the higher well-ordering principle that we shall introduce can be seen as a strengthening of axiom beta: one that implies the existence of admissible sets, even when the base theory does not contain $\Delta$-separation.

From a different viewpoint, the present paper is part of an ongoing investigation into the reverse mathematics of well-ordering principles. A typical result in this area (due to Marcone and Montalb\'an \cite{marcone-montalban}, and re-proved by Afshari and Rathjen \cite{rathjen-afshari} using different methods) says that the following statements are equivalent over $\mathbf{RCA}_0$:
\begin{enumerate}[label=(\roman*)]
\item the $\omega$-th jump of any set exists;
\item for any well-ordering $X$, a certain term system $\varepsilon_X$ is well-ordered as well;
\item any set is contained in a countable coded $\omega$-model of $\mathbf{ACA}$.
\end{enumerate}
A function such as $X\mapsto\varepsilon_X$ in (ii), which maps any well-ordering to some other well-ordering, is called a well-ordering principle. Many other set existence axioms have been characterized in terms well-ordering principles: arithmetical comprehension (\mbox{Girard} \cite[Theorem~5.4.1]{girard87}, Hirst \cite{hirst94}), $\Pi_{\omega^\alpha}^0$-comprehension (Marcone and Montalb\'an \cite{marcone-montalban}), arithmetical trans\-finite recursion (Friedman, Montalb\'an and Weiermann \cite{friedman-mw}, Rathjen and Weiermann \cite{rathjen-weiermann-atr}, Marcone and Montalb\'an \cite{marcone-montalban}), the existence of $\omega$-models of arithmetical transfinite recursion (Rath\-jen \cite{rathjen-atr}), and the existence of $\omega$-models of bar induction (Rathjen and Vizca\'{i}no~\cite{rathjen-model-bi}). While these results have reached considerable proof theoretic strength they share a limitation in terms of logical complexity: Statements analogous to (ii) and (iii) are equivalent to $\Pi^1_2$-formulas. Thus we cannot expect to replace (i) by a genuine $\Pi^1_3$-statement, such as the axiom of $\Pi^1_1$-comprehension. To overcome this limitation Rathjen \cite{rathjen-wops-chicago,rathjen-atr} and Montalb\'an \cite[Section~4.5]{montalban-open-problems} have proposed to use well-ordering principles of higher type, i.e.~functionals $\mathcal F$ which map each well-ordering principle $f$ to a well-ordering $\mathcal F(f)$. At the place of (ii) above one would state that
\begin{equation}\label{eq:higher-wop-general}
 \text{``for any well-ordering principle $f$, the set $\mathcal F(f)$ is a well-ordering''.}\tag{$\star$}
\end{equation}
Alternatively, one could consider functionals which map well-ordering principles to well-ordering principles (i.e.~increasing the type of the co-domain as well). At the place of (iii) one demands the existence of $\beta$-models. Note that the resulting statements are $\Pi^1_3$-formulas. By \cite[Theorem~VII.2.10]{simpson09} the existence of countable coded $\beta$-models is equivalent to $\Pi^1_1$-comprehension, as desired. A strategy to construct $\beta$-models from higher well-ordering principles has also been suggested by Rathjen (personal communication; cf.~also \cite{rathjen-wops-chicago} and \cite[Section~6]{rathjen-atr}): In the results cited above, Rathjen and collaborators construct the required $\omega$-models via Sch\"utte's method of search trees (or ``deduction chains'', see~\cite[Section~II.3]{schuette77}). In order to construct $\beta$-models, Rathjen proposed to extend these ideas to Girard's~\cite[Section~6]{girard-intro} notion of $\beta$-proof. This fits well with Montalb\'an's \cite{montalban-open-problems} remark that the variable $f$ in (\ref{eq:higher-wop-general}) should range over dilators (note that the quantification needs to be restricted in some way if we are to remain within the realm of second order arithmetic). In the present paper we do not work with $\beta$-proofs and dilators in the strict sense, but the underlying ideas are still of central importance (for more on this point see Remark~\ref{rmk:search-trees-almost-beta-proofs} below). Let us point out that we will construct transitive models of Kripke-Platek set theory rather than $\beta$-models, thus realizing a modification of Rathjen's idea. As Kripke-Platek set theory does not prove axiom beta this does not immediately yield $\beta$-models of second order arithmetic (in contrast, the theory $\mathbf{ATR}_0^{\operatorname{set}}$ in \cite[Theorem~VII.3.27]{simpson09} contains axiom beta). Nevertheless, the existence of transitive models of Kripke-Platek set theory is known to imply $\Pi^1_1$-comprehension (see~\cite[Theorem~3.3.3.5]{pohlers98}).

It is time to describe our higher well-ordering principle in detail. The meta-theory of the present paper will be primitive recursive set theory with infinity (see e.g.~\cite[Section~6]{rathjen-set-functions}). Note that ``primitive recursive'' will always mean ``primitive recursive relative to $\omega$''. In the language of this theory we have function symbols for all primitive recursive (class) functions. We cannot quantify over all of these functions, but we can use Skolemization to quantify over parametrized families: Given a primitive recursive function $(u,x)\mapsto F(u,x)$ quantification over the functions $x\mapsto F(u,x)$ amounts to first-order quantification over the set parameter~$u$, which is of course permitted. In particular we can implement well-ordering principles as follows (cf.~Definition~\ref{def:wop} below): Consider primitive recursive functions $T:(u,\alpha)\mapsto (T_\alpha^u,<_{T_\alpha^u})$ and $|\cdot|_T:(u,s)\mapsto|s|_T^u$. We say that $T^u$ is a (ranked compatible) well-ordering principle, abbreviated as $\wop(T^u)$, if the following holds:
\begin{enumerate}[leftmargin=*,label=(WOP\arabic*)]
 \item For every ordinal $\alpha$ the set $T_\alpha^u$ is well-ordered by $<_{T_\alpha^u}$.
 \item For $\alpha<\beta$ we have $T_\alpha^u\subseteq T_\beta^u$ and ${<_{T_\alpha^u}}={<_{T_\beta^u}}\cap(T_\alpha^u\times T_\alpha^u)$; furthermore we have $T_\lambda^u=\bigcup_{\alpha<\lambda}T_\alpha^u$ for each limit ordinal $\lambda$.
 \item We have
\begin{equation*}
 |s|_T^u=\begin{cases}
             \min\{\alpha\in\ordi\,|\,s\in T_{\alpha+1}^u\}\quad&\text{if such an $\alpha$ exists},\\
             \{1\}\quad&\text{otherwise}.
            \end{cases}
\end{equation*}
\end{enumerate}
As the set $\{1\}$ is not an ordinal (which is its sole purpose) this does, in particular, make the class $T^u=\bigcup_{\alpha\in\ordi}T_\alpha^u$ primitive recursive. Considering the general form~(\ref{eq:higher-wop-general}) of a higher well-ordering principle, we should now describe a functional $\mathcal F$ which transforms each well-ordering principle $T^u$ into a well-ordering $\mathcal F(T^u)$. Viewing the set $u$ rather than the class function $T^u$ as the argument we should strive to define $\mathcal F$ as a primitive recursive class function. In the present paper we take a more abstract approach: Rather than constructing a term system $\mathcal F(T^u)$ explicitly we axiomatize ordinals of the desired order-type. The following notion achieves this (cf.~Definition~\ref{def:bh-collapse} below): Given a well-ordering principle $T^u$ and an ordinal~$\alpha$, a function $\vartheta:T_\alpha^u\rightarrow\alpha$ is called a Bachmann-Howard collapse, abbreviated as $\vartheta:T_\alpha^u\bh\alpha$, if the following holds for all $s,t\in T_\alpha^u$:
\begin{enumerate}[leftmargin=*,label=(BH\arabic*)]
 \item $|s|_T^u<\vartheta(s)$;
 \item if $s<_{T_\alpha^u}t$ and $|s|_T^u<\vartheta(t)$ then $\vartheta(s)<\vartheta(t)$.
\end{enumerate}
These conditions are motivated by Rathjen's notation system for the Bachmann-Howard ordinal (see e.g.\ \cite{rathjen-model-bi}). Now we can define the central notion of the present paper: The higher Bachmann-Howard principle for $T$ is the statement
\begin{equation*}
 \bhp(T):\equiv\forall_u(\wop(T^u)\rightarrow\exists_\alpha\exists_\vartheta\,\vartheta:T_\alpha^u\bh\alpha).
\end{equation*}
From a set theoretic perspective one may view this as a strengthening of axiom beta (for linear orderings): Rather than collapsing the single well-ordering $T_0^u$ we demand a ``compatible collapse of a compatible family''. The reader may wish to consider the simple Example~\ref{ex:no-monotone-collapsing}, the existence proof in Remark~\ref{rmk:Bachmann-Howard-semantically}, and the connection with axiom beta established in Remark~\ref{rmk:comparison-beta}.

Let us now state our main results. In Section~\ref{sect:wo-proof} we will show that primitive recursive set theory proves the following, for each primitive recursive function $T$: If $\wop(T^u)$ holds and $\mathbb A$ is an admissible set with $u\in\mathbb A$ then there is a Bachmann-Howard collapse $\vartheta:T_{o(\mathbb A)}^u\bh o(\mathbb A)$, where~$o(\mathbb A)=\mathbb A\cap\ordi$. The other sections are devoted to the converse direction: We will define a primitive recursive function which maps each countable transitive set $u=\{u_i\,|\,i\in\omega\}$ (with fixed enumeration) and each ordinal $\alpha$ to a linear ordering~$\varepsilon(S_{\omega^\alpha}^u)$, such that the following holds:
\begin{theorem*}
 Working in primitive recursive set theory, consider a countable transitive set $u$. If the implication
\begin{equation*}
 \wop(\alpha\mapsto\varepsilon(S_{\omega^\alpha}^u))\rightarrow\exists_\alpha\exists_\vartheta\,\vartheta:\varepsilon(S_{\omega^\alpha}^u)\bh\alpha
\end{equation*}
holds then there is an admissible set $\mathbb A$ with $u\subseteq\mathbb A$.
\end{theorem*}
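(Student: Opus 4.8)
The plan is to establish the antecedent of the assumed implication, extract from it a Bachmann-Howard collapse, and then read an admissible set off that collapse --- the collapse taking over the role that an infinite branch through a search tree plays in the lower-type results of Rathjen and his collaborators cited above.

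First I would check that $\wop(\alpha\mapsto\varepsilon(S_{\omega^\alpha}^u))$ holds, in primitive recursive set theory, for our fixed countable transitive $u$. The compatibility clause (WOP2) and the rank equation (WOP3) are formal: they follow from the continuity of $\gamma\mapsto S_\gamma^u$, from the fact that $\alpha\mapsto\omega^\alpha$ is continuous with additively closed values (so that the $\varepsilon$-term-system operator behaves well over $S_{\omega^\alpha}^u$), and from the bookkeeping built into the definition of $|\cdot|_T$. For (WOP1) it suffices, since the $\varepsilon$-operator preserves well-orderedness, that each $S_{\omega^\alpha}^u$ be well-ordered, which is part of the design of $S^u$; alternatively, should some $S_{\omega^\alpha}^u$ fail to be well-ordered, an infinite descending sequence in it would, by the way $S^u$ linearises the underlying search, yield an infinite branch coding an admissible set $\mathbb A$ with $u\subseteq\mathbb A$, and we would already be done. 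So we may assume the antecedent, and the hypothesised implication then furnishes an ordinal $\alpha$ together with a Bachmann-Howard collapse $\vartheta:\varepsilon(S_{\omega^\alpha}^u)\bh\alpha$.

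The heart of the argument is to build $\mathbb A$ out of $\vartheta$. I would read the elements of $S_{\omega^\alpha}^u$, and through the $\varepsilon$-layer the elements of $\varepsilon(S_{\omega^\alpha}^u)$, as \emph{names} for the sets that the admissible closure of the members of $u$ ought to contain; the ordinal $\vartheta(s)<\alpha$ attached to a name $s$, together with the linear ordering $<_{\varepsilon(S_{\omega^\alpha}^u)}$, then lets us run a Mostowski-style collapse and obtain a transitive set $\mathbb A$ whose ordinals form a subset of $\alpha$ and which contains a name for every $u_i\in u$, so that $u\subseteq\mathbb A$. The two collapse conditions are exactly what is wanted: (BH1), the strict inequality $|s|_T^u<\vartheta(s)$, forces $\mathbb A$ to reach past every bounded stage of the construction --- this is where the $\varepsilon$-layer over $S^u$ and the use of height $\omega^\alpha$ rather than $\alpha$ supply the necessary room --- while (BH2), whose side condition $|s|_T^u<\vartheta(t)$ isolates precisely the situation in which the name $s$ is already available at the stage where $t$ is introduced, makes the collapse respect the ordering in exactly the cases where closure demands it. Pushing the closure clauses built into the Skolem hull $S^u$ --- which are designed to mirror the axioms of Kripke-Platek set theory --- through $\vartheta$ then yields $\Delta_0$-separation and $\Sigma$-collection for $\mathbb A$, so that $\mathbb A$ is admissible.

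The main obstacle, as in all reversals of this kind, is this last step: extracting genuine admissibility --- $\Sigma$-collection in particular --- from the purely combinatorial content of a single Bachmann-Howard collapse, over a base theory (primitive recursive set theory) far too weak to know in advance that $\omega_1^{\operatorname{CK}}$ relative to $u$ exists. One must check that the closure read off from $\vartheta$ is a genuine set rather than a proper class and is genuinely transitive, and that it satisfies collection rather than merely being closed under the named functions. The point of building $S^u$ so that its internal ``axioms'' are the closure conditions of $\mathbf{KP}$, and of inserting the $\varepsilon$-layer, is exactly that this verification can then be imported from the ordinal analysis of $\mathbf{KP}$, with the Bachmann-Howard ordinal reappearing --- relativised to $u$ and to the well-ordering principle $\varepsilon(S^u)$ --- as the order type at which everything closes off.
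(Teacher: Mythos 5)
Your opening dichotomy is essentially the paper's Proposition~\ref{prop:no-admissible-gives-wop} read contrapositively, and that part is fine: if some $S_{\omega^\alpha}^u$ has a branch, Corollary~\ref{cor:branch-to-admissible-set} already gives the admissible set; otherwise the term systems are well-ordered, the rank and compatibility clauses are checked, and the hypothesis hands you a collapse $\vartheta:\varepsilon(S_{\omega^\alpha}^u)\bh\alpha$. (One caveat: well-foundedness of the search trees is \emph{not} ``part of the design'' --- if $\mathbb L_\alpha^u$ happens to be admissible the tree $S_\alpha^u$ is necessarily ill-founded --- but since you treat the ill-founded case separately this does not break the outline.)

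The genuine gap is the second arm: what you do with $\vartheta$. You propose to read the elements of $\varepsilon(S_{\omega^\alpha}^u)$ as \emph{names} for sets, run a Mostowski-style collapse along $\vartheta$ and the term ordering, and then ``push the closure clauses built into the Skolem hull $S^u$'' through $\vartheta$ to obtain $\Delta_0$-separation and $\Sigma$-collection. But $S_{\omega^\alpha}^u$ is not a Skolem hull and carries no closure clauses: its nodes are sequents of an attempted \emph{refutation} in $\mathbb L_{\omega^\alpha}^u$-logic (the Kripke-Platek axioms enter only as negated side formulas), and $\varepsilon(S_{\omega^\alpha}^u)$ is a system of ordinal terms over the Kleene-Brouwer ordering of that tree; there is no membership structure on these terms to collapse, and conditions (BH1)/(BH2) by themselves say nothing about separation or collection. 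Your own ``main obstacle'' paragraph concedes exactly this and then defers it to ``the ordinal analysis of $\mathbf{KP}$'' --- but supplying that analysis, over primitive recursive set theory and with only the abstract collapse $\vartheta$ available, \emph{is} the content that is missing. The paper's route is in fact the opposite of a direct construction: assuming no admissible set contains $u$, it embeds $S_{\omega^\alpha}^u$ into an $\mathbf L_{\omega^\alpha}^u$-preproof of the empty sequent of height $\varepsilon_{\langle\rangle}$ (Proposition~\ref{prop:proof-from-search-tree}), performs continuous cut elimination down to cut-rank $2$ via a primitive recursive code calculus (Proposition~\ref{prop:without-cuts}), and then uses $\vartheta$ through the operator-controlled collapsing theorem (Theorem~\ref{thm:collapsing}) to push the height below $\Omega$, contradicting the soundness result of Proposition~\ref{prop:countable-proof-sound}. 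The admissible set is never built from $\vartheta$; it always comes from a branch of a search tree, and the role of $\vartheta$ is purely to refute the well-founded alternative. Without this proof-theoretic machinery (or a genuine substitute for it), your case-2 argument does not go through.
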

If $u$ is hereditarily countable then we may apply the theorem to the transitive closure of $\{u\}$, to get an admissible set $\mathbb A$ with $u\in\mathbb A$. This completes the equivalence between (ii) and (iii) of the following theorem. The equivalence between (i) and (iii) is known (see \cite[Theorem~3.3.3.5]{pohlers98} and \cite[Lemma~7.5]{jaeger-admissibles}, and use \cite[Section~VII.3]{simpson09} to relate second order arithmetic and set theory).
\begin{theorem*}
 The following axioms resp.~axiom schemes are equivalent over primitive recursive set theory, extended by the axiom of countability and axiom beta:
\begin{enumerate}[label=(\roman*)]
 \item $\Pi_1^1$-comprehension for subsets of the natural numbers;
 \item the higher Bachmann-Howard principle, i.e.~the collection of axioms $\bhp(T)$ for all primitive recursive functions $T$;
 \item the statement that every set is an element of an admissible set.
\end{enumerate}
\end{theorem*}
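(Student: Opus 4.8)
The plan is as follows. The equivalence of (i) and (iii) is already available in the literature, via the cited results together with the translation between set theory and second order arithmetic that is made possible by the axiom of countability and axiom beta; so the only thing left to prove is the equivalence of (ii) and (iii). Both implications should come out as essentially routine applications of the two principal theorems established in the later sections, namely the result of Section~\ref{sect:wo-proof} (a semantic construction of a Bachmann-Howard collapse from a given admissible set) and the first of the two Theorems stated above (the extraction of an admissible set from the higher Bachmann-Howard principle); the task here is therefore mainly to assemble these two inputs.

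For the direction (iii)$\,\Rightarrow\,$(ii) I would fix a primitive recursive function $T$ and verify $\bhp(T)$. So let $u$ be a set with $\wop(T^u)$; I must produce an ordinal $\alpha$ and a map $\vartheta:T_\alpha^u\bh\alpha$. By (iii) the set $u$ lies in some admissible set $\mathbb A$, and then the result of Section~\ref{sect:wo-proof}, applied to $T$, $u$ and $\mathbb A$, yields a Bachmann-Howard collapse $\vartheta:T_{o(\mathbb A)}^u\bh o(\mathbb A)$. Since $o(\mathbb A)=\mathbb A\cap\ordi$ is an ordinal, taking $\alpha=o(\mathbb A)$ completes this direction.

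For the direction (ii)$\,\Rightarrow\,$(iii) I would start from an arbitrary set $v$. By the axiom of countability $v$ is hereditarily countable, so $u:=\operatorname{TC}(\{v\})$ is a countable transitive set, which I fix together with an enumeration $u=\{u_i\mid i\in\omega\}$. The assignment $(u',\alpha)\mapsto\varepsilon(S_{\omega^\alpha}^{u'})$, bundled with its norm function, is a single primitive recursive function, so (ii) provides the axiom $\bhp\bigl((u',\alpha)\mapsto\varepsilon(S_{\omega^\alpha}^{u'})\bigr)$; instantiating it at the parameter $u$ gives exactly the implication
\[
 \wop(\alpha\mapsto\varepsilon(S_{\omega^\alpha}^u))\rightarrow\exists_\alpha\exists_\vartheta\,\vartheta:\varepsilon(S_{\omega^\alpha}^u)\bh\alpha ,
\]
which is the hypothesis of the first of the two Theorems stated above. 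That theorem then yields an admissible set $\mathbb A$ with $u\subseteq\mathbb A$, and since $v\in\operatorname{TC}(\{v\})=u\subseteq\mathbb A$ the set $v$ is an element of an admissible set, as required.

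Within this assembly the only points I would still have to treat with some care are that the bundled map $(u',\alpha)\mapsto\varepsilon(S_{\omega^\alpha}^{u'})$ genuinely falls under the scope of the schema in (ii)---so that we really are quantifying over a set parameter rather than over a proper-class-indexed family of functions---and that $o(\mathbb A)$ behaves as an honest ordinal in conditions (BH1)--(BH2). The substantial difficulty, however, lies outside this argument: it is the proof of that first Theorem, which must manufacture an admissible set out of a search-tree style analysis of the orderings $\varepsilon(S_{\omega^\alpha}^u)$ (in the spirit of Girard's $\beta$-proofs), and I expect that to be the main obstacle. The second substantial ingredient, which is comparatively more direct, is the semantic construction of a collapse from a given admissible set, carried out in Section~\ref{sect:wo-proof}.
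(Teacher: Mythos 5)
Your proposal is correct and follows essentially the same route as the paper: the equivalence of (i) and (iii) is quoted from the literature, (iii)$\Rightarrow$(ii) is exactly the application of the Section~\ref{sect:wo-proof} construction of $\vartheta_{\mathbb A}:T_{o(\mathbb A)}^u\bh o(\mathbb A)$ for an admissible $\mathbb A\ni u$, and (ii)$\Rightarrow$(iii) is the paper's own assembly via the axiom of countability (the paper picks some countable transitive $u\ni v$, you take $u=\operatorname{TC}(\{v\})$, which is the same thing) followed by the first theorem applied to the instance $\bhp(\alpha\mapsto\varepsilon(S_{\omega^\alpha}^u))$. Your two cautionary points are also handled as in the paper: the parameter of the well-ordering principle is the enumerated set $u$, so the instance does fall under the schema, and $o(\mathbb A)=\mathbb A\cap\ordi$ is an ordinal by transitivity of $\mathbb A$.
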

Let us make the connection with second order arithmetic: Eliminating the primitive recursive function symbols, it should be straightforward to show that the base theory of the theorem is conservative over Simpson's \cite[Section~VII.3]{simpson09} set theoretic version of $\mathbf{ATR_0}$. Thus the equivalence between (i), (ii) and (iii) translates into a theorem of $\mathbf{ATR_0}$. Note that (ii) and (iii) are $\Pi_2$-statements in the language of set theory. According to \cite[Theorem~VII.3.24]{simpson09} they correspond to $\Pi^1_3$-statements of second order arithmetic, as expected. In a sense, we have thus solved the conjecture formulated by Montalb\'an \cite[Section~4.5]{montalban-open-problems} and Rathjen \cite[Section~6]{rathjen-atr}. To see why our solution is not completely satisfying, let us once again compare our Bachmann-Howard principles $\bhp(T)$ with the general form (\ref{eq:higher-wop-general}) of a well-ordering principle: The point is that $\bhp(T)$ merely asserts the existence of ordinals with certain properties, while (\ref{eq:higher-wop-general}) requires to compute these ordinals (or some well-orderings of the appropriate order type) by a concrete functional $\mathcal F$. The author currently works on a more constructive version of the present paper, including an explicit definition of $\mathcal F$.

\section{Search Trees for Admissible Sets}\label{sect:search-trees}

In this section we give a primitive recursive construction $(u,\alpha)\mapsto S_\alpha^u$ of ``search trees" for each ordinal $\alpha$ and each countable transitive set $u$. To be more precise, $S_\alpha^u$ will be primitive recursive in $\alpha$ and a given enumeration $u=\{u_i\,|\,i\in\omega\}$ of $u$; for the sake of readability this enumeration will often be left implicit. One may think of $S_\alpha^u$ as an attempted proof of a contradiction in $\mathbb L_\alpha^u$-logic, with the axioms of Kripke-Platek set theory as open assumptions. Recall that the distinctive rule of $\mathbb L_\alpha^u$-logic allows to infer $\forall_x\,\varphi(x)$ from the assumptions $\varphi(a)$ for all $a\in\mathbb L_\alpha^u$. If $\mathbb L_\alpha^u$ satisfies the Kripke-Platek axioms then the construction of $S_\alpha$ cannot be successful, by the correctness of $\mathbb L_\alpha^u$-logic; this will manifest itself in the fact that $S_\alpha^u$ turns out ill-founded. Search trees are distinguished by a converse property: We will be able to transform an infinite branch of $S_\alpha^u$ into a standard model $u\subseteq M\subseteq\mathbb L_\alpha^u$ of the Kripke-Platek axioms. The method of search trees (or ``deduction chains'') is due to Sch\"utte, who used it to prove the completeness theorem for first-order logic (see~\cite[Section~II.3]{schuette77}). The present paper is mainly influenced by Rathjen's use of search trees in the construction of $\omega$-models (cf.~the introduction). Another application of search trees in $\omega$-logic is due to J\"ager and Strahm \cite{jaeger-strahm-bi-reflection}. The author knows of one application of ``$\beta$-search trees'': This is Buchholz' \cite{buchholz-inductive-dilator} construction of a dilator that bounds the stages of inductive definitions.

Recall that the constructible hierarchy relative to $u$ can be given as a primitive recursive function $(u,\alpha)\mapsto\mathbb L_\alpha^u$ (see \cite[Definition~2.3]{rathjen-set-functions}). Let us emphasize that the enumeration $u=\{u_i\,|\,i\in\omega\}$ does \emph{not} come into play at this point: The stage $\mathbb L_0^u$ is simply the set $u$ and we do not require the function $i\mapsto u_i$ to lie in the class $\mathbb L^u=\bigcup_{\alpha\in\ordi}\mathbb L_\alpha^u$. We would like to define our search tree $S^u_\alpha$ as a labelled subtree of $(\mathbb L_\alpha^u)^{<\omega}$, the tree of finite sequences with entries in $\mathbb L_\alpha^u$. However, there is one technical obstruction: We will later need a primitive recursive notion of $\mathbb L^u$-rank. This is problematic, for not even membership in the class $\mathbb L^u$ seems to be primitive recursively decidable. To fix this we replace $\mathbb L^u_\alpha$ by its ranked version
\begin{equation*}
 \mathbf L_\alpha^u=\begin{cases}
 \{\langle 0,a\rangle\,|\,a\in\mathbb L_0^u\} & \text{if $\alpha=0$},\\
 \{\langle\beta,a\rangle\,|\,\beta<\alpha\text{ minimal with }a\in\mathbb L_{\beta+1}^u\}\quad & \text{if $\alpha>0$}.
 \end{cases}
\end{equation*}
Note that $\mathbf L^u=\bigcup_{\alpha\in\ordi}\mathbf L_\alpha^u$ is now a primitive recursive class. It is trivial to define a rank function
\begin{equation*}
 |\cdot|_{\mathbf L}^u:\mathbf L^u\rightarrow\ordi,\qquad |\langle\beta,a\rangle|_{\mathbf L}^u=\beta
\end{equation*}
and a projection
\begin{equation*}
 \lpr{u}:\mathbf L^u\rightarrow\mathbb L^u,\qquad\lpr{u}(\langle\beta,a\rangle)=a
\end{equation*}
such that we have
\begin{equation*}
 |c|_{\mathbf L}^u=\min\{\alpha\in\ordi\,|\,\lpr{u}(c)\in\mathbb L_{\alpha+1}^u\}
\end{equation*}
and
\begin{equation*}
 |c|_{\mathbf L}^u=\min\{\alpha\in\ordi\,|\,c\in\mathbf L_{\alpha+1}^u\}
\end{equation*}
for all $c\in\mathbf L^u$. Note that we have $\mathbf L_\alpha^u\subseteq\mathbf L_\beta^u$ for $\alpha<\beta$ (if $\alpha=0$ use $\mathbb L_0^u\subseteq\mathbb L_1^u$), as well as $\mathbf L_\lambda^u=\bigcup_{\alpha<\lambda}\mathbf L_\alpha^u$ for any limit ordinal $\lambda$. The projection $\lpr{u}:\mathbf L^u\rightarrow\mathbb L^u$ is bijective but has no primitive recursive inverse. However, given a bound $\alpha>0$ with $a\in\mathbb L_\alpha^u$ we can primitive recursively compute the minimal ordinal $\beta<\alpha$ with $a\in\mathbb L_{\beta+1}^u$, leading to $\langle\beta,a\rangle\in\mathbf L_\alpha^u$. Misusing notation we will often write $a$ at the place of $\langle\beta,a\rangle$; the first component can be ``recovered'' by the notation $\beta=|a|_{\mathbf L}^u$. In particular, since $u$ is equal to $\mathbb L_0^u$ we can view each $u_i\in u$ as the element $\langle 0,u_i\rangle$ of $\mathbf L_\alpha^u$ (for any $\alpha$). It will be convenient to assume $0,1\in u$, for then we have $\langle 0,0\rangle,\langle 0,1\rangle\in\mathbb L_0^u$ (we will use $0,1$ as markers for the two conjuncts / disjuncts of a formula).

As stated above, the search tree $S_\alpha^u$ will be a subtree of $(\mathbf L_\alpha^u)^{<\omega}$. Each node of $S_\alpha^u$ will be labelled by an $\mathbf L_\alpha^u$-sequent, a notion that we shall define next: By a formula (of the object language) we shall mean a first-order formula with relation symbols $\in$ and $=$. As common in proof theory we only consider formulas in negation normal form: These are built from negated and unnegated prime formulas by the connectives $\land$ and $\lor$ and the quantifiers $\forall$ and $\exists$. To negate a formula one pushes negation down to the level of prime formulas (applying de~Morgan's laws) and deletes any double negations. Other connectives will be used as abbreviations, such that e.g.~$\varphi\rightarrow\psi$ stands for $\neg\varphi\lor\psi$. An occurrence of a quantifier is called bounded if it is of the form $\forall_x(x\in y\rightarrow\cdots)$ resp.\ $\exists_x(x\in y\land\cdots)$, where $y$ is not the variable~$x$. We will abbreviate this as $\forall_{x\in y}\cdots$ resp.\ $\exists_{x\in y}\cdots$ but we shall not consider bounded quantifiers as separate quantifiers in their own right; rather, they are bounded occurrences of normal quantifiers. A $\Delta_0$-formula is a formula in which all quantifiers are bounded. Formulas may contain arbitrary sets as parameters. By an $\mathbf L_\alpha^u$-formula we mean a closed formula with parameters from $\mathbf L_\alpha^u$. Note that for parameters in $\mathbf L^u$ it is really the projection into $\mathbb L^u$ that counts: e.g.\ the $\mathbf L_\alpha^u$-formula $\langle\beta_0,a_0\rangle\in\langle\beta_1,a_1\rangle$ should really be interpreted as the formula $a_0\in a_1$, together with information on the ranks of the sets $a_0,a_1\in\mathbb L^u$. An $\mathbf L_\alpha^u$-sequent is a finite sequence of $\mathbf L_\alpha^u$-formulas. As usual we write $\Gamma,\varphi$ for the sequent that arises from $\Gamma$ by appending the formula~$\varphi$ as last entry. Concerning semantics, we have a primitive recursive notion of satisfaction of a formula in a set model~$(m,\in\restriction_{m\times m})$ (cf.~\cite[Section~III.1]{barwise-admissible}). In particular we get a primitive recursive truth predicate for $\Delta_0$-formulas with parameters. We stress once more that the parameters of an $\mathbf L_\alpha^u$-formula must be projected into $\mathbb L_\alpha^u$ before its satisfaction or truth are evaluated.

As a final ingredient for the definition of $S_\alpha^u$ we need an enumeration $\langle\theta_k\rangle_{k\in\omega}$ of the axioms of Kripke-Platek set theory with infinity, excluding the instances of foundation (which will hold in any transitive model). To give a concrete description of the axioms, recall the usual $\Delta_0$-formula expressing that a given set is a limit ordinal (cf.~\cite[Chapter~I]{barwise-admissible}: there the formula has complexity $\Delta_1$, but only because of the implementation of urelements). Then $\langle\theta_k\rangle_{k\in\omega}$ lists the axioms
\begin{gather*}
\forall_{x}\forall_{x'}\forall_{y}\forall_{y'}(x=x'\land y=y'\land x\in y\rightarrow x'\in y')\tag{Equality}\\
\forall_x\forall_y(\forall_{z\in x}z\in y\land\forall_{z\in y}z\in x\rightarrow x=y)\tag{Extensionality}\\
\forall_x\forall_y\exists_z(x\in z\land y\in z)\tag{Pairing}\\
\forall_x\exists_y\forall_{z\in x}\forall_{z'\in z}z'\in y\tag{Union}\\
\exists_x\text{``$x$ is a limit ordinal"}\tag{Infinity}
\end{gather*}
and the instances of the axiom schemata
\begin{gather*}
\begin{multlined}[t][0.7\textwidth]
\forall_{v_1}\cdots\forall_{v_k}\forall_x\exists_y(\forall_{z\in x}(\theta(x,z,v_1,\dots,v_k)\rightarrow z\in y)\land\\
\land\forall_{z\in y}(z\in x\land\theta(x,z,v_1,\dots ,v_k)))\end{multlined}\tag{$\Delta_0$-separation}\\
\begin{multlined}[t][0.7\textwidth]
\forall_{v_1}\cdots\forall_{v_k}\forall_x(\forall_{y\in x}\exists_z\theta(x,y,z,v_1,\dots,v_k)\rightarrow\\
\rightarrow\exists_w\forall_{y\in x}\exists_{z\in w}\theta(x,y,z,v_1,\dots,v_k))\end{multlined}\tag{$\Delta_0$-collection}
\end{gather*}
for $\Delta_0$-formulas $\theta$. It will be convenient to make two restrictions on the formula $\theta$ in the axiom schemata: Firstly, we fix some global bound on the number $k$ of parameters in $\Delta_0$-separation and $\Delta_0$-collection axioms. This is a harmless restriction because all other instances can be derived via an encoding of tuples. Secondly, we require the formula $\theta$ in a $\Delta_0$-collection axiom to be a disjunction. This ensures that the existential quantifier in the subformula $\exists_z\theta$ is unbounded. To deduce $\Delta_0$-collection for an arbitrary $\Delta_0$-formula $\theta$ one replaces $\theta$ by the equivalent disjunction $z\neq z\lor\theta$. Now we are ready to define the desired search trees:

\begin{definition}\label{def:construct-search-tree}
 Given a transitive set $u=\{u_i\,|\,i\in\omega\}\supseteq\{0,1\}$ and an ordinal $\alpha$ we define a search tree $S_\alpha^u\subseteq(\mathbf L_\alpha^u)^{<\omega}$ and a labelling $l:S_\alpha^u\rightarrow\text{``$\mathbf L_\alpha^u$-sequents''}$ by recursion on sequences $\sigma\in(\mathbf L_\alpha^u)^{<\omega}$. In the base case $\sigma=\langle\rangle$ we set
\begin{equation*}
 \langle\rangle\in S_\alpha^u\qquad\text{and}\qquad l(\langle\rangle)=\langle\rangle.
\end{equation*}
In the recursion step we assume that $\sigma\in S_\alpha^u$ holds. If $\sigma$ has even length $2k$ then we add the negation of an axiom of Kripke-Platek set theory, setting
\begin{equation*}
 \sigma^\frown a\in S_\alpha^u\Leftrightarrow a=0\qquad\text{and}\qquad l(\sigma^\frown  0)=l(\sigma),\neg\theta_k.
\end{equation*}
If the length of $\sigma$ is odd we analyze the previous sequent: If $l(\sigma)$ contains a true $\Delta_0$-formula then we stipulate that $\sigma$ is a leaf of $S_\alpha^u$. If $l(\sigma)$ consist of false (negated) prime formulas we set
\begin{equation*}
 \sigma^\frown a\in S_\alpha^u\Leftrightarrow a=0\qquad\text{and}\qquad l(\sigma^\frown 0)=l(\sigma).
\end{equation*}
Otherwise we write $l(\sigma)=\Gamma,\varphi,\Gamma'$ such that $\Gamma$ consists of (negated) prime formulas and $\varphi$ is not a (negated) prime formula. For later reference we call $\varphi$ the redex of~$l(\sigma)$. The recursion step depends on the form of $\varphi$ as follows:
{\def\arraystretch{1.75}\setlength{\tabcolsep}{6pt}
\setlength{\LTpre}{\baselineskip}\setlength{\LTpost}{\baselineskip}
\begin{longtable}{lp{0.78\textwidth}}\hline
If\dots & \dots\ then \dots\\ \hline
$\varphi\equiv\psi_0\land\psi_1$ & $\sigma^\frown a\in S_\alpha^u$ iff $a\in\{0,1\}$, and $l(\sigma^\frown i)=\Gamma,\Gamma',\varphi,\psi_i$ for $i=0,1$,\\
$\varphi\equiv\psi_0\lor\psi_1$ & $\sigma^\frown a\in S_\alpha^u$ iff $a=0$, and $l(\sigma^\frown 0)=\Gamma,\Gamma',\varphi,\psi_i$ where $i=0$ if $\psi_0$ does not already occur in $l(\sigma)$ and $i=1$ otherwise,\\
$\varphi\equiv\forall_x\psi(x)$ & $\sigma^\frown a\in S_\alpha^u$ for all $a\in\mathbf L_\alpha^u$, and $l(\sigma^\frown a)=\Gamma,\Gamma',\varphi,\psi(a)$,\\
$\varphi\equiv\exists_x\psi(x)$ & $\sigma^\frown a\in S_\alpha^u$ iff $a=0$, and\newline $l(\sigma^\frown 0)=\Gamma,\Gamma',\varphi,\psi(b)$ where $b$ is the first entry of the list $u_0,\sigma_0,\dots,u_{\dom(\sigma)-1},\sigma_{\dom(\sigma)-1},u_{\dom(\sigma)},u_{\dom(\sigma)+1},\dots$ such that $\psi(b)$ does not already occur in $l(\sigma)$.\\ \hline
\end{longtable}}
\end{definition}

Consider a function $f:\omega\rightarrow\mathbf L_\alpha^u$ and write $f[n]=\langle f(0),\dots ,f(n-1)\rangle$ for $n\in\omega$. If $f[n]\in S_\alpha^u$ holds for all $n\in\omega$ then $f$ is called a branch of $S_\alpha^u$. We say that a formula occurs on $f$ if it occurs in some sequent $l(f[n])$. The construction of search trees ensures the following crucial properties:

\begin{lemma}\label{lem:properties-search-tree}
 For any branch $f$ of the search tree $S_\alpha^u$ the following holds:
\begin{enumerate}[label=(\alph*)]
 \item Any parameter in a formula on $f$ lies in $\rng(f)\cup u$.
 \item Any (negated) prime formula on $f$ is false.
 \item If $\psi_0\land\psi_1$ occurs on $f$ then either $\psi_0$ or $\psi_1$ occurs on $f$.
 \item If $\psi_0\lor\psi_1$ occurs on $f$ then both $\psi_0$ and $\psi_1$ occur on $f$.
 \item If $\forall_x\psi(x)$ occurs on $f$ then $\psi(b)$ occurs on $f$ for some $b\in\rng(f)$.
 \item If $\exists_x\psi(x)$ occurs on $f$ then $\psi(b)$ occurs on $f$ for all $b\in\rng(f)\cup u$.
\end{enumerate}
\end{lemma}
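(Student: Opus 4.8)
The plan is to prove each of the six clauses by tracking what happens to a redex along the branch $f$. The key structural observation is that the construction is ``fair'': once a non-prime formula $\varphi$ appears in a sequent $l(f[n])$, the prime-formula prefix $\Gamma$ in the decomposition $l(\sigma)=\Gamma,\varphi,\Gamma'$ only ever grows (prime formulas are never removed), so after finitely many further odd steps $\varphi$ becomes the redex of some $l(f[m])$ with $m\geq n$, and the recursion step for that shape of $\varphi$ is then applied. The only cases that require a little more care are $\varphi\equiv\forall_x\psi(x)$ and $\varphi\equiv\exists_x\psi(x)$, where the redex $\varphi$ is copied into both successor sequents and hence is processed infinitely often, each time witnessing $\psi(a)$ for a new $a$; one must check that the chosen witnesses exhaust the intended set.

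For clause~(a) I would argue by induction on $n$ that every parameter occurring in a formula of $l(f[n])$ lies in $\rng(f[n])\cup u$: new formulas are either $\neg\theta_k$ (no parameters, as the KP axioms we enumerate are parameter-free sentences), subformulas $\psi_i$ of a formula already present (no new parameters), instances $\psi(a)$ with $a=f(n-1)\in\rng(f)$ (the $\forall$-case), or instances $\psi(b)$ where $b$ is drawn from the list $u_0,f(0),u_1,f(1),\dots$ (the $\exists$-case), hence $b\in\rng(f)\cup u$. Clause~(b) is immediate from the construction: if a sequent consisting only of prime formulas contained a true one it would be declared a leaf, contradicting that $f$ is an infinite branch; and prime formulas, once present, persist, so if any prime formula on $f$ were true, the branch would eventually reach such a leaf. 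For clauses~(c)--(f), fix an occurrence of the relevant redex $\varphi$ at some node $f[n]$. By the fairness observation, for clauses (c), (d) there is $m\geq n$ with $\varphi$ the redex of $l(f[m])$, and the successor $f(m)\in\{0,1\}$ selects $\psi_{f(m)}$ (for $\land$) resp.\ the construction appends $\psi_i$ and, since $\varphi$ is dropped from the conjunct-free position, a later step appends the other (for $\lor$, where both $\psi_0$ and $\psi_1$ are forced because once $\psi_0$ occurs the rule switches to $i=1$). For clause~(e), when $\varphi\equiv\forall_x\psi(x)$ is the redex at $f[m]$ the successor is $f(m)\in\mathbf L_\alpha^u$ and $\psi(f(m))$ is appended, with $f(m)\in\rng(f)$ as required. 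For clause~(f), when $\varphi\equiv\exists_x\psi(x)$ is the redex it is preserved in the successor sequent, so it is the redex at infinitely many nodes $f[m_0]<f[m_1]<\cdots$ along $f$; at each such node the construction appends $\psi(b)$ for the first $b$ in the list $u_0,f(0),u_1,f(1),\dots,u_{m_j},u_{m_j+1},\dots$ not yet used, and since every fixed $u_i\in u$ and every fixed $f(k)\in\rng(f)$ occupies a fixed position in all these (eventually stable) lists, it is eventually chosen; hence $\psi(b)$ occurs on $f$ for every $b\in\rng(f)\cup u$.

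The main obstacle is making the ``fairness'' argument fully rigorous, in particular for clause~(f): one must verify that the dovetailed enumeration $u_0,f(0),u_1,f(1),\dots$ used in the $\exists$-rule, together with the ``not already occurring'' side condition, really does list every element of $\rng(f)\cup u$ cofinally among the infinitely many visits to the redex $\varphi\equiv\exists_x\psi(x)$. The subtlety is that the list depends on $\dom(\sigma)$, so it changes from visit to visit; the point to check is that the initial segment $u_0,f(0),\dots,u_{k},f(k)$ is stable once $\dom(\sigma)>k$, so any target $u_i$ or $f(k)$ sits at a bounded position for all sufficiently late visits and the finitely many ``already used'' witnesses cannot block it forever. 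A symmetric but easier remark handles the $\lor$-case of clause~(d). Everything else is a routine induction on the length of sequences, using that prime formulas are never deleted and that every non-prime, non-true-$\Delta_0$ sequent has a well-defined redex.
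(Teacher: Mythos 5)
Your proposal is correct and follows essentially the same route as the paper's proof: induction on $n$ for (a), the leaf argument for (b), and the fairness of redex selection for (c)--(f), including the analysis of the dovetailed list $u_0,f(0),u_1,f(1),\dots$ (whose initial segments stabilise) to exhaust $\rng(f)\cup u$ in the $\exists$-case. One small refinement: the fairness claim is most cleanly justified not by the (merely weak) growth of the prime-formula prefix, but by the observation the paper uses, namely that any formula to the right of the redex moves one position to the left at every odd step, so its position strictly decreases until it becomes the redex.
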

\begin{proof} (a) By induction on $n$ we show that all parameters that occur in the sequent $l(f[n])$ lie in $\rng(f[n])\cup u$: For $n=0$ we have $l(f[0])=\langle\rangle$ and thus no parameters. In the induction step we consider $f[n+1]=f[n]^\frown f(n)$: If $n=2k$ is even then the only new formula in $l(f[n+1])$ is the negated axiom $\neg\theta_k$, which contains no parameters. Now assume that $n$ is odd. The first interesting case is that of a redex $\varphi\equiv\forall_x\psi(x)$. Then $l(f[n+1])$ contains the new formula $\psi(f(n))$. As the formula $\forall_x\psi(x)$ occurs in $l(f[n])$ its parameters lie in $\rng(f[n])\cup u$ by induction hypothesis. The only new parameter in the formula $\psi(f(n))$ is $f(n)$, which is indeed an element of $\rng(f[n+1])$. The other interesting case is a redex $\varphi\equiv\exists_x\psi(x)$. Here $l(f[n])$ contains a new parameter from the list $u_0,f(0),u_1,\dots,f(n-1),u_n,u_{n+1},u_{n+2},\dots$, which is thus an element of $\rng(f[n])\cup u$.\\
(b) Aiming at a contradiction, assume that $l(f[n])$ contains a true (negated) prime formula. By construction this means that $f[n]$ is a leaf of the search tree, contradicting the assumption that $f$ is a branch.\\
(c) Similar to (e), and easier.\\
(d) Similar to (f), and easier.\\
(e) Assume that $\forall_x\psi(x)$ is a formula in $l(f[n])$. By construction of the search tree $\forall_x\psi(x)$ will be the redex of $l(f[m])$ for some $m\geq n$ (a formula to the right of the redex moves a position to the left in each odd step). Then the formula $\psi(f(m))$ occurs in $l(f[m+1])$, again by construction.\\
(f) As in (e) we may assume that $\exists_x\psi(x)$ is the redex of $l(f[m])$. By construction the formula $\psi(u_0)$ occurs in $l(f[m+1])$, and so does $\exists_x\psi(x)$. Now observe that $\psi(u_0)$ and $\exists_x\psi(x)$ remain in $l(f[m'])$ for all $m'\geq m+1$. Pick an $m'$ such that $\exists_x\psi(x)$ is again the redex of $l(f[m'])$. Since $\psi(u_0)$ is already contained in $l(f[m'])$ we may conclude that $\psi(f(0))$ occurs in $l(f[m'+1])$. Inductively we can verify that $\psi(b)$ occurs on $f$ for any $b$ in the list $u_0,f(0),u_1,f(1),\dots$, which enumerates the set $\rng(f)\cup u$.
\end{proof}

To understand the following proposition, recall that $\lpr{u}$ projects the ranked constructible hierarchy $\mathbf L^u$ onto the usual hierarchy $\mathbb L^u$, and that $\mathbf L_\alpha^u$-formulas are to be evaluated under this projection.

\begin{proposition}
 Assume that $f$ is a branch of the search tree $S_\alpha^u$. Any formula that occurs on $f$ is false in the structure $(\rng({\lpr{u}}\circ f)\cup u,\in)$.
\end{proposition}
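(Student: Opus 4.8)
The plan is to argue by induction on the logical complexity of a formula (say, the number of connectives and quantifiers it contains), a measure that is unaffected by substituting parameters for free variables. Fix the structure $M=(\rng(\lpr{u}\circ f)\cup u,\in)$. Following the convention recalled just before the proposition, I will silently identify an $\mathbf L_\alpha^u$-formula $\varphi$ with the formula obtained by replacing every parameter $c\in\mathbf L_\alpha^u$ occurring in it by its projection $\lpr{u}(c)\in\mathbb L^u$. For a formula occurring on $f$ this is meaningful relative to $M$: by Lemma~\ref{lem:properties-search-tree}(a) every parameter of such a formula lies in $\rng(f)\cup u$, and $\lpr{u}$ maps $\rng(f)$ onto $\rng(\lpr{u}\circ f)$ and maps the copy $\{\langle 0,u_i\rangle\mid i\in\omega\}$ of $u$ inside $\mathbf L^u_0$ back onto $u$, so all parameters of such a formula get projected into the domain of $M$. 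Hence ``$\varphi$ is false in $M$'' makes sense for every formula on $f$, and in fact $b\mapsto\lpr{u}(b)$ maps $\rng(f)\cup u$ \emph{onto} the domain of $M$.

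For the base case let $\varphi$ be a prime or negated prime formula on $f$, say $s\in t$ with $s,t\in\mathbf L^u_\alpha$. By Lemma~\ref{lem:properties-search-tree}(b) the formula $s\in t$ is false, i.e.\ $\lpr{u}(s)\notin\lpr{u}(t)$; since the interpretation of $\in$ in $M$ is genuine membership restricted to the domain of $M$, this yields $M\not\models\lpr{u}(s)\in\lpr{u}(t)$. The cases $s=t$, $s\notin t$, $s\neq t$ are identical, using absoluteness of equality. (One may note in passing that $\lpr{u}\restriction\mathbf L_\alpha^u$ is injective because each element of $\mathbb L^u$ has a unique $\mathbf L$-rank, though this is not needed.)

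For the induction step assume the statement holds for all formulas of smaller complexity that occur on $f$. If $\psi_0\land\psi_1$ occurs on $f$, then by Lemma~\ref{lem:properties-search-tree}(c) at least one of $\psi_0,\psi_1$ occurs on $f$, hence is false in $M$ by the induction hypothesis, so $M\not\models\psi_0\land\psi_1$. If $\psi_0\lor\psi_1$ occurs on $f$, then by (d) both $\psi_0$ and $\psi_1$ occur on $f$ and are both false in $M$ by the induction hypothesis, so $M\not\models\psi_0\lor\psi_1$. If $\forall_x\psi(x)$ occurs on $f$, then by (e) there is $b\in\rng(f)$ with $\psi(b)$ on $f$; by the induction hypothesis $\psi(\lpr{u}(b))$ is false in $M$, and since $\lpr{u}(b)$ lies in the domain of $M$ this exhibits a counterexample, so $M\not\models\forall_x\psi(x)$. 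Finally, if $\exists_x\psi(x)$ occurs on $f$, then by (f) $\psi(b)$ occurs on $f$ for every $b\in\rng(f)\cup u$; as noted above the projections $\lpr{u}(b)$ range over exactly the domain of $M$, and each $\psi(\lpr{u}(b))$ is false in $M$ by the induction hypothesis, so $M\not\models\exists_x\psi(x)$. This completes the induction.

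The only point requiring genuine care is the bookkeeping between $\mathbf L_\alpha^u$-formulas and their projections: one must check that first substituting $b\in\mathbf L_\alpha^u$ into $\psi$ and then projecting all parameters agrees with substituting $\lpr{u}(b)$ into the projected formula, and that the map $b\mapsto\lpr{u}(b)$ on $\rng(f)\cup u$ is onto the domain of $M$ (this rests on the identification of $u_i\in u$ with $\langle 0,u_i\rangle\in\mathbf L^u_0$, which $\lpr{u}$ sends back to $u_i$). Beyond this the argument is the standard Sch\"utte/deduction-chain computation, driven entirely by clauses (a)--(f) of Lemma~\ref{lem:properties-search-tree}; I do not anticipate any serious obstacle.
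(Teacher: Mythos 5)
Your proof is correct and follows essentially the same route as the paper: an induction on formula complexity (the paper uses formula height) whose base case is clause (b) of Lemma~\ref{lem:properties-search-tree} and whose induction steps are driven by clauses (c)--(f). The extra bookkeeping you supply about the projection $\lpr{u}$ and the fact that $\rng(f)\cup u$ projects onto the domain of the model is exactly the convention the paper fixes beforehand, so it adds rigour but no new idea.
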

\begin{proof}
 We establish the claim by induction on the height of formulas. Note that the formulas on $f$ form a set, so that the induction statement is primitive recursive. For a (negated) prime formula the claim holds by the previous lemma. Now let us consider a formula $\forall_x\psi(x)$ that occurs on $f$. By the lemma $\psi(b)$ occurs on $f$ for some $b\in\rng(f)$. The induction hypothesis tells us that $\psi(b)$ is false in the structure $(\rng({\lpr{u}}\circ f)\cup u,\in)$; thus $\forall_x\psi(x)$ must also be false in this structure. Next, consider a formula $\exists_x\psi(x)$ on $f$. The lemma tells us that $\psi(b)$ occurs on $f$ for all $b\in\rng(f)\cup u$. By the induction hypothesis $(\rng({\lpr{u}}\circ f)\cup u,\in)$ does not satisfy any of these formulas. Thus it cannot satisfy $\exists_x\psi(x)$ either. The remaining cases are similar and easier.
\end{proof}

\begin{corollary}
 If $f$ is a branch of the search tree $S_\alpha^u$ then $(\rng({\lpr{u}}\circ f)\cup u,\in)$ is a model of Kripke-Platek set theory.
\end{corollary}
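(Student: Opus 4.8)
The plan is to combine the Proposition immediately above with the shape of the even steps in Definition~\ref{def:construct-search-tree}. First I would unwind the construction at a node of even length: if $\sigma\in S_\alpha^u$ has length $2k$, then its unique immediate successor in $S_\alpha^u$ is $\sigma^\frown 0$, with label $l(\sigma^\frown 0)=l(\sigma),\neg\theta_k$. Since $f$ is a branch we have $f[2k]\in S_\alpha^u$, and $f[2k+1]=f[2k]^\frown f(2k)\in S_\alpha^u$ forces $f(2k)=0$ and $l(f[2k+1])=l(f[2k]),\neg\theta_k$. Hence for every $k\in\omega$ the formula $\neg\theta_k$ occurs on $f$. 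Note that each $\theta_k$ is a closed formula without parameters (the instances of $\Delta_0$-separation and $\Delta_0$-collection quantify their parameters $v_1,\dots,v_k$ universally), so $\neg\theta_k$ is trivially an $\mathbf L_\alpha^u$-formula and the Proposition applies to it.

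Next I would invoke the Proposition: every formula occurring on $f$ is false in $M:=(\rng({\lpr{u}}\circ f)\cup u,\in)$. Applied to $\neg\theta_k$, and using that negation-normal-form negation agrees with ordinary negation under the (classical, two-valued) satisfaction relation, this yields $M\models\theta_k$ for every $k$. By the choice of the enumeration $\langle\theta_k\rangle_{k\in\omega}$ this gives every axiom of Kripke-Platek set theory with infinity in $M$, with the sole exception of the instances of foundation.

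Finally I would treat foundation by hand: for any formula $\chi$ and any parameters from $M$, the subset of $M$ it defines is a genuine set of the meta-theory, hence---if nonempty---has an $\in$-minimal element by the foundation (regularity) axiom of primitive recursive set theory; equivalently, the true membership relation restricted to the set $M$ is well-founded. Thus every foundation instance holds in $M$ as well, and therefore $M$ is a model of Kripke-Platek set theory, as claimed. I do not anticipate any real obstacle: all the substantive work lies in the Proposition, and the only points requiring a little care are the verification that $\neg\theta_k$ genuinely appears on every branch (reading off the even steps of the construction) and the observation that foundation comes for free from well-foundedness of the ambient $\in$.
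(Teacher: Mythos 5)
Your argument is correct and takes essentially the same route as the paper: the even steps of the construction put every $\neg\theta_k$ on the branch, the Proposition makes each of them false in $(\rng({\lpr{u}}\circ f)\cup u,\in)$, and foundation is inherited from the meta-theory. The only point to make explicit in your foundation step is that forming the subset of the model defined by $\chi$ as an actual set rests on the satisfaction relation being primitive recursive --- this is precisely how the paper phrases it, reducing foundation for arbitrary formulas in the model to foundation for primitive recursive predicates in the meta-theory.
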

\begin{proof}
 Recall that $\langle\theta_k\rangle_{k\in\omega}$ enumerates the Kripke-Platek axioms, except for foundation. By construction of the search tree all formulas $\neg\theta_k$ occur on $f$. The proposition tells us that $\neg\theta_k$ is false in $(\rng({\lpr{u}}\circ f)\cup u,\in)$. Thus this structure satisfies all axioms $\theta_k$. As the satisfaction relation is primitive recursive, foundation for arbitrary formulas in $(\rng({\lpr{u}}\circ f)\cup u,\in)$ reduces to foundation for primitive recursive predicates in the meta-theory.
\end{proof}

Recall that transitive models of Kripke-Platek set theory are also called admissible sets.

\begin{corollary}\label{cor:branch-to-admissible-set}
 If the search tree $S_\alpha^u$ has a branch then there is an admissible set~$\mathbb A$ with $u\subseteq\mathbb A$.
\end{corollary}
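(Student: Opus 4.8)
The plan is to invoke the previous corollary and then pass to the Mostowski collapse. Suppose $f$ is a branch of $S_\alpha^u$, and set $M:=\rng(\lpr{u}\circ f)\cup u$ and $E:={\in}\restriction_{M\times M}$. Since $f\colon\omega\to\mathbf L_\alpha^u$, the set $\rng(f)$ exists by replacement, hence so does $M$. By the preceding corollary, $(M,E)$ is a model of Kripke-Platek set theory; in particular it satisfies extensionality, so $E$ is an extensional relation. Moreover $E$ is a restriction of the true membership relation and is therefore well-founded by the axiom of foundation.

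Next I would form the transitive collapse $\pi$, defined by $\in$-recursion via $\pi(x)=\{\pi(y)\mid y\in x\cap M\}$. Because this recursion only calls $\pi$ on genuine $\in$-predecessors, the axiom of foundation makes it legitimate, and it defines a primitive recursive set function; thus $\mathbb A:=\pi[M]=\{\pi(x)\mid x\in M\}$ exists as a set by replacement. A routine verification --- the classical one --- shows that $\mathbb A$ is transitive and that $\pi\restriction M$ is an isomorphism of $(M,E)$ onto $(\mathbb A,\in)$, with injectivity of $\pi\restriction M$ being precisely the point where extensionality of $E$ enters. Transferring first-order satisfaction across this isomorphism, $(\mathbb A,\in)$ is a model of Kripke-Platek set theory, and as $\mathbb A$ is transitive it is an admissible set.

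Finally, one checks $u\subseteq\mathbb A$. As $u$ is transitive we have $a\subseteq u\subseteq M$ for every $a\in u$, so $a\cap M=a$ and $\pi(a)=\{\pi(b)\mid b\in a\}$; an immediate $\in$-induction then yields $\pi(a)=a$ for all $a\in u$, whence $u=\pi[u]\subseteq\pi[M]=\mathbb A$. The argument being nothing more than the standard Mostowski collapse, the only point that requires a moment's attention is its availability in the meta-theory: primitive recursive set theory does provide $\in$-recursion to define $\pi$ and replacement to collect $\mathbb A$, so there is no real obstacle here. (I also note that the subtleties involving ranked parameters and the projection $\lpr{u}$ have already been dealt with in the earlier proposition and corollary, so from this point on we work with an honest $\in$-structure throughout.)
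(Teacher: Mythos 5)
Your proposal is correct and follows exactly the paper's route: apply the preceding corollary to get the $\in$-structure $(\rng(\lpr{u}\circ f)\cup u,\in)$ as a model of Kripke--Platek set theory, pass to its Mostowski collapse (legitimate in the meta-theory since the relation is the genuine membership relation), and use transitivity of $u$ to see that the collapse fixes $u$ pointwise, giving $u\subseteq\mathbb A$. The only difference is that you spell out the collapse construction and the identity of $\pi$ on $u$ in detail, which the paper leaves implicit.
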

\begin{proof}
 The previous corollary provides a model in which $\in$ is interpreted by the actual membership relation. In particular this is a model of extensionality. Thus it is isomorphic to its Mostowski collapse. As $u$ is transitive it is still contained in the collapsed model. 
\end{proof}

If $u$ is hereditarily countable we can apply the same construction to the transitive closure of $\{u\}$, to obtain an admissible set $\mathbb A$ with $u\in\mathbb A$. To conclude that $u$ is indeed contained in an admissible set one must exclude the case that the search trees $S_\alpha^u$ are well-founded for all ordinals $\alpha$. This will require an additional assumption, namely a ``higher well-ordering principle'' to be described in the next section. In the rest of this section we exhibit some further properties of the trees $S_\alpha^u$, which will link them to such well-ordering principles.

First, let us clarify our notion of well-ordering: We say that $<$ well-orders $a$ if every non-empty subset of $a$ has a $<$-minimal element. Below we will relate this to the existence of infinitely decreasing sequences (in general this requires some amount of choice). Now we want to define linear orderings on the search trees $S_\alpha^u$. Note that the set $u$ admits a well-ordering which is primitive recursive in the given enumeration $u=\{u_i\,|\,i\in\omega\}$. This can be extended to a primitive recursive family of compatible well-orderings $<_{\mathbf L_\alpha^u}$ on the stages $\mathbf L_\alpha^u$ of the (ranked) constructible hierarchy (cf.~\cite[Lemma~VII.4.19]{simpson09}; to show that these are well-orderings one constructs order embeddings $(\mathbf L_\alpha^u,<_{\mathbf L_\alpha^u})\rightarrow\ordi$, also by primitive recursion). Using these well-orderings we can define the Kleene-Brouwer ordering on the tree $S_\alpha^u$, namely as
\begin{equation*}
 \sigma<_{S_\alpha^u}\tau\quad :\Leftrightarrow\quad\left\{
                                \begin{array}{l}
                                  \text{either the sequence $\sigma$ properly extends the sequence $\tau$},\\
                                  \text{or $\sigma={\sigma_0}^\frown a^\frown \sigma'$ and $\tau={\sigma_0}^\frown b^\frown \tau'$ with $a<_{\mathbf L_\alpha^u} b$}.
                                \end{array}
                            \right.
\end{equation*}
Clearly $<_{S_\alpha^u}$ is a linear ordering on $S_\alpha^u$. The promised connection with well-orderedness goes as follows:

\begin{lemma}\label{lem:branch-Kleene-Brouwer}
 If the search tree $S_\alpha^u$ has no infinite branch then its Kleene-Brouwer ordering $<_{S_\alpha^u}$ is a well-ordering.
\end{lemma}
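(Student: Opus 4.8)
The plan is to argue the contrapositive: if the Kleene-Brouwer ordering $<_{S_\alpha^u}$ is \emph{not} a well-ordering, then $S_\alpha^u$ has an infinite branch. So suppose $A\subseteq S_\alpha^u$ is a non-empty subset with no $<_{S_\alpha^u}$-minimal element. I want to extract an infinite branch $f:\omega\to\mathbf L_\alpha^u$ from $A$. The standard device is to build $f$ by recursion so that at each stage $n$ the sequence $f[n]$ has \emph{some} element of $A$ extending it (equivalently, $f[n]$ lies below or equal to some member of $A$ in the tree-extension sense, and $A$ keeps having elements that are $<_{S_\alpha^u}$-smaller than any candidate successor we reject). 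Concretely, I would maintain the invariant that the set $A_n=\{\sigma\in A \mid f[n]\subseteq\sigma\}$ is non-empty and itself has no $<_{S_\alpha^u}$-minimum; then I pick $f(n)$ to be the $<_{\mathbf L_\alpha^u}$-\emph{least} $a\in\mathbf L_\alpha^u$ such that $\{\sigma\in A_n\mid (f[n])^\frown a\subseteq\sigma\}$ is non-empty with no $<_{S_\alpha^u}$-minimum.

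The key step is to check that such an $a$ always exists, given the invariant on $A_n$. First note $f[n]$ itself cannot lie in $A_n$: an element of the tree that is properly extended by another element of $A_n$ is $<_{S_\alpha^u}$-larger than that extension, so if $f[n]\in A$ it would not be $<_{S_\alpha^u}$-minimal in $A_n$, which is fine, but I actually need more — I need that $A_n$ decomposes over the immediate successors of $f[n]$. Write $A_n$ as the disjoint union, over those $a\in\mathbf L_\alpha^u$ with $(f[n])^\frown a\in S_\alpha^u$, of the sets $A_n^a=\{\sigma\in A_n\mid (f[n])^\frown a\subseteq\sigma\}$, together possibly with the singleton $\{f[n]\}$ if $f[n]\in A$. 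Now suppose, for contradiction, that for \emph{every} admissible successor letter $a$ the set $A_n^a$ is either empty or has a $<_{S_\alpha^u}$-minimum $\mu_a$. Since $<_{\mathbf L_\alpha^u}$ is a well-ordering of $\mathbf L_\alpha^u$ and (by the definition of the Kleene-Brouwer order and the structure of the search tree) we can compare across different successor letters, the collection of all such $\mu_a$ — a set, because $A$ and the relevant successor set form sets — would have a $<_{S_\alpha^u}$-least element, which together with $f[n]$ (if present) would furnish a $<_{S_\alpha^u}$-minimum of $A_n$, contradicting the invariant. Hence some $A_n^a$ is non-empty without a $<_{S_\alpha^u}$-minimum, and taking the $<_{\mathbf L_\alpha^u}$-least such $a$ preserves the invariant and keeps $f$ growing along the tree.

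Two bookkeeping points remain. We must confirm $(f[n])^\frown f(n)\in S_\alpha^u$, which is immediate since $A_n^{f(n)}$ is non-empty and its members are sequences in $S_\alpha^u$ extending $(f[n])^\frown f(n)$, and $S_\alpha^u$ is closed under initial segments. And we must confirm the recursion never stalls: the invariant that $A_n$ has no $<_{S_\alpha^u}$-minimum is exactly what guarantees, at each step, that $f[n]$ is not all of the "remaining" witness and that the branch-extension argument above applies; the base case $A_0=A$ holds by the hypothesis on $A$. Therefore $f$ is defined on all of $\omega$ with $f[n]\in S_\alpha^u$ for every $n$, i.e.\ $f$ is an infinite branch of $S_\alpha^u$.

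The main obstacle I anticipate is purely the comparison-across-successors bit: one has to verify carefully, from the explicit definition of $<_{S_\alpha^u}$ (properly-extends versus first-difference under $<_{\mathbf L_\alpha^u}$), that the "minimum of the minimums $\mu_a$" really is a minimum of $A_n$, handling the boundary case where $f[n]$ itself belongs to $A$. Everything else is the routine König/Kleene-Brouwer extraction, and the fact that all the objects in play ($A$, $S_\alpha^u$, the successor sets, the orderings) are sets in the sense of the meta-theory makes the "pick the least" operations legitimate without any choice.
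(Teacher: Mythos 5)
Your proof is correct, but it takes a genuinely different route from the one in the paper. The paper argues in two stages: it first builds a primitive recursive ``choice function'' $\min_{\mathbf L_{\alpha+\omega}^u}$ and uses it to extract from the minimum-less set an explicit $<_{S_\alpha^u}$-descending sequence $g:\omega\rightarrow a$, and then converts $g$ into a branch $f$ by choosing at each node the least letter $b$ such that $f[n]^\frown b$ lies below \emph{infinitely many} of the $g(m)$, where the existence of such a $b$ is secured by observing that the letters $b_k$ attached to the $g(m_k)$ form a $\leq_{\mathbf L_\alpha^u}$-non-increasing sequence and must stabilize. You never construct the descending sequence at all: you keep the bad set $A$ itself and maintain the invariant that its restriction $A_n$ above the current node is non-empty with no Kleene--Brouwer minimum, descending via the decomposition of $A_n$ over the children of $f[n]$ --- if every child-restriction $A_n^a$ were empty or had a minimum, then $\mu_{a_0}$ for the $<_{\mathbf L_\alpha^u}$-least relevant letter $a_0$ (or $f[n]$ itself, in the degenerate case where all $A_n^a$ are empty) would be a minimum of $A_n$, which is exactly the contradiction you need; note you should state this last degenerate case explicitly, since it is where non-emptiness of $A_n$ is used. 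Both arguments lean on the well-ordering $<_{\mathbf L_\alpha^u}$ to make canonical selections and hence need no choice, and both recursions along $\omega$ have primitive recursive step functions, so both are available in the meta-theory. What your version buys is the elimination of the intermediate descending sequence, the detour through $\mathbf L_{\alpha+\omega}^u$, and the ``below infinitely many $g(m)$'' bookkeeping; what the paper's version buys is that the stabilization argument with the $b_k$ is perhaps the more familiar textbook form of the Kleene--Brouwer extraction, and it makes the required primitive recursive choice machinery completely explicit.
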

\begin{proof}
 The first step is to show that the ``choice function''
\begin{equation*}
 \textstyle\min_{\mathbf L_\alpha^u}(a):=\text{``the $<_{\mathbf L_\alpha^u}$-minimal element of $a\cap\mathbf L_\alpha^u$''}
\end{equation*}
is primitive recursive. Indeed the set
\begin{equation*}
 \{b\in a\cap\mathbf L_\alpha^u\,|\,\neg\exists_{y\in a\cap\mathbf L_\alpha^u}\,y<_{\mathbf L_\alpha^u}b\}
\end{equation*}
can be computed by a primitive recursive function. As $<_{\mathbf L_\alpha^u}$ is a well-ordering this set is a singleton (or empty, in which case we assign some default value), and we can extract its only element $\textstyle\min_{\mathbf L_\alpha^u}(a)$. Now the claim of the lemma is shown by contraposition: Assume that $a\subseteq S_\alpha^u$ has no $<_{S_\alpha^u}$-minimal element. Observe that we have $a\subseteq(\mathbf L_\alpha^u)^{<\omega}\subseteq \mathbf L_{\alpha+\omega}^u$. Thus we can use the choice function for subsets of $\mathbf L_{\alpha+\omega}^u$ to define a $<_{S_\alpha^u}$-descending sequence $g:\omega\rightarrow x$, setting
\begin{equation*}
 g(n+1):=\textstyle\min_{\mathbf L_{\alpha+\omega}^u}(\{b\in a\,|\,b<_{S_\alpha^u}g(n)\}).
\end{equation*}
To transform $g$ into an infinite branch $f$ of $S_\alpha^u$ we recursively define
\begin{multline*}
 f(n):=\textstyle\min_{\mathbf L_\alpha^u}(\{b\in\mathbf L_\alpha^u\,|\,f[n]^\frown b\in S_\alpha^u\text{ and $f[n]^\frown b$ lies below $g(m)$}\\ \text{for infinitely many $m\in\omega$}\}).
\end{multline*}
Here ``$f[n]^\frown b$ lies below $g(m)$'' means that the sequence $g(m)$ is an end-extension of the sequence $f[n]^\frown b$. Note that the property ``is an infinite subset of $\omega$'' is primitive recursive. To conclude it suffices to show that the required sets $b$ exist: Inductively we assume that $f[n]$ lies (strictly) below infinitely many nodes of the form $g(m)$. Define a strictly increasing sequence of numbers $m_k$ such that $f[n]$ lies below all nodes $g(m_k)$. Let $b_k$ be the unique set in $\mathbf L_\alpha^u$ such that $f[n]^\frown b_k$ lies below $g(m_k)$. From $g(m_{k+1})<_{S_\alpha^u}g(m_k)$ and the definition of the Kleene-Brouwer ordering we get $b_{k+1}\leq_{\mathbf L_\alpha^u} b_k$. As $<_{\mathbf L_\alpha^u}$ is well-founded there must be a bound $K$ such that $b_k=b_K$ holds for all $k\geq K$. It follows that $f[n]^\frown b_K$ lies below $g(m_k)$ for all $k\geq K$, so $b:=b_K$ is as required for the definition of $f(n)$.
\end{proof}

Next, let us observe that the linear orderings $(S_\alpha^u,<_{S_\alpha^u})$ are compatible:

\begin{lemma}\label{lem:search-trees-compatible}
 For $\alpha<\beta$ we have $S_\alpha^u\subseteq S_\beta^u$ and ${<_{S_\alpha^u}}={<_{S_\beta^u}}\cap(S_\alpha^u\times S_\alpha^u)$. If $\lambda$ is a limit ordinal then we have $S_\lambda^u=\bigcup_{\alpha<\lambda} S_\alpha^u$.
\end{lemma}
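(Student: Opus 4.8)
The plan is to trace through Definition~\ref{def:construct-search-tree} and verify that the only place the ordinal $\alpha$ enters is through the parameter set $\mathbf L_\alpha^u$, and that this dependence is monotone and continuous in $\alpha$. First I would establish $S_\alpha^u\subseteq S_\beta^u$ for $\alpha<\beta$ by induction on the length of sequences $\sigma$, simultaneously checking that $l_\alpha(\sigma)=l_\beta(\sigma)$ whenever $\sigma\in S_\alpha^u$ (here I write $l_\alpha$ for the labelling relative to $\mathbf L_\alpha^u$). The base case $\sigma=\langle\rangle$ is immediate. For the inductive step one examines each clause of the recursion: the even-length case adds $\neg\theta_k$, which is independent of $\alpha$; the odd-length cases branch on the redex $\varphi$ of $l(\sigma)$, and by the induction hypothesis this redex is the same whether computed in $\mathbf L_\alpha^u$ or $\mathbf L_\beta^u$. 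For redexes of the form $\psi_0\land\psi_1$, $\psi_0\lor\psi_1$, and $\exists_x\psi(x)$ the successor entries $a$ are drawn from $\{0,1\}\subseteq u$ (or, in the existential case, from the list $u_0,\sigma_0,\dots$ whose entries already lie in $u\cup\rng(\sigma)\subseteq\mathbf L_\alpha^u$), so the set of successors and the new labels do not change. For the redex $\forall_x\psi(x)$ the successors range over all of $\mathbf L_\alpha^u$, and here one uses precisely the monotonicity $\mathbf L_\alpha^u\subseteq\mathbf L_\beta^u$ recorded just before the lemma: every successor node $\sigma^\frown a$ in $S_\alpha^u$ is also a successor node in $S_\beta^u$, with the same label $l(\sigma),\ldots,\varphi,\psi(a)$. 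The ``leaf'' stipulations (a true $\Delta_0$-formula, or all false prime formulas) depend only on the label, hence are also preserved. This gives $S_\alpha^u\subseteq S_\beta^u$; the clause ${<_{S_\alpha^u}}={<_{S_\beta^u}}\cap(S_\alpha^u\times S_\alpha^u)$ then follows directly from the definition of the Kleene--Brouwer ordering, since it is defined from the compatible family $<_{\mathbf L_\alpha^u}$, for which the analogous compatibility ${<_{\mathbf L_\alpha^u}}={<_{\mathbf L_\beta^u}}\cap(\mathbf L_\alpha^u\times\mathbf L_\alpha^u)$ was already noted.

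For the limit clause $S_\lambda^u=\bigcup_{\alpha<\lambda}S_\alpha^u$, the inclusion $\supseteq$ is the monotonicity just proved. For $\subseteq$ I would argue by induction on sequences again: given $\sigma^\frown a\in S_\lambda^u$ with $\sigma\in\bigcup_{\alpha<\lambda}S_\alpha^u$, one notes that $a\in\mathbf L_\lambda^u=\bigcup_{\alpha<\lambda}\mathbf L_\alpha^u$ (using continuity of the ranked constructible hierarchy at limits, recorded before the lemma), so there is some $\alpha_0<\lambda$ with $\sigma\in S_{\alpha_0}^u$ and $a\in\mathbf L_{\alpha_0}^u$; the successor clause applied in $\mathbf L_{\alpha_0}^u$ then puts $\sigma^\frown a\in S_{\alpha_0}^u$, since in every clause the new entry $a$ being available in $\mathbf L_{\alpha_0}^u$ is the only requirement beyond $\sigma$ itself belonging to the tree. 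One must be mildly careful that the redex computation and the leaf tests at $\sigma$ give the same answer at stage $\alpha_0$ as at stage $\lambda$ — but that is exactly the label-invariance already established.

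The only genuine subtlety — and the step I would flag as the main obstacle — is the universal-redex case in the limit argument: a node $\sigma$ with redex $\forall_x\psi(x)$ has successors $\sigma^\frown a$ for \emph{all} $a\in\mathbf L_\lambda^u$, and no single $\alpha_0<\lambda$ captures all of them at once. This is why the statement is formulated as an equality of \emph{sets} of nodes rather than ``$S_\lambda^u$ is the union as a labelled tree in one stroke'': each individual node $\sigma^\frown a$ does descend to the stage $\alpha_0$ determined by that particular $a$ (together with $\sigma$), so the union still exhausts $S_\lambda^u$, even though the branching at a $\forall$-node is unbounded cofinally in $\lambda$. Once this point is seen clearly the verification is routine, and I would present it as a single induction on $\len(\sigma)$ handling all three clauses of the lemma together.
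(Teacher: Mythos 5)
Your argument is correct and is essentially the paper's own proof, just written out in full: the paper also proceeds by induction on sequences in $(\mathbf L_\alpha^u)^{<\omega}$, checking simultaneously that membership and labels agree in the two trees (so that $S_\alpha^u=S_\beta^u\cap(\mathbf L_\alpha^u)^{<\omega}$), and then reads off the ordering and limit clauses from the compatibility and continuity of the hierarchy $\alpha\mapsto\mathbf L_\alpha^u$. Your handling of the universal-redex case at limits is exactly the point the paper leaves implicit, and it is resolved the same way (each individual node descends to a stage determined by its own entries).
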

\begin{proof}
 By induction on sequences $\sigma\in\mathbf L_\alpha^u$ one verifies $\sigma\in S_\alpha^u\Leftrightarrow \sigma\in S_\beta^u$; simultaneously one needs to check that the labels in the two trees coincide. Thus we have $S_\alpha^u=S_\beta^u\cap(\mathbf L_\alpha^u)^{<\omega}$, from which the claims are easily deduced.
\end{proof}

It is important to observe that, given $\alpha<\beta$, the order $(S_\alpha^u,<_{S_\alpha^u})$ is \emph{not} an initial segment of $(S_\beta^u,<_{S_\beta^u})$: Indeed the root $\langle\rangle$ is the biggest element of any $S_\alpha^u$, and it already lies in $S_0^u$ (cf.~Example~\ref{ex:no-monotone-collapsing} below).

\begin{remark}\label{rmk:search-trees-almost-beta-proofs}
 Recall Girard's notion of (pre-)dilator \cite{girard-pi2}. In particular a pre-dilator is a functor from the category of well-orders to the category of linear orders. To turn our construction of search trees
\begin{equation*}
 \alpha\mapsto (S_\alpha^u,<_{S_\alpha^u})
\end{equation*}
into such a functor we would have to assign an embedding $(S_\alpha^u,<_{S_\alpha^u})\rightarrow (S_\beta^u,<_{S_\beta^u})$ to each order preserving map $\alpha\rightarrow\beta$. The previous lemma yields such an embedding for the inclusion map of $\alpha$ into $\beta>\alpha$. The obvious extension to arbitrary maps requires a functorial version of the constructible hierarchy (cf.~the notion of $\beta$-proof in \cite[Section~6]{girard-intro}). Dilators have the foundational advantage that they are finitistically meaningful (see \cite[Section~0.2.1]{girard-pi2}). Apart from that there is no need to work with dilators in the present study --- interestingly enough, though, the embeddings $S_\alpha^u\subseteq S_\beta^u$ from the previous lemma will play a key role. Many ideas that we use come from Girard's work on $\Pi^1_2$-logic, even if we do not work with dilators and $\beta$-proofs in the strict sense. 
\end{remark}

To conclude this section, let us show that the rank function for the constructible hierarchy yields a rank function for the sequence $\alpha\mapsto S_\alpha^u$ of search trees:

\begin{lemma}\label{lem:search-trees-rank}
 The class $S^u:=\bigcup_{\alpha\in\ordi} S_\alpha^u$ is primitive recursive in the given enumeration of $u$. There is a primitive recursive rank function $|\cdot|_S^u:S^u\rightarrow\ordi$ such that we have $|\sigma|_S^u=\min\{\alpha\in\ordi\,|\,\sigma\in S_{\alpha+1}^u\}$ for all $\sigma\in S^u$.
\end{lemma}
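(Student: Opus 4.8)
The plan is to reduce the rank function for $S^u$ to the rank function $|\cdot|_{\mathbf L}^u$ for the ranked constructible hierarchy, using the key fact extracted in the proof of Lemma~\ref{lem:search-trees-compatible}, namely $S_\alpha^u = S_\beta^u \cap (\mathbf L_\alpha^u)^{<\omega}$ for $\alpha < \beta$. First I would observe that $S^u$ is primitive recursive: membership $\sigma \in S^u$ should be decidable by checking $\sigma \in S_{\alpha}^u$ for any sufficiently large $\alpha$, and a suitable such $\alpha$ can be computed primitive recursively from $\sigma$ — e.g.\ if $\sigma = \langle c_0,\dots,c_{n-1}\rangle$ with each $c_i \in \mathbf L^u$, then taking $\alpha$ to be the successor of $\max_i |c_i|_{\mathbf L}^u$ (plus a small buffer) guarantees $\sigma \in (\mathbf L_\alpha^u)^{<\omega}$, and the recursion of Definition~\ref{def:construct-search-tree} that decides $\sigma \in S_\alpha^u$ is primitive recursive in $\alpha$ and the enumeration of $u$. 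Here one uses that the labels and the branching conditions only ever introduce parameters drawn from $\mathbf L_\alpha^u$ together with elements of $u$ (which all have $\mathbf L$-rank $0$), so nothing forces us to a larger stage.

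Next I would define the candidate rank function. For $\sigma \in S^u$ I would set
\begin{equation*}
 |\sigma|_S^u := \max\{\, |c|_{\mathbf L}^u \mid c \text{ is an entry of } \sigma \,\}
\end{equation*}
with the convention that this maximum is $0$ when $\sigma = \langle\rangle$ (or has all entries of $\mathbf L$-rank $0$). This is manifestly primitive recursive in $\sigma$ and the enumeration of $u$, since it is a finite maximum of values of the primitive recursive function $|\cdot|_{\mathbf L}^u$. The point is that this records exactly the least stage of the ranked constructible hierarchy whose finite sequences contain $\sigma$: one has $\sigma \in (\mathbf L_\gamma^u)^{<\omega}$ iff $\gamma > |c|_{\mathbf L}^u$ for every entry $c$ of $\sigma$ — using $|c|_{\mathbf L}^u = \min\{\gamma \mid c \in \mathbf L_{\gamma+1}^u\}$ from the excerpt and the fact that $\mathbf L_\gamma^u$ is increasing and continuous at limits — which happens iff $\gamma \geq |\sigma|_S^u + 1$, i.e.\ iff $\gamma > |\sigma|_S^u$.

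It then remains to verify the defining property $|\sigma|_S^u = \min\{\alpha \mid \sigma \in S_{\alpha+1}^u\}$. For this I would argue: if $\alpha \geq |\sigma|_S^u$ then $\sigma \in (\mathbf L_{\alpha+1}^u)^{<\omega}$ by the observation just made, and since $\sigma \in S^u$ means $\sigma \in S_\beta^u$ for some (hence, by Lemma~\ref{lem:search-trees-compatible}, all sufficiently large) $\beta$, the identity $S_{\alpha+1}^u = S_\beta^u \cap (\mathbf L_{\alpha+1}^u)^{<\omega}$ for $\beta > \alpha+1$ gives $\sigma \in S_{\alpha+1}^u$; conversely if $\alpha < |\sigma|_S^u$ then $\sigma \notin (\mathbf L_{\alpha+1}^u)^{<\omega}$, so a fortiori $\sigma \notin S_{\alpha+1}^u$. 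Hence the set $\{\alpha \mid \sigma \in S_{\alpha+1}^u\}$ is exactly $\{\alpha \mid \alpha \geq |\sigma|_S^u\}$ and its minimum is $|\sigma|_S^u$, as required. I do not expect a serious obstacle here — the content is entirely bookkeeping with the already-established compatibility lemma and the stated properties of $|\cdot|_{\mathbf L}^u$; the only mild subtlety is to make sure that the branching rules of Definition~\ref{def:construct-search-tree} never produce a child $\sigma^\frown b$ whose rank exceeds what one expects, which is immediate since $b$ always comes from $\mathbf L_\alpha^u$, and to handle the edge cases ($\sigma = \langle\rangle$, sequences with all entries in $u$) by the stated conventions.
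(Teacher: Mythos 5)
Your proposal is correct and follows essentially the same route as the paper: the rank is taken to be the maximum of the $\mathbf L$-ranks of the entries of $\sigma$, and both the primitive recursiveness of $S^u$ (via checking $\sigma\in S^u_{|\sigma|_S^u+1}$) and the minimality property are reduced to the identity $S_\alpha^u=S_\beta^u\cap(\mathbf L_\alpha^u)^{<\omega}$ extracted from the proof of Lemma~\ref{lem:search-trees-compatible}. The extra worry about the branching rules producing children of too large rank is not needed, but it does no harm.
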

\begin{proof}
 First define $|\cdot|_S^u$ on the full tree $(\mathbf L^u)^{<\omega}$, namely by
\begin{equation*}
|\sigma|_S^u=\begin{cases}
            0\quad & \text{if $\sigma=\langle\rangle$},\\
            \max\{|a_0|_{\mathbf L}^u,\dots ,|a_n|_{\mathbf L}^u\}\quad & \text{if $\sigma=\langle a_0,\dots ,a_n\rangle$.}
           \end{cases}
\end{equation*}
Then
\begin{equation*}
 |\sigma|_S^u=\min\{\alpha\in\ordi\,|\,\sigma\in(\mathbf L_{\alpha+1}^u)^{<\omega}\}
\end{equation*}
follows from the corresponding property of the ranked constructible hierarchy. In the proof of Lemma \ref{lem:search-trees-compatible} we have seen $S^u\cap(\mathbf L_\alpha^u)^{<\omega}=S_\alpha^u$. Thus $\sigma\in S^u$ is equivalent to $\sigma\in S^u_{|\sigma|_S^u+1}$, which is a primitive recursive relation. Now we may restrict $|\cdot|_S^u$ to the class $S^u$, and $|\sigma|_S^u=\min\{\alpha\in\ordi\,|\,\sigma\in S_{\alpha+1}^u\}$ follows from the above.
\end{proof}

Depending on the situation it may be more intuitive to think of the compatible family $\alpha\mapsto S_\alpha^u$ of set-sized trees or of the single class-sized tree $S^u$.

\section{A Higher Bachmann-Howard Construction}\label{sect:higher-wop}

In the previous section we have constructed, for each countable transitive set $u$, a family of search trees $\langle S_\alpha^u\rangle_{\alpha\in\ordi}$ with the following property: If there is an ordinal $\alpha$ such that $S_\alpha^u$ is ill-founded then there is an admissible set $\mathbb A$ with $u\subseteq\mathbb A$. It remains to consider the case where all search trees $S_\alpha^u$ are well-founded. In this case the primitive recursive function $\alpha\mapsto S_\alpha^u$ is a \emph{well-ordering principle}, in a sense to be defined below. The goal of this section is to define a notion of Bachmann-Howard ordinal relative to a well-ordering principle. The assertion that ``the Bachmann-Howard ordinal relative to any given well-ordering principle exists'' can itself be described as a \emph{higher well-ordering principle}. In the following sections we will use this higher well-ordering principle to exclude the case that all $S_\alpha^u$ are well-founded. This will finally establish the existence of an admissible set $\mathbb A$ with $u\subseteq\mathbb A$.

We begin with a general notion of well-ordering principle:

\begin{definition}\label{def:wop}
 Consider primitive recursive functions $T:(u,\alpha)\mapsto (T_\alpha^u,<_{T_\alpha^u})$ and $|\cdot|_T:(u,s)\mapsto|s|_T^u$. We say that $T^u$ is a (ranked compatible) well-ordering principle, abbreviated as $\wop(T^u)$, if the following holds:
\begin{enumerate}[label=(\roman*)]
 \item For every ordinal $\alpha$ the set $T_\alpha^u$ is well-ordered by $<_{T_\alpha^u}$.
 \item For $\alpha<\beta$ we have $T_\alpha^u\subseteq T_\beta^u$ and ${<_{T_\alpha^u}}={<_{T_\beta^u}}\cap(T_\alpha^u\times T_\alpha^u)$; furthermore we have $T_\lambda^u=\bigcup_{\alpha<\lambda}T_\alpha^u$ for each limit ordinal $\lambda$.
 \item We have
\begin{equation*}
 |s|_T^u=\begin{cases}
             \min\{\alpha\in\ordi\,|\,s\in T_{\alpha+1}^u\}\quad&\text{if such an $\alpha$ exists},\\
             \{1\}\quad&\text{otherwise}.
            \end{cases}
\end{equation*}
As the set $\{1\}$ is not an ordinal (which is its sole purpose) this does, in particular, make the class $T^u=\bigcup_{\alpha\in\ordi}T_\alpha^u$ primitive recursive.
\end{enumerate}
\end{definition}

The above is in fact a definition scheme: For fixed function symbols~$T$ and~$|\cdot|_T$ we obtain a statement $\wop(T^u)$ with parameter $u$. Observe that $\wop(T^u)$ is a $\Pi_1$-formula in the language of primitive recursive set theory. Also, note that $s\in T_\alpha^u$ implies $|\cdot|_T^u<\alpha$ for all $\alpha>0$, by the minimality of the rank and the ``continuity'' in clause (ii). Let us now define a notion of collapse for ranked well-orderings:

\begin{definition}\label{def:bh-collapse}
 Adding to Definition~\ref{def:wop}, a function $\vartheta:T_\alpha^u\rightarrow\alpha$ is called a Bachmann-Howard collapse of $T^u_\alpha$, abbreviated as $\vartheta:T_\alpha^u\bh\alpha$, if the following holds for all $s,t\in T_\alpha^u$:
\begin{enumerate}[label=(\roman*)]
 \item $|s|_T^u<\vartheta(s)$,
 \item if $s<_{T_\alpha^u}t$ and $|s|_T^u<\vartheta(t)$ then $\vartheta(s)<\vartheta(t)$.
\end{enumerate}
\end{definition}

Observe that $\vartheta:T_\alpha^u\bh\alpha$ is a primitive recursive property of $\vartheta,\alpha,u$. We shall motivate the definition in a moment, but let us first use it to state our higher well-ordering principle:

\begin{definition}\label{def:higher-bh-principle}
 The higher Bachmann-Howard principle for $T$ is the statement
\begin{equation*}
 \bhp(T):\equiv\forall_u(\wop(T^u)\rightarrow\exists_\alpha\exists_\vartheta\,\vartheta:T_\alpha^u\bh\alpha).
\end{equation*}
An ordinal $\alpha$ with $\exists_\vartheta\vartheta:T_\alpha^u\bh\alpha$ is called a Bachmann-Howard ordinal for $T^u$.
\end{definition}

Observe that $\wop(T^u)\rightarrow\exists_\alpha\exists_\vartheta\,\vartheta:T_\alpha^u\bh\alpha$ is a $\Sigma_1$-statement in the language of primitive recursive set-theory. Thus $\bhp(T)$ is a $\Pi_2$-statement. The notion of Bachmann-Howard collapse is motivated by Rathjen's ordinal notation system for the Bachmann-Howard ordinal (see e.g.\ \cite{rathjen-model-bi}). However, we can also give some motivation without recourse to this background: First, note that condition~(i) of Definition~\ref{def:bh-collapse} excludes the trivial solution $\vartheta(s)=0$ for all $s\in T_\alpha^u$, which would fulfill condition (ii). Also note that (i) entails the implication
\begin{equation*}
 \vartheta(t)\leq |s|_T^u\quad\Rightarrow\quad\vartheta(t)<\vartheta(s),
\end{equation*}
familiar from Rathjen's ordinal notation system. Let us record an easy consequence:

\begin{lemma}
 If $T^u$ is a well-ordering principle then any Bachmann-Howard collapse $\vartheta:T_\alpha^u\bh\alpha$ is injective.
\end{lemma}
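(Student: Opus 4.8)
The plan is to use trichotomy of the well-ordering $<_{T_\alpha^u}$ together with the two defining clauses of a Bachmann-Howard collapse. Concretely, I would take two distinct elements $s,t\in T_\alpha^u$ and argue by contradiction that $\vartheta(s)=\vartheta(t)$ is impossible. Since $<_{T_\alpha^u}$ is a well-ordering (hence in particular a linear ordering) and $s\neq t$, we may assume without loss of generality that $s<_{T_\alpha^u}t$.

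Now suppose, towards a contradiction, that $\vartheta(s)=\vartheta(t)$. Clause~(i) of Definition~\ref{def:bh-collapse} gives $|s|_T^u<\vartheta(s)$, and hence $|s|_T^u<\vartheta(t)$ by our assumption $\vartheta(s)=\vartheta(t)$. Thus the hypotheses of clause~(ii) are met — namely $s<_{T_\alpha^u}t$ and $|s|_T^u<\vartheta(t)$ — so we conclude $\vartheta(s)<\vartheta(t)$, contradicting $\vartheta(s)=\vartheta(t)$. Therefore $\vartheta(s)\neq\vartheta(t)$, i.e.\ $\vartheta$ is injective.

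There is essentially no obstacle here: the only point that needs to be invoked explicitly is that $T_\alpha^u$ is linearly ordered by $<_{T_\alpha^u}$, which is part of $\wop(T^u)$ (clause~(i) of Definition~\ref{def:wop}), so that the two cases $s<_{T_\alpha^u}t$ and $t<_{T_\alpha^u}s$ exhaust all possibilities for $s\neq t$. Everything else is an immediate application of the two conditions (i) and (ii) defining a Bachmann-Howard collapse.
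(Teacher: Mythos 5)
Your proof is correct and uses essentially the same ingredients as the paper's: linearity of $<_{T_\alpha^u}$, clause~(i) to get $|s|_T^u<\vartheta(s)$, and clause~(ii) to separate the values. The only cosmetic difference is that the paper argues by a case distinction on whether $|s|_T^u<\vartheta(t)$ (showing the values are strictly comparable either way), while you assume $\vartheta(s)=\vartheta(t)$ and derive a contradiction, which collapses that case split.
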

\begin{proof}
 Consider arbitrary elements $s,t\in T_\alpha^u$. As $T_\alpha^u$ is linearly ordered we may assume $s<_{T_\alpha^u}t$. Now distinguish the following cases: If we have $|s|_T^u<\vartheta(t)$ then condition (ii) of Definition~\ref{def:bh-collapse} implies $\vartheta(s)<\vartheta(t)$. If, on the other hand, we have $\vartheta(t)\leq |s|_T^u$ then we get $\vartheta(t)<\vartheta(s)$, as we have just seen.
\end{proof}

Note that any Bachmann-Howard collapse preserves the ordering between elements of the same rank: If $|s|_T^u\leq |t|_T^u$ then condition (i) of Definition~\ref{def:bh-collapse} yields $|s|_T^u<\vartheta(t)$. Together with condition (ii) this implies $\vartheta(s)<\vartheta(t)$. On the other hand $|s|_T^u\leq |t|_T^u$ is not a necessary condition for $\vartheta(s)<\vartheta(t)$. We will later see that the weaker condition $|s|_T^u<\vartheta(t)$ plays a crucial role. The following explains why we do not require $\vartheta$ to be completely order preserving:

\begin{example}\label{ex:no-monotone-collapsing}
Consider the well-ordering principle $T_\alpha:=\alpha\cup\{\star\}$ with the usual ordering on $\alpha$ and $\star$ as biggest element. Then $T=\bigcup_{\alpha\in\ordi}T_\alpha$ is the class of all ordinals with a maximal element added. We have $|\beta|_T=\beta$ and $|\star|_T=0$. The order type of $T_\alpha$ is $\alpha+1$, so there can be no order preserving map $\vartheta:T_\alpha\rightarrow\alpha$. However, if we demand $\vartheta(s)<\vartheta(t)$ only under the side condition $|s|_T<\vartheta(t)$, then such a map exists for $\alpha=\omega\cdot 2$: Set $\vartheta(\beta)=\beta+1$ and $\vartheta(\star)=\omega$. Indeed, $|\beta|_T<\vartheta(\star)$ now implies $\beta<\omega$ and thus $\vartheta(\beta)<\omega=\vartheta(\star)$. Also observe that $\vartheta(\star)\geq\omega$ must hold for any Bachmann-Howard collapse $\vartheta:T_{\omega\cdot 2}\bh\omega\cdot 2$: First, we must have $0=|\star|_T<\vartheta(\star)$. Inductively we assume $|n|_T=n<\vartheta(\star)$ and infer $n=|n|_T<\vartheta(n)<\vartheta(\star)$, which implies~$n+1<\vartheta(\star)$.
\end{example}

Having seen the example, the reader may rightly ask whether each well-ordering principle allows for a Bachmann-Howard collapse. In Section~\ref{sect:wo-proof} we will show that a Bachmann-Howard collapse can be constructed on the basis of an admissible set. The following foreshadows this construction, but in a strong meta-theory:

\begin{remark}\label{rmk:Bachmann-Howard-semantically}
 Consider a well-ordering principle $T^u$ with $u\in\mathbb L_{\aleph_1}$ (where $\aleph_1$ is the first uncountable cardinal). As $T$ is a primitive recursive function we have $T_\alpha^u\in\mathbb L_{\aleph_1}$ for each $\alpha<\aleph_1$; in particular the sets $T_\alpha^u$ are countable. We define a Bachmann-Howard collapse $\vartheta:T_{\aleph_1}^u\rightarrow\aleph_1$ by recursion over the well-ordering $T_{\aleph_1}^u$. Assuming that $\vartheta(s)$ is already defined for all $s<_{T_{\aleph_1}^u}t$ let us construct sets $C_n(t,\alpha)\subseteq\aleph_1$ by recursion over $n\in\omega$, for all $\alpha<\aleph_1$:
\begin{itemize}
 \item $C_0(t,\alpha)=\alpha\cup\{|t|_T^u\}$,
 \item $C_{n+1}(t,\alpha)=C_n(t,\alpha)\cup\{\vartheta(s)\,|\,s<_{T_{\aleph_1}^u}t\text{ and }|s|_T^u\in C_n(t,\alpha)\}$.
\end{itemize}
Now set $C(t,\alpha):=\bigcup_{n\in\omega}C_n(t,\alpha)$ and
\begin{equation*}
 \vartheta(t):=\min\{\alpha<\aleph_1\,|\,C(t,\alpha)\subseteq\alpha\}.
\end{equation*}
We must verify that such an $\alpha$ exists: For each countable $\beta$ there are only countably many $s\in T_{\aleph_1}^u$ with $|s|_T^u=\beta$, because we have $s\in T_{|s|_T^u+1}^u$. Thus if $C_n(t,\alpha)$ is countable then so is the set
\begin{equation*}
 \bigcup_{\beta\in C_n(t,\alpha)}\{\vartheta(s)\,|\,|s|_T^u=\beta\}.                                                                                               \end{equation*}
Inductively it follows that all $C_n(t,\alpha)$ are countable. So $C(t,\alpha)$ is countable as well. We can thus construct a sequence $0=\alpha_0<\alpha_1<\dots<\aleph_1$ with $C(t,\alpha_n)\subseteq\alpha_{n+1}$. Set $\alpha:=\sup_{n\in\omega}\alpha_n<\aleph_1$. It is easy to verify
\begin{equation*}
 C(t,\alpha)=\bigcup_{n\in\omega}C(t,\alpha_n)\subseteq\bigcup_{n\in\omega}\alpha_{n+1}=\alpha,
\end{equation*}
as required. Conditions (i) and (ii) from Definition~\ref{def:bh-collapse} are readily deduced: We have $|s|_T^u\in C(s,\vartheta(s))\subseteq\vartheta(s)$. Also, $|s|_T^u<\vartheta(t)$ implies $|s|_T^u\in C(t,\vartheta(t))$. Together with $s<_{T_{\aleph_1}^u}t$ this yields $\vartheta(s)\in C(t,\vartheta(t))\subseteq\vartheta(t)$. The proof-theorist will have noticed that the given argument is very similar to the usual construction of the Bachmann-Howard ordinal.
\end{remark}

We want to show that our higher Bachmann-Howard principle implies the existence of admissible sets. Let us compare this claim with some known results:

\begin{remark}\label{rmk:comparison-beta}
Recall axiom beta (see e.g.~\cite[Definition~I.9.5]{barwise-admissible}), which states that any well-founded relation can be collapsed to the $\in$-relation. It is easy to deduce axiom beta for linear orderings from our higher Bachmann-Howard principle: Define $T_\alpha^{(u,<_u)}:=(u,<_u)$ and
\begin{equation*}
|s|_T^{(u,<_u)}:=\begin{cases}
0 & \text{if $s\in u$},\\
\{1\} & \text{otherwise}.
\end{cases}
\end{equation*}
If $(u,<_u)$ is a well-ordering then $T^{(u,<_u)}$ is a well-ordering principle. The higher Bachmann-Howard principle provides a Bachmann-Howard collapse $\vartheta:u\rightarrow\alpha$ for some ordinal $\alpha$. In the present case $\vartheta$ is fully order preserving, because all elements of $u$ receive the same rank zero. Thus axiom beta is established. This observation sparks the following question: Can we construct admissible sets on the basis of axiom beta alone, making our higher Bachmann-Howard principle redundant? Indeed, axiom beta is powerful in the presence of $\Delta$-separation: As axiom beta turns well-foundedness into a $\Delta$-property we see that the definable well-orderings of the natural numbers form a set. Then $\Sigma$-collection ensures the existence of the Church-Kleene ordinal, which is well-known to be admissible. Also, the combination of axiom beta and $\Sigma$-collection implies $\Delta_2^1$-comprehension (see \cite[Theorem~3.3.4.7]{pohlers98}). In particular we get $\Pi_1^1$-comprehension, which is equivalent to the existence of countable admissible sets (see \cite[Theorem~3.3.3.5]{pohlers98} and \cite[Lemma~7.5]{jaeger-admissibles}). To summarize, our higher Bachmann-Howard principle does not add more strength than the bare axiom beta over a base theory that contains $\Sigma$-collection (such as Kripke-Platek set theory). For a base theory that does not contain $\Sigma$-collection and $\Delta$-separation the situation can be quite different: Consider for example the set-theoretic version of $\mathbf{ATR_0}$ introduced by Simpson (see~\cite[Section~VII.3]{simpson09}). This theory contains axiom beta but does not prove $\Pi_1^1$-comprehension or the existence of admissible sets. Over such base theories the higher Bachmann-Howard principle is thus a genuine strengthening of axiom beta (anticipating our construction of admissible sets based on a Bachmann-Howard collapse). We remark that theories without $\Delta$-separation and $\Sigma$-collection are particularly interesting in the context of reverse mathematics. Many interesting questions remain open: What precisely is responsible for the strength of the higher Bachmann-Howard principle? Can we weaken the conditions on a Bachmann-Howard collapse, e.g.~by replacing conditions (i,ii) of Definition~\ref{def:bh-collapse} with the weaker implication $s<_{T_\alpha^u}t\land|s|_T^u=|t|_T^u\Rightarrow\vartheta(s)<\vartheta(t)$? Can we find a set theoretic or recursion theoretic proof of our result? Assuming the higher Bachmann-Howard principle, is there a ``direct" construction of the Church-Kleene ordinal? 
\end{remark}

Recall the construction of search trees $S_\alpha^u$ from the previous section. We will not apply the higher Bachmann-Howard principle to the search trees themselves but rather to a modified well-ordering principle $\alpha\mapsto\varepsilon(S_{\omega^\alpha}^u)$, which combines ideas from \cite[Definition~2.1]{rathjen-afshari} and \cite[Definition~4.1]{rathjen92}. In the following term systems the reader may interpret $\Omega$ as the ordinal $\omega^\alpha$ or, alternatively, as the class of all ordinals: The first interpretation helps to understand each term system individually, while the second clarifies the relation of the term systems for different values of $\alpha$. The terms~$\varepsilon_\sigma$ should be imagined as $\varepsilon$-numbers above $\Omega$.

\begin{definition}\label{def:epsilon-of-ordering}
For each countable transitive set $u=\{u_i\,|\,i\in\omega\}$ and each ordinal~$\alpha$ we define a set of terms $\varepsilon(S_{\omega^\alpha}^u)$ and an order relation $<_{\varepsilon(S_{\omega^\alpha}^u)}$ by the following simultaneous recursion (which will be justified below):
\begin{enumerate}[label=(\roman*)]
 \item The symbol $0$ is a term in $\varepsilon(S_{\omega^\alpha}^u)$.
 \item For each $\sigma\in S_{\omega^\alpha}^u$ the symbol $\varepsilon_\sigma$ is a term in $\varepsilon(S_{\omega^\alpha}^u)$.
 \item Given terms $s_0,\dots ,s_n\in\varepsilon(S_{\omega^\alpha}^u)$ and ordinals $0<\beta_0,\dots ,\beta_n<\omega^\alpha$ the expression
\begin{equation*}
 \Omega^{s_0}\cdot\beta_0+\dots +\Omega^{s_n}\cdot\beta_n
\end{equation*}
is also a term in $\varepsilon(S_{\omega^\alpha}^u)$, provided that the following holds: If we have $n=0$ and $\beta_0=1$ then $s_0$ may not be of the form $\varepsilon_\sigma$. If we have $n>0$ then we require $s_{i+1}<_{\varepsilon(S_{\omega^\alpha}^u)} s_i$ for all $i<n$.
\end{enumerate}
We stipulate that $s<_{\varepsilon(S_{\omega^\alpha}^u)} t$ holds if and only if one of the following is satisfied:
\begin{enumerate}[label=(\roman*)]
 \item $s=0$ and $t\neq 0$ (equality of terms),
 \item $s=\varepsilon_\sigma$ and one of the following holds:
\begin{itemize}
 \item $t=\varepsilon_\tau$ for some $\tau\in S_{\omega^\alpha}^u$ with $\sigma<_{S_{\omega^\alpha}^u}\tau$,
 \item $t=\Omega^{t_0}\cdot\gamma_0+\dots +\Omega^{t_m}\cdot\gamma_m$ and $s<_{\varepsilon(S_{\omega^\alpha}^u)} t_0$ or $s=t_0$,
\end{itemize}
 \item $s=\Omega^{s_0}\cdot\beta_0+\dots +\Omega^{s_n}\cdot\beta_n$ and one of the following holds:
\begin{itemize}
 \item $t=\varepsilon_\tau$ and $s_0<_{\varepsilon(S_{\omega^\alpha}^u)} t$,
 \item $t=\Omega^{t_0}\cdot\gamma_0+\dots +\Omega^{t_m}\cdot\gamma_m$ and either
\begin{itemize}
 \item $n<m$ and $\langle s_i,\beta_i\rangle=\langle t_i,\gamma_i\rangle$ for all $i\leq n$, or
 \item there is a $j\leq\min\{n,m\}$ such that we have either $s_j<_{\varepsilon(S_{\omega^\alpha}^u)} t_j$ or $s_j=t_j$ and $\beta_j<\gamma_j$, and $\langle s_i,\beta_i\rangle=\langle t_i,\gamma_i\rangle$ holds for all $i<j$.
\end{itemize}
\end{itemize}
\end{enumerate}
\end{definition}

The given definition can be justified as follows: First construct a preliminary term system $\varepsilon^0(S_{\omega_\alpha}^u)$ which is defined as above but contains all terms of the form
\begin{equation*}
 \Omega^{s_0}\cdot\beta_0+\dots +\Omega^{s_n}\cdot\beta_n,
\end{equation*}
regardless of the condition $s_{i+1}<_{\varepsilon(S_{\omega^\alpha}^u)} s_i$. Clearly $\varepsilon^0(S_{\omega_\alpha}^u)$ can be constructed by primitive recursion (just as the set of formulas with parameters from a given set). Also by primitive recursion we can define the length of terms in $\varepsilon^0(S_{\omega_\alpha}^u)$, setting
\begin{gather*}
 \len(0):=\len(\varepsilon_\sigma):=0,\\
 \len(\Omega^{s_0}\cdot\beta_0+\dots +\Omega^{s_n}\cdot\beta_n):=\len(s_0)+\dots+\len(s_n)+n+1.
\end{gather*}
Then the conditions in Definition~\ref{def:epsilon-of-ordering} single out a subset $\varepsilon(S_{\omega_\alpha}^u)\subseteq\varepsilon^0(S_{\omega_\alpha}^u)$ and a relation ${<_{\varepsilon(S_{\omega^\alpha}^u)}}\subseteq\varepsilon(S_{\omega_\alpha}^u)\times\varepsilon(S_{\omega_\alpha}^u)$ in the following way:
\begin{itemize}[leftmargin=1cm]
 \item To determine whether we have $s\in\varepsilon(S_{\omega_\alpha}^u)$ we only need to check $t\in\varepsilon(S_{\omega_\alpha}^u)$ for $\len(t)<\len(s)$, and $t<_{\varepsilon(S_{\omega^\alpha}^u)}t'$ for $\len(t)+\len(t')<\len(s)$.
 \item To determine whether we have $s<_{\varepsilon(S_{\omega^\alpha}^u)}t$ we need to check $s'\in\varepsilon(S_{\omega_\alpha}^u)$ for $\len(s')\leq \len(s)+\len(t)$, and $s'<_{\varepsilon(S_{\omega^\alpha}^u)}t'$ for $\len(s')+\len(t')<\len(s)+\len(t)$.
\end{itemize}

It follows that there is a primitive recursive function which constructs $\varepsilon(S_{\omega^\alpha}^u)$ and~$<_{\varepsilon(S_{\omega^\alpha}^u)}$ from $\alpha$ and the given enumeration of $u$. Our next goal is to show that $\alpha\mapsto\varepsilon(S_{\omega^\alpha}^u)$ is a well-ordering principle if all the search trees $S_{\omega^\alpha}^u$ are well-founded. The following is a first step:

\begin{lemma}
 For all $u,\alpha$ the relation $<_{\varepsilon(S_{\omega^\alpha}^u)}$ is a linear ordering of $\varepsilon(S_{\omega^\alpha}^u)$.
\end{lemma}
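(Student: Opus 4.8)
The plan is to verify that $<_{\varepsilon(S_{\omega^\alpha}^u)}$ is irreflexive, transitive, and total by induction on the combined length $\len(s)+\len(t)$ (or, more precisely, by a simultaneous induction handling the three defining clauses). Writing $<$ for $<_{\varepsilon(S_{\omega^\alpha}^u)}$ and $E$ for $\varepsilon(S_{\omega^\alpha}^u)$, I would first record the structural shape of an arbitrary term: by Definition~\ref{def:epsilon-of-ordering} every $s\in E$ is either $0$, or $\varepsilon_\sigma$ for some $\sigma\in S_{\omega^\alpha}^u$, or a ``polynomial'' $\Omega^{s_0}\cdot\beta_0+\dots+\Omega^{s_n}\cdot\beta_n$ with strictly $<$-decreasing exponents and the non-degeneracy condition ruling out a lone $\Omega^{\varepsilon_\sigma}\cdot 1$. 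The three clauses of the definition then amount to a lexicographic-style comparison: first by ``type'' ($0 <$ everything, then a mixed rule between $\varepsilon$-terms and polynomials governed by comparing against the leading exponent $t_0$), and within the polynomial type by comparing the sequences $\langle s_i,\beta_i\rangle$ lexicographically with the convention that a shorter agreeing prefix counts as smaller.

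I would organize the argument as three claims. \textbf{Totality:} given $s,t\in E$, split into cases on their types; the only nontrivial sub-cases are $\varepsilon_\sigma$ versus $\varepsilon_\tau$ (use that $<_{S_{\omega^\alpha}^u}$ is a linear ordering of $S_{\omega^\alpha}^u$, which holds since the Kleene--Brouwer ordering is linear on any tree), $\varepsilon_\sigma$ versus a polynomial with leading exponent $t_0$ (apply the induction hypothesis for totality to $\varepsilon_\sigma$ and $t_0$, noting $\len(\varepsilon_\sigma)+\len(t_0)<\len(\varepsilon_\sigma)+\len(t)$), and polynomial versus polynomial (compare prefixes entrywise, invoking totality inductively on each pair $\langle s_i,t_i\rangle$ and trichotomy of the ordinal comparison $\beta_i$ versus $\gamma_i$). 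One must check that the various defining alternatives are mutually exclusive so that exactly one of $s<t$, $s=t$, $t<s$ holds; this is where the non-degeneracy clause in (iii) matters, since it prevents a polynomial $\Omega^{s_0}\cdot 1$ from coinciding as a term with $\varepsilon_{s_0}$ while still comparing via the mixed rule. \textbf{Irreflexivity:} $s<s$ would require, in the $\varepsilon$-case, $\sigma<_{S_{\omega^\alpha}^u}\sigma$ (impossible) or $s<s_0$ with $s=\varepsilon_{s_0}$, but then induction on length gives a contradiction; in the polynomial case it would force either $n<n$ or some $s_j<s_j$ (impossible by IH) or some $\beta_j<\beta_j$ (impossible for ordinals). \textbf{Transitivity:} assume $r<s$ and $s<t$; branch on the type of $s$ and then on the types of $r$ and $t$, and in every branch reduce to an instance of transitivity or totality at strictly smaller combined length, together with transitivity of $<_{S_{\omega^\alpha}^u}$ and of the ordinal ordering. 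The genuinely fiddly branch is the one mixing $\varepsilon$-terms and polynomials: e.g. $r=\varepsilon_\rho$, $s$ a polynomial with leading exponent $s_0$, $t=\varepsilon_\tau$; here $r<s$ gives $\varepsilon_\rho< s_0$ or $\varepsilon_\rho=s_0$, while $s<t$ gives $s_0<\varepsilon_\tau$, and one chains these to $\varepsilon_\rho<\varepsilon_\tau$, i.e. $\rho<_{S_{\omega^\alpha}^u}\tau$, via the $\varepsilon$-versus-$\varepsilon$ clause — all at smaller length.

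I expect the main obstacle to be purely bookkeeping: setting up a single induction (on $\len(r)+\len(s)+\len(t)$ for transitivity, on $\len(s)+\len(t)$ for totality and comparability) that is strong enough to feed all the cross-type reductions, and being scrupulous about the exclusivity of the defining disjunctions so that linearity — not merely semilinearity — is obtained. There is no conceptual difficulty once one recognizes the comparison as an iterated lexicographic order built on the known-linear orders $<_{S_{\omega^\alpha}^u}$ and $\in$ on the ordinals below $\omega^\alpha$; indeed this is the standard verification for Cantor-normal-form-style notation systems (cf.\ \cite[Definition~2.1]{rathjen-afshari}, \cite[Definition~4.1]{rathjen92}), and I would present it accordingly, spelling out only the mixed $\varepsilon$/polynomial cases in detail and relegating the pure-polynomial and pure-$\varepsilon$ cases to ``as usual''.
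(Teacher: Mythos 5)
Your proposal is correct and follows essentially the same route as the paper: irreflexivity, transitivity and trichotomy are each established by induction on the length(s) of the terms involved, with a case distinction on the three term shapes and an appeal to the linearity of the Kleene--Brouwer ordering $<_{S_{\omega^\alpha}^u}$ and of the ordinals below $\omega^\alpha$. (Note that mutual exclusivity of the alternatives is automatic from irreflexivity and transitivity, so the paper only needs to prove that at least one alternative holds.)
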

\begin{proof}
 Recall that $<_{S_{\omega^\alpha}^u}$ is a linear ordering of $S_{\omega^\alpha}^u$. The claim for $<_{\varepsilon(S_{\omega^\alpha}^u)}$ follows by tedious but straightforward inductions on the length of terms (see above): By induction on $\len(s)$ one shows that $s<_{\varepsilon(S_{\omega^\alpha}^u)} s$ is false. To see that $s<_{\varepsilon(S_{\omega^\alpha}^u)} t$ and $t<_{\varepsilon(S_{\omega^\alpha}^u)} r$ imply $s<_{\varepsilon(S_{\omega^\alpha}^u)} r$ one argues by induction on $\len(s)+\len(t)+\len(r)$. Finally, by induction on $\len(s)+\len(t)$ one shows that one of the alternatives $s<_{\varepsilon(S_{\omega^\alpha}^u)} t$, $s=t$ (equality of terms) and $t<_{\varepsilon(S_{\omega^\alpha}^u)} s$ must hold.
\end{proof}

Primitive recursive set theory does not show that any well-ordering is isomorphic to an ordinal. Nevertheless it is instructive to assume that we have an order embedding $c:S_{\omega^\alpha}^u\rightarrow\ordi$, and to consider the following construction: Pick an $\varepsilon$-number $\varepsilon_\eta\geq\omega^\alpha$. Now define a function $o:\varepsilon(S_{\omega^\alpha}^u)\rightarrow\ordi$ by
\begin{align*}
 o(0)&:=0,\\
 o(\varepsilon_s)&:=\varepsilon_{\eta+1+c(s)},\\
 o(\Omega^{s_0}\cdot\beta_0+\dots +\Omega^{s_n}\cdot\beta_n)&:=(\omega^{1+\alpha})^{o(s_0)}\cdot\beta_0+\dots +(\omega^{1+\alpha})^{o(s_n)}\cdot\beta_n.
\end{align*}
By induction on $\len(s)+\len(t)$ one shows that $s<_{\varepsilon(S_{\omega^\alpha}^u)}t$ implies $o(s)<o(t)$. We have thus constructed an order embedding of $\varepsilon(S_{\omega^\alpha}^u)$ into the ordinals. As stated above, we do not in general have the function $c:S_{\omega^\alpha}^u\rightarrow\ordi$ required for this interpretation at our disposal. Nevertheless we will be able to show that $\varepsilon(S_{\omega^\alpha}^u)$ is a well-ordering, provided that the same holds for $S_{\omega^\alpha}^u$. First, we need to extend addition and exponentiation to the full term system $\varepsilon(S_{\omega^\alpha}^u)$. Let us assume $\alpha>0$ to have the coefficients $1,2<\omega^\alpha$ available. Exponentiation to the base $\Omega$ is easily defined, namely by
\begin{equation*}
 \Omega^s:=\begin{cases}
            s\quad&\text{if $s$ is of the form $\varepsilon_\sigma$},\\
            \Omega^s\cdot 1\quad&\text{otherwise}.
           \end{cases}
\end{equation*}
Iterated exponentiation is written as
\begin{align*}
 \Omega_0^s&:=s,\\
 \Omega_{n+1}^s&:=\Omega^{\Omega_n^s}.
\end{align*}
Extending addition to all terms in $\varepsilon(S_{\omega^\alpha}^u)$ is more tedious because we need to distinguish many cases. Luckily, the correct definition is evident if one thinks in terms of Cantor normal forms:
\begin{gather*}
\begin{aligned}
 0+s&=s+0=s,\\[1.5ex]
 \varepsilon_\sigma+\varepsilon_\tau&=\begin{cases}
                                       \varepsilon_\tau\quad&\text{if $\varepsilon_\sigma<\varepsilon_\tau$},\\
                                       \Omega^{\varepsilon_\sigma}\cdot 2\quad&\text{if $\varepsilon_\sigma=\varepsilon_\tau$},\\
                                       \Omega^{\varepsilon_\sigma}\cdot 1+\Omega^{\varepsilon_\tau}\cdot 1&\text{if $\varepsilon_\sigma>\varepsilon_\tau$},
                                      \end{cases}\\[2.5ex]
 \varepsilon_\sigma+(\Omega^{t_0}\cdot\gamma_0+\dots+\Omega^{t_m}\cdot\gamma_m)&=
                                      \begin{cases}
                                        \parbox[t]{0.5\textwidth}{$\Omega^{t_0}\cdot\gamma_0+\dots+\Omega^{t_m}\cdot\gamma_m$\newline \hspace*{3cm}if $\varepsilon_\sigma<t_0$,}\\[2.5ex]
                                        \parbox[t]{0.5\textwidth}{$\Omega^{t_0}\cdot(1+\gamma_0)+\Omega^{t_1}\cdot\gamma_1+\dots+\Omega^{t_m}\cdot\gamma_m$\\ \hspace*{3cm}if $\varepsilon_\sigma=t_0$,}\\[2.5ex]
                                        \parbox[t]{0.5\textwidth}{$\Omega^{\varepsilon_\sigma}\cdot 1+\Omega^{t_0}\cdot\gamma_0+\dots+\Omega^{t_m}\cdot\gamma_m$\\ \hspace*{3cm}if $t_0<\varepsilon_\sigma$,}
                                      \end{cases}\\[2.5ex]
 (\Omega^{s_0}\cdot\beta_0+\dots+\Omega^{s_n}\cdot\beta_n)+\varepsilon_\tau&=
                                      \begin{cases}
                                       \parbox[t]{0.5\textwidth}{\makebox[3cm][t]{$\varepsilon_\tau$}if $s_0<\varepsilon_\tau$,}\\[1.5ex]
                                       \parbox[t]{0.5\textwidth}{$\Omega^{s_0}\cdot\beta_0+\dots+\Omega^{s_{i-1}}\cdot\beta_{i-1}+\Omega^{s_i}\cdot(\beta_i+1)$\newline \hspace*{3cm}if $s_i=\varepsilon_\tau$,}\\[2.5ex]
                                       \parbox[t]{0.5\textwidth}{$\Omega^{s_0}\cdot\beta_0+\dots+\Omega^{s_i}\cdot\beta_i+\Omega^{\varepsilon_\tau}\cdot 1$\newline \hspace*{3cm}if $s_{i+1}<\varepsilon_\tau<s_i$,}\\[2.5ex]
                                       \parbox[t]{0.5\textwidth}{$\Omega^{s_0}\cdot\beta_0+\dots+\Omega^{s_n}\cdot\beta_n+\Omega^{\varepsilon_\tau}\cdot 1$\newline \hspace*{3cm}if $\varepsilon_\tau<s_n$,}
                                      \end{cases}
\end{aligned}\displaybreak[0]\\
\begin{multlined}
 (\Omega^{s_0}\cdot\beta_0+\dots+\Omega^{s_n}\cdot\beta_n)+(\Omega^{t_0}\cdot\gamma_0+\dots+\Omega^{t_m}\cdot\gamma_m)=\\
=\begin{cases}
  \parbox[t]{0.8\textwidth}{$\Omega^{t_0}\cdot\gamma_0+\dots+\Omega^{t_m}\cdot\gamma_m$\newline \hspace*{3cm}if $s_0<t_0$,}\\[2.5ex] \parbox[t]{0.8\textwidth}{$\Omega^{s_0}\cdot\beta_0+\dots+\Omega^{s_{i-1}}\cdot\beta_{i-1}+\Omega^{s_i}\cdot(\beta_i+\gamma_0)+\Omega^{t_1}\cdot\gamma_1+\dots+\Omega^{t_m}\cdot\gamma_m$\newline \hspace*{3cm}if $s_i=t_0$,}\\[2.5ex]
  \parbox[t]{0.8\textwidth}{$\Omega^{s_0}\cdot\beta_0+\dots+\Omega^{s_i}\cdot\beta_i+\Omega^{t_0}\cdot\gamma_0+\dots+\Omega^{t_m}\cdot\gamma_m$\newline \hspace*{3cm}if $s_{i+1}<t_0<s_i$,}\\[2.5ex]
  \parbox[t]{0.8\textwidth}{$\Omega^{s_0}\cdot\beta_0+\dots+\Omega^{s_n}\cdot\beta_n+\Omega^{t_0}\cdot\gamma_0+\dots+\Omega^{t_m}\cdot\gamma_m$\newline \hspace*{3cm}if $t_0<s_n$.}
 \end{cases}
 \end{multlined}
\end{gather*}%

In the last case distinction, observe that we have $\beta_i+\gamma_0<\omega^\alpha$ because $\omega^\alpha$ is additively closed. For~$\alpha>0$ the term system $\varepsilon(S_{\omega^\alpha}^u)$ is thus closed under addition. As in the usual ordinal notation systems (see e.g.~\cite[Section~V.14]{schuette77}) one can verify the expected relations:

\begin{lemma}
 Assume $\alpha>0$. The following holds for all $r,s,t\in\varepsilon(S_{\omega^\alpha}^u)$:
\begin{enumerate}[label=(\roman*)]
 \item $s\leq\Omega^s$, and $s<t$ implies $\Omega^s<\Omega^t$,
 \item if $t<t'$ then $s+t<s+t'$ and $t+s\leq t'+s$,
 \item $s+(t+r)=(s+r)+t$,
 \item if $s<\Omega^t$ then $s+\Omega^t=\Omega^t$,
 \item if $s\leq t$ then we have $t=s+r$ for some $r$.
\end{enumerate}
\end{lemma}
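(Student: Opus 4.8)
The plan is to establish (i)--(v) by one simultaneous induction, the measure being the total length $\len(\cdot)$ of the terms occurring in the statement, run in parallel with the length-bounded recursion that defines $\varepsilon(S_{\omega^\alpha}^u)$ and $<_{\varepsilon(S_{\omega^\alpha}^u)}$. At every step one splits into cases according to the three possible forms of each term --- $0$, $\varepsilon_\sigma$, or $\Omega^{s_0}\cdot\beta_0+\dots+\Omega^{s_n}\cdot\beta_n$ --- so that the argument follows the case distinctions already present in the definitions of $+$ and of the ordering. No individual part is conceptually hard; what follows indicates the auxiliary facts each part consumes and the shape of the case analysis, exactly as in the cited treatment \cite[Section~V.14]{schuette77}.

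For (i), recall that $\Omega^s\equiv s$ when $s\equiv\varepsilon_\sigma$ and $\Omega^s\equiv\Omega^s\cdot 1$ otherwise, so (the cases $s\equiv 0$ and $s\equiv\varepsilon_\sigma$ being immediate) the inequality $s\le\Omega^s$ reduces to checking $\Omega^{s_0}\cdot\beta_0+\dots+\Omega^{s_n}\cdot\beta_n<_{\varepsilon(S_{\omega^\alpha}^u)}\Omega^s\cdot 1$. By the comparison clause for two $\Omega$-headed terms this in turn reduces to $s_0<_{\varepsilon(S_{\omega^\alpha}^u)}s$, which holds because $s$ is headed by $\Omega^{s_0}$ and, by the inductive instance of (i) applied to $s_0$ (legitimate since $\len(s_0)<\len(s)$), we have $s_0\le\Omega^{s_0}$; the syntactic side condition forbidding a solitary term $\Omega^{\varepsilon_\sigma}\cdot 1$ rules out the borderline equality. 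The second half of (i), $s<t\Rightarrow\Omega^s<\Omega^t$, is then a brief case check on the forms of $s$ and $t$: the only slightly delicate case is $s\equiv\varepsilon_\sigma$ with $t$ an $\Omega$-headed term, where $s<t$ forces $s\le t_0$ (the head of $t$), and one concludes using $t_0<\Omega^t$, itself an instance of the first half of (i).

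Part (ii) is proved by induction on $\len(t)+\len(t')$, with $s$ a side parameter, running through the defining clauses of $+$. The asymmetry between the two assertions is exactly the absorption rule $\Omega^a+\Omega^b=\Omega^b$ for $a<b$: it forces the right-hand inequality $t+s\le t'+s$ to be merely non-strict, whereas $s+t<s+t'$ stays strict because left addition keeps the leading block of $s$ intact. Part (iv), $s<\Omega^t\Rightarrow s+\Omega^t=\Omega^t$, follows by observing that $\Omega^t$ is either $\varepsilon_\tau$ (when $t\equiv\varepsilon_\tau$) or $\Omega^t\cdot 1$, and then reading off the appropriate clause of the definition of $+$: the hypothesis $s<_{\varepsilon(S_{\omega^\alpha}^u)}\Omega^t$ forces the head of $s$ to lie strictly below the exponent $t$ (resp. below $\varepsilon_\tau$), which is precisely the condition triggering the absorbing alternatives in the clauses for $\varepsilon_\sigma+(\cdots)$, $(\cdots)+\varepsilon_\tau$ and $(\cdots)+(\cdots)$.

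For (iii), $s+(t+r)=(s+r)+t$, I would induct on $\len(t)+\len(r)$: reduce $t+r$ on the left and $s+r$ on the right via the definition of $+$, case-split on how the heads of $s$, $t$, $r$ compare, and match the two resulting normal forms block by block, using (ii) to relocate a leading block and (iv) to justify the absorptions that occur when a head is dominated; the coefficients stay inside $\omega^\alpha$ by additive closure of $\omega^\alpha$, which was already used to see that $+$ is total on $\varepsilon(S_{\omega^\alpha}^u)$. For (v), given $s\le t$ one builds the witness $r$ by cancelling the common head: induct on $\len(t)$, compare the leading blocks of the normal forms of $s$ and $t$ (handling $s\equiv 0$, $s\equiv\varepsilon_\sigma$, $t\equiv\varepsilon_\tau$ by hand), let $r$ be the tail of $t$ past the first point of difference --- adjusting the first surviving coefficient by a subtraction carried out inside $\omega^\alpha$ --- and verify $s+r=t$ by one more pass through the definition of $+$. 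I expect the main obstacle to be organisational rather than mathematical: the five parts are entangled, since (i) underlies the ordering clauses used everywhere while (ii) and (iv) are both invoked in (iii) and (v), so the real task is to fix a single induction measure (term length, compatible with the length-bounded recursion defining $\varepsilon(S_{\omega^\alpha}^u)$ and its order) under which all five statements can be proved together without circularity, and then to push each of the dozen-odd syntactic cases through the matching clause of the definition of $+$; a recurring minor nuisance is keeping track of the ban on a lone $\Omega^{\varepsilon_\sigma}\cdot 1$, which is what prevents spurious equalities (as in (i)) and keeps normal forms unique.
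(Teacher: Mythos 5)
Your overall plan --- one simultaneous induction on term length, pushed through the case distinctions in the definitions of $+$, $\Omega^{(\cdot)}$ and $<_{\varepsilon(S_{\omega^\alpha}^u)}$ --- is exactly the routine verification the paper has in mind; the paper offers no proof of its own beyond the pointer to Sch\"utte, so there is nothing finer to compare against, and your outlines for (i), (ii), (iv) and (v) go through. (One small remark on (i): in the case $s\equiv\varepsilon_\sigma$ with $t$ $\Omega$-headed the conclusion is immediate, since $\Omega^t=\Omega^t\cdot 1$ and the comparison clause for an $\varepsilon$-term against an $\Omega$-headed term asks precisely whether $\varepsilon_\sigma<t$ or $\varepsilon_\sigma=t$; your detour through ``$t_0<\Omega^t$'' is not literally an instance of the first half of (i) but needs the auxiliary fact that the leading exponent of an $\Omega$-headed term lies strictly below the term, which is the same length induction you already run for $s\leq\Omega^s$.)

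The one genuine problem is (iii). As printed, $s+(t+r)=(s+r)+t$ is false: take $s=0$, $t=\hat 1=\Omega^0\cdot 1$ and $r=\Omega=\Omega^{\hat 1}\cdot 1$; then the left side is $\hat 1+\Omega=\Omega$, while the right side is $\Omega+\hat 1$, and these are distinct terms with $\Omega<\Omega+\hat 1$ by (ii). So your plan to ``match the two resulting normal forms block by block'' cannot succeed for the statement as written --- already the subcase $s=0$ would reduce it to commutativity of $+$, which the absorption clause destroys. The equation is evidently a typo for associativity, $s+(t+r)=(s+t)+r$, which is also the form the paper actually uses later (for instance in the well-ordering proof, where $r+\Omega^{s_0}\cdot\beta_0+\Omega^{s_0}=r+\Omega^{s_0}\cdot\beta$ is computed, and in the collapsing argument, where $t+\Omega^{s}+\Omega^{s'}=t+\Omega^{s'}$ for $s<s'$ is needed together with (iv)). Your length induction with the case split on how the heads of the summands compare proves that corrected identity without any change; a careful write-up should state the emendation explicitly rather than purport to verify the printed equation.
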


As promised, we can now show that well-orderedness is preserved:

\begin{lemma}\label{lem:eps-preserves-wo}
 If $(S_{\omega^\alpha}^u,<_{S_{\omega^\alpha}^u})$ is a well-ordering then so is $(\varepsilon(S_{\omega^\alpha}^u),<_{\varepsilon(S_{\omega^\alpha}^u)})$, for each $\alpha>0$.
\end{lemma}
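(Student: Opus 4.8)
The plan is to use the classical Gentzen--Schütte method of \emph{accessible parts} (the ``fundamental lemma'' for ordinal notation systems), with a single transfinite induction consuming the hypothesis that $<_{S_{\omega^\alpha}^u}$ is a well-ordering. Abbreviate $T:=\varepsilon(S_{\omega^\alpha}^u)$ and $<\ :=\ <_{\varepsilon(S_{\omega^\alpha}^u)}$, and call a term $t\in T$ \emph{accessible} if $\{s\in T:s\leq t\}$ is well-founded. Because $<$ is linear (by the linearity lemma just proved), $T$ is a well-ordering as soon as every term is accessible: given nonempty $X\subseteq T$, pick $t\in X$; then $X\cap\{s:s\leq t\}$ is nonempty and well-founded, so it has a $<$-least element, which by linearity is least in $X$. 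The very same observation shows that $\mathrm{Acc}:=\{t\in T:t\text{ accessible}\}$ is downward closed under $<$, and it will be reused repeatedly below; note that it avoids any appeal to choice, which is why the accessible-part method is preferable here to the Kleene--Brouwer route used for the search trees.

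The first and main step is the \emph{closure lemma}: $0\in\mathrm{Acc}$, and whenever $s_0\geq s_1\geq\dots\geq s_n$ lie in $\mathrm{Acc}$ and $0<\beta_0,\dots,\beta_n<\omega^\alpha$, the term $\Omega^{s_0}\cdot\beta_0+\dots+\Omega^{s_n}\cdot\beta_n$ lies in $\mathrm{Acc}$ (the side condition on $\varepsilon_\sigma$-exponents being automatically preserved, or the term being $\varepsilon_{s_0}$ itself, which is handled in the next step). I would prove this by the standard nested transfinite induction: an outer induction on the $<$-position of the leading exponent $s_0$ within the well-ordered set $\mathrm{Acc}$, an intermediate induction on the number $n+1$ of blocks, and an inner transfinite induction on the coefficients $\beta_i<\omega^\alpha$. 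In the successor and concatenation steps one rewrites the term in question using the extended addition $+$ and the arithmetical relations of the preceding lemma (in particular $s+(t+r)=(s+r)+t$, ``$s<\Omega^t\Rightarrow s+\Omega^t=\Omega^t$'' and the subtraction clause), reducing matters to $<$-smaller, hence still accessible, terms; at a limit coefficient $\beta$ one observes that every $t<\Omega^{s}\cdot\beta$ already satisfies $t\leq\Omega^{s}\cdot\gamma$ for some $\gamma<\beta$, so the ``pick an element, intersect with its predecessors'' argument reduces well-foundedness of $\{t:t\leq\Omega^{s}\cdot\beta\}$ to the induction hypothesis. Here the hypothesis $\alpha>0$ is used exactly where the excerpt already flagged it: to have the coefficients $1,2<\omega^\alpha$ and the additive closure of $\omega^\alpha$ available for the definition of $+$.

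The second step feeds in the hypothesis on $S_{\omega^\alpha}^u$. Inspecting Definition~\ref{def:epsilon-of-ordering}, the set $\{s\in T:s<\varepsilon_\sigma\}$ consists precisely of $0$, the terms $\varepsilon_\tau$ with $\tau<_{S_{\omega^\alpha}^u}\sigma$, and the sum terms all of whose exponents satisfy the same bound; equivalently, it is the set of terms generated from $0$ and $\{\varepsilon_\tau:\tau<_{S_{\omega^\alpha}^u}\sigma\}$ by the term-forming operations. Hence, if $\varepsilon_\tau\in\mathrm{Acc}$ for every $\tau<_{S_{\omega^\alpha}^u}\sigma$, the closure lemma places every element of $\{s:s<\varepsilon_\sigma\}$ into $\mathrm{Acc}$, and the ``pick and intersect'' argument shows this set is well-founded, i.e.\ $\varepsilon_\sigma\in\mathrm{Acc}$. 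Since $<_{S_{\omega^\alpha}^u}$ is a well-ordering, transfinite induction along it gives $\varepsilon_\sigma\in\mathrm{Acc}$ for all $\sigma$; together with $0\in\mathrm{Acc}$ and the closure lemma this makes every term of $T$ accessible, and by the first paragraph $(T,<)$ is a well-ordering.

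The main obstacle is the closure lemma: getting the three-fold nesting of inductions right and correctly treating the infinite coefficients $\beta_i$ are the delicate points, though each step is routine for ordinal notation systems (which is presumably why the arithmetical lemma on $+$ and $\Omega^{(-)}$ was isolated beforehand). A secondary point needing care is that all the transfinite inductions above are legitimately available in primitive recursive set theory for the predicates involved: accessibility has roughly the complexity of ``every subset of a given set is well-founded'', so one should phrase the induction along $<_{S_{\omega^\alpha}^u}$ via a progressive predicate and check that the relevant instance of separation/comprehension is licensed.
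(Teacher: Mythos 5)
There is a genuine gap, and it sits exactly at the point you dismiss as ``secondary''. Your argument is the classical accessible-parts proof, and it is mathematically fine in a sufficiently strong meta-theory; but the lemma has to be proved in primitive recursive set theory, where transfinite induction along a well-ordering (in the paper's sense: every non-empty \emph{subset} has a minimal element) is only available for predicates that can be turned into subsets by separation, i.e.\ essentially for primitive recursive statements with set parameters. Your predicate ``$t$ is accessible'', namely ``every non-empty subset of $\{s\leq t\}$ has a $<$-minimal element'', is an unbounded $\Pi_1$-statement (there is no power set operation in this theory, so the quantifier over subsets cannot be bounded), and $\Pi_1$-separation is not available. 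Consequently $\mathrm{Acc}$ cannot be formed as a subset of $\varepsilon(S_{\omega^\alpha}^u)$, the transfinite induction along $<_{S_{\omega^\alpha}^u}$ in your second step is not licensed, and the outer induction of your closure lemma --- an induction ``over the well-ordered set $\mathrm{Acc}$'' for yet another non-primitive-recursive property --- is doubly unavailable, since $\mathrm{Acc}$ is only a $\Pi_1$-definable class. Saying one should ``phrase the induction via a progressive predicate and check that the relevant instance of separation/comprehension is licensed'' does not repair this: the instance needed is precisely one that the base theory does not have.

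This is not an incidental formality but the central difficulty the paper's proof is built around. The paper also adapts the Sch\"utte-style well-ordering proof, but instead of an accessibility predicate it fixes an \emph{arbitrary set} $a$, proves $\prog(a)\rightarrow\forall_{r<\Omega_n^{\varepsilon_\sigma+1}}\,r\in a$, and, to keep the induction statement primitive recursive while still having enough ``logical strength'' in the induction step, replaces quantification over all subsets by the primitive recursive jump hierarchy $J(m,a)$, carrying out a single induction on pairs $(\sigma,n)\in S_{\omega^\alpha}^u\times\omega$ for the statement $\forall_{m\in\omega}(\prog(J(m,a))\rightarrow\forall_{t<\Omega_n^{\varepsilon_\sigma+1}}\,t\in J(m,a))$, which \emph{is} primitive recursive in the parameter $a$. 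To make your proposal work you would have to perform an analogous manoeuvre (eliminating the accessibility predicate in favour of a fixed set parameter plus a jump-type hierarchy), at which point you have essentially reconstructed the paper's proof rather than given an alternative to it.
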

Concerning the restriction $\alpha>0$, we will later show that $\varepsilon(S_{\omega^0}^u)$ is a sub-ordering of $\varepsilon(S_{\omega^1}^u)$. Thus the well-foundedness of $\varepsilon(S_{\omega^0}^u)$ is also covered.
\begin{proof}
 We adapt the argument from \cite[Lemma~VIII.5]{schuette77} to our context (cf.~also the result of \cite{rathjen-afshari}): As usual, well-foundedness is equivalent to induction, i.e.~it suffices to establish
\begin{equation*}
   \forall_{s\in\varepsilon(S_{\omega^\alpha}^u)}(\forall_{t\in\varepsilon(S_{\omega^\alpha}^u)}(t<_{\varepsilon(S_{\omega^\alpha}^u)}s\rightarrow t\in a)\rightarrow s\in a)\rightarrow S_{\omega^\alpha}^u\subseteq a                                                                                                                                                                                                                                                                                                         \end{equation*}
for an arbitrary set $a$. Let us abbreviate
\begin{equation*}
 \prog(a):\equiv\forall_{s\in\varepsilon(S_{\omega^\alpha}^u)}(\forall_{t\in\varepsilon(S_{\omega^\alpha}^u)}(t<_{\varepsilon(S_{\omega^\alpha}^u)}s\rightarrow t\in a)\rightarrow s\in a).
\end{equation*}
In the rest of the proof we will drop the subscript of $<_{\varepsilon(S_{\omega^\alpha}^u)}$. Quantifiers with bound variable $s,t$ or $r$ are always restricted to $\varepsilon(S_{\omega^\alpha}^u)$. It is easy to see that any term in $\varepsilon(S_{\omega^\alpha}^u)$ is smaller than a term of the form~$\Omega_n^{\varepsilon_\sigma+1}$. Thus it will be enough to show
\begin{equation*}
 \prog(a)\rightarrow\forall_{r<\Omega_n^{\varepsilon_\sigma+1}}\,r\in a
\end{equation*}
for all $\sigma\in S_{\omega^\alpha}^u$ and $n\in\omega$. We want to argue by induction on $(\sigma,n)\in S_{\omega^\alpha}^u\times\omega$, ordered alphabetically. The latter is a well-ordering since $S_{\omega^\alpha}^u$ is well-ordered by assumption. Now, induction over any well-ordering is available if the induction statement is primitive recursive (and thus, by separation, corresponds to a subset of the well-ordering). For fixed $a$ the statement above is indeed primitive recursive; however it ceases to be primitive recursive if we quantify over all subsets \mbox{$a\subseteq S_{\omega^\alpha}^u$}. To gain flexibility while keeping the induction statement primitive recursive we introduce the primitive recursive ``jump'' function
\begin{align*}
 J(0,a)&:=a,\\
 J(m+1,a)&:=\{s\in S_{\omega^\alpha}^u\,|\,\forall_r(\forall_{t<r}\,t\in J(m,a)\rightarrow\forall_{t<r+\Omega^s}\,t\in J(m,a))\}.
\end{align*}
Let us verify an auxiliary result that we will need later, namely the implication
\begin{equation*}
 \prog( J(m,a))\rightarrow\prog( J(m+1,a)).
\end{equation*}
Aiming at $\prog( J(m+1,a))$ we fix $s\in\varepsilon(S_{\omega^\alpha}^u)$ and assume $\forall_{t<s}\,t\in J(m+1,a)$. Our goal is to establish $s\in J(m+1,a)$, which is equivalent to
\begin{equation*}
 \forall_r(\forall_{t<r}\,t\in J(m,a)\rightarrow\forall_{t<r+\Omega^s}\,t\in J(m,a)).
\end{equation*}
 If $s=0$ this is easy: From $\forall_{t<r}\,t\in J(m,a)$ and the assumption $\prog( J(m,a))$ we get $r\in J(m,a)$, so that we have $\forall_{t<r+\Omega^0}\,t\in J(m,a)$. In case $s>0$ any $t<r+\Omega^s$ is smaller than some term $r+\Omega^{s_0}\cdot\beta$, with $s_0<s$ and $\beta<\omega^\alpha$. We establish $\forall_{t<r+\Omega^{s_0}\cdot\beta}\,t\in J(m,a)$ by induction on $\beta$. For $\beta=0$ it suffices to cite the assumption $\forall_{t<r}\,t\in J(m,a)$. If $\beta$ is a limit ordinal then any $t<r+\Omega^{s_0}\cdot\beta$ is smaller than $r+\Omega^{s_0}\cdot\beta_0$ for some $\beta_0<\beta$, and the induction step is immediate. Now assume that $\beta=\beta_0+1$ is a successor. As $s_0<s$, one of the assumptions above provides $s_0\in J(m+1,a)$, which implies
\begin{equation*}
 \forall_{t<r+\Omega^{s_0}\cdot\beta_0}\,t\in J(m,a)\rightarrow\forall_{t<r+\Omega^{s_0}\cdot\beta_0+\Omega^{s_0}}\,t\in J(m,a).
\end{equation*}
In view of $r+\Omega^{s_0}\cdot\beta_0+\Omega^{s_0}=r+\Omega^{s_0}\cdot\beta$ this completes the induction step. After these preparations, let us prove
\begin{equation*}
 \forall_{m\in\omega}(\prog( J(m,a))\rightarrow\forall_{t<\Omega_n^{\varepsilon_\sigma+1}}\,t\in J(m,a))
\end{equation*}
by induction on $(\sigma,n)$. As observed above the instance $m=0$ suffices to establish the lemma; the other instances are required to perform the induction, while keeping $a$ fixed and the statement primitive recursive. In the induction step we assume $\prog(J(m,a))$ for some $m$ and consider an arbitrary $t<\Omega_n^{\varepsilon_\sigma+1}$. First assume $n=0$, such that we have $\Omega_n^{\varepsilon_\sigma+1}=\varepsilon_\sigma+1$. If $t<\varepsilon_\sigma$ then we have $t<\Omega_k^{\varepsilon_\tau+1}$ for some $\tau<_{S_{\omega^\alpha}^u}\sigma$ and some $k\in\omega$. So $t\in J(m,a)$ holds by the induction hypothesis. Having shown this much, we can conclude $\varepsilon_\sigma\in J(m,a)$ by $\prog( J(m,a))$. Together we have established $t\in J(m,a)$ for all $t<\varepsilon_\sigma+1=\Omega_0^{\varepsilon_\sigma+1}$, as required. Now assume $n>1$ and write $n=k+1$. Our auxiliary result provides $\prog(J(m+1,a))$ and the induction hypothesis yields $\forall_{r<\Omega_k^{\varepsilon_\sigma+1}}\,r\in J(m+1,a)$. Using $\prog(J(m+1,a))$ again we get $\Omega_k^{\varepsilon_\sigma+1}\in J(m+1,a)$, which is equivalent to
\begin{equation*}
 \forall_r(\forall_{t<r}\,t\in J(m,a)\rightarrow\forall_{t<r+\Omega_n^{\varepsilon_\sigma+1}}\,t\in J(m,a)).
\end{equation*}
With $r=0$ the antecedent is trivial and we get $\forall_{t<\Omega_n^{\varepsilon_\sigma+1}}\,t\in J(m,a)$ as desired.
\end{proof}

To obtain a well-ordering principle in the sense of Definition~\ref{def:wop} we must also verify compatibility:

\begin{lemma}
 For $\alpha<\beta$ we have $\varepsilon(S_{\omega^\alpha}^u)\subseteq\varepsilon(S_{\omega^\beta}^u)$, and $<_{\varepsilon(S_{\omega^\alpha}^u)}$ is the restriction of $<_{\varepsilon(S_{\omega^\beta}^u)}$ to $\varepsilon(S_{\omega^\alpha}^u)$. Also, we have $\varepsilon(S_{\omega^\lambda}^u)=\bigcup_{\gamma<\lambda}\varepsilon(S_{\omega^\gamma}^u)$ for each limit $\lambda$.
\end{lemma}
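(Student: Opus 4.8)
The plan is to run a simultaneous induction on the length of terms, following the well-founded recursion that justifies Definition~\ref{def:epsilon-of-ordering}, and then to read off the limit clause by a finite-support argument. The inputs are purely ``absoluteness'' facts: since $\mu\mapsto\omega^\mu$ is non-decreasing we have $\omega^\alpha\leq\omega^\beta$ for $\alpha\leq\beta$, so Lemma~\ref{lem:search-trees-compatible} gives $S_{\omega^\alpha}^u\subseteq S_{\omega^\beta}^u$ and ${<_{S_{\omega^\alpha}^u}}={<_{S_{\omega^\beta}^u}}\cap(S_{\omega^\alpha}^u\times S_{\omega^\alpha}^u)$; moreover $\beta_i<\omega^\alpha$ entails $\beta_i<\omega^\beta$, and the syntactic side conditions in Definition~\ref{def:epsilon-of-ordering} (the shape of $s_0$ when $n=0$, $\beta_0=1$), the numerical comparisons $\beta_j<\gamma_j$, and equalities of terms do not mention the ambient ordinal at all. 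In particular $\varepsilon^0(S_{\omega^\alpha}^u)\subseteq\varepsilon^0(S_{\omega^\beta}^u)$ is immediate by structural induction.

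For fixed $\alpha<\beta$ I would prove by induction on $k\in\omega$ the conjunction of $(\mathrm I_k)$: every $s$ with $\len(s)\leq k$ and $s\in\varepsilon(S_{\omega^\alpha}^u)$ also lies in $\varepsilon(S_{\omega^\beta}^u)$; and $(\mathrm{II}_k)$: for $s,t\in\varepsilon(S_{\omega^\alpha}^u)$ with $\len(s)+\len(t)\leq k$ one has $s<_{\varepsilon(S_{\omega^\alpha}^u)}t$ if and only if $s<_{\varepsilon(S_{\omega^\beta}^u)}t$. In the step one establishes $(\mathrm I_k)$ first: the cases $s=0$ and $s=\varepsilon_\sigma$ use only $S_{\omega^\alpha}^u\subseteq S_{\omega^\beta}^u$, and for $s=\Omega^{s_0}\cdot\beta_0+\dots+\Omega^{s_n}\cdot\beta_n$ the subterms satisfy $s_i\in\varepsilon(S_{\omega^\beta}^u)$ by $(\mathrm I_{<k})$, the coefficients are $<\omega^\beta$, the side condition is absolute, and the descending conditions $s_{i+1}<_{\varepsilon(S_{\omega^\beta}^u)}s_i$ follow from $s_{i+1}<_{\varepsilon(S_{\omega^\alpha}^u)}s_i$ by $(\mathrm{II}_{<k})$ (here $\len(s_i)+\len(s_{i+1})<\len(s)$). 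Then $(\mathrm{II}_k)$: by $(\mathrm I_k)$ both $s$ and $t$ belong to $\varepsilon(S_{\omega^\beta}^u)$, so the relation makes sense, and one runs through the clauses of the definition of $<_{\varepsilon(\cdot)}$; each clause reduces the question to a comparison $\sigma<_S\tau$ with $\sigma,\tau\in S_{\omega^\alpha}^u$ (absolute by Lemma~\ref{lem:search-trees-compatible}), to numerical comparisons and term equalities (absolute), or to comparisons of proper subterms of strictly smaller total length --- which lie in $\varepsilon(S_{\omega^\alpha}^u)$ because $s,t$ do --- to which $(\mathrm{II}_{<k})$ applies. This proves the first sentence of the lemma.

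For the limit clause, $\bigcup_{\gamma<\lambda}\varepsilon(S_{\omega^\gamma}^u)\subseteq\varepsilon(S_{\omega^\lambda}^u)$ is the first part applied to each $\gamma<\lambda$. For the converse I would show by induction on $\len(s)$ that $s\in\varepsilon(S_{\omega^\lambda}^u)$ implies $s\in\varepsilon(S_{\omega^\gamma}^u)$ for some $\gamma<\lambda$. Here $\omega^\lambda=\sup_{\gamma<\lambda}\omega^\gamma$ is a limit ordinal, so by the limit clause of Lemma~\ref{lem:search-trees-compatible} we have $S_{\omega^\lambda}^u=\bigcup_{\gamma<\lambda}S_{\omega^\gamma}^u$, and every ordinal below $\omega^\lambda$ is below some $\omega^\gamma$. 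The cases $s=0,\varepsilon_\sigma$ are immediate; for $s=\Omega^{s_0}\cdot\beta_0+\dots+\Omega^{s_n}\cdot\beta_n$ the induction hypothesis yields $\gamma_i<\lambda$ with $s_i\in\varepsilon(S_{\omega^{\gamma_i}}^u)$, and there are $\delta_i<\lambda$ with $\beta_i<\omega^{\delta_i}$; since $\lambda$ is a limit one picks a single $\gamma<\lambda$ above all $\gamma_i$ and $\delta_i$. Then $s\in\varepsilon^0(S_{\omega^\gamma}^u)$ (each $s_i\in\varepsilon(S_{\omega^\gamma}^u)$ by the first part, each $\beta_i<\omega^\gamma$), and the descending conditions transfer from $<_{\varepsilon(S_{\omega^\lambda}^u)}$ to $<_{\varepsilon(S_{\omega^\gamma}^u)}$ by the restriction statement already proved, so $s\in\varepsilon(S_{\omega^\gamma}^u)$.

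The only place that requires care is the clause-by-clause check inside $(\mathrm{II}_k)$: one must match the long case split defining $<_{\varepsilon(S_{\omega^\alpha}^u)}$ against that of $<_{\varepsilon(S_{\omega^\beta}^u)}$ and confirm that the induction hypothesis applies to every subterm comparison it invokes --- in particular that those subterms really are members of $\varepsilon(S_{\omega^\alpha}^u)$, which is automatic because $s$ and $t$ are. Everything else is a routine transfer of conditions that do not see the difference between $\omega^\alpha$ and $\omega^\beta$.
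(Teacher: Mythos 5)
Your argument is correct and follows essentially the same route as the paper: a simultaneous induction on term length, with Lemma~\ref{lem:search-trees-compatible} supplying the atomic cases, followed by a separate length induction for the limit clause. The only cosmetic differences are that the paper phrases the main induction as biconditionals over the auxiliary sets $\varepsilon^0(S_{\omega^\alpha}^u)$ (and saves the backward direction of the order comparison by appealing to linearity), and routes the limit case through the inclusion $\varepsilon^0(S_{\omega^\lambda}^u)\subseteq\bigcup_{\gamma<\lambda}\varepsilon^0(S_{\omega^\gamma}^u)$, whereas you handle it directly at the level of $\varepsilon(S_{\omega^\lambda}^u)$ using the restriction statement already proved.
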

\begin{proof}
 Recall the auxiliary set $\varepsilon^0(S_{\omega_\alpha}^u)\supseteq\varepsilon(S_{\omega^\alpha}^u)$ discussed just after Definition~\ref{def:epsilon-of-ordering}. For $s,t\in\varepsilon^0(S_{\omega_\alpha}^u)$ one verifies
\begin{equation*}
 s\in\varepsilon(S_{\omega^\alpha}^u)\quad\Leftrightarrow\quad s\in\varepsilon(S_{\omega^\beta}^u)
\end{equation*}
and
\begin{equation*}
 s<_{\varepsilon(S_{\omega^\alpha}^u)}t\quad\Leftrightarrow\quad s<_{\varepsilon(S_{\omega^\beta}^u)}t
\end{equation*}
by simultaneous induction on $|s|$ resp.~$|s|+|t|$. The base of the induction relies on the fact that the search trees $S_\gamma^u$ are compatible, as shown in Lemma~\ref{lem:search-trees-compatible}. The induction step is straightforward. To save some work, observe that it suffices to establish the implication ``$\Rightarrow$'' in the second biconditional, as we already know that both orderings are linear. We have thus established
\begin{equation*}
 \varepsilon(S_{\omega^\alpha}^u)=\varepsilon(S_{\omega^\beta}^u)\cap\varepsilon^0(S_{\omega^\alpha}^u)
\end{equation*}
and ${<_{\varepsilon(S_{\omega^\alpha}^u)}}={<_{\varepsilon(S_{\omega^\beta}^u)}}\cap(\varepsilon(S_{\omega^\alpha}^u)\times\varepsilon(S_{\omega^\alpha}^u))$. The remaining claim about limit ordinals is reduced to the inclusion
\begin{equation*}
 \varepsilon^0(S_{\omega^\lambda}^u)\subseteq\bigcup_{\gamma<\lambda}\varepsilon^0(S_{\omega^\gamma}^u),
\end{equation*}
which is readily verified by induction on the length $|t|$ of a term $t\in\varepsilon^0(S_{\omega^\lambda}^u)$. Again this relies on Lemma~\ref{lem:search-trees-compatible}, i.e.~the corresponding statement for search trees.
\end{proof}

Finally, we need to construct a rank function:

\begin{lemma}
 There is a primitive recursive function $(u,s)\mapsto |s|_{\varepsilon(S_{\omega^\cdot})}^u$ such that we have
\begin{equation*}
 |s|_{\varepsilon(S_{\omega^\cdot})}^u=\begin{cases}
                            \min\{\alpha\in\ordi\,|\,s\in\varepsilon(S_{\omega^{\alpha+1}}^u)\} & \text{if such an $\alpha$ exists},\\
                            \{1\} & \text{otherwise}.
                           \end{cases}
\end{equation*}
\end{lemma}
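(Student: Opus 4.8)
The plan is to read the rank $|s|_{\varepsilon(S_{\omega^\cdot})}^u$ off from the ordinal ``parameters'' that occur inside the term $s$. Recall from Lemma~\ref{lem:search-trees-rank} that there is a primitive recursive rank function $|\cdot|_S^u$ on the primitive recursive class $S^u$, with $\sigma\in S_\beta^u\Leftrightarrow|\sigma|_S^u<\beta$ for $\beta>0$. Inspecting Definition~\ref{def:epsilon-of-ordering}, the only data in a term of $\varepsilon(S_{\omega^\gamma}^u)$ that depend on $\gamma$ are the search-tree nodes $\sigma$ occurring in subterms $\varepsilon_\sigma$ (which must lie in $S_{\omega^\gamma}^u$) and the ordinal coefficients $\beta_i$ (which must satisfy $0<\beta_i<\omega^\gamma$); every remaining side condition --- the ordering constraints $s_{i+1}<s_i$, the exclusion of a lone summand $\varepsilon_\sigma$ with coefficient $1$, positivity of the coefficients --- is absolute between the systems $\varepsilon(S_{\omega^\gamma}^u)$ for varying $\gamma$. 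This absoluteness is the content of the compatibility lemma proved just above, which itself rests on Lemma~\ref{lem:search-trees-compatible}; in particular the preliminary class $\varepsilon^0$ and the length function $\len$ from after Definition~\ref{def:epsilon-of-ordering} are entirely $\gamma$-free.

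Accordingly I would define, by primitive recursion on the term structure of $s$ (recursing along $\len$ inside $\varepsilon^0$, and outputting a default value for sets $s$ not of term form), an ordinal $\delta(s)$ together with a ``well-formedness'' flag: set $\delta(0):=0$; for $s=\varepsilon_\sigma$ set $\delta(s):=|\sigma|_S^u$ if $\sigma\in S^u$ --- which is primitive recursive to decide --- and declare $s$ ill-formed otherwise; for $s=\Omega^{s_0}\cdot\beta_0+\dots+\Omega^{s_n}\cdot\beta_n$ first check, all primitive recursively, that each $s_i$ is well-formed, that each $\beta_i$ is a non-zero ordinal, and that the $\gamma$-independent structural conditions of Definition~\ref{def:epsilon-of-ordering} hold, declaring $s$ ill-formed if any of this fails and otherwise putting $\delta(s):=\max\{\delta(s_0),\dots,\delta(s_n),\beta_0,\dots,\beta_n\}$. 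All the ingredients ($|\cdot|_S^u$, membership in $S^u$, the orderings $<_{\varepsilon(S_{\omega^\beta}^u)}$ evaluated at any convenient $\beta$, finite maxima of ordinals) are primitive recursive, so $s\mapsto(\delta(s),\text{well-formedness})$ is primitive recursive, and $\delta(s)$ is a genuine ordinal whenever $s$ is well-formed.

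The key claim, to be proved by induction on $\len(s)$ --- using $\sigma\in S_{\omega^\gamma}^u\Leftrightarrow|\sigma|_S^u<\omega^\gamma$ for the $\varepsilon_\sigma$ clause, and the absoluteness of the structural conditions together with the continuity $\omega^\lambda=\sup_{\gamma<\lambda}\omega^\gamma$ for the sum clause --- is that, for every ordinal $\gamma$,
\begin{equation*}
 s\in\varepsilon(S_{\omega^\gamma}^u)\quad\Longleftrightarrow\quad\text{$s$ is well-formed and }\ \delta(s)<\omega^\gamma.
\end{equation*}
Granting this, $s$ lies in some $\varepsilon(S_{\omega^\gamma}^u)$ exactly when $s$ is well-formed (take $\gamma=\delta(s)+1$ and use $\omega^{\delta(s)+1}>\delta(s)$), and then $\min\{\alpha\in\ordi\,|\,s\in\varepsilon(S_{\omega^{\alpha+1}}^u)\}=\min\{\alpha\in\ordi\,|\,\delta(s)<\omega^{\alpha+1}\}$, which equals the leading Cantor-normal-form exponent of $\delta(s)$ (with the convention that this exponent is $0$ when $\delta(s)=0$) and is therefore primitive recursive in $\delta(s)$. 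So we let $|s|_{\varepsilon(S_{\omega^\cdot})}^u$ be this ordinal when $s$ is well-formed and $\{1\}$ otherwise, matching the prescribed case distinction.

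The only step demanding real care is the absoluteness invoked in the displayed equivalence: that the comparison $s_{i+1}<s_i$ between subterms of a fixed $s$ is independent of which $\gamma$ one works in, so that ``well-formedness'' is genuinely a $\gamma$-free notion. I expect this to be the main (though modest) obstacle; it is already essentially supplied by the compatibility lemma above and by Lemma~\ref{lem:search-trees-compatible}, and it enters precisely when the redex is a sum term. Everything else --- the inductions and the ordinal arithmetic --- is routine.
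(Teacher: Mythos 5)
Your proof is correct, but it is organized differently from the paper's. The paper avoids any new induction: it notes that a candidate term $s$ is determined by its ``constants'' (the $\varepsilon_\sigma$-pieces and the ordinal coefficients), computes from these --- via the search-tree rank function of Lemma~\ref{lem:search-trees-rank} and bounded minimization --- the minimal $\alpha$ with $s\in\varepsilon^0(S_{\omega^{\alpha+1}}^u)$, and then runs the already-justified primitive recursive membership test for $\varepsilon(S_{\omega^{\alpha+1}}^u)$ once, at that single level; correctness rests on the compatibility lemma proved just before, which gives $\varepsilon(S_{\omega^{\alpha}}^u)=\varepsilon(S_{\omega^{\beta}}^u)\cap\varepsilon^0(S_{\omega^{\alpha}}^u)$ and hence guarantees that membership at any level already forces membership at the candidate level. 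You instead recompute membership recursively, producing an ordinal $\delta(s)$ together with a well-formedness flag and proving by induction on $\len(s)$ that $s\in\varepsilon(S_{\omega^\gamma}^u)$ holds exactly when $s$ is well-formed and $\delta(s)<\omega^\gamma$; this re-derives the needed absoluteness of the subterm comparisons (which you rightly single out as the one delicate point, and which is indeed supplied by the compatibility lemma together with Lemma~\ref{lem:search-trees-compatible}), at the cost of an extra induction, but it yields a slightly finer by-product, namely the explicit formula $|s|_{\varepsilon(S_{\omega^\cdot})}^u=\min\{\alpha\in\ordi\,|\,\delta(s)<\omega^{\alpha+1}\}$ for well-formed $s$. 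Two small points to tighten: $\varepsilon^0(S_{\omega^\gamma}^u)$ is not literally $\gamma$-free (its $\varepsilon_\sigma$-constants must lie in $S_{\omega^\gamma}^u$ and its coefficients below $\omega^\gamma$) --- what is $\gamma$-free is the class of syntactic simple terms over $S^u$ with arbitrary nonzero ordinal coefficients, which is what your recursion actually uses; and when you evaluate $s_{i+1}<s_i$ ``at a convenient level'' you should fix that level explicitly, e.g.\ $\max\{\delta(s_i),\delta(s_{i+1})\}+1$, so that both subterms are guaranteed to be members of the system in which the comparison is performed. Neither point affects the correctness of your argument.
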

\begin{proof}
As in the previous proof we use the auxiliary sets $\varepsilon^0(S_{\omega^{\alpha+1}}^u)\supseteq\varepsilon(S_{\omega^{\alpha+1}}^u)$ introduced after Definition~\ref{def:epsilon-of-ordering}. These sets consist of simple terms, built from ``constants'' $\varepsilon_\sigma$ and $\beta$ with $\sigma\in S_{\omega^{\alpha+1}}^u$ and $1<\beta<\omega^{\alpha+1}$, respectively. Clearly we can check whether a given set $s$ represents such a simple term, and if it does we can extract the constants it contains. Next we must check whether these constants have the required form: Lemma~\ref{lem:search-trees-rank} provides a primitive recursive rank function for the search trees. Given an alleged constant $\varepsilon_\sigma$ we can thus determine whether~$\sigma$ lies in some search tree $S_{\gamma+1}^u$, and we can compute the minimal such $\gamma$ if it does. Bounded minimization gives $\alpha=\min\{\beta\leq\gamma\,|\,\gamma<\omega^{\beta+1}\}$, which is minimal with~$\sigma\in S_{\omega^{\alpha+1}}^u$. Doing this for all constants in the term $s$ we get the minimal $\alpha$ with $s\in\varepsilon^0(S_{\omega^{\alpha+1}}^u)$, or we come to decide that no such $\alpha$ exists. Once $\alpha$ is computed we check whether $s$ lies in $\varepsilon(S_{\omega^{\alpha+1}}^u)\subseteq\varepsilon^0(S_{\omega^{\alpha+1}}^u)$: If it does, set $|s|_{\varepsilon(S_{\omega^\cdot})}^u=\alpha$, otherwise $|s|_{\varepsilon(S_{\omega^\cdot})}^u=\{1\}$.
\end{proof}

Combining these lemmata with the results of the previous section we obtain the following:

\begin{proposition}\label{prop:no-admissible-gives-wop}
 Consider a countable transitive set $u=\{u_i\,|\,i\in\omega\}$. If there is no admissible set $\mathbb A$ with $u\subseteq\mathbb A$ then $\alpha\mapsto\varepsilon(S_{\omega^\alpha}^u)$ is a well-ordering principle.
\end{proposition}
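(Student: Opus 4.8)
The plan is to verify the three clauses of Definition~\ref{def:wop} for the pair of primitive recursive functions $T\colon(u,\alpha)\mapsto(\varepsilon(S_{\omega^\alpha}^u),<_{\varepsilon(S_{\omega^\alpha}^u)})$ and $|\cdot|_T\colon(u,s)\mapsto|s|_{\varepsilon(S_{\omega^\cdot})}^u$. That both of these are indeed primitive recursive has already been arranged: the former in the discussion following Definition~\ref{def:epsilon-of-ordering}, the latter in the last lemma preceding the proposition. Clauses (ii) and (iii) of Definition~\ref{def:wop} then require no further work, since they are literally the compatibility lemma and the rank lemma proved above, and both hold for every countable transitive~$u$ with no reference to admissibility.

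The only clause whose proof uses the hypothesis is clause~(i), well-orderedness. Assume there is no admissible set $\mathbb A$ with $u\subseteq\mathbb A$. By the contrapositive of Corollary~\ref{cor:branch-to-admissible-set}, the search tree $S_\beta^u$ has no infinite branch, for every ordinal~$\beta$; in particular $S_{\omega^\alpha}^u$ has no infinite branch, for every~$\alpha$. Lemma~\ref{lem:branch-Kleene-Brouwer} now yields that the Kleene--Brouwer order $<_{S_{\omega^\alpha}^u}$ is a well-ordering of $S_{\omega^\alpha}^u$, for every~$\alpha$. For $\alpha>0$ we may apply Lemma~\ref{lem:eps-preserves-wo} directly to conclude that $<_{\varepsilon(S_{\omega^\alpha}^u)}$ well-orders $\varepsilon(S_{\omega^\alpha}^u)$. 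The excluded case $\alpha=0$ is handled by the compatibility lemma with $\alpha=0<1=\beta$, which exhibits $(\varepsilon(S_{\omega^0}^u),<_{\varepsilon(S_{\omega^0}^u)})$ as a suborder of the well-ordering $(\varepsilon(S_{\omega^1}^u),<_{\varepsilon(S_{\omega^1}^u)})$; and a suborder of a well-ordering is again one, since any non-empty subset of $\varepsilon(S_{\omega^0}^u)$ is a non-empty subset of $\varepsilon(S_{\omega^1}^u)$ and hence has a $<_{\varepsilon(S_{\omega^1}^u)}$-least element, which is then also $<_{\varepsilon(S_{\omega^0}^u)}$-least. This establishes clause~(i) for all~$\alpha$, and therefore $\wop(\alpha\mapsto\varepsilon(S_{\omega^\alpha}^u))$.

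I do not expect any genuine obstacle: the proposition is a packaging step assembling Corollary~\ref{cor:branch-to-admissible-set}, Lemma~\ref{lem:branch-Kleene-Brouwer}, Lemma~\ref{lem:eps-preserves-wo}, and the compatibility and rank lemmata into the single statement needed in the next section. The one point worth a sentence of care is the degenerate index $\alpha=0$ that is excluded from Lemma~\ref{lem:eps-preserves-wo}, which the compatibility lemma disposes of as above; and, in keeping with the paper's convention on well-orderings, one should note that ``no infinite branch'' is used only as the hypothesis of Lemma~\ref{lem:branch-Kleene-Brouwer}, so that no choice principle is invoked beyond what is already built into that lemma's primitive recursive construction.
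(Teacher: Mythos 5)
Your proof is correct and follows essentially the same route as the paper: it assembles Corollary~\ref{cor:branch-to-admissible-set}, Lemma~\ref{lem:branch-Kleene-Brouwer} and Lemma~\ref{lem:eps-preserves-wo} with the compatibility and rank lemmata, and handles the excluded case $\alpha=0$ exactly as the paper does, via the inclusion $\varepsilon(S_{\omega^0}^u)\subseteq\varepsilon(S_{\omega^1}^u)$.
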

\begin{proof}
 The previous lemmata show that $(\varepsilon(S_{\omega^\alpha}^u),<_{\varepsilon(S_{\omega^\alpha}^u)})$ are compatible linear orderings with a rank function. Now assume that $u$ is not contained in a transitive model of Kripke-Platek set theory. By Corollary~\ref{cor:branch-to-admissible-set} this implies that none of the search trees $S_{\omega^{\alpha}}^u$ has an infinite branch. Using Lemma~\ref{lem:branch-Kleene-Brouwer} we conclude that $S_{\omega^{\alpha}}^u$ is well-ordered by $<_{S_{\omega^\alpha}^u}$. Lemma \ref{lem:eps-preserves-wo} tells us that $(\varepsilon(S_{\omega^\alpha}^u),<_{\varepsilon(S_{\omega^\alpha}^u)})$ is a well-ordering for each $\alpha>1$. As $\varepsilon(S_{\omega^0}^u)\subseteq\varepsilon(S_{\omega^1}^u)$ the case $\alpha=0$ is covered as well.
\end{proof}

If $\alpha\mapsto\varepsilon(S_{\omega^\alpha}^u)$ is a well-ordering principle then the higher Bachmann-Howard principle yields a collapsing function $\vartheta:\varepsilon(S_{\omega^\alpha}^u)\bh\alpha$, for some ordinal $\alpha$. In the rest of this section we establish properties of such a collapse. To get started, observe that $0\in\varepsilon(S_{\omega^\alpha}^u)$ implies $\vartheta(0)\in\alpha$ and thus $\alpha>0$. So we have $1<\omega^1\leq\omega^\alpha$, which means that $\varepsilon(S_{\omega^\alpha}^u)$ contains the terms $1:=\Omega^0\cdot 1$ and $\Omega:=\Omega^1\cdot 1$. We observe the following:

\begin{lemma}\label{lem:embed-alpha-s-alpha}
The map
\begin{equation*}
 \beta\mapsto\hat\beta:=\begin{cases}
                         0\quad&\text{if $\beta=0$},\\
                         \Omega^0\cdot\beta\quad&\text{otherwise}
                        \end{cases}
\end{equation*}
is an order isomorphism between $\omega^\alpha$ and $\varepsilon(S_{\omega^\alpha}^u)\cap\Omega:=\{t\in\varepsilon(S_{\omega^\alpha}^u)\,|\,t<_{\varepsilon(S_{\omega^\alpha}^u)}\Omega\}$.
\end{lemma}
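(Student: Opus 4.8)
The plan is to first pin down the set $\varepsilon(S_{\omega^\alpha}^u)\cap\Omega$ explicitly and then read off the isomorphism. Recall from the preceding paragraph that $\alpha>0$, so that $1=\Omega^0\cdot 1$ and $\Omega=\Omega^1\cdot 1$ are terms; more generally $\hat\beta=\Omega^0\cdot\beta$ is a legitimate term of $\varepsilon(S_{\omega^\alpha}^u)$ for every ordinal $\beta$ with $0<\beta<\omega^\alpha$, since the side condition in clause~(iii) of Definition~\ref{def:epsilon-of-ordering} (that $s_0$ not be of the form $\varepsilon_\sigma$ when $n=0$ and $\beta_0=1$) is satisfied because the exponent $0$ is not an $\varepsilon_\sigma$. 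Hence $\beta\mapsto\hat\beta$ maps the ordinal $\omega^\alpha=\{\beta\,|\,\beta<\omega^\alpha\}$ into $\varepsilon(S_{\omega^\alpha}^u)$, and it is evidently injective.

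The one step that requires care is the identification $\varepsilon(S_{\omega^\alpha}^u)\cap\Omega=\{0\}\cup\{\Omega^0\cdot\beta\,|\,0<\beta<\omega^\alpha\}$; this will simultaneously show that $\beta\mapsto\hat\beta$ is \emph{onto} $\varepsilon(S_{\omega^\alpha}^u)\cap\Omega$. For the inclusion ``$\supseteq$'' one notes that $0<_{\varepsilon(S_{\omega^\alpha}^u)}\Omega$ by the first clause of the order definition, and that $\Omega^0\cdot\beta<_{\varepsilon(S_{\omega^\alpha}^u)}\Omega^1\cdot 1$ holds by clause~(iii) of the order definition with $j=0$, because $0<_{\varepsilon(S_{\omega^\alpha}^u)}1$ (again the first clause). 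For ``$\subseteq$'' one runs through the possible forms of a term $t<_{\varepsilon(S_{\omega^\alpha}^u)}\Omega$: unwinding clause~(ii) twice shows that no term $\varepsilon_\sigma$ lies below $\Omega$, since that would require $\varepsilon_\sigma\leq 1=\Omega^0\cdot 1$, which fails; and a term $t=\Omega^{s_0}\cdot\beta_0+\dots+\Omega^{s_n}\cdot\beta_n$ can be below $\Omega=\Omega^1\cdot 1$ only via clause~(iii) with $j=0$, which forces $s_0<_{\varepsilon(S_{\omega^\alpha}^u)}1$; a short case analysis on the form of $s_0$ gives $s_0=0$, whence the descending-exponent requirement in the term formation forces $n=0$, so $t=\Omega^0\cdot\beta_0$ with $0<\beta_0<\omega^\alpha$.

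It then remains to check that $\beta\mapsto\hat\beta$ is order preserving. If $\beta<\gamma<\omega^\alpha$ then either $\beta=0$, in which case $\hat\beta=0<_{\varepsilon(S_{\omega^\alpha}^u)}\Omega^0\cdot\gamma=\hat\gamma$ by the first clause of the order definition, or $0<\beta$, in which case $\hat\beta=\Omega^0\cdot\beta<_{\varepsilon(S_{\omega^\alpha}^u)}\Omega^0\cdot\gamma=\hat\gamma$ by clause~(iii) with $j=0$ (equal exponents $0=0$ and coefficients $\beta<\gamma$). Since both $<_{\varepsilon(S_{\omega^\alpha}^u)}$ and the ordinal ordering are linear, a bijection preserving order in this one direction automatically reflects order as well, so $\beta\mapsto\hat\beta$ is an order isomorphism onto $\varepsilon(S_{\omega^\alpha}^u)\cap\Omega$. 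The only (very mild) obstacle is the bookkeeping in the ``$\subseteq$'' direction, i.e.\ verifying that neither an $\varepsilon_\sigma$ nor a term with non-zero leading exponent can be $<_{\varepsilon(S_{\omega^\alpha}^u)}\Omega$; there is no conceptual difficulty.
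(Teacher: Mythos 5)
Your proof is correct and follows essentially the same route as the paper: check that $\beta\mapsto\hat\beta$ is an order embedding with $\hat\beta<\Omega$, and then classify all terms below $\Omega=\Omega^1\cdot 1$ by ruling out $\varepsilon_\sigma$-terms and forcing the leading exponent of a sum term to be $0$ (hence a single summand $\Omega^0\cdot\beta$). The extra details you supply (e.g.\ excluding $s_0=1$ via the positivity of coefficients, and the case analysis showing $s_0<1$ implies $s_0=0$) match the paper's argument.
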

\begin{proof}
It is clear from the definition of $<_{\varepsilon(S_{\omega^\alpha}^u)}$ that $\beta\mapsto\hat\beta$ is an order embedding. Also, we can infer $\Omega^0\cdot\beta<_{\varepsilon(S_{\omega^\alpha}^u)}\Omega$ from $0<_{\varepsilon(S_{\omega^\alpha}^u)}1$. It remains to check that any term $t<_{\varepsilon(S_{\omega^\alpha}^u)}\Omega$ is of the form $t=0$ or $t=\Omega^0\cdot\beta$. First, aiming at a contradiction, assume that $t$ is of the form $\varepsilon_\sigma$. Then $t<_{\varepsilon(S_{\omega^\alpha}^u)}\Omega$ would imply $\varepsilon_\sigma<_{\varepsilon(S_{\omega^\alpha}^u)}1$ and thus $\varepsilon_\sigma<_{\varepsilon(S_{\omega^\alpha}^u)}0$, which is false. Now assume that $t<_{\varepsilon(S_{\omega^\alpha}^u)}\Omega$ is of the form
\begin{equation*}
 t=\Omega^{t_0}\cdot\gamma_0+\dots+\Omega^{t_m}\cdot\gamma_m.
\end{equation*}
We cannot have $t_0=1$ as this would require $\gamma_0<1$, which was not allowed in our term system. Thus we must have $t_0<_{\varepsilon(S_{\omega^\alpha}^u)}1$, which is easily seen to imply $t_0=0$. The latter makes $t_1<_{\varepsilon(S_{\omega^\alpha}^u)}t_0$ impossible, so that we see $m=0$ and $t=\Omega^0\cdot\gamma_0$.
\end{proof}

As we will argue in the same context for quite a while it is worth introducing some abbreviations:

\begin{notation}\label{notation:abs-and-star}
In view of the previous lemma we will write $\beta$ instead of $\hat\beta$ and $<$ instead of $<_{\varepsilon(S_{\omega^\alpha}^u)}$. Concerning the rank functions $|\cdot|_{\mathbf L}^u$ and $|\cdot|_S^u$ we will omit the sub- and superscript, writing $|a|$ resp.~$|\sigma|$ instead of $|a|_{\mathbf L}^u$ resp.~$|\sigma|_S^u$. This is harmless because these two rank functions are closely connected: For example we have $|a|_{\mathbf L}^u=|\langle a\rangle|_S^u$. The rank function $|\cdot|_{\varepsilon(S_{\omega^\cdot})}^u$ behaves quite differently and must be carefully distinguished: To make this visual we write $s^*$ at the place of $|s|_{\varepsilon(S_{\omega^\cdot})}^u$ (this is inspired by Rathjen's ordinal notation system for the Bachmann-Howard ordinal in \cite{rathjen-model-bi}). To get some intuition for the notation, let us consider two ways to view an ordinal $\beta<\omega^\alpha$: First, one can view $\beta$ as an element of $\mathbf L_{\omega^\alpha}^u$, with
\begin{equation*}
|\beta|=|\beta|_{\mathbf L}^u=\min\{\gamma\in\ordi\,|\,\beta\in\mathbb L_{\gamma+1}^u\}\leq\beta.
\end{equation*}
Note that inequality is possible, e.g.~if we have $\beta\in u$. Alternatively, one can view~$\beta$ as the term $\hat\beta$ in $\varepsilon(S_{\omega^\alpha}^u)$. Then we have
\begin{equation*}
\beta^*=|\beta|_{\varepsilon(S_{\omega^\cdot})}^u=\min\{\gamma\in\ordi\,|\,\beta<\omega^{\gamma+1}\}\leq\beta,
\end{equation*}
which implies $\omega^{\beta^*}\leq\beta<\omega^{\beta^*+1}$ in case $\beta>0$.
\end{notation}

The reader may wonder why we consider the well-ordering principle $\alpha\mapsto\varepsilon(S_{\omega^\alpha}^u)$ rather than $\alpha\mapsto\varepsilon(S_\alpha^u)$. The following two results should make this clear:

\begin{lemma}\label{lem:bh-epsilon-number}
 Assume $\vartheta:\varepsilon(S_{\omega^\alpha}^u)\bh\alpha$. Then we have $\beta\leq\vartheta(\beta)$ for each $\beta<\omega^\alpha$. In particular this implies $\alpha=\omega^\alpha$, i.e.~the ordinal $\alpha$ must be an $\varepsilon$-number.
\end{lemma}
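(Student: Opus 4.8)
The plan is to show $\beta\le\vartheta(\beta)$ for all $\beta<\omega^\alpha$ by transfinite induction on $\beta$, and then to read off $\alpha=\omega^\alpha$ from the fact that $\vartheta$ takes values below $\alpha$. Throughout I would adopt Notation~\ref{notation:abs-and-star}, identifying an ordinal $\beta<\omega^\alpha$ with the term $\hat\beta\in\varepsilon(S_{\omega^\alpha}^u)$; by Lemma~\ref{lem:embed-alpha-s-alpha} this identification is an order embedding, so for $\gamma,\beta<\omega^\alpha$ we have $\gamma<\beta$ as ordinals precisely when $\gamma<\beta$ in $\varepsilon(S_{\omega^\alpha}^u)$. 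The one additional observation I would record first is the monotonicity of the star-rank: since $\beta^*=\min\{\delta\in\ordi\,|\,\beta<\omega^{\delta+1}\}$, a smaller ordinal satisfies the defining inequality for at least as many $\delta$, so $\gamma\le\beta$ implies $\gamma^*\le\beta^*$.

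For the induction itself, the base case $\beta=0$ is immediate because $\vartheta(0)$ is an ordinal. For the step, fix $\beta$ with $0<\beta<\omega^\alpha$ and assume $\gamma\le\vartheta(\gamma)$ for all $\gamma<\beta$. Given such a $\gamma$, condition (i) of Definition~\ref{def:bh-collapse} gives $\beta^*<\vartheta(\beta)$, hence $\gamma^*\le\beta^*<\vartheta(\beta)$ by the monotonicity just noted. Since also $\gamma<\beta$ in $\varepsilon(S_{\omega^\alpha}^u)$, condition (ii) of Definition~\ref{def:bh-collapse} applies and yields $\vartheta(\gamma)<\vartheta(\beta)$; combined with the induction hypothesis this gives $\gamma\le\vartheta(\gamma)<\vartheta(\beta)$. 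As $\gamma<\beta$ was arbitrary, $\beta\le\vartheta(\beta)$. I would note explicitly that this transfinite induction along the ordinal $\omega^\alpha$ is permissible in primitive recursive set theory, since for fixed $\vartheta$ and $\alpha$ the predicate ``$\beta\le\vartheta(\beta)$'' is primitive recursive.

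Finally, since $\vartheta$ maps $\varepsilon(S_{\omega^\alpha}^u)$ into $\alpha$, every $\beta<\omega^\alpha$ satisfies $\beta\le\vartheta(\beta)<\alpha$, whence $\omega^\alpha\le\alpha$; together with the general inequality $\alpha\le\omega^\alpha$ this gives $\alpha=\omega^\alpha$, and since $\alpha>0$ (as already observed, from $\vartheta(0)\in\alpha$) this means $\alpha$ is an $\varepsilon$-number. I do not anticipate a genuine obstacle here: the only point that rewards care is the bookkeeping between the two readings of an object of the form $\beta<\omega^\alpha$ — as an ordinal and as a term of $\varepsilon(S_{\omega^\alpha}^u)$ — and in particular isolating the monotonicity $\gamma^*\le\beta^*$, which is exactly what turns condition (i) into the hypothesis needed to invoke condition (ii).
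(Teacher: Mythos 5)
Your proposal is correct and follows essentially the same route as the paper: both use the monotonicity $\gamma\le\beta\Rightarrow\gamma^*\le\beta^*$ together with conditions (i) and (ii) of Definition~\ref{def:bh-collapse} to get $\vartheta(\gamma)<\vartheta(\beta)$ for $\gamma<\beta$, and then conclude $\beta\le\vartheta(\beta)$ by the standard induction on $\beta$. Your explicit derivation of $\alpha=\omega^\alpha$ from $\beta\le\vartheta(\beta)<\alpha$ is exactly the intended reading of the paper's ``in particular'' clause.
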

\begin{proof}
 Clearly $\gamma<\beta<\omega^\alpha$ implies $\gamma^*\leq\beta^*$. By the definition of Bachmann-Howard collapse we conclude $\gamma^*\leq\beta^*<\vartheta(\beta)$ and then $\vartheta(\gamma)<\vartheta(\beta)$. Now $\beta\leq\vartheta(\beta)$ is established by the usual induction on $\beta$: We have
\begin{equation*}
 \vartheta(\beta)\geq\sup\{\vartheta(\gamma)+1\,|\,\gamma<\beta\}\geq\sup\{\gamma+1\,|\,\gamma<\beta\},
\end{equation*}
which implies the induction step in both successor and limit case.
\end{proof}

\begin{proposition}\label{prop:bh-epsilon-numbers}
 Assume $\vartheta:\varepsilon(S_{\omega^\alpha}^u)\bh\alpha$. For any $t\in\varepsilon(S_{\omega^\alpha}^u)$ with $\Omega\leq t$ the ordinal $\vartheta(t)$ is additively principal and bigger than $\omega$.
\end{proposition}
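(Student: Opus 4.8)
The plan is to establish the slightly stronger fact that $\vartheta(t)$ is itself an $\varepsilon$-number, i.e.\ that $\vartheta(t)=\omega^{\vartheta(t)}$; since any nonzero fixed point of $\gamma\mapsto\omega^\gamma$ is at least $\varepsilon_0>\omega$ and is closed under addition, this yields both assertions at once. The two ingredients are Lemma~\ref{lem:bh-epsilon-number} (which gives $\alpha=\omega^\alpha$ and $\beta\leq\vartheta(\beta)$ for all $\beta<\omega^\alpha$) and Lemma~\ref{lem:embed-alpha-s-alpha} (which lets us regard each ordinal $\beta<\omega^\alpha$ as the term of $\varepsilon(S_{\omega^\alpha}^u)$ sitting below $\Omega$, in the notation of~\ref{notation:abs-and-star}).

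First I would record the preliminaries. Condition~(i) of Definition~\ref{def:bh-collapse} gives $\vartheta(t)>t^*\geq 0$, so in particular $\vartheta(t)>0$. Since $\vartheta$ takes values in $\alpha$ we have $\vartheta(t)<\alpha$, and as ordinal exponentiation is strictly increasing and $\alpha=\omega^\alpha$ this yields $\omega^{\vartheta(t)}<\omega^\alpha=\alpha$. I shall also use the standard inequality $\gamma\leq\omega^\gamma$.

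The core of the argument is to show that every ordinal $\xi<\omega^{\vartheta(t)}$ satisfies $\xi<\vartheta(t)$; combined with $\vartheta(t)\leq\omega^{\vartheta(t)}$ this forces $\vartheta(t)=\omega^{\vartheta(t)}$, as required. So fix $\xi<\omega^{\vartheta(t)}$. Then $\xi<\alpha$, so by Lemma~\ref{lem:embed-alpha-s-alpha} we may regard $\xi$ as a term of $\varepsilon(S_{\omega^\alpha}^u)$ with $\xi<\Omega\leq t$. Its $\varepsilon$-rank is $\xi^*=\min\{\gamma\,|\,\xi<\omega^{\gamma+1}\}$; from $\omega^{\xi^*}\leq\xi<\omega^{\vartheta(t)}$ (the case $\xi=0$ being covered by $\vartheta(t)>0$) we obtain $\omega^{\xi^*}<\omega^{\vartheta(t)}$ and hence $\xi^*<\vartheta(t)$. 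Thus condition~(ii) of Definition~\ref{def:bh-collapse} applies to the pair $\xi<t$ and gives $\vartheta(\xi)<\vartheta(t)$, while Lemma~\ref{lem:bh-epsilon-number} gives $\xi\leq\vartheta(\xi)$; together these yield $\xi<\vartheta(t)$.

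The one point that needs genuine care is the verification of the hypotheses of condition~(ii): one must make sure that $\xi$ is a legitimate element of $\varepsilon(S_{\omega^\alpha}^u)$ — this is exactly where $\omega^{\vartheta(t)}<\omega^\alpha$ is used — and that $\xi^*$ is a bona fide ordinal strictly below $\vartheta(t)$, not the dummy value $\{1\}$. Everything else is routine ordinal arithmetic. Note finally that the hypothesis $\Omega\leq t$ enters only through the chain $\xi<\Omega\leq t$, that is, merely to place all the small terms $\xi$ strictly below $t$.
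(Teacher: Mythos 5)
Your argument is correct, and it rests on the same basic mechanism as the paper's proof --- apply condition~(ii) of Definition~\ref{def:bh-collapse} to small terms $\xi<\Omega\leq t$ whose rank $\xi^*$ lies below $\vartheta(t)$, then pass from $\vartheta(\xi)$ back to $\xi$ via Lemma~\ref{lem:bh-epsilon-number} --- but you package it differently. The paper proves exactly the two stated properties: for additive principality it takes $\beta,\gamma<\vartheta(t)$ and uses the estimate $(\beta+\gamma)^*\leq\max\{\beta^*,\gamma^*\}\leq\max\{\beta,\gamma\}<\vartheta(t)$ before applying (ii) to the term $\beta+\gamma<\Omega\leq t$, and it handles $\omega<\vartheta(t)$ by a separate little argument via $1<\vartheta(t)$ and $\omega^*=1$. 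You instead establish the uniformly stronger statement that $\vartheta(t)$ is a fixed point of $\gamma\mapsto\omega^\gamma$ (an $\varepsilon$-number), using $\omega^{\xi^*}\leq\xi$ from Notation~\ref{notation:abs-and-star} to get $\xi^*<\vartheta(t)$ for every $\xi<\omega^{\vartheta(t)}$, so that both conclusions drop out at once; the one point of care --- that $\omega^{\vartheta(t)}<\omega^\alpha=\alpha$, so each such $\xi$ really is a term $\hat\xi\in\varepsilon(S_{\omega^\alpha}^u)$ below $\Omega$ with a genuine ordinal rank --- is exactly the point you check, using $\vartheta(t)<\alpha$ and $\alpha=\omega^\alpha$ from Lemma~\ref{lem:bh-epsilon-number}. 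What your route buys is a single closure argument and a sharper conclusion ($\vartheta(t)$ is an $\varepsilon$-number, not merely additively principal and above $\omega$); what the paper's route buys is that it needs nothing beyond what the proposition asserts and that its sum-rank estimate is the same one it records anyway for later use (Lemma~\ref{lem:exp-add-ranks}).
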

\begin{proof}
From $\beta<\omega^{\beta^*+1}$ and $\gamma<\omega^{\gamma^*+1}$ we get $\beta+\gamma<\omega^{\max\{\beta^*,\gamma^*\}+1}$ and thus
\begin{equation*}
 (\beta+\gamma)^*\leq\max\{\beta^*,\gamma^*\}.
\end{equation*}
Also, $\beta<\omega^{\beta+1}$ yields $\beta^*\leq\beta$. Thus $\beta,\gamma<\vartheta(t)$ implies
\begin{equation*}
 (\beta+\gamma)^*\leq\max\{\beta^*,\gamma^*\}\leq\max\{\beta,\gamma\}<\vartheta(t).
\end{equation*}
As $\omega^\alpha$ is additively principal we can form the term $\beta+\gamma=\Omega^0\cdot(\beta+\gamma)$. From $\beta+\gamma<\Omega\leq t$ and the above we infer $\vartheta(\beta+\gamma)<\vartheta(t)$. Together with the previous lemma we obtain $\beta+\gamma<\vartheta(t)$, as needed to show that $\vartheta(t)$ is additively principal. As for $\omega<\vartheta(t)$, note first that we have $0^*=0\leq t^*<\vartheta(t)$ and thus $0<\vartheta(0)<\vartheta(t)$, which means $1<\vartheta(t)$. In view of $\omega^*=1$ this implies $\vartheta(\omega)<\vartheta(t)$. Together with the previous lemma we get $\omega<\vartheta(t)$.
\end{proof}

Note that the condition $\Omega\leq t$ in the statement of the proposition is necessary: We could indeed set $\vartheta(\beta):=f(\beta)$ for any strictly increasing function $f:\aleph_1\rightarrow\aleph_1$ with $f(\beta)>\beta$, and then use the construction from Remark~\ref{rmk:Bachmann-Howard-semantically} to extend~$\vartheta$ to a Bachmann-Howard collapse $\varepsilon(S_{\omega^{\aleph_1}}^u)\rightarrow\aleph_1$. The value $\vartheta(\Omega)$ cannot be prescribed in the same way because a strictly increasing function $f:\aleph_1+1\rightarrow\aleph_1$ does not exist. It follows that the values $\vartheta(\beta)$ for $\beta<\Omega$ are not informative at all, in contrast to the usual notation systems for the Bachmann-Howard ordinal. This will not be a problem, however, as we can work with the ordinals $\vartheta(\Omega^{1+\beta})$ instead. Next, recall the above definition of exponentiation and addition for terms in $\varepsilon(S_{\omega^\alpha}^u)$. Crucially, these operations behave nicely with respect to the ranks:

\begin{lemma}\label{lem:exp-add-ranks}
 The following holds for all $s,t\in\varepsilon(S_{\omega^\alpha}^u)$:
\begin{enumerate}[label=(\roman*)]
 \item $(\Omega^s)^*\leq s^*$,
 \item $(s+t)^*\leq\max\{s^*,t^*\}$.
\end{enumerate}
\end{lemma}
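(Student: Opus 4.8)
My plan is to derive both parts from the way the rank $(\cdot)^*$ is computed, together with the identity $\varepsilon(S_{\omega^\delta}^u)=\varepsilon(S_{\omega^{\delta'}}^u)\cap\varepsilon^0(S_{\omega^\delta}^u)$ (valid whenever $\delta\le\delta'$), which was established in the proof of the compatibility lemma for the $\varepsilon$-systems. Recall that membership of a term in the preliminary system $\varepsilon^0(S_{\omega^\delta}^u)$ depends only on its constituents: it holds precisely when every subterm $\varepsilon_\sigma$ occurring in the term satisfies $\sigma\in S_{\omega^\delta}^u$ and every ordinal coefficient $\beta$ occurring in it satisfies $\beta<\omega^\delta$. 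The proof of the rank lemma then shows that, for any $s\in\varepsilon(S_{\omega^\alpha}^u)$, the ordinal $s^*$ is the least $\gamma$ for which all constituents of $s$ meet the corresponding level-$(\gamma+1)$ bounds (i.e.\ $\sigma\in S_{\omega^{\gamma+1}}^u$ and $\beta<\omega^{\gamma+1}$); in particular $s^*<\alpha$ and $s\in\varepsilon(S_{\omega^{s^*+1}}^u)$, and similarly for $t$. (Throughout we assume $\alpha>0$, as is needed for $\Omega^{(\cdot)}$ and term addition to be defined at all.)

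For part~(i), if $s=\varepsilon_\sigma$ then $\Omega^s=s$ and there is nothing to do. Otherwise $\Omega^s=\Omega^s\cdot 1$, which is a term of $\varepsilon(S_{\omega^\alpha}^u)$ by clause~(iii) of Definition~\ref{def:epsilon-of-ordering} (take $n=0$, $\beta_0=1$, $s_0=s$; the side condition is met since $s$ is not of the form $\varepsilon_\sigma$). Its constituents are exactly those of $s$ together with the coefficient $1<\omega^{s^*+1}$, so all of them meet the level-$(s^*+1)$ bounds; hence $\Omega^s\cdot 1\in\varepsilon^0(S_{\omega^{s^*+1}}^u)$, and combining this with $\Omega^s\cdot 1\in\varepsilon(S_{\omega^\alpha}^u)$ and the identity above gives $\Omega^s\cdot 1\in\varepsilon(S_{\omega^{s^*+1}}^u)$, i.e.\ $(\Omega^s)^*\le s^*$.

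For part~(ii), put $\gamma:=\max\{s^*,t^*\}$, so $\gamma+1\le\alpha$ and $s,t\in\varepsilon(S_{\omega^{\gamma+1}}^u)$ by compatibility, while $s+t\in\varepsilon(S_{\omega^\alpha}^u)$ since $\varepsilon(S_{\omega^\alpha}^u)$ is closed under addition. Inspecting the case distinction defining term addition, every constituent of $s+t$ is a constituent of $s$, a constituent of $t$, one of the coefficients $1$ or $2$, or a single sum $\beta_i+\gamma_0$ of an ordinal coefficient $\beta_i$ of $s$ with an ordinal coefficient $\gamma_0$ of $t$. Constituents of $s$ meet the level-$(s^*+1)$ bounds, hence also the weaker level-$(\gamma+1)$ bounds since $s^*\le\gamma$; likewise for $t$. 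The coefficients $1$ and $2$ are smaller than $\omega^{\gamma+1}$, and from $\beta_i<\omega^{s^*+1}$, $\gamma_0<\omega^{t^*+1}$ and the additive closure of $\omega^{\gamma+1}$ we get $\beta_i+\gamma_0<\omega^{\gamma+1}$. So all constituents of $s+t$ meet the level-$(\gamma+1)$ bounds, whence $s+t\in\varepsilon^0(S_{\omega^{\gamma+1}}^u)$; with $s+t\in\varepsilon(S_{\omega^\alpha}^u)$ and the identity this yields $s+t\in\varepsilon(S_{\omega^{\gamma+1}}^u)$, so $(s+t)^*\le\gamma=\max\{s^*,t^*\}$.

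The only step needing genuine attention is the enumeration of the possible constituents of $s+t$ in part~(ii): one has to read each line of the addition table and confirm that nothing beyond the coefficients $1,2$ and a single sum $\beta_i+\gamma_0$ is ever created. As the table is finite and every case introduces at most such constituents, this is purely mechanical, and the one substantive fact used is the additive closure of $\omega^{\gamma+1}$. I do not anticipate any obstacle beyond this routine bookkeeping.
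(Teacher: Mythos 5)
Your proof is correct and takes essentially the same route as the paper: set $\delta=\max\{s^*,t^*\}$, observe that $\Omega^s$ resp.\ $s+t$ lies in $\varepsilon(S_{\omega^{\delta+1}}^u)$ --- where the paper simply cites the previously observed closure of that system under exponentiation and addition, while you re-verify this by the constituent description of $\varepsilon^0(S_{\omega^{\delta+1}}^u)$, the additive closure of $\omega^{\delta+1}$, and the intersection identity from the compatibility lemma --- and then conclude by minimality of the rank. The extra bookkeeping is harmless and even makes explicit the (implicit) point that the term $s+t$ as computed in $\varepsilon(S_{\omega^{\alpha}}^u)$ itself belongs to the smaller system.
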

\begin{proof}
 The argument is essentially the same for both claims, so we only consider part~(ii): Set $\delta:=\max\{s^*,t^*\}$. By definition of the rank we have $s,t\in\varepsilon(S_{\omega^{\delta+1}}^u)$. We have already observed that $\varepsilon(S_{\omega^{\delta+1}}^u)$ is closed under addition, i.e.~we also have $s+t\in\varepsilon(S_{\omega^{\delta+1}}^u)$. This implies $(s+t)^*\leq\delta$ by the minimality of the rank.
\end{proof}

The following notions will be of central importance in the next sections, where we extend the search trees $S_{\omega^\alpha}^u$ to proof trees and analyse them by proof-theoretic methods:

\begin{definition}\label{def:bh-to-operators}
 Assume $\vartheta:\varepsilon(S_{\omega^\alpha}^u)\bh\alpha$. For each $t\in\varepsilon(S_{\omega^\alpha}^u)$ with $\Omega\leq t$ and each subset $X\subseteq\omega^\alpha$ we define a set $C^\vartheta(t,X)\subseteq\varepsilon(S_{\omega^\alpha}^u)$: Put \begin{equation*}
 C^\vartheta(t,X):=\bigcup_{n\in\omega}C_n^\vartheta(t,X)
 \end{equation*}
 where $C_n^\vartheta(t,X)$ is inductively defined by
\begin{align*}
C_0^\vartheta(t,X)&=X\cup\{0\},\\
C_{n+1}^\vartheta(t,X)&=\begin{aligned}[t]C_{n}^\vartheta(t,X)&\cup\{s\in\varepsilon(S_{\omega^\alpha}^u)\,|\,s^*\in C_{n}^\vartheta(t,X)\}\\ &\cup\{\vartheta(s)\,|\,s\in C_{n}^\vartheta(t,X)\text{ and }s<t\}\\ &\cup\{s\,|\,s<s'\text{ for some }s'\in C_{n}^\vartheta(t,X)\cap\Omega\}.\end{aligned}
\end{align*} 
\end{definition}

Here we have abbreviated $C_{n}^\vartheta(t,X)\cap\Omega=\{s'\in C_{n}^\vartheta(t,X)\,|\,s'<\Omega\}$. The reader may wish to recall that $s^*$ and $\vartheta(s)$ are elements of $\omega^\alpha\cong\varepsilon(S_{\omega^\alpha}^u)\cap\Omega$, for any term $s\in\varepsilon(S_{\omega^\alpha}^u)$. Clearly $C^\vartheta(t,X)$ is primitive recursive in $\vartheta,t,X,\alpha$ and (the fixed enumeration~of)~$u$. Let us show some basic properties:

\begin{lemma}\label{lem:basic-c-sets}
 Assume $\vartheta:\varepsilon(S_{\omega^\alpha}^u)\bh\alpha$. Then the following holds:
\begin{enumerate}[label=(\roman*)]
 \item If $\Omega\leq t<t'$ then $C^\vartheta(t,X)\subseteq C^\vartheta(t',X)$.
 \item If $X\subseteq C^\vartheta(t,X')\cap\Omega$ then $C^\vartheta(t,X)\subseteq C(t,X')$.
 \item If $s\in C^\vartheta(t,X)$ then $s^*\in C^\vartheta(t,X)$.
\end{enumerate}
\end{lemma}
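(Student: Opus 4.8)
The plan is to verify each of the three items directly from the definitions of $C^\vartheta(t,X)=\bigcup_n C_n^\vartheta(t,X)$ and the inductive clauses for $C_{n+1}^\vartheta(t,X)$, using the earlier lemmata (in particular Lemma~\ref{lem:basic-c-sets} is being stated right now, so the proof can only use Lemmata~\ref{lem:bh-epsilon-number}, \ref{lem:exp-add-ranks} and the definition of Bachmann-Howard collapse). For part~(iii) I would argue by a short case distinction on why $s\in C^\vartheta(t,X)=\bigcup_n C_n^\vartheta(t,X)$, i.e.\ fix $n$ minimal with $s\in C_n^\vartheta(t,X)$. If $n=0$ then $s\in X\cup\{0\}\subseteq\omega^\alpha$, so $s$ is really an ordinal $\hat s=\Omega^0\cdot s$ (or $0$), and $s^*\leq s$; moreover $s^*<\Omega$ and $s^*<s$ (or $s^*=0$ when $s=0$), so $s^*\in C_1^\vartheta(t,X)$ via the last ``downward closure below $\Omega$'' clause (using $s\in C_0^\vartheta(t,X)\cap\Omega$). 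If $n=m+1$, inspect which of the four clauses put $s$ into $C_{m+1}^\vartheta(t,X)$; in the clause $\{s\mid s^*\in C_m^\vartheta(t,X)\}$ we get $s^*\in C_m^\vartheta(t,X)\subseteq C^\vartheta(t,X)$ immediately, in the clause $\{\vartheta(s')\mid s'<t\}$ the element $s=\vartheta(s')$ is an ordinal $<\Omega$ so again $s^*\leq s<\Omega$ and the downward-below-$\Omega$ clause gives $s^*\in C^\vartheta(t,X)$, and in the downward-below-$\Omega$ clause $s$ itself is $<\Omega$ and we repeat the same reasoning. So (iii) is essentially: every element of $C^\vartheta(t,X)$ is either below $\Omega$ (hence its rank, being $\leq$ itself and $<\Omega$, lands back in the set by downward closure) or was put in precisely because its rank was already present.

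For part~(i), monotonicity in $t$, I would prove by induction on $n$ that $C_n^\vartheta(t,X)\subseteq C^\vartheta(t',X)$ whenever $\Omega\leq t<t'$; then take the union over $n$. The base case $n=0$ is trivial since $C_0^\vartheta(t,X)=X\cup\{0\}=C_0^\vartheta(t',X)$. For the step, each of the four generating clauses for $C_{n+1}^\vartheta(t,X)$ is applied to elements of $C_n^\vartheta(t,X)$, which by induction hypothesis already lie in $C^\vartheta(t',X)$, hence in some $C_k^\vartheta(t',X)$; the first, third and fourth clauses are identical in form and produce elements of $C_{k+1}^\vartheta(t',X)\subseteq C^\vartheta(t',X)$. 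The only clause that mentions $t$ is $\{\vartheta(s)\mid s\in C_n^\vartheta(t,X),\ s<t\}$; here, since $t<t'$, the condition $s<t$ implies $s<t'$, so $\vartheta(s)$ is also produced by the corresponding clause for $t'$. Thus every generator stays inside $C^\vartheta(t',X)$ and the induction goes through.

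For part~(ii) the hypothesis is $X\subseteq C^\vartheta(t,X')\cap\Omega$, and the goal (reading $C(t,X')$ as $C^\vartheta(t,X')$) is $C^\vartheta(t,X)\subseteq C^\vartheta(t,X')$. Again I would show $C_n^\vartheta(t,X)\subseteq C^\vartheta(t,X')$ by induction on $n$. For $n=0$: $C_0^\vartheta(t,X)=X\cup\{0\}$, and $X\subseteq C^\vartheta(t,X')$ by hypothesis while $0\in C_0^\vartheta(t,X')\subseteq C^\vartheta(t,X')$. For the inductive step, as in part~(i), the four generating clauses for $C_{n+1}^\vartheta(t,X)$ are applied to elements of $C_n^\vartheta(t,X)\subseteq C^\vartheta(t,X')$ by the induction hypothesis, and since the same four clauses (with the same $t$) are used in the construction of $C^\vartheta(t,X')$, the resulting elements again lie in $C^\vartheta(t,X')$; note here the hypothesis $X\subseteq\Omega$ is needed only to make $C_0^\vartheta(t,X)$ sit inside $\varepsilon(S_{\omega^\alpha}^u)$ sensibly, and the fourth clause's ``$\cap\Omega$'' restriction is unaffected. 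I expect the main (and essentially only) subtlety to be bookkeeping: being careful that taking a union over the stage indices $n$ and $k$ does no harm, and — in part~(iii) — correctly handling the $n=0$ case, where one must observe that an element of $X\cup\{0\}$, viewed via Notation~\ref{notation:abs-and-star} as the term $\hat s$, satisfies $s^*\leq s<\Omega$ (or $s=s^*=0$) so that the downward-closure-below-$\Omega$ clause applies; there is no deep obstacle, just a need to keep the three rank functions and the identification $\omega^\alpha\cong\varepsilon(S_{\omega^\alpha}^u)\cap\Omega$ straight.
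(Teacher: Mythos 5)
Your proposal is correct and follows essentially the same route as the paper: induction on the stage $n$ for all three parts, with part (iii) resting on the observation $s^*\leq s$ for $s<\Omega$ together with the downward-closure-below-$\Omega$ clause. One small inaccuracy: your claim that $s^*<s$ for $s\neq 0$ fails when $s<\Omega$ is an $\varepsilon$-number (there $s^*=s$, e.g.\ $s=\varepsilon_0$ when $\alpha>\varepsilon_0$), so the strict downward-closure clause need not apply; this is harmless, since in that case $s^*=s$ already lies in $C^\vartheta(t,X)$ — which is exactly why the paper only uses $s^*\leq s$ — but the case $s^*=s$ should be mentioned rather than the strict inequality asserted.
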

\begin{proof}
 (i) It is straightforward to establish $C_n^\vartheta(t,X)\subseteq C(t',X)$ by induction on $n$.\\
(ii) Note that $C^\vartheta(t,X)$ is defined because of $X\subseteq\varepsilon(S_{\omega^\alpha}^u)\cap\Omega\cong\omega^\alpha$. A straightforward induction on $n$ shows $C_n^\vartheta(t,X)\subseteq C(t,X')$.\\
(iii) Let us first establish the claim in the special case $s<\Omega$: In the context of Notation~\ref{notation:abs-and-star} we have observed $s^*\leq s$. Thus $s\in C^\vartheta(t,X)\cap\Omega$ implies $s^*\in C^\vartheta(t,X)$, by one of the closure properties of $C^\vartheta(t,X)$. As for the general case, let us show by induction on $n$ that $s\in C_n^\vartheta(t,X)$ implies $s^*\in C^\vartheta(t,X)$: For $n=0$ this reduces to the special case $s<\Omega$, because of $X\cup\{0\}\subseteq C^\vartheta(t,X)\cap\Omega$. Concerning the induction step, if $s\in C_{n+1}^\vartheta(t,X)$ holds because of $s^*\in C_n^\vartheta(t,X)$ then the claim is immediate. In all other cases we have $s<\Omega$, and the claim holds as before.
\end{proof}

The following recovers the usual construction of the Bachmann-Howard ordinal (cf.~Remark~\ref{rmk:Bachmann-Howard-semantically} above):

\begin{lemma}\label{lem:recover-c-sets}
 Assume $\vartheta:\varepsilon(S_{\omega^\alpha}^u)\bh\alpha$. For each $t\in\varepsilon(S_{\omega^\alpha}^u)$ with $\Omega\leq t$ we have
\begin{equation*}
 C^\vartheta(t,\vartheta(t))\cap\Omega=\vartheta(t).
\end{equation*}
\end{lemma}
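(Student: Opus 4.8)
The plan is to show both inclusions. For ``$\subseteq$'', we must verify that $C^\vartheta(t,\vartheta(t))\cap\Omega\subseteq\vartheta(t)$, i.e.\ that every $s\in C^\vartheta(t,\vartheta(t))$ with $s<\Omega$ satisfies $s<\vartheta(t)$ (identifying, as in Notation~\ref{notation:abs-and-star}, the terms below $\Omega$ with ordinals $<\omega^\alpha$). I would prove by induction on $n$ that $C_n^\vartheta(t,\vartheta(t))\cap\Omega\subseteq\vartheta(t)$, examining the four clauses in the definition of $C_{n+1}^\vartheta$. The base case $C_0^\vartheta(t,\vartheta(t))\cap\Omega=\vartheta(t)\cup\{0\}$ is clear since $0<\vartheta(t)$ by Lemma~\ref{lem:bh-epsilon-number} (which gives $\vartheta(0)\geq 0$, hence $\vartheta(t)>\vartheta(0)\geq 0$ whenever $t\ne 0$; and $t\ne 0$ because $\Omega\leq t$). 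For the induction step, suppose $s\in C_{n+1}^\vartheta(t,\vartheta(t))$ with $s<\Omega$. If $s$ entered via $s^*\in C_n^\vartheta(t,\vartheta(t))$: since $s<\Omega$ we have $s^*\leq s$, but this goes the wrong way; instead I note that for $s<\Omega$, if $s^*\in C_n\cap\Omega\subseteq\vartheta(t)$ then $s^*<\vartheta(t)$, so $\vartheta(s)<\vartheta(t)$ follows, but I actually want $s<\vartheta(t)$ — so I should use clause three of the definition together with Lemma~\ref{lem:bh-epsilon-number}. The cleanest route: for $s<\Omega$ with $s\in C_{n+1}$, the relevant way $s$ can be new is either (a) it was added as some $\vartheta(s')$ with $s'\in C_n$, $s'<t$; or (b) it lies below some $s'\in C_n\cap\Omega$. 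In case (b), $s<s'<\vartheta(t)$ by induction hypothesis. In case (a), $s'<t$; if $s'<\Omega$ then $s'\in C_n\cap\Omega\subseteq\vartheta(t)$, so $(s')^*\leq s'<\vartheta(t)$, and Lemma~\ref{lem:basic-c-sets}(iii) places $(s')^*$ in $C_n$; then by (BH2) applied to $s'<_{\varepsilon}t$ together with $(s')^*<\vartheta(t)$ we get $\vartheta(s')<\vartheta(t)$. If instead $\Omega\leq s'$, then again $(s')^*\leq\ldots$ — here I must check $(s')^*\in C_n^\vartheta(t,\vartheta(t))$, which follows from Lemma~\ref{lem:basic-c-sets}(iii) applied to $s'\in C_n$; and $(s')^*\in\Omega$, so $(s')^*\in C_n\cap\Omega\subseteq\vartheta(t)$; now (BH2) with $s'<t$ and $(s')^*<\vartheta(t)$ gives $\vartheta(s')<\vartheta(t)$, i.e.\ $s<\vartheta(t)$. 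This closes the induction.

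For the reverse inclusion ``$\supseteq$'', I must show $\vartheta(t)\subseteq C^\vartheta(t,\vartheta(t))\cap\Omega$. But $\vartheta(t)=C_0^\vartheta(t,\vartheta(t))\cap\Omega\subseteq C^\vartheta(t,\vartheta(t))\cap\Omega$ is immediate from the definition $C_0^\vartheta(t,X)=X\cup\{0\}$ with $X=\vartheta(t)$. So this direction is trivial, and all the content is in ``$\subseteq$''.

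The main obstacle I anticipate is the careful bookkeeping in case (a) above: one must repeatedly invoke that ranks $(s')^*$ are again captured by $C^\vartheta(t,\vartheta(t))$ (Lemma~\ref{lem:basic-c-sets}(iii)) and land below $\Omega$, so that the induction hypothesis applies to them, before feeding them into (BH2). A subtle point worth stating explicitly: the closure clause ``$s^*\in C_n \Rightarrow s\in C_{n+1}$'' can introduce terms $s$ with $\Omega\leq s$ into $C^\vartheta(t,\vartheta(t))$, but such terms do not lie in $\Omega$ and hence do not threaten the inclusion into $\vartheta(t)$; only when we subsequently apply $\vartheta$ to them (which requires $s<t$, not merely $\Omega\leq s$) do they contribute ordinals $<\Omega$, and then the argument of case (a) applies. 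I would also remark that Lemma~\ref{lem:bh-epsilon-number} (giving $\alpha=\omega^\alpha$, so $\Omega=\Omega^1\cdot 1$ makes sense and $\vartheta(t)<\alpha=\omega^\alpha$ identifies with a term $<\Omega$) is what makes the identification $\varepsilon(S_{\omega^\alpha}^u)\cap\Omega\cong\omega^\alpha$ of Lemma~\ref{lem:embed-alpha-s-alpha} usable throughout, so that ``$s<\vartheta(t)$'' and ``$s\in\vartheta(t)$'' and ``$s\in C^\vartheta(t,\vartheta(t))\cap\Omega$'' can be compared on the nose.
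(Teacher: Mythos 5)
Your overall strategy is the paper's: the inclusion $\vartheta(t)\subseteq C^\vartheta(t,\vartheta(t))\cap\Omega$ is immediate from $C_0$, and the converse is proved by induction on the stages $C_n^\vartheta(t,\vartheta(t))$, treating the closure clauses one by one via condition (ii) of Definition~\ref{def:bh-collapse}, $s\leq\vartheta(s)$ (Lemma~\ref{lem:bh-epsilon-number}), and $s^*\leq s$ for $s<\Omega$. The gap is in your case (a) when $\Omega\leq s'$: you justify $(s')^*\in C_n^\vartheta(t,\vartheta(t))$ by Lemma~\ref{lem:basic-c-sets}(iii), but that lemma only yields $(s')^*\in C^\vartheta(t,\vartheta(t))$, i.e.\ membership at \emph{some} stage, possibly later than $n$. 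Your induction hypothesis speaks only about $C_n\cap\Omega$, so it cannot be applied to $(s')^*$; and appealing instead to $C^\vartheta(t,\vartheta(t))\cap\Omega\subseteq\vartheta(t)$ for the full union would presuppose exactly the statement being proved. Since $s=\vartheta(s')$ with $\Omega\leq s'<t$ is precisely how collapses of ``large'' terms enter below $\Omega$, the induction does not close as written.

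Two repairs are available. The paper strengthens the induction statement: one proves simultaneously that $s\in C_n^\vartheta(t,\vartheta(t))$ implies $s^*<\vartheta(t)$ (for \emph{all} $s$, not only $s<\Omega$); this auxiliary claim is preserved by every clause and hands you $(s')^*<\vartheta(t)$ directly in case (a), after which condition (ii) of Definition~\ref{def:bh-collapse} with $s'<t$ gives $\vartheta(s')<\vartheta(t)$. Alternatively, make your closing remark precise: since $C_0=\vartheta(t)\cup\{0\}\subseteq\Omega$ and both the $\vartheta$-clause and the downward-closure clause produce only terms $<\Omega$, a term $s'\geq\Omega$ can first appear in some $C_{m+1}$ with $m+1\leq n$ only through the rank clause, so $(s')^*\in C_m\subseteq C_n$, and then your stage-$n$ hypothesis applies to $(s')^*<\Omega$. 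Two smaller points: your base case is circular as stated, since deriving $\vartheta(0)<\vartheta(t)$ from condition (ii) already requires $0=0^*<\vartheta(t)$; just use condition (i) (which gives $t^*<\vartheta(t)$, hence $\vartheta(t)>0$) or Proposition~\ref{prop:bh-epsilon-numbers}. And your ``cleanest route'' enumeration (a)/(b) omits the clause that adds $s$ whenever $s^*\in C_n$, which does create new ordinals below $\Omega$; your earlier sentences handle it correctly ($s\leq\vartheta(s)<\vartheta(t)$ via Lemma~\ref{lem:bh-epsilon-number} and condition (ii), using $s<\Omega\leq t$ and $s^*\in C_n\cap\Omega\subseteq\vartheta(t)$), so keep that case explicit in the final write-up.
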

\begin{proof}
First, $\vartheta(t)\subseteq C^\vartheta(t,\vartheta(t))\cap\Omega$ is immediate by the definition of $C_0^\vartheta(t,\vartheta(t))$. To establish the converse inclusion we prove
\begin{equation*}
C_n^\vartheta(t,\vartheta(t))\cap\Omega\subseteq\vartheta(t)
\end{equation*}
by induction on $n$. Simultaneously we show that $s\in C_n^\vartheta(t,\vartheta(t))$ implies $s^*<\vartheta(t)$. For $n=0$ the inclusion $\vartheta(t)\cup\{0\}\subseteq\vartheta(t)$ amounts to $0<\vartheta(t)$, which holds by Proposition~\ref{prop:bh-epsilon-numbers}. Also $s<\vartheta(t)$ implies $s^*\leq s<\vartheta(t)$, as in the proof of the previous lemma. In the induction step we distinguish several cases: First, assume that $s\in C_{n+1}^\vartheta(t,\vartheta(t))$ holds because of $s^*\in C_n^\vartheta(t,\vartheta(t))$. In view of $s^*<\Omega$ the induction hypothesis yields $s^*<\vartheta(t)$. In case $s<\Omega$ we also have $s<\Omega\leq t$. Then $\vartheta(s)<\vartheta(t)$ follows by the definition of Bachmann-Howard collapse. Together with Lemma~\ref{lem:bh-epsilon-number} we obtain $s\leq\vartheta(s)<\vartheta(t)$, as required for the induction step. Next, assume $s=\vartheta(s_0)$ with $s_0\in C_n^\vartheta(t,X)$ and $s_0<t$. The induction hypothesis provides ${s_0}^*<\vartheta(t)$, which implies $s=\vartheta(s_0)<\vartheta(t)$. Also, because of $s=\vartheta(s_0)<\Omega$ we have $s^*\leq s<\vartheta(t)$. Finally, assume that we have $s<s'$ for some $s'\in C_n^\vartheta(t,X)\cap\Omega$. By induction hypothesis we have $s'<\vartheta(t)$. This implies $s^*\leq s<s'<\vartheta(t)$, as required.
\end{proof}

For a proof-theoretic analysis of the Kripke-Platek axioms it is convenient to use the following ``controlling operators". This formalism is due to Buchholz \cite{buchholz-local-predicativity}:

\begin{definition}\label{def:operators}
Assume $\vartheta:\varepsilon(S_{\omega^\alpha}^u)\bh\alpha$. For a term $t\in\varepsilon(S_{\omega^\alpha}^u)$ with $\Omega\leq t$ and ordinals $\beta_1,\dots,\beta_k<\omega^\alpha$ we write
\begin{equation*}
\mathcal H_t^\vartheta[\beta_1,\dots ,\beta_k]:=C^\vartheta(t+1,\{\beta_1,\dots,\beta_n\})
\end{equation*}
We will write $\mathcal H_t^\vartheta$ rather than $\mathcal H_t^\vartheta[\,]$ (i.e.~in case $k=0$).
\end{definition}

Occasionally it is helpful to think of the operator $\mathcal H_t^\vartheta[\beta_1,\dots ,\beta_k]$ as the function
\begin{equation*}
\gamma\mapsto\mathcal H_t^\vartheta[\beta_1,\dots ,\beta_k,\gamma],
\end{equation*}
which maps an ordinal $\gamma<\omega^\alpha$ to a subset of $\varepsilon(S_{\omega^\alpha}^u)$. Note that this function exists as a set, as it is the restriction of a primitive recursive class function to a set. Our next goal is to recover properties of Buchholz' operators in our context:

\begin{lemma}\label{lem:operators-closure}
Assume $\vartheta:\varepsilon(S_{\omega^\alpha}^u)\bh\alpha$. The operators $\mathcal H_t^\vartheta[\beta_1,\dots ,\beta_k]$ are ``closure operators", in the sense that we have
\begin{enumerate}[label=(\roman*)]
\item $\{\beta_1,\dots ,\beta_k\}\subseteq\mathcal H_t^\vartheta[\beta_1,\dots ,\beta_k]$,
\item if $\{\gamma_1,\dots ,\gamma_l\}\subseteq\mathcal H_t^\vartheta[\beta_1,\dots ,\beta_k]$ then $\mathcal H_t^\vartheta[\gamma_1,\dots ,\gamma_l]\subseteq\mathcal H_t^\vartheta[\beta_1,\dots ,\beta_k]$.
\end{enumerate}
\end{lemma}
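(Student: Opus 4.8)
The plan is to unfold the definition $\mathcal H_t^\vartheta[\beta_1,\dots,\beta_k]=C^\vartheta(t+1,\{\beta_1,\dots,\beta_k\})$ and reduce both assertions to the two stability properties of the $C^\vartheta$-sets that are already recorded, namely the inclusion $C^\vartheta(t,X)\subseteq C^\vartheta(t',X)$ from Lemma~\ref{lem:basic-c-sets}(i) and the ``substitution into the parameter set'' property $C^\vartheta(t,X)\subseteq C^\vartheta(t,X')$ whenever $X\subseteq C^\vartheta(t,X')\cap\Omega$ from Lemma~\ref{lem:basic-c-sets}(ii). Note $\Omega\le t$ implies $\Omega\le t+1$, so all these sets are defined; and each $\beta_i<\omega^\alpha$ lies in $\varepsilon(S_{\omega^\alpha}^u)\cap\Omega\cong\omega^\alpha$, so the parameter sets are legitimate.

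For part~(i), I would simply observe $\{\beta_1,\dots,\beta_k\}=C_0^\vartheta(t+1,\{\beta_1,\dots,\beta_k\})\setminus\{0\}\subseteq C^\vartheta(t+1,\{\beta_1,\dots,\beta_k\})=\mathcal H_t^\vartheta[\beta_1,\dots,\beta_k]$, directly from the base clause of Definition~\ref{def:bh-to-operators}. For part~(ii), I would use the hypothesis $\{\gamma_1,\dots,\gamma_l\}\subseteq\mathcal H_t^\vartheta[\beta_1,\dots,\beta_k]=C^\vartheta(t+1,\{\beta_1,\dots,\beta_k\})$ together with the fact that each $\gamma_j<\omega^\alpha$ corresponds to a term below $\Omega$, hence $\{\gamma_1,\dots,\gamma_l\}\subseteq C^\vartheta(t+1,\{\beta_1,\dots,\beta_k\})\cap\Omega$. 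Then Lemma~\ref{lem:basic-c-sets}(ii) applied with the bound $t+1$, the set $X=\{\gamma_1,\dots,\gamma_l\}$ and $X'=\{\beta_1,\dots,\beta_k\}$ gives $C^\vartheta(t+1,\{\gamma_1,\dots,\gamma_l\})\subseteq C^\vartheta(t+1,\{\beta_1,\dots,\beta_k\})$, which is exactly $\mathcal H_t^\vartheta[\gamma_1,\dots,\gamma_l]\subseteq\mathcal H_t^\vartheta[\beta_1,\dots,\beta_k]$.

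The one subtlety — and the only thing that deserves care rather than citation — is the identification of ordinals below $\omega^\alpha$ with terms below $\Omega$. Lemma~\ref{lem:basic-c-sets}(ii) speaks of $C^\vartheta(t,X')\cap\Omega$, i.e.\ terms strictly below $\Omega$ in $\varepsilon(S_{\omega^\alpha}^u)$, whereas an element $\gamma_j$ of a parameter list is, strictly speaking, an ordinal $<\omega^\alpha$. Notation~\ref{notation:abs-and-star} and Lemma~\ref{lem:embed-alpha-s-alpha} tell us that the map $\beta\mapsto\hat\beta$ is an order isomorphism $\omega^\alpha\cong\varepsilon(S_{\omega^\alpha}^u)\cap\Omega$, and under the convention of writing $\beta$ for $\hat\beta$ this identification is exactly what makes the statement type-correct; so when the hypothesis of part~(ii) holds, each $\gamma_j$ viewed as a term does lie below $\Omega$, and the inclusion $\{\gamma_1,\dots,\gamma_l\}\subseteq C^\vartheta(t+1,\{\beta_1,\dots,\beta_k\})\cap\Omega$ that feeds Lemma~\ref{lem:basic-c-sets}(ii) is automatic. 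I expect this is the main (indeed the only) point where one has to be attentive; everything else is a direct application of the two parts of Lemma~\ref{lem:basic-c-sets}. No induction and no case analysis beyond that should be needed.
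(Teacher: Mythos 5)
Your proof is correct and is essentially the paper's own argument: part (i) reads off the base clause $C_0^\vartheta(t+1,\{\beta_1,\dots,\beta_k\})=\{\beta_1,\dots,\beta_k\}\cup\{0\}$, and part (ii) is precisely an application of Lemma~\ref{lem:basic-c-sets}(ii) with $t+1$ in place of $t$. The extra care you take over identifying ordinals below $\omega^\alpha$ with terms below $\Omega$ via Lemma~\ref{lem:embed-alpha-s-alpha} and Notation~\ref{notation:abs-and-star} is a fair point the paper leaves implicit, but it does not change the route.
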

\begin{proof}
Part (i) is immediate by definition, and (ii) holds by Lemma~\ref{lem:basic-c-sets}(ii).
\end{proof}

Let us record an easy consequence:

\begin{lemma}\label{lem:operators-max-counts}
 Assume $\vartheta:\varepsilon(S_{\omega^\alpha}^u)\bh\alpha$. Then the following holds:
\begin{enumerate}[label=(\roman*)]
\item If $s\in\mathcal H_t^\vartheta[\beta_1,\dots ,\beta_k]\cap\Omega$ and $s'<s$ then $s'\in\mathcal H_t^\vartheta[\beta_1,\dots ,\beta_k]$,
\item if $\max\{\gamma_1,\dots,\gamma_l\}\leq\max\{\beta_1,\dots,\beta_k\}$ then $\mathcal H_t^\vartheta[\gamma_1,\dots ,\gamma_l]\subseteq\mathcal H_t^\vartheta[\beta_1,\dots ,\beta_k]$.
\end{enumerate}
\end{lemma}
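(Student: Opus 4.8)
Both parts follow from the closure properties in Lemma~\ref{lem:operators-closure} together with the monotonicity properties of the $C^\vartheta$-operator established in Lemma~\ref{lem:basic-c-sets}. I would treat part~(ii) first, since part~(i) will turn out to be a special case once we unwind what $\mathcal H_t^\vartheta[\beta_1,\dots,\beta_k]$ means.

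For part~(ii), suppose $\max\{\gamma_1,\dots,\gamma_l\}\leq\max\{\beta_1,\dots,\beta_k\}$. The key observation is that by Lemma~\ref{lem:operators-max-counts}(i) — or rather, directly from the closure properties of $C^\vartheta$, since $\mathcal H_t^\vartheta[\beta_1,\dots,\beta_k]=C^\vartheta(t+1,\{\beta_1,\dots,\beta_k\})$ contains $\{0\}\cup\{\beta_1,\dots,\beta_k\}$ at stage $0$ and is closed under ``adding all smaller ordinals below $\Omega$'' — every ordinal $\delta\leq\max\{\beta_1,\dots,\beta_k\}$ lies in $\mathcal H_t^\vartheta[\beta_1,\dots,\beta_k]$. (Here one uses that $\beta_i<\omega^\alpha\leq\Omega$, so each $\beta_i$ is, via Lemma~\ref{lem:embed-alpha-s-alpha}, an element of $\varepsilon(S_{\omega^\alpha}^u)\cap\Omega$, and that $C^\vartheta(t+1,\cdot)$ closes off downward below $\Omega$.) In particular each $\gamma_j$ lies in $\mathcal H_t^\vartheta[\beta_1,\dots,\beta_k]$, so $\{\gamma_1,\dots,\gamma_l\}\subseteq\mathcal H_t^\vartheta[\beta_1,\dots,\beta_k]$, and now Lemma~\ref{lem:operators-closure}(ii) gives $\mathcal H_t^\vartheta[\gamma_1,\dots,\gamma_l]\subseteq\mathcal H_t^\vartheta[\beta_1,\dots,\beta_k]$, as desired.

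For part~(i): suppose $s\in\mathcal H_t^\vartheta[\beta_1,\dots,\beta_k]\cap\Omega$ and $s'<s$. Since $\mathcal H_t^\vartheta[\beta_1,\dots,\beta_k]=C^\vartheta(t+1,\{\beta_1,\dots,\beta_k\})$, we have $s\in C^\vartheta(t+1,\{\beta_1,\dots,\beta_k\})$ with $s<\Omega$, i.e.~$s\in C^\vartheta(t+1,\{\beta_1,\dots,\beta_k\})\cap\Omega$. But by the very definition of $C_{n+1}^\vartheta$ (the last clause, ``$\{s\,|\,s<s'\text{ for some }s'\in C_n^\vartheta(t,X)\cap\Omega\}$''), the set $C^\vartheta(t+1,\{\beta_1,\dots,\beta_k\})$ is closed under passing to smaller terms below $\Omega$; hence $s'\in C^\vartheta(t+1,\{\beta_1,\dots,\beta_k\})=\mathcal H_t^\vartheta[\beta_1,\dots,\beta_k]$. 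This is essentially a one-line unwinding of the definition.

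**Main obstacle.** There is no serious obstacle here; the only thing to be careful about is the bookkeeping around the identification $\varepsilon(S_{\omega^\alpha}^u)\cap\Omega\cong\omega^\alpha$ from Notation~\ref{notation:abs-and-star}, i.e.~making sure that when we speak of ordinals $\beta_i,\gamma_j,s,s'$ ``below $\Omega$'' we are consistently viewing them as terms $\hat\beta_i$, etc., so that the relevant closure clause of $C^\vartheta$ applies. Once that identification is taken for granted — as the paper does throughout this section — both parts are immediate consequences of the stage-by-stage definition of $C^\vartheta$ and of Lemma~\ref{lem:operators-closure}. The proof can be written in a few lines.
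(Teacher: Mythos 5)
Your proposal is correct and follows the paper's own argument: part (i) is exactly the last closure clause in the definition of $C^\vartheta(t+1,\{\beta_1,\dots,\beta_k\})$, and part (ii) combines the downward closure below $\max\{\beta_1,\dots,\beta_k\}$ (which the paper obtains by citing part (i) together with Lemma~\ref{lem:operators-closure}(i), while you unwind it directly) with Lemma~\ref{lem:operators-closure}(ii). The difference is only in ordering and phrasing, not in substance.
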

\begin{proof}
 Part (i) holds by the definition of $\mathcal H_t^\vartheta[\beta_1,\dots ,\beta_k]=C^\vartheta(t+1,\{\beta_1,\dots,\beta_n\})$. As for (ii), the previous lemma implies $\max\{\beta_1,\dots,\beta_k\}\in\mathcal H_t^\vartheta[\beta_1,\dots ,\beta_k]$. Then $\max\{\gamma_1,\dots,\gamma_l\}\leq\max\{\beta_1,\dots,\beta_k\}$ implies $\{\gamma_1,\dots ,\gamma_l\}\subseteq\mathcal H_t^\vartheta[\beta_1,\dots ,\beta_k]$, by~(i). The previous lemma yields $\mathcal H_t^\vartheta[\gamma_1,\dots ,\gamma_l]\subseteq\mathcal H_t^\vartheta[\beta_1,\dots ,\beta_k]$, as desired.
\end{proof}

\begin{lemma}\label{lem:operators-nice}
The operators $\mathcal H_t^\vartheta[\beta_1,\dots ,\beta_k]$ are ``nice", in the sense that we have
\begin{enumerate}[label=(\roman*)]
\item $\{0,\omega\}\subseteq\mathcal H_t^\vartheta[\beta_1,\dots ,\beta_k]$,
\item if $\{s,s'\}\subseteq\mathcal H_t^\vartheta[\beta_1,\dots ,\beta_k]$ then $\{\Omega^s,s+s'\}\subseteq\mathcal H_t^\vartheta[\beta_1,\dots ,\beta_k]$,
\item if $\sigma\in S_{\omega^\alpha}^u$ and $|\sigma|\in\mathcal H_t^\vartheta[\beta_1,\dots ,\beta_k]$ then $\varepsilon_\sigma\in\mathcal H_t^\vartheta[\beta_1,\dots ,\beta_k]$.
\end{enumerate}
\end{lemma}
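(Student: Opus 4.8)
The plan is to work straight from the definition $\mathcal H_t^\vartheta[\beta_1,\dots ,\beta_k]=C^\vartheta(t+1,\{\beta_1,\dots,\beta_k\})$ of Definition~\ref{def:operators} and the recursion for $C^\vartheta$ in Definition~\ref{def:bh-to-operators}. The one fact that does most of the work is the second closure clause of that recursion: if $r\in\varepsilon(S_{\omega^\alpha}^u)$ and $r^*$ already lies in some $C_n^\vartheta(t+1,\{\beta_1,\dots,\beta_k\})$, then $r$ itself enters at the next level. So for each of the three items it will suffice to exhibit the relevant object as a term of $\varepsilon(S_{\omega^\alpha}^u)$ and to check that its $\ast$-rank lies in $\mathcal H_t^\vartheta[\beta_1,\dots ,\beta_k]$; for the latter I will repeatedly combine Lemma~\ref{lem:basic-c-sets}(iii) (the operator is closed under $s\mapsto s^*$), the downward closure below $\Omega$ from Lemma~\ref{lem:operators-max-counts}(i), and the rank estimates of Lemma~\ref{lem:exp-add-ranks}. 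Throughout I work in the standing situation $\vartheta:\varepsilon(S_{\omega^\alpha}^u)\bh\alpha$, so that $\alpha=\omega^\alpha$ is an $\varepsilon$-number by Lemma~\ref{lem:bh-epsilon-number}.

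For (i): $0\in C_0^\vartheta(t+1,\{\beta_1,\dots,\beta_k\})$ by definition. Since $\alpha$ is an $\varepsilon$-number we have $\omega^\alpha=\alpha>\omega$, so $1$ and $\omega$ are genuine terms of $\varepsilon(S_{\omega^\alpha}^u)$; and by the formula for $\ast$ in Notation~\ref{notation:abs-and-star} we have $1^*=0$ and $\omega^*=1$. Hence $1$ enters the operator at level $1$ (its $\ast$-rank $0$ lying in $C_0^\vartheta$), and then $\omega$ enters at level $2$. For (ii), assume $s,s'\in\mathcal H_t^\vartheta[\beta_1,\dots ,\beta_k]$. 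Lemma~\ref{lem:basic-c-sets}(iii) puts $s^*$ and $s'^*$ in the operator, hence also $\max\{s^*,s'^*\}$, being one of the two. Lemma~\ref{lem:exp-add-ranks} gives $(\Omega^s)^*\leq s^*$ and $(s+s')^*\leq\max\{s^*,s'^*\}$; since $s^*$ and $\max\{s^*,s'^*\}$ lie in $\mathcal H_t^\vartheta[\beta_1,\dots ,\beta_k]\cap\Omega$, Lemma~\ref{lem:operators-max-counts}(i) (or equality) places $(\Omega^s)^*$ and $(s+s')^*$ in the operator as well. Finally $\Omega^s$ is a term of $\varepsilon(S_{\omega^\alpha}^u)$ by the definition of exponentiation, and $s+s'$ is one too because the term system is closed under addition for $\alpha>0$; so both enter the operator by the second closure clause of Definition~\ref{def:bh-to-operators}.

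For (iii), the first step is the identity $(\varepsilon_\sigma)^*=|\sigma|^*$ for $\sigma\in S_{\omega^\alpha}^u$. This follows by unwinding the two rank functions using the compatibility of the search trees: $\varepsilon_\sigma\in\varepsilon(S_{\omega^{\gamma+1}}^u)$ holds exactly when $\sigma\in S_{\omega^{\gamma+1}}^u$, i.e.\ (via Lemma~\ref{lem:search-trees-rank}) when $|\sigma|<\omega^{\gamma+1}$, and the least such $\gamma$ is by Notation~\ref{notation:abs-and-star} precisely $|\sigma|^*$. Now if $|\sigma|\in\mathcal H_t^\vartheta[\beta_1,\dots ,\beta_k]$, then $|\sigma|^*\leq|\sigma|<\Omega$ together with Lemma~\ref{lem:operators-max-counts}(i) yields $(\varepsilon_\sigma)^*=|\sigma|^*\in\mathcal H_t^\vartheta[\beta_1,\dots ,\beta_k]$; since $\varepsilon_\sigma$ is a term of $\varepsilon(S_{\omega^\alpha}^u)$ it then enters the operator by the second closure clause.

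I do not expect a real obstacle here: the whole argument is bookkeeping with the closure clauses of Definition~\ref{def:bh-to-operators}. The only points warranting a moment's care are the identification $(\varepsilon_\sigma)^*=|\sigma|^*$, which rests on the compatibility of the search trees (Lemma~\ref{lem:search-trees-compatible}) via the rank function of Lemma~\ref{lem:search-trees-rank}, and the verification that $0$ and $\omega$ genuinely are terms of $\varepsilon(S_{\omega^\alpha}^u)$, i.e.\ that $\omega<\omega^\alpha$ — this is where the fact that $\alpha$ is forced to be an $\varepsilon$-number (Lemma~\ref{lem:bh-epsilon-number}) is used.
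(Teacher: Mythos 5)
Your proof is correct and follows essentially the same route as the paper: items (i) and (ii) are verbatim the paper's argument (rank computations plus Lemma~\ref{lem:basic-c-sets}, Lemma~\ref{lem:exp-add-ranks}, Lemma~\ref{lem:operators-max-counts} and the closure clause of Definition~\ref{def:bh-to-operators}). For (iii) you prove the slightly sharper identity $(\varepsilon_\sigma)^*=|\sigma|^*$ where the paper only needs $\sigma\in S_{|\sigma|+1}^u\subseteq S_{\omega^{|\sigma|+1}}^u$, hence $(\varepsilon_\sigma)^*\leq|\sigma|$; this is a harmless refinement, and your explicit appeal to Lemma~\ref{lem:bh-epsilon-number} for $\omega<\omega^\alpha$ is a sensible way of making precise what the paper leaves implicit.
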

\begin{proof}
(i) By the definition of $\mathcal H_t^\vartheta[\beta_1,\dots ,\beta_k]=C^\vartheta(t+1,\{\beta_1,\dots ,\beta_k\})$ we immediately get $0\in\mathcal H_t^\vartheta[\beta_1,\dots ,\beta_k]$. Now $1<\omega^{0+1}$ implies $1^*=0$, so that we obtain $1\in\mathcal H_t^\vartheta[\beta_1,\dots ,\beta_k]$. Similarly, $\omega^1\leq\omega<\omega^2$ implies $\omega^*=1$, from which we can infer $\omega\in\mathcal H_t^\vartheta[\beta_1,\dots ,\beta_k]$.\\
(ii) Assume that $\mathcal H_t^\vartheta[\beta_1,\dots ,\beta_k]$ contains $s$ and $s'$. By Lemma~\ref{lem:basic-c-sets} it contains $s^*$ and $(s')^*$. Using Lemma~\ref{lem:exp-add-ranks} and the previous lemma we infer that it contains $(\Omega^s)^*$ and $(s+t)^*$. Finally, $\{\Omega^s,s+t\}\subseteq\mathcal H_t^\vartheta[\beta_1,\dots ,\beta_k]$ follows by definition.\\
(iii) By definition of the rank we have $\sigma\in S_{|\sigma|+1}^u\subseteq S_{\omega^{|\sigma|+1}}^u$ and thus $\varepsilon_\sigma\in\varepsilon(S_{\omega^{|\sigma|+1}}^u)$. This means ${\varepsilon_\sigma}^*\leq|\sigma|$. Thus $|\sigma|\in\mathcal H_t^\vartheta[\beta_1,\dots ,\beta_k]$ implies ${\varepsilon_\sigma}^*\in\mathcal H_t^\vartheta[\beta_1,\dots ,\beta_k]$, and then $\varepsilon_\sigma\in\mathcal H_t^\vartheta[\beta_1,\dots ,\beta_k]$ as desired.
\end{proof}

The above are general conditions for ``reasonable" operators. Now we show a result that is specific to the operators $\mathcal H_t^\vartheta[\beta_1,\dots ,\beta_k]$:

\begin{proposition}\label{prop:operators-special}
 Assume $\vartheta:\varepsilon(S_{\omega^\alpha}^u)\bh\alpha$. The operators $\mathcal H_t^\vartheta[\beta_1,\dots ,\beta_k]$ have the following properties:
\begin{enumerate}[label=(\roman*),leftmargin=1cm]
\item If $\Omega\leq t<t'$ then $\mathcal H_t^\vartheta[\beta_1,\dots ,\beta_k]\subseteq\mathcal H_{t'}^\vartheta[\beta_1,\dots ,\beta_k]$,
\item if $s\in\mathcal H_t^\vartheta[\beta_1,\dots ,\beta_k]$ and $s\leq t$ then $\vartheta(s)\in\mathcal H_t^\vartheta[\beta_1,\dots ,\beta_k]$,
\item if $s\in\mathcal H_t^\vartheta\cap\Omega$ then $s<\vartheta(t+1)$,
\item if $\{s,t\}\subseteq\mathcal H_t^\vartheta$ and $s<s'$ then $\vartheta(t+\Omega^s)<\vartheta(t+\Omega^{s'})$.
\end{enumerate}
\end{proposition}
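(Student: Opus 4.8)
The plan is to reduce all four parts to the properties of the sets $C^\vartheta(\cdot,\cdot)$ established in Lemmas~\ref{lem:basic-c-sets} and~\ref{lem:recover-c-sets}, the rank estimates of Lemma~\ref{lem:exp-add-ranks}, and the defining conditions of a Bachmann-Howard collapse. Write $X=\{\beta_1,\dots,\beta_k\}$, so that $\mathcal H_t^\vartheta[\beta_1,\dots,\beta_k]=C^\vartheta(t+1,X)$, and recall that every rank $s^*$ is an ordinal below $\omega^\alpha$, hence (via Lemma~\ref{lem:embed-alpha-s-alpha} and Notation~\ref{notation:abs-and-star}) a term below $\Omega$. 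Part (i) is immediate: from $t<t'$ the lemma on term addition gives $t+1\le t'+1$; if these coincide the inclusion is trivial, and otherwise $\Omega\le t+1<t'+1$ lets us apply Lemma~\ref{lem:basic-c-sets}(i). For part (ii), choose $n$ with $s\in C_n^\vartheta(t+1,X)$; since $s\le t<t+1$, the clause $\{\vartheta(s')\mid s'\in C_n^\vartheta(t+1,X)\text{ and }s'<t+1\}$ in the definition of $C_{n+1}^\vartheta(t+1,X)$ puts $\vartheta(s)$ into $\mathcal H_t^\vartheta[\beta_1,\dots,\beta_k]$. For part (iii), note $\mathcal H_t^\vartheta=C^\vartheta(t+1,\emptyset)$ with $\Omega\le t+1$; Lemma~\ref{lem:basic-c-sets}(ii) (using the trivial inclusion $\emptyset\subseteq C^\vartheta(t+1,\vartheta(t+1))\cap\Omega$) gives $C^\vartheta(t+1,\emptyset)\subseteq C^\vartheta(t+1,\vartheta(t+1))$, and Lemma~\ref{lem:recover-c-sets} identifies $C^\vartheta(t+1,\vartheta(t+1))\cap\Omega$ with $\vartheta(t+1)$; intersecting with $\Omega$ yields $s<\vartheta(t+1)$ for every $s\in\mathcal H_t^\vartheta\cap\Omega$.

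Part (iv) is the substantial case, and the strategy is to apply condition (ii) of Definition~\ref{def:bh-collapse} to the pair $p:=t+\Omega^s$, $q:=t+\Omega^{s'}$. Since $s<s'$ forces $s'\neq 0$ we have $\Omega^{s'}>1$, hence $\Omega\le t<t+1<q$; moreover $\Omega^s<\Omega^{s'}$ by monotonicity of exponentiation, so $p<q$ by monotonicity of addition, which is the first hypothesis of Definition~\ref{def:bh-collapse}(ii). For the second hypothesis, Lemma~\ref{lem:exp-add-ranks} gives $p^*\le\max\{t^*,(\Omega^s)^*\}\le\max\{t^*,s^*\}$, so it suffices to show $t^*,s^*<\vartheta(q)$. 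This follows from the chain of inclusions
\begin{equation*}
 \{s,t\}\subseteq\mathcal H_t^\vartheta=C^\vartheta(t+1,\emptyset)\subseteq C^\vartheta(q,\emptyset)\subseteq C^\vartheta(q,\vartheta(q)),
\end{equation*}
in which the middle inclusion is Lemma~\ref{lem:basic-c-sets}(i) (valid since $\Omega\le t+1<q$) and the last is Lemma~\ref{lem:basic-c-sets}(ii). Applying Lemma~\ref{lem:basic-c-sets}(iii) we get $t^*,s^*\in C^\vartheta(q,\vartheta(q))$, and as these lie below $\Omega$, Lemma~\ref{lem:recover-c-sets} (with $\Omega\le q$) gives $t^*,s^*\in C^\vartheta(q,\vartheta(q))\cap\Omega=\vartheta(q)$. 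Thus $p^*<\vartheta(q)$, and Definition~\ref{def:bh-collapse}(ii) yields $\vartheta(p)<\vartheta(q)$, i.e.\ $\vartheta(t+\Omega^s)<\vartheta(t+\Omega^{s'})$.

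The main obstacle is not conceptual but bookkeeping, in order to invoke Lemmas~\ref{lem:basic-c-sets} and~\ref{lem:recover-c-sets} legitimately: checking the inequalities $\Omega\le t<t+1<q=t+\Omega^{s'}$ (which relies on $\Omega^{s'}>1$, hence on $s<s'$), verifying that all the displayed terms genuinely lie in $\varepsilon(S_{\omega^\alpha}^u)$ so that $\vartheta$ is defined on them, and keeping straight the identification of ranks $s^*<\omega^\alpha$ with terms below $\Omega$ via Lemma~\ref{lem:embed-alpha-s-alpha}. None of this is deep, and I anticipate no real difficulty beyond it.
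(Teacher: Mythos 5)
Your proposal is correct and takes essentially the same route as the paper: parts (i)--(iii) coincide with the published argument, and part (iv) rests on the same chain $C^\vartheta(t+1,\emptyset)\subseteq C^\vartheta(t+\Omega^{s'},\emptyset)\subseteq C^\vartheta(t+\Omega^{s'},\vartheta(t+\Omega^{s'}))$ combined with Lemma~\ref{lem:recover-c-sets} and condition (ii) of Definition~\ref{def:bh-collapse}. The only cosmetic difference is that in (iv) the paper puts the term $t+\Omega^s$ itself into $\mathcal H_t^\vartheta$ and extracts $(t+\Omega^s)^*$ via Lemma~\ref{lem:basic-c-sets}(iii), whereas you bound $(t+\Omega^s)^*$ by $\max\{t^*,s^*\}$ using Lemma~\ref{lem:exp-add-ranks} and collapse the ranks $s^*,t^*$ instead.
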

\begin{proof}
(i) This is a special case of Lemma~\ref{lem:basic-c-sets}.\\
(ii) Immediate by the closure properties of $\mathcal H_t^\vartheta[\beta_1,\dots ,\beta_k]=C^\vartheta(t+1,\beta_1,\dots ,\beta_k)$.\\
(iii) Using Lemma~\ref{lem:basic-c-sets}(ii) we get
\begin{equation*}
\mathcal H_t^\vartheta=C^\vartheta(t+1,\emptyset)\subseteq C^\vartheta(t+1,\vartheta(t+1)).
\end{equation*}
Then the claim follows from Lemma~\ref{lem:recover-c-sets}.\\
(iv) From $s<s'$ we get $t+\Omega^s<t+\Omega^{s'}$. To infer $\vartheta(t+\Omega^s)<\vartheta(t+\Omega^{s'})$ it remains to establish $(t+\Omega^s)^*<\vartheta(t+\Omega^{s'})$. Now $\{s,t\}\subseteq\mathcal H_t^\vartheta$ implies $t+\Omega^s\in\mathcal H_t^\vartheta=C^\vartheta(t+1,\emptyset)$. Using Lemma~\ref{lem:basic-c-sets} we get
\begin{equation*}
 (t+\Omega^s)^*\in C^\vartheta(t+1,\emptyset)\subseteq C^\vartheta(t+\Omega^{s'},\emptyset)\subseteq C^\vartheta(t+\Omega^{s'},\vartheta(t+\Omega^{s'})).
\end{equation*}
Together with $(t+\Omega^s)^*<\Omega$ Lemma~\ref{lem:recover-c-sets} yields $(t+\Omega^s)^*<\vartheta(t+\Omega^{s'})$, as needed.
\end{proof}

\section{From Search Tree to Proof Tree}\label{sect:proof-trees}

In Section \ref{sect:search-trees} we have built a ``search tree'' $S_\alpha^u$ for each countable transitive set $u=\{u_i\,|\,i\in\omega\}$ and each ordinal $\alpha$. As stated there, $S_\alpha^u$ can be seen as an attempted proof of a contradiction in $\mathbb L_\alpha^u$-logic, with the axioms of Kripke-Platek set theory as open assumptions. The goal of this section is to remove these assumptions, by adding infinitary proofs of the Kripke-Platek axioms. To begin, we give a reasonably general definition of infinitary proof trees, which we call $\mathbf L_{\omega^\alpha}^u$-preproofs. 

Recall that the search tree $S_{\omega^\alpha}^u$ is a subtree of $(\mathbf L_{\omega^\alpha}^u)^{<\omega}$, where each node is labelled by an $\mathbf L_{\omega^\alpha}^u$-sequent. The order of formulas in a sequent was crucial for the definition of the search trees, but it is inessential in the context of proof trees. We will thus identify a sequent with the set of its entries (e.g.\ $\Gamma\subseteq\Delta$ expresses that each entry of $\Gamma$ is also an entry of $\Delta$). In addition to sequents, $\mathbf L_{\omega^\alpha}^u$-preproofs will carry labels for ``rules'': By an $\mathbf L_{\omega^\alpha}^u$-rule we shall mean a symbol from the list
\begin{multline*}
 \ax,\quad (\land,\psi_0,\psi_1),\quad (\lor_i,\psi_0,\psi_1),\quad (\forall_x,\psi),\quad (\exists_x,a,\psi),\\
(\cut, \psi),\quad (\rref,\exists_z\forall_{x\in a}\exists_{y\in z}\theta),\quad (\rep, a), 
\end{multline*}
where $\psi_0$, $\psi_1$ and $\psi$ are $\mathbf L_{\omega^\alpha}^u$-formulas; we have $i\in\{0,1\}$; $a$ is an element of $\mathbf L_{\omega^\alpha}^u$; and $\theta$ is a bounded disjunction which does not contain the variable $z$. In addition to the rules, each node of an $\mathbf L_{\omega^\alpha}^u$-preproof will be labelled by an element of the term system $\varepsilon(S_{\omega^\alpha}^u)$, defined in the previous section. We do not assume that $\varepsilon(S_{\omega^\alpha}^u)$ is well-founded. For this reason the term ``preproof'' is better than ``proof'', even though we will occasionally use the latter for the sake of brevity; also, we will sometimes refer to the elements of $\varepsilon(S_{\omega^\alpha}^u)$ as ``ordinal labels''. As in the previous section we will write $<$ rather than $<_{\varepsilon(S_{\omega^\alpha}^u)}$ for the order relation on $\varepsilon(S_{\omega^\alpha}^u)$ (recall from Lemma~\ref{lem:embed-alpha-s-alpha} that $\omega^\alpha$ can be identified with an initial segment of this ordering).

\begin{definition}\label{def:L-proof}
 Consider a countable transitive set $u=\{u_i\,|\,i\in\omega\}\supseteq\{0,1\}$ and an ordinal $\alpha>1$. An $\mathbf L_{\omega^\alpha}^u$-preproof consists of a non-empty tree $P\subseteq(\mathbf L_{\omega^\alpha}^u)^{<\omega}$ and labelling functions \mbox{$l:P\rightarrow\text{``$\mathbf L_{\omega^\alpha}^u$-sequents''}$}, $r:P\rightarrow\text{``$\mathbf L_{\omega^\alpha}^u$-rules''}$ and \mbox{$o:P\rightarrow\varepsilon(S_{\omega^\alpha}^u)$} such that the following ``local correctness conditions'' hold at every node $\sigma\in P$:
{\def\arraystretch{1.75}\tabcolsep=10pt
\setlength{\LTpre}{\baselineskip}\setlength{\LTpost}{\baselineskip}
\begin{longtable}{lp{0.65\textwidth}}\hline
If $r(\sigma)$ is \dots & \dots\ then \dots\\ \hline
$\ax$ & $\sigma$ is a leaf of $P$ and $l(\sigma)$ contains a true $\Delta_0$-formula,\\
$(\land,\psi_0,\psi_1)$ & we have $\sigma^\frown a\in P$ iff $a\in\{0,1\}$, and $o(\sigma^\frown a)<o(\sigma)$; \newline also $\psi_0\land\psi_1\in l(\sigma)$ and $l(\sigma^\frown i)\subseteq l(\sigma),\psi_i$ for $i=0,1$,\\
$(\lor_i,\psi_0,\psi_1)$ & we have $\sigma^\frown a\in P$ iff $a=0$, and $o(\sigma^\frown 0)<o(\sigma)$;\newline also $\psi_0\lor\psi_1\in l(\sigma)$ and $l(\sigma^\frown 0)\subseteq l(\sigma),\psi_i$,\\
$(\forall_x,\psi)$ & we have $\sigma^\frown a\in P$ for all $a\in\mathbf L_{\omega^\alpha}^u$,\newline and $o(\sigma^\frown a)+\omega\leq o(\sigma)$;\newline also $\forall_x\psi\in l(\sigma)$ and $l(\sigma^\frown a)\subseteq l(\sigma),\psi(a)$ for each $a$,\\
$(\exists_x,b,\psi)$ & we have $\sigma^\frown a\in P$ iff $a=0$,\newline and $o(\sigma^\frown 0)+\omega\leq o(\sigma)$ and $|b|<o(\sigma)$;\newline also $\exists_x\psi\in l(\sigma)$ and $l(\sigma^\frown 0)\subseteq l(\sigma),\psi(b)$,\\
$(\cut, \psi)$ & we have $\sigma^\frown a\in P$ iff $a\in\{0,1\}$, and $o(\sigma^\frown a)<o(\sigma)$; \newline also $l(\sigma^\frown 0)\subseteq l(\sigma),\psi$ and $l(\sigma^\frown 1)\subseteq l(\sigma),\neg\psi$,\\
$(\rref,\exists_z\forall_{x\in a}\exists_{y\in z}\theta)$ & we have $\sigma^\frown a\in P$ iff $a=0$,\newline and $o(\sigma^\frown 0)<o(\sigma)$ and $\Omega\leq o(\sigma)$;\newline also $\exists_z\forall_{x\in a}\exists_{y\in z}\theta\in l(\sigma)$ and $l(s^\frown 0)\subseteq l(\sigma),\forall_{x\in a}\exists_y\theta$,\\
$(\rep, b)$ & we have $\sigma^\frown a\in P$ iff $a=b$, and $o(\sigma^\frown b)<o(\sigma)$;\newline also $l(\sigma^\frown b)\subseteq l(\sigma)$.\\ \hline
\end{longtable}}
We call $l(\langle\rangle)$, $r(\langle\rangle)$ and $o(\langle\rangle)$ the end-sequent, the last rule, and the height of the preproof $P$, respectively.
\end{definition}

Let us give $\mathbf L_{\omega^\alpha}^u$-preproofs of the Kripke-Platek axioms:

\begin{lemma}\label{lem:proofs-kp-axioms}
Consider a countable transitive $u$ and an ordinal $\alpha>1$. Except for foundation, each Kripke-Platek axiom has an \mbox{$\mathbf L_{\omega^\alpha}^u$-preproof} with height below $\Omega^1\cdot 2$.
\end{lemma}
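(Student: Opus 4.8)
The plan is to exhibit, for each Kripke--Platek axiom listed before Definition~\ref{def:construct-search-tree} (except foundation), an explicit $\mathbf L_{\omega^\alpha}^u$-preproof and to bound its height $o(\langle\rangle)$ by a concrete term of $\varepsilon(S_{\omega^\alpha}^u)$ lying below $\Omega^1\cdot 2=\Omega\cdot 2$. Throughout, recall that $\omega^\alpha$ is identified with the initial segment $\varepsilon(S_{\omega^\alpha}^u)\cap\Omega$ (Lemma~\ref{lem:embed-alpha-s-alpha}), that for $\alpha>1$ the ordinal $\omega^\alpha\ge\omega^2$ is additively principal, and that in the meta-theory the truth of $\Delta_0$-formulas (with parameters projected into $\mathbb L^u$) is decidable; this last point makes every tree we build primitive recursive.

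For Equality and Extensionality I would apply the rule $(\forall_x,\cdot)$ to strip the outer unbounded universal quantifiers -- four times for Equality, twice for Extensionality -- and then observe that the resulting matrix, after substituting parameters from $\mathbf L_{\omega^\alpha}^u$, is a $\Delta_0$-formula that is true in $V$ (a logical validity for Equality; extensionality of the actual membership relation for Extensionality), so the $\ax$-rule closes the branch. Each $(\forall_x,\cdot)$-step costs $\omega$ in height, giving heights $\omega\cdot 4$ and $\omega\cdot 2$. For Pairing, Union, Infinity and $\Delta_0$-separation I would proceed in the same way, but after the strippings apply $(\exists_x,c,\cdot)$ with a witnessing set $c\in\mathbb L_{\omega^\alpha}^u$: $c=\{a,b\}$, $c=\bigcup a$, $c$ the von Neumann $\omega$, and $c=\{z\in a\mid\theta(a,z,\vec p)\text{ true}\}$ respectively. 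Since $\omega^\alpha$ is a limit ordinal and $\alpha>1$, each such $c$ is definable over some $\mathbb L_\gamma^u$ with $\gamma<\omega^\alpha$ (using $\Delta_0$-absoluteness in the separation case), hence lies in $\mathbb L_{\gamma+1}^u\subseteq\mathbb L_{\omega^\alpha}^u$ and has a primitive recursively computable $\mathbf L$-rank $<\omega^\alpha$; after the $(\exists_x,c,\cdot)$-step the relevant formula ($a\in c\land b\in c$, a bounded double conjunction, ``$c$ is a limit ordinal'', or the separation matrix) is $\Delta_0$ and true, so $\ax$ applies. The side condition on the rank of $c$ together with the finitely many $\omega$-steps keeps the total height below $\omega^\alpha$, hence below $\Omega$.

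The only case that needs the reflection rule, and the only one approaching the bound, is $\Delta_0$-collection. Fix an instance with $\Delta_0$-disjunction $\theta$, strip the $k+1$ outer universal quantifiers, and consider the remaining task: for parameters $\vec p,a\in\mathbf L_{\omega^\alpha}^u$, prove the sequent containing $D:=\exists_{y\in a}\forall_z\neg\theta\lor\exists_w\forall_{y\in a}\exists_{z\in w}\theta$ (where $\theta$ abbreviates $\theta(a,y,z,\vec p)$). Here I split -- decidably in the meta-theory -- according to whether there is some $c'\in\lpr{u}(a)$ that admits no $d\in\mathbb L_{\omega^\alpha}^u$ with $\theta(a,c',d,\vec p)$ true. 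In that case (Case~A) the formulas $\neg\theta(a,c',e,\vec p)$ are true for all $e\in\mathbf L_{\omega^\alpha}^u$, so $(\forall_z,\cdot)$ followed by $\ax$ proves $\forall_z\neg\theta(a,c',z)$; combining this through $(\land,\cdot)$ with the true $\Delta_0$-formula $c'\in a$, then $(\exists_y,c',\cdot)$ and $(\lor_0,\cdot)$ applied to $D$, proves the sequent with height below $\Omega$. Otherwise (Case~B) every $c'\in\lpr{u}(a)$ has such a witness $d_{c'}$, and I would apply $(\lor_1,\cdot)$ to $D$ followed by $(\rref,\exists_w\forall_{y\in a}\exists_{z\in w}\theta)$, which reduces the goal to the premise $\forall_{y\in a}\exists_z\theta$ and forces the $(\rref,\cdot)$-node to have height $\ge\Omega$. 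This premise is proved by $(\forall_y,\cdot)$: at a parameter $c$ with $\lpr{u}(c)\notin\lpr{u}(a)$, $\neg(c\in a)$ is a true $\Delta_0$-formula and $(\lor_0,\cdot)$ plus $\ax$ suffices; at a parameter $c$ with $\lpr{u}(c)\in\lpr{u}(a)$, $(\lor_1,\cdot)$ and then $(\exists_z,\tilde d,\cdot)$ with $\tilde d\in\mathbf L_{\omega^\alpha}^u$ a ranked representative of $d_{\lpr{u}(c)}$ puts the true $\Delta_0$-formula $\theta(a,c,\tilde d,\vec p)$ into the sequent. All children of the $(\forall_y,\cdot)$-node can be given height below $\Omega$, so that node receives height $\Omega$, the $(\rref,\cdot)$-node receives $\Omega+1$, the $(\lor_1,\cdot)$-node receives $\Omega+2$, and the $k+1$ outer $(\forall_x,\cdot)$-strippings add $\omega$ each on top of this (Case~A contributing even less), giving total height $\Omega+\omega\cdot(k+1)$, a valid term of $\varepsilon(S_{\omega^\alpha}^u)$ which is $<\Omega\cdot 2$ because $\omega\cdot(k+1)<\omega^\alpha$.

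The main obstacle is the collection case: one has to see that a uniform appeal to $(\rref,\cdot)$ fails -- when a ``bad'' $c'$ exists the premise $\forall_{y\in a}\exists_z\theta$ of the reflection rule is simply not provable, since $\exists_z\theta(a,c',z)$ has no $\mathbf L_{\omega^\alpha}^u$-witness -- which forces both the case split (escaping to the left disjunct of $D$ in Case~A) and the budget $\Omega\cdot 2$ rather than $\Omega$. The remaining points are routine but must be checked: that the witnessing sets indeed lie in $\mathbb L_{\omega^\alpha}^u$ with primitive recursively computable rank (closure of the $\mathbb L$-hierarchy, $\omega^\alpha$ a limit, $\Delta_0$-absoluteness for separation), that every local correctness condition of Definition~\ref{def:L-proof} holds at each node, and that all assigned heights are genuinely terms of $\varepsilon(S_{\omega^\alpha}^u)$ below $\Omega\cdot 2$. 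Note that the rules $(\cut,\cdot)$ and $(\rep,\cdot)$ are not needed for these proofs.
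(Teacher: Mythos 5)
Your height bookkeeping for Pairing, Union and $\Delta_0$-separation is wrong as stated: you claim these preproofs have total height below $\Omega$, but that is impossible. In each of these axioms the existential witness ($\{a,b\}$, $\bigcup a$, $\{z\in a\mid\theta\}$) has $\mathbf L$-rank growing cofinally in $\omega^\alpha$ as the universally quantified parameters vary, and local correctness at a $(\forall_x,\psi)$-node demands $o(\sigma^\frown a)+\omega\leq o(\sigma)$ for \emph{all} $a\in\mathbf L_{\omega^\alpha}^u$ simultaneously; since each $(\exists_x,c,\cdot)$-node in the subtrees must carry a label exceeding $|c|$, the universal node directly above them --- and hence the root --- must receive a label $\geq\Omega$. (You implicitly acknowledge this phenomenon when you give the bounded-universal node inside the collection proof the label $\Omega$.) The slip does not threaten the lemma: labelling the existential nodes uniformly with $\Omega$ and the successive quantifier strippings with $\Omega+\omega,\Omega+\omega\cdot 2,\dots$, exactly as the paper does, repairs it and stays below $\Omega^1\cdot 2$; but as written local correctness fails at those universal nodes. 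Your treatment of Equality, Extensionality and Infinity is fine (there the matrix after stripping is a true $\Delta_0$-formula, resp.\ the witness $\omega$ has bounded rank).

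Your treatment of $\Delta_0$-collection is correct but rests on a false premise: a uniform appeal to $(\rref,\cdot)$ does \emph{not} fail. The premise of the reflection rule is a sequent and may carry side formulas; the paper keeps the negated hypothesis $\exists_{x\in w}\forall_y\neg\theta$ in the sequent throughout and derives the reflection premise $\forall_{x\in w}\exists_y\theta$ (alongside that side formula) purely logically from the axioms $\neg\theta(a,b,\vec c,d),\theta(a,b,\vec c,d)$, the parameter $b$ serving simultaneously as the $\forall_y$-instance on the left and the $\exists_y$-witness on the right --- no semantic witnesses and no case distinction are needed. Your Case~A/Case~B argument does go through (the case split is a bounded search over $\mathbb L_{\omega^\alpha}^u$ combined with $\Delta_0$-truth, hence primitive recursive, and the local correctness checks you sketch are sound), so it is a genuinely different, semantic route to the same bound $\Omega+\omega\cdot(k+1)$; but it buys nothing over the uniform syntactic derivation, and it obscures the fact that these preproofs exist for purely formal reasons, independently of whether the collection hypothesis happens to hold of the given parameters in $\mathbb L_{\omega^\alpha}^u$.
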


\begin{proof}
 Let us start with the case of $\Delta_0$-separation, i.e.\ an axiom of the form
\begin{equation*}
 \forall_{v_1}\cdots\forall_{v_k}\forall_x\exists_y(\forall_{z\in y}(z\in x\land\theta(x,z,\vec v))\land\forall_{z\in x}(\theta(x,z,\vec v)\rightarrow z\in y))
\end{equation*}
with a $\Delta_0$-formula $\theta$. The nodes in the desired $\mathbf L_{\omega^\alpha}^u$-preproof will be precisely those of the form $\langle c_1,\dots,c_k,a,0\rangle$, where $\vec c,a\in\mathbf L_{\omega^\alpha}^u$ are arbitrary. Given such parameters, set $\gamma:=\max\{|c_1|,\dots,|c_k|,|a|\}+1$ and observe that $\vec c,a\in\mathbf L_\gamma^u$ holds by definition of the rank. We can primitive recursively compute the set 
\begin{equation*}
 b:=\{z\in a\,|\,\theta(a,z,\vec c)\}=\{z\in\mathbb L_\gamma\,|\,\mathbb L_\gamma\vDash\theta(a,z,\vec c)\}\in\mathbb L_{\gamma+\omega}^u
\end{equation*}
and the ordinal
\begin{equation*}
 |b|:=\min\{\beta<\gamma+\omega\,|\,b\in\mathbb L_{\beta+1}^u\}<\omega^\alpha,
\end{equation*}
which allows us to view $b$ as an element of $\mathbf L_{\omega^\alpha}^u$. By construction of $b$ the bounded $\mathbf L_{\omega^\alpha}^u$-formula
\begin{equation*}
 \forall_{z\in b}(z\in a\land\theta(a,z,\vec c))\land\forall_{z\in a}(\theta(a,z,\vec c)\rightarrow z\in b)
\end{equation*}
is true. Thus the leaf $\langle c_1,\dots,c_k,a,0\rangle$ can be labelled by
\begin{align*}
 l(\langle c_1,\dots,c_k,a,0\rangle)&=\langle\forall_{z\in b}(z\in a\land\theta(a,z,\vec c))\land\forall_{z\in a}(\theta(a,z,\vec c)\rightarrow z\in b)\rangle,\\
 r(\langle c_1,\dots,c_k,a,0\rangle)&=\ax,\\
 o(\langle c_1,\dots,c_k,a,0\rangle)&=0,
\end{align*}
in a locally correct way. Next, we can take $b$ as a witness for an existential quantifier. This amounts to setting
\begin{align*}
 l(\langle c_1,\dots,c_k,a\rangle)&=\langle\exists_y(\forall_{z\in y}(z\in a\land\theta(a,z,\vec c))\land\forall_{z\in a}(\theta(a,z,\vec c)\rightarrow z\in y))\rangle,\\
 r(\langle c_1,\dots,c_k,a\rangle)&=(\exists_y,b,\forall_{z\in y}(z\in a\land\theta(a,z,\vec c))\land\forall_{z\in a}(\theta(a,z,\vec c)\rightarrow z\in y)),\\
 o(\langle c_1,\dots,c_k,a\rangle)&=\Omega.
\end{align*}
Concerning local correctness, note that we have $|b|<\Omega$ in the term system~$\varepsilon(S_{\omega^\alpha}^u)$, as $|b|$ is an ordinal below $\omega^\alpha$ (cf.~Lemma~\ref{lem:embed-alpha-s-alpha}). The above construction was performed for all values $a\in\mathbf L_{\omega^\alpha}$. Thus we may introduce a universal quantifier, by setting
\begin{align*}
 l(\langle c_1,\dots,c_k\rangle)&=\langle\forall_x\exists_y(\forall_{z\in y}(z\in x\land\theta(x,z,\vec c))\land\forall_{z\in x}(\theta(x,z,\vec c)\rightarrow z\in y))\rangle,\\
 r(\langle c_1,\dots,c_k\rangle)&=(\forall_x,\exists_y(\forall_{z\in y}(z\in x\land\theta(x,z,\vec c))\land\forall_{z\in x}(\theta(x,z,\vec c)\rightarrow z\in y))),\\
 o(\langle c_1,\dots,c_k\rangle)&=\Omega+\omega.
\end{align*}
The universal quantifiers over the variables $v_i$ are introduced in the same way, increasing the height by $\omega$ for each quantifier. The root will then receive labels
\begin{align*}
 l(\langle\rangle)&=\langle\forall_{\vec v}\forall_x\exists_y(\forall_{z\in y}(z\in x\land\theta(x,z,\vec v))\land\forall_{z\in x}(\theta(x,z,\vec v)\rightarrow z\in y))\rangle,\\
 o(\langle\rangle)&=\Omega+\omega\cdot(k+1),
\end{align*}
as required.

The axioms of equality, extensionality, pairing, union and infinity are proved in a similar way. For infinity one uses the witness $\omega\in\mathbf L_{\omega^\alpha}^u$ with $|\omega|\leq\omega$ (inequality is possible, e.g.~if $\omega\in u$).

Let us now look at an instance of $\Delta_0$-collection, i.e.\ at an axiom
\begin{equation*}
 \forall_{\vec v}\forall_w(\forall_{x\in w}\exists_y\theta(x,y,\vec v,w)\rightarrow\exists_z\forall_{x\in w}\exists_{y\in z}\theta(x,y,\vec v,w)),
\end{equation*}
where $\theta$ is a $\Delta_0$-formula. For arbitrary parameters $a,b,\vec c,d\in\mathbf L_{\omega^\alpha}^u$ the sequent
\begin{equation*}
 \neg\theta(a,b,\vec c,d),\theta(a,b,\vec c,d)
\end{equation*}
contains a true bounded formula, i.e.\ it is an axiom. Introducing an existential quantifier we get
\begin{equation*}
 \neg\theta(a,b,\vec c,d),\exists_y\theta(a,y,\vec c,d)
\end{equation*}
with height $\Omega$ (as above, the rank of the witness $b$ is always bounded by $\Omega$). Since $b\in\mathbf L_{\omega^\alpha}^u$ was arbitrary the constructed preproofs can be combined, to obtain
\begin{equation*}
 \forall_y\neg\theta(a,y,\vec c,d),\exists_y\theta(a,y,\vec c,d)
\end{equation*}
with height $\Omega+\omega$. Together with the axiom $a\notin d,a\in d$ we get
\begin{equation*}
 a\in d\land\forall_y\neg\theta(a,y,\vec c,d),a\notin d,\exists_y\theta(a,y,\vec c,d)
\end{equation*}
with height $\Omega+\omega+1$, and then
\begin{equation*}
 \exists_{x\in d}\forall_y\neg\theta(x,y,\vec c,d),a\notin d,\exists_y\theta(a,y,\vec c,d)
\end{equation*}
with height $\Omega+\omega\cdot 2$. Introducing two disjunctions yields
\begin{equation*}
 \exists_{x\in d}\forall_y\neg\theta(x,y,\vec c,d),a\notin d\lor\exists_y\theta(a,y,\vec c,d),
\end{equation*}
now with height $\Omega+\omega\cdot 2+2$. Since $a$ was arbitrary this gives
\begin{equation*}
 \exists_{x\in d}\forall_y\neg\theta(x,y,\vec c,d),\forall_{x\in d}\exists_y\theta(x,y,\vec c,d)
\end{equation*}
with height $\Omega+\omega\cdot 3$. Now we are in a position to use the reflection rule
\begin{equation*}
 (\rref,\exists_z\forall_{x\in d}\exists_{y\in z}\theta(x,y,\vec c,d)),
\end{equation*}
which gives a proof of
\begin{equation*}
 \exists_{x\in d}\forall_y\neg\theta(x,y,\vec c,d),\exists_z\forall_{x\in d}\exists_{y\in z}\theta(x,y,\vec c,d)
\end{equation*}
with height $\Omega+\omega\cdot 3+1$. Observe that we have $\Omega<\Omega+\omega\cdot 3+1$, as required for the local correctness of the reflection rule. Introducing two disjunctions we obtain
\begin{equation*}
 \forall_{x\in d}\exists_y\theta(x,y,\vec c,d)\rightarrow\exists_z\forall_{x\in d}\exists_{y\in z}\theta(x,y,\vec c,d)
\end{equation*}
with height $\Omega+\omega\cdot 3+3$. It only remains to introduce the universal quantifiers over $w$ and $\vec v$, as in the case of $\Delta_0$-separation.
\end{proof}

Now that we have proofs of the Kripke-Platek axioms, let us extend the search trees $S_{\omega^\alpha}^u$ from Section~\ref{sect:search-trees} to $\mathbf L_{\omega^\alpha}^u$-preproofs of the empty sequent:

\begin{proposition}\label{prop:proof-from-search-tree}
 For each countable transitive set $u=\{u_i\,|\,i\in\omega\}$ and each ordinal~$\alpha>1$ there is an $\mathbf L_{\omega^\alpha}^u$-preproof $(P_\alpha^u,l_\alpha^u,r_\alpha^u,o_\alpha^u)$ with end-sequent $\langle\rangle$ and height $\varepsilon_{\langle\rangle}\in\varepsilon(S_{\omega^\alpha}^u)$.
\end{proposition}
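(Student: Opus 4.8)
The plan is to graft the $\mathbf L_{\omega^\alpha}^u$-preproofs of the Kripke--Platek axioms supplied by Lemma~\ref{lem:proofs-kp-axioms} onto the search tree $S_{\omega^\alpha}^u$: reinterpret each step of the search-tree construction in Definition~\ref{def:construct-search-tree} as an application of a proof rule from Definition~\ref{def:L-proof}, and replace each ``add a negated axiom $\neg\theta_k$'' step by a cut on $\theta_k$ whose other premise is a copy of the grafted proof of $\theta_k$. Before carrying this out I would record three preliminary facts. First, since $\sigma^\frown a$ properly extends $\sigma$ in the Kleene--Brouwer ordering, $\sigma^\frown a<_{S_{\omega^\alpha}^u}\sigma$, hence $\varepsilon_{\sigma^\frown a}<\varepsilon_\sigma$ in $\varepsilon(S_{\omega^\alpha}^u)$ for every node $\sigma^\frown a$. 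Second, each term $\varepsilon_\sigma$ behaves as an $\varepsilon$-number strictly above $\Omega$: using the arithmetical properties of $\varepsilon(S_{\omega^\alpha}^u)$ collected after the definition of term addition (available since $\alpha>1$), one gets $\Omega\cdot 2<\varepsilon_\sigma$, $\omega<\varepsilon_\sigma$, and --- via $\Omega^{\varepsilon_\sigma}=\varepsilon_\sigma$ together with additive principality --- that $s,t<\varepsilon_\sigma$ implies $s+t<\varepsilon_\sigma$; in particular $\varepsilon_{\sigma^\frown a}+\omega<\varepsilon_\sigma$. Third, weakening is admissible: adding the entries of a fixed $\mathbf L_{\omega^\alpha}^u$-sequent $\Delta$ to every label of an $\mathbf L_{\omega^\alpha}^u$-preproof yields again an $\mathbf L_{\omega^\alpha}^u$-preproof, because every local correctness condition in Definition~\ref{def:L-proof} is phrased with $\subseteq$ and only asserts membership of designated formulas --- in particular $\ax$-leaves keep their true $\Delta_0$-formula and the reflection rule keeps its principal formula.

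Then I would define $(P_\alpha^u,l_\alpha^u,r_\alpha^u,o_\alpha^u)$ by recursion, associating to every $\sigma\in S_{\omega^\alpha}^u$ a preproof $P_\sigma$ with end-sequent $l(\sigma)$ (the search-tree label, read as a set of formulas) and height $\varepsilon_\sigma$, and setting $P_\alpha^u:=P_{\langle\rangle}$. If $\sigma$ is a leaf of $S_{\omega^\alpha}^u$ because $l(\sigma)$ contains a true $\Delta_0$-formula, $P_\sigma$ is the single node with rule $\ax$. If $l(\sigma)$ consists of false (negated) prime formulas, $P_\sigma$ applies $(\rep,0)$ above $P_{\sigma^\frown 0}$. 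If $l(\sigma)$ has a redex $\varphi$, $P_\sigma$ applies the matching logical rule --- $(\land,\psi_0,\psi_1)$; $(\lor_i,\psi_0,\psi_1)$ with the same index $i$ as in Definition~\ref{def:construct-search-tree}; $(\forall_x,\psi)$; or $(\exists_x,b,\psi)$ with the same witness $b$ --- above the preproofs $P_{\sigma^\frown a}$. If $\sigma$ has even length $2k$, $P_\sigma$ applies $(\cut,\theta_k)$ with right premise $P_{\sigma^\frown 0}$ (end-sequent $l(\sigma),\neg\theta_k$) and left premise the Lemma~\ref{lem:proofs-kp-axioms} preproof of $\theta_k$ weakened by $l(\sigma)$ (end-sequent $l(\sigma),\theta_k$, height below $\Omega\cdot 2$). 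All nodes coming from search-tree nodes receive height $\varepsilon_\sigma$; nodes inside a grafted axiom-preproof keep the heights of Lemma~\ref{lem:proofs-kp-axioms}. Unwinding this, $P_\alpha^u$ becomes a labelled subtree of $(\mathbf L_{\omega^\alpha}^u)^{<\omega}$ whose membership and labels are computed primitive recursively by decoding a finite sequence from the left and tracking whether one is currently following $S_{\omega^\alpha}^u$ or is inside one of the inserted axiom-preproofs.

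What remains is to verify the eight local correctness conditions at every node, which I expect to be routine. Inside a grafted axiom-preproof this is Lemma~\ref{lem:proofs-kp-axioms} verbatim. At a search-tree node the sequent inclusions follow from Definition~\ref{def:construct-search-tree} once sequents are identified with sets: the reshuffling $\Gamma,\varphi,\Gamma'\mapsto\Gamma,\Gamma',\varphi,\psi_i$ becomes the inclusion $l(\sigma)\cup\{\psi_i\}\subseteq l(\sigma),\psi_i$, and $\varphi\in l(\sigma)$ supplies the principal-formula requirement; the branching matches (notably $(\forall_x,\psi)$ branches over all $a\in\mathbf L_{\omega^\alpha}^u$ in both settings); and the ordinal side conditions hold by the first paragraph --- $\varepsilon_{\sigma^\frown a}<\varepsilon_\sigma$ for the $\land,\lor_i,\rep$ rules, $\varepsilon_{\sigma^\frown a}+\omega<\varepsilon_\sigma$ for the $\forall,\exists$ rules, and the bound $|b|<\varepsilon_\sigma$ in $(\exists_x,b,\psi)$ holds because $|b|<\omega^\alpha$ is, via Lemma~\ref{lem:embed-alpha-s-alpha}, a term below $\Omega<\varepsilon_\sigma$. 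At an even-length node the cut is correct because its premises are $P_{\sigma^\frown 0}$ (end-sequent $l(\sigma),\neg\theta_k$, height $\varepsilon_{\sigma^\frown 0}<\varepsilon_\sigma$) and the weakened axiom-preproof (end-sequent $l(\sigma),\theta_k$, height $<\Omega\cdot 2<\varepsilon_\sigma$). Finally $l(\langle\rangle)=\langle\rangle$ and $o(\langle\rangle)=\varepsilon_{\langle\rangle}$, as asserted.

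The genuine difficulty is not mathematical depth but precision in two places: giving a clean primitive recursive description of $P_\alpha^u$ as a labelled subtree of $(\mathbf L_{\omega^\alpha}^u)^{<\omega}$ in the presence of the inserted cuts --- i.e.\ the reindexing of the search-tree branches at even-length nodes, where one predecessor in $S_{\omega^\alpha}^u$ must now have two successors in $P_\alpha^u$ --- and checking that the $+\omega$ decrease forced by the $\forall$- and $\exists$-rules, together with the heights $<\Omega\cdot 2$ of the grafted axiom-preproofs, all fit strictly below $\varepsilon_\sigma$. This is precisely why the ordinal labels are taken to be the terms $\varepsilon_\sigma$, and why the whole apparatus is attached to the well-ordering principle $\alpha\mapsto\varepsilon(S_{\omega^\alpha}^u)$ rather than to the search trees themselves.
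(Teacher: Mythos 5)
Your construction is essentially the paper's own proof: ordinal labels $\varepsilon_\sigma$ on the search-tree nodes, the search-tree steps reread as the corresponding logical rules, and cuts over the (negated) Kripke-Platek axioms at even-length nodes with the preproofs of Lemma~\ref{lem:proofs-kp-axioms} grafted on, justified by exactly the same inequalities $\varepsilon_{\sigma^\frown a}+\omega<\varepsilon_\sigma$, $\Omega\cdot 2<\varepsilon_\sigma$ and $|b|<\Omega<\varepsilon_\sigma$. The only deviations are cosmetic: the paper cuts over $\neg\theta_k$ and attaches the unweakened proof of $\theta_k$ at the otherwise unused child $\sigma^\frown 1$, so neither your explicit weakening step nor the branch reindexing you worry about is actually needed, because the local correctness conditions only require the inclusions $\subseteq$.
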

\begin{proof}
We go through the construction of the seach tree $S_{\omega^\alpha}^u$ with labelling function $l_\alpha^u=l:S_{\omega^\alpha}^u\rightarrow\text{``$\mathbf L_{\omega^\alpha}^u$-sequents''}$ from Definition \ref{def:construct-search-tree}. In doing so we construct additional labellings $r_\alpha^u:S_\alpha^u\rightarrow\text{``$\mathbf L_{\omega^\alpha}^u$-rules''}$ and $o_\alpha^u:S_{\omega^\alpha}^u\rightarrow\varepsilon(S_{\omega^\alpha}^u)$. We will also add certain subtrees to fulfil the local correctness conditions. Labelling the nodes in $S_{\omega^\alpha}^u$ by ``ordinals'' is easy: To the node $\sigma\in S_{\omega^\alpha}^u$ we attach the label $o_\alpha^u(\sigma):=\varepsilon_\sigma\in\varepsilon(S_{\omega^\alpha}^u)$. For $\sigma^\frown a\in S_{\omega^\alpha}^u$ we have $\sigma^\frown a <_{S_{\omega^\alpha}^u}\sigma$ in the Kleene-Brouwer ordering, and thus
\begin{equation*}
 \varepsilon_{\sigma^\frown a}<\varepsilon_{\sigma^\frown a}+\omega<\varepsilon_\sigma
\end{equation*}
by definition of the term system $\varepsilon(S_{\omega^\alpha}^u)$. This means that the labels $o_\alpha^u(\sigma)$ descend as required by the local correctness conditions. Next, for $\sigma\in S_{\omega^\alpha}^u$ we define the $\mathbf L_{\omega^\alpha}^u$-rule $r_\alpha^u(\sigma)$ as follows: If $\sigma$ has even length $2k$ then we put
\begin{equation*}
 r_\alpha^u(\sigma)=(\cut,\neg\theta_k).
\end{equation*}
Recall that $\langle\theta_k\rangle_{k\in\omega}$ is an enumeration of the Kripke-Platek axioms (excluding foundation) that we have used in the construction of the search trees. By definition of the search tree we have
\begin{equation*}
 l_\alpha^u(\sigma^\frown 0)\subseteq l_\alpha^u(\sigma),\neg\theta_k,
\end{equation*}
i.e.\ this part of the local correctness condition is satisfied. On the other hand, the search tree $S_{\omega^\alpha}^u$ does not contain the node $\sigma^\frown 1$. Here we must add a subtree to restore local correctness: In the previous lemma we have constructed an $\mathbf L_{\omega^\alpha}^u$-preproof of $\theta_k$, with ordinal height at most $\Omega^1\cdot 2$. In view of $\Omega^1\cdot 2<\varepsilon_{\sigma}$ the monotonicity of the ``ordinal'' labelling is preserved when we attach this preproof to the node $\sigma^\frown 1$. Now consider a node $\sigma\in S_{\omega^\alpha}^u$ of odd length. We will only write out one case, leaving the remaining ones to the reader: Assume that we have
\begin{equation*}
 l_\alpha^u(\sigma)=\Gamma,\exists_x\psi(x),\Gamma'
\end{equation*}
where $\Gamma$ consists of (negated) prime formulas. Recalling the definition of the search tree we see that $\sigma^\frown a\in S_{\omega^\alpha}^u$ holds precisely for $a=0$. Also, neglecting the order of the formulas, we have
\begin{equation*}
 l_\alpha^u(\sigma^\frown 0)=l_\alpha^u(\sigma),\psi(b)
\end{equation*}
for some particular $b\in\mathbf L_{\omega^\alpha}^u$. As $|b|$ is an ordinal below $\omega^\alpha$ we see
\begin{equation*}
 |b|<\Omega<\varepsilon_\sigma.
\end{equation*}
Setting
\begin{equation*}
 r_\alpha^u(\sigma)=(\exists_x,b,\psi)
\end{equation*}
we thus have local correctness at the node $\sigma$.
\end{proof}

To see where we would like to get, consider the following:

\begin{proposition}\label{prop:countable-proof-sound}
 If an $\mathbf L_{\omega^\alpha}^u$-preproof has ordinal height below $\Omega$ then some formula in its end-sequent holds in the structure $(\mathbb L_{\omega^\alpha}^u,\in)$. In particular the end-sequent cannot be empty.
\end{proposition}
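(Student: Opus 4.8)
The plan is to prove, by transfinite induction on the ordinal label, the slightly more general claim that \emph{for every node $\sigma$ of the preproof with $o(\sigma)<\Omega$, some formula in $l(\sigma)$ holds in $(\mathbb L_{\omega^\alpha}^u,\in)$}, where a formula with parameters from $\mathbf L_{\omega^\alpha}^u$ is evaluated after applying the projection $\lpr{u}$, as always. Recall from Lemma~\ref{lem:embed-alpha-s-alpha} that $\varepsilon(S_{\omega^\alpha}^u)\cap\Omega$ is order isomorphic to the ordinal $\omega^\alpha$, so an ordinal label below $\Omega$ really is an ordinal $<\omega^\alpha$; since the local correctness conditions force $o(\sigma^\frown a)<o(\sigma)$ for every child, all labels occurring strictly below a node with label $<\Omega$ again lie in this well-ordered initial segment, and the induction is a genuine transfinite induction over $\omega^\alpha$. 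The induction statement is primitive recursive in $P,l,r,o,\alpha$ and the fixed enumeration of $u$ --- here one uses that satisfaction in the set model $(\mathbb L_{\omega^\alpha}^u,\in)$ is primitive recursive and that $\lpr{u}$ restricted to $\mathbf L_{\omega^\alpha}^u$ is a primitive recursive bijection onto $\mathbb L_{\omega^\alpha}^u$ --- so transfinite induction is available in primitive recursive set theory.

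The inductive step is a case distinction on the last rule $r(\sigma)$. For $\ax$ the sequent $l(\sigma)$ contains a true $\Delta_0$-formula; since $u$ is transitive the set $\mathbb L_{\omega^\alpha}^u$ is transitive, so $\Delta_0$-absoluteness transfers its truth to $(\mathbb L_{\omega^\alpha}^u,\in)$. For $(\land,\psi_0,\psi_1)$, $(\lor_i,\psi_0,\psi_1)$, $(\cut,\psi)$, $(\exists_x,b,\psi)$ and $(\rep,b)$ one applies the induction hypothesis to the children, whose labels are $<o(\sigma)<\Omega$: if the formula supplied there already lies in $l(\sigma)$ we are done, and otherwise it is the new formula introduced by the rule, from which the principal formula of the rule --- which belongs to $l(\sigma)$ --- is seen to hold; for $\cut$ one additionally uses that $\psi$ and $\neg\psi$ cannot both hold in a structure. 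For $(\forall_x,\psi)$ one argues the same way: either some child contributes a formula of $l(\sigma)$, or $\psi(a)$ holds for every $a\in\mathbf L_{\omega^\alpha}^u$, and since $\lpr{u}$ maps $\mathbf L_{\omega^\alpha}^u$ onto $\mathbb L_{\omega^\alpha}^u$ this yields $(\mathbb L_{\omega^\alpha}^u,\in)\models\forall_x\psi$, with $\forall_x\psi\in l(\sigma)$.

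The crucial point --- and the reason for the hypothesis ``height below $\Omega$'' --- is the reflection rule $(\rref,\exists_z\forall_{x\in a}\exists_{y\in z}\theta)$: its local correctness condition demands $\Omega\leq o(\sigma)$, which is incompatible with $o(\sigma)<\Omega$, so this case simply cannot occur at any node of the preproof. Hence the induction goes through, and applying the claim to the root $\langle\rangle$ shows that some formula of the end-sequent $l(\langle\rangle)$ holds in $(\mathbb L_{\omega^\alpha}^u,\in)$; in particular $l(\langle\rangle)$ cannot be empty. I do not expect a serious obstacle here: the only points requiring care are (a) that the reflection rule is genuinely excluded by the ordinal bound, and (b) that the induction statement remains primitive recursive, so that transfinite induction over $\omega^\alpha$ is legitimate in the weak meta-theory --- both of which are immediate once spelled out.
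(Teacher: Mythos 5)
Your proposal is correct and follows essentially the same route as the paper: transfinite induction on the ordinal labels (which below $\Omega$ are genuine ordinals $<\omega^\alpha$ by Lemma~\ref{lem:embed-alpha-s-alpha}), a case distinction on the last rule using the local correctness conditions, and the observation that the reflection rule is excluded because it requires $\Omega\leq o(\sigma)$. The only cosmetic difference is that you spell out the axiom case via $\Delta_0$-absoluteness and note the primitive recursiveness of the induction statement explicitly, both of which the paper treats implicitly or in passing.
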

\begin{proof}
 Writing $(P,l,r,o)$ for the given $\mathbf L_{\omega^\alpha}^u$-preproof, we have $o(\langle\rangle)<\Omega$ by assumption. By Lemma~\ref{lem:embed-alpha-s-alpha} this means that $o(\langle\rangle)$ is an actual ordinal (below $\omega^\alpha$). Thus it suffices to show
\begin{equation*}
 \forall_{\beta<\omega^\alpha}\forall_{\sigma\in P}(o(\sigma)=\beta\rightarrow\text{``some formula in $l(\sigma)$ holds in $\mathbb L_{\omega^\alpha}^u$''}).
\end{equation*}
This can be established by induction on $\beta$ (note that the induction statement is primitive recursive): Consider some ordinal $\beta$ and a node $\sigma\in P$ with $o(\sigma)=\beta$. We distinguish cases according to the rule $r(\sigma)$. First, observe that $r(\sigma)$ cannot be a reflection rule $(\rref,\dots)$, for this would require $\Omega\leq o(\sigma)$. The other cases follow from the local correctness conditions. As an example, consider $r(\sigma)=(\forall_x,\psi)$. Aiming at a contradiction, assume that no formula in $l(\sigma)$ holds in $\mathbb L_{\omega^\alpha}^u$. By local correctness the formula $\forall_x\psi(x)$ occurs in $l(\sigma)$. We want to establish that this formula holds in $\mathbb L_{\omega^\alpha}^u$. So consider an arbitrary element $a\in\mathbb L_{\omega^\alpha}^u$. Computing its rank
\begin{equation*}
 |a|=\min\{\beta<\omega^\alpha\,|\,a\in\mathbb L_{\beta+1}^u\}
\end{equation*}
we may view $a$ as an element of $\mathbf L_{\omega^\alpha}^u$. By local correctness we have $\sigma^\frown a\in P$, as well as $o(\sigma^\frown a)<o(\sigma)$ and $l(\sigma^\frown a)\subseteq l(\sigma),\psi(a)$. The induction hypothesis tells us that some formula in $l(\sigma^\frown a)$ holds in $\mathbb L_{\omega^\alpha}^u$. We have assumed that all formulas in the sequent $l(\sigma)$ fail, which means that $\mathbb L_{\omega^\alpha}^u$ must satisfy $\psi(a)$. As $a\in\mathbb L_{\omega^\alpha}^u$ was arbitrary we can conclude that $\forall_x\psi$ holds in $\mathbb L_{\omega^\alpha}^u$, which completes the induction step in this case. It is also worth looking at a cut rule $r(\sigma)=(\cut,\psi)$. Let us assume that $\psi$ holds in $\mathbb L_{\omega^\alpha}^u$, the converse case being symmetric. By local correctness we have $\sigma^\frown 1\in P$, as well as $o(\sigma^\frown 1)<o(\sigma)$ and $l(\sigma^\frown 1)\subseteq l(\sigma),\neg\psi$. The induction hypothesis tells us that some formula in $l(\sigma^\frown 1)$ holds in $\mathbb L_{\omega^\alpha}^u$. By assumption the formula $\neg\psi$ fails in $\mathbb L_{\omega^\alpha}^u$. Thus $\mathbb L_{\omega^\alpha}^u$ must satisfy some formula in $l(\sigma)$, as required for the induction step.
\end{proof}

 In Proposition \ref{prop:proof-from-search-tree} we have constructed $\mathbf L_{\omega^\alpha}^u$-preproofs $P_\alpha^u$ with empty end-sequent and ordinal height $\varepsilon_{\langle\rangle}>\Omega$. On the other hand we have just seen that no $\mathbf L_{\omega^\alpha}^u$-preproof with empty end-sequent can have ordinal height below $\Omega$. Now the plan is as follows: Aiming at a contradiction, assume that there is no admissible set that contains $u$. By Proposition~\ref{prop:no-admissible-gives-wop} it follows that $\alpha\mapsto\varepsilon(S_{\omega^\alpha}^u)$ is a well-ordering principle. The higher Bachmann-Howard principle from Definition~\ref{def:higher-bh-principle} then gives a collapsing function $\vartheta:\varepsilon(S_{\omega^\alpha}^u)\bh\alpha$, for some ordinal $\alpha$. This will allow us to ``collapse'' the $\mathbf L_{\omega^\alpha}^u$-preproof $P_\alpha^u$ to ordinal height below $\Omega$, yielding the desired contradiction. The required transformations of $P_\alpha^u$ use two techniques from proof-theory --- cut elimination and collapsing --- which will be developed in the next two sections. We point out that these methods stem from the ordinal analysis of Kripke-Platek set theory via local predicativity: see in particular the work of J\"ager~\cite{jaeger-kripke-platek}, Pohlers~\cite{pohlers-local-predicativity} and Buchholz~\cite{buchholz-local-predicativity}.

\section{Cut Elimination}

In the last paragraph of the previous section we have outlined an argument that establishes the existence of admissible sets. The present section is devoted to one particular step in this argument, cut elimination. We begin with the main concept:

\begin{definition}
 The height $\hth(\varphi)\in\omega$ of an $\mathbf L_{\omega^\alpha}^u$-formula $\varphi$ is defined as follows:
\begin{enumerate}[label=(\roman*)]
 \item If $\varphi$ is a $\Delta_0$-formula then we have $\hth(\varphi)=0$.
 \item If $\varphi\equiv\psi_0\land\psi_1$ is not a $\Delta_0$-formula then $\hth(\varphi)=\max\{\hth(\psi_0),\hth(\psi_1)\}+1$. Similarly for $\varphi\equiv\psi_0\lor\psi_1$.
 \item If $\varphi\equiv\forall_x\,\psi$ is not a $\Delta_0$-formula then we have $\hth(\varphi)=\hth(\psi)+1$. Similarly for~$\varphi\equiv\exists_x\,\psi$.
\end{enumerate}
We say that an $\mathbf L_{\omega^\alpha}^u$-preproof $(P,l,r,o)$ has cut-rank $n$ if the following holds: For any node $\sigma\in P$, if $r(\sigma)$ is of the form $(\cut,\psi)$ then we have $\hth(\psi)<n$.
\end{definition}

Let us check the cut-rank of the proofs that we have constructed so far:

\begin{lemma}\label{lem:cut-rank-search-trees}
 There is some global bound $C\in\omega$ such that the $\mathbf L_{\omega^\alpha}^u$-preproofs $P_\alpha^u$ constructed in Proposition \ref{prop:proof-from-search-tree} all have cut-rank $C$.
\end{lemma}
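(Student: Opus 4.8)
The plan is to trace the construction of $P_\alpha^u$ in Proposition~\ref{prop:proof-from-search-tree} and bound the heights of formulas that appear in any $(\cut,\psi)$-rule. There are only two sources of cut rules in that construction. The first is the explicit stipulation $r_\alpha^u(\sigma)=(\cut,\neg\theta_k)$ at each even-length node $\sigma$ of the search tree: here $\psi=\neg\theta_k$ is the negation of a Kripke--Platek axiom. The second is the collection of $\mathbf L_{\omega^\alpha}^u$-preproofs of the axioms $\theta_k$ furnished by Lemma~\ref{lem:proofs-kp-axioms}, which are attached at the nodes $\sigma^\frown 1$; one must inspect those proofs for any internal cuts. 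So the first step is to observe that $\hth(\neg\theta_k)=\hth(\theta_k)$ (negation is pushed to prime formulas and does not change the quantifier/connective structure), and then to bound $\sup_k\hth(\theta_k)$.

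The second step is the key finite computation: there are only finitely many Kripke--Platek axioms up to the form of the $\Delta_0$-matrix, and — crucially — the paper has imposed a \emph{global bound on the number $k$ of parameters} in the $\Delta_0$-separation and $\Delta_0$-collection schemes. Since a $\Delta_0$-formula has height $0$, the height of each axiom $\theta_k$ is determined by the fixed outer quantifier-and-connective prefix wrapped around a $\Delta_0$-matrix; with the parameter count bounded, there are only finitely many such prefixes, hence $\sup_k\hth(\theta_k)$ is some concrete natural number, say $C_0$. (Concretely: equality, extensionality, pairing, union, infinity each contribute a fixed small height; a separation axiom with $k$ parameters has height of order $k+$ constant; a collection axiom likewise.) Thus every cut of the form $(\cut,\neg\theta_k)$ introduced in $P_\alpha^u$ has rank at most $C_0$, uniformly in $\alpha$ and $u$.

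The third step is to check the attached subproofs from Lemma~\ref{lem:proofs-kp-axioms}. Inspecting that proof, the only rules used are $\ax$, $(\exists_x,b,\psi)$, $(\forall_x,\psi)$, $(\lor_i,\ldots)$, $(\land,\ldots)$ and one $(\rref,\ldots)$ for collection — \emph{no cut rules appear}. Hence these subproofs contribute no cuts at all, and the cut-rank of the whole of $P_\alpha^u$ is exactly $\max(C_0,0)=C_0$. Setting $C:=C_0$ (or, for safety, $C:=C_0+1$ so that $\hth(\psi)<C$ holds with a strict inequality) gives the claimed global bound, independent of $\alpha$ and $u$ since the enumeration $\langle\theta_k\rangle_{k\in\omega}$ and the parameter bound are fixed once and for all.

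I expect the only real obstacle to be bookkeeping: one must actually confirm, by going through the (omitted) remaining cases of Proposition~\ref{prop:proof-from-search-tree} and through all five named axioms plus the two schemes in Lemma~\ref{lem:proofs-kp-axioms}, that no further cut rules are silently introduced and that the height bound is genuinely uniform in the parameter count. Since both of those constructions were carried out explicitly above and use a fixed, finite stock of rule-shapes with a fixed parameter bound, this is routine; there is no issue of $\alpha$- or $u$-dependence, because $\hth$ depends only on the logical form of a formula, not on its parameters or on the ambient constructible level.
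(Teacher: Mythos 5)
Your proposal is correct and follows essentially the same route as the paper: the attached subproofs of the Kripke--Platek axioms from Lemma~\ref{lem:proofs-kp-axioms} are cut-free, so every cut in $P_\alpha^u$ is of the form $(\cut,\neg\theta_k)$, and the fixed bound on the number of parameters in the $\Delta_0$-separation and $\Delta_0$-collection schemes bounds $\hth(\neg\theta_k)$ uniformly, giving a global $C$ (the paper takes $C=C_0+6$, matching your remark that $C$ must strictly exceed the maximal cut-formula height).
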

\begin{proof}
 The $\mathbf L_{\omega^\alpha}^u$-preproofs from Lemma \ref{lem:proofs-kp-axioms}, establishing the Kripke-Platek axioms, contain no cuts. Thus all cut rules in the preproofs $P_\alpha^u$ are of the form $(\cut,\neg\theta_k)$, where $\langle\theta_k\rangle_{k\in\omega}$ is our enumeration of the Kripke-Platek axioms (not including instances of foundation). As stated in Section \ref{sect:search-trees} we only allow a fixed number $C_0$ of parameters in the axiom schemes of $\Delta_0$-separation and $\Delta_0$-collection (all other instances can be deduced via coding of tuples). Each such instance of $\Delta_0$-separation (resp.~$\Delta_0$-collection) has height at most $C_0+2$ (resp.~$C_0+5$). Thus the claim holds for~$C=C_0+6$.
\end{proof}

In the previous proof we have used the fact that the axioms in our search tree have bounded quantifier complexity. We should point out that this is a convenient simplification rather than a necessary prerequisite: It is indeed possible to transform a search tree with unbounded cut rank into a preproof with bounded cut rank. To do so one must interweave embedding and cut elimination, as in Rathjen's and Vizca\'ino's construction of $\omega$-models of bar induction (see~\cite[Theorem~5.26]{rathjen-model-bi}). Rather than adopting this approach, we have chosen to bound the number of parameters in the axioms: It makes the presentation easier and means no real restriction (contrary to the situation for bar induction, where unbounded quantifier complexity cannot be avoided).

The goal of the present section is to transform the preproofs $P_\alpha^u$ into preproofs with cut-rank $2$ (and unchanged end-sequent). The easiest way to describe the required operations would be by transfinite recursion over proof trees. However, this approach is not available to us: Firstly, we do not currently assume that the preproofs $P_\alpha^u$ are well-founded. More importantly, recursion over arbitrary well-orderings is not available in primitive recursive set theory. In order to describe the required operations in a primitive recursive way we adopt Buchholz' approach \cite{buchholz91} to ``continuous cut elimination'' (for systems of set theory this is worked out in \cite{buchholz-notations-set-theory}, but we will not follow the formalism from that paper). The idea is to define a set of $\mathbf L_{\omega^\alpha}^u$-codes and a primitive recursive interpretation that maps each code to an $\mathbf L_{\omega^\alpha}^u$-preproof. Primitive recursive transformations of proofs can then be described by simple operations on the codes. As a first step, let us define codes for the preproofs $P_\alpha^u$ that we have already constructed. To make the approach work we will also need codes $P_\alpha^u\sigma$ for subtrees of the preproofs $P_\alpha^u$, rooted at arbitrary nodes $\sigma\in P_\alpha^u$.

\begin{definition}\label{def:basic-codes}
 By a basic $\mathbf L_{\omega^\alpha}^u$-code we mean a term of the form $P_\alpha^u\sigma$, for a countable transitive set $u=\{u_i\,|\,i\in\omega\}$, an ordinal $\alpha>1$, and a finite sequence $\sigma$ with entries in $\mathbf L_{\omega^\alpha}^u$. We define functions
\begin{align*}
 l_{\langle\rangle}:\text{``basic $\mathbf L_{\omega^\alpha}^u$-codes''}&\rightarrow\text{``$\mathbf L_{\omega^\alpha}^u$-sequents''},\\
 r_{\langle\rangle}:\text{``basic $\mathbf L_{\omega^\alpha}^u$-codes''}&\rightarrow\text{``$\mathbf L_{\omega^\alpha}^u$-rules''},\\
 o_{\langle\rangle}:\text{``basic $\mathbf L_{\omega^\alpha}^u$-codes''}&\rightarrow\varepsilon(S_{\omega^\alpha}^u)
\end{align*}
as follows: Let $l_\alpha^u$, $r_\alpha^u$ and $o_\alpha^u$ be the labelling functions of the $\mathbf L_\alpha^u$-preproof $P_\alpha^u$ (see~Proposition \ref{prop:proof-from-search-tree}). If $\sigma$ is not a node in $P_\alpha^u$ then set $l_{\langle\rangle}(P_\alpha^u\sigma)=\langle 0=0\rangle$, $r_{\langle\rangle}(P_\alpha^u\sigma)=\ax$ and $o_{\langle\rangle}(P_\alpha^u\sigma)=0$. If $\sigma$ is a node in $P_\alpha^u$ we put
\begin{align*}
 l_{\langle\rangle}(P_\alpha^u\sigma)=l_\alpha^u(\sigma),\\
 r_{\langle\rangle}(P_\alpha^u\sigma)=r_\alpha^u(\sigma),\\
 o_{\langle\rangle}(P_\alpha^u\sigma)=o_\alpha^u(\sigma).
\end{align*} 
Also, we define a function
\begin{equation*}
 n:\text{``basic $\mathbf L_{\omega^\alpha}^u$-codes''}\times\mathbf L_{\omega^\alpha}^u\rightarrow\text{``basic $\mathbf L_{\omega^\alpha}^u$-codes''},\quad n(P_\alpha^u\sigma,a):=P_\alpha^u(\sigma^\frown a).
\end{equation*}
\end{definition}

Intuitively, $P_\alpha^u\sigma$ represents the subtree of $P_\alpha^u$ rooted at $\sigma$. The functions $l_{\langle\rangle}$, $r_{\langle\rangle}$ and $o_{\langle\rangle}$ give the labels at the root of this subtree. Using $n$ we can also access the children of a given node. Combining these functions allows us to recover the full tree $P_\alpha^u$:

\begin{definition}\label{def:interpretation-codes}
 We iterate the function $n$ from Definition \ref{def:basic-codes} along finite sequences, to get a function
\begin{equation*}
 \bar n:\text{``basic $\mathbf L_{\omega^\alpha}^u$-codes''}\times(\mathbf L_{\omega^\alpha}^u)^{<\omega}\rightarrow\text{``basic $\mathbf L_{\omega^\alpha}^u$-codes''}
\end{equation*}
with
\begin{align*}
 \bar n(P,\langle\rangle)&:=P,\\
 \bar n(P,\sigma^\frown a)&:=n(\bar n(P,\sigma),a).
\end{align*}
Using this, define functions
 \begin{align*}
 l:\text{``basic $\mathbf L_{\omega^\alpha}^u$-codes''}\times(\mathbf L_{\omega^\alpha}^u)^{<\omega}&\rightarrow\text{``$\mathbf L_{\omega^\alpha}^u$-sequents''}, &l(P,\sigma)&\mathrel{:=}l_{\langle\rangle}(\bar n(P,\sigma)),\\
 r:\text{``basic $\mathbf L_{\omega^\alpha}^u$-codes''}\times(\mathbf L_{\omega^\alpha}^u)^{<\omega}&\rightarrow\text{``$\mathbf L_{\omega^\alpha}^u$-rules''}, &r(P,\sigma)&\mathrel{:=}r_{\langle\rangle}(\bar n(P,\sigma)),\\
 o:\text{``basic $\mathbf L_{\omega^\alpha}^u$-codes''}\times(\mathbf L_{\omega^\alpha}^u)^{<\omega}&\rightarrow\varepsilon(S_{\omega^\alpha}^u), &o(P,\sigma)&\mathrel{:=}o_{\langle\rangle}(\bar n(P,\sigma)).
\end{align*}
In order to define a subtree $[P]\subseteq(\mathbf L_{\omega^\alpha}^u)^{<\omega}$ for each basic $\mathbf L_{\omega^\alpha}^u$-code $P$ we need an auxiliary notion: The relevant premises of an $\mathbf L_{\omega^\alpha}^u$-rule are given by
\begin{gather*}
 \iota(\ax)=\emptyset,\\
 \iota((\land,\psi_0,\psi_1))=\iota((\cut,\psi))=\{0,1\},\\
 \iota((\lor_i,\psi_0,\psi_1))=\iota((\exists_x,b,\psi))=\iota((\rref,\exists_z\forall_{x\in a}\exists_{y\in z}\theta))=\{0\},\\
 \iota((\forall_x,\psi))=\mathbf L_{\omega^\alpha}^u,\\
 \iota((\rep,a))=\{a\}.
\end{gather*}
 Given an $\mathbf L_{\omega^\alpha}^u$-code $P$ we now define $[P]\subseteq(\mathbf L_{\omega^\alpha}^u)^{<\omega}$ by recursion on the length of sequences: We always have $\langle\rangle\in[P]$. Given $\sigma\in [P]$ we stipulate $\sigma^\frown a\in [P]$ if and only if $a\in\iota(r(P,\sigma))$. Finally, write $l_{[P]},r_{[P]},o_{[P]}$ for the restrictions of the functions $l(P,\cdot),r(P,\cdot),o(P,\cdot)$ to $[P]$. The tree $[P]$, together with the functions $l_{[P]},r_{[P]},o_{[P]}$, is called the interpretation of the basic $\mathbf L_{\omega^\alpha}^u$-code $P$.
\end{definition}

The following result is reassuring, even though we will never use it:

\begin{lemma}\label{lem:int-basic-codes-expected}
 The labelled trees $[P_\alpha^u\langle\rangle]$ and $P_\alpha^u$ are equal. In particular $[P_\alpha^u\langle\rangle]$ is a locally correct $\mathbf L_{\omega^\alpha}^u$-preproof.
\end{lemma}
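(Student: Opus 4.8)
The plan is to unfold the definitions of $[P_\alpha^u\langle\rangle]$ and of the preproof $P_\alpha^u = (P_\alpha^u, l_\alpha^u, r_\alpha^u, o_\alpha^u)$ in parallel and check that they agree level by level. Concretely, I would prove by induction on the length of a sequence $\sigma\in(\mathbf L_{\omega^\alpha}^u)^{<\omega}$ the conjunction of the following two statements: (a) $\sigma\in[P_\alpha^u\langle\rangle]$ if and only if $\sigma\in P_\alpha^u$; and (b) whenever $\sigma$ lies in both trees, the labels coincide, i.e.\ $l_{[P_\alpha^u\langle\rangle]}(\sigma)=l_\alpha^u(\sigma)$, $r_{[P_\alpha^u\langle\rangle]}(\sigma)=r_\alpha^u(\sigma)$ and $o_{[P_\alpha^u\langle\rangle]}(\sigma)=o_\alpha^u(\sigma)$. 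The base case $\sigma=\langle\rangle$ is immediate: $\langle\rangle$ belongs to $[P_\alpha^u\langle\rangle]$ by definition of the interpretation, it belongs to $P_\alpha^u$ since that tree is non-empty (it is a subtree of $(\mathbf L_{\omega^\alpha}^u)^{<\omega}$ containing the root), and by Definition~\ref{def:basic-codes} the root labels of the code $P_\alpha^u\langle\rangle$ are exactly $l_\alpha^u(\langle\rangle)$, $r_\alpha^u(\langle\rangle)$, $o_\alpha^u(\langle\rangle)$.

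For the induction step, suppose the statement holds for $\sigma$; I would consider $\sigma^\frown a$. The key observation is the compatibility of $\bar n$ with the tree structure: by the definition of $\bar n$ and the function $n$ from Definition~\ref{def:basic-codes}, we have $\bar n(P_\alpha^u\langle\rangle,\sigma)=P_\alpha^u\sigma$, so that $l(P_\alpha^u\langle\rangle,\sigma)=l_\alpha^u(\sigma)$, $r(P_\alpha^u\langle\rangle,\sigma)=r_\alpha^u(\sigma)$, $o(P_\alpha^u\langle\rangle,\sigma)=o_\alpha^u(\sigma)$ whenever $\sigma\in P_\alpha^u$ (and if $\sigma\notin P_\alpha^u$, then by the induction hypothesis $\sigma\notin[P_\alpha^u\langle\rangle]$ and there is nothing to extend). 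Now $\sigma^\frown a\in[P_\alpha^u\langle\rangle]$ iff $\sigma\in[P_\alpha^u\langle\rangle]$ and $a\in\iota(r(P_\alpha^u\langle\rangle,\sigma))=\iota(r_\alpha^u(\sigma))$. So the induction step reduces to the single combinatorial fact: \emph{for every node $\sigma\in P_\alpha^u$ and every $a\in\mathbf L_{\omega^\alpha}^u$, we have $\sigma^\frown a\in P_\alpha^u$ if and only if $a\in\iota(r_\alpha^u(\sigma))$.} This is verified by going through the cases in the construction in Proposition~\ref{prop:proof-from-search-tree}: if $\sigma$ has even length, $r_\alpha^u(\sigma)=(\cut,\neg\theta_k)$ and both the search-tree child $\sigma^\frown 0$ and the attached proof of $\theta_k$ at $\sigma^\frown 1$ are present, matching $\iota((\cut,\psi))=\{0,1\}$; if $\sigma$ has odd length, the rule assigned ($(\exists_x,b,\psi)$, $(\forall_x,\psi)$, $(\lor_i,\ldots)$, $(\land,\ldots)$, $(\rref,\ldots)$, $(\ax)$, or a repetition) has $\iota$ exactly matching the successor set dictated by Definition~\ref{def:construct-search-tree}; and inside the appended subtrees proving the Kripke--Platek axioms the local correctness conditions of Definition~\ref{def:L-proof} were arranged precisely so that the premise sets equal the relevant $\iota$-values. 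Once (a) and (b) are established for all $\sigma$, we get $[P_\alpha^u\langle\rangle]=P_\alpha^u$ as labelled trees; the final sentence of the lemma then follows because $P_\alpha^u$ was shown to be a locally correct $\mathbf L_{\omega^\alpha}^u$-preproof in Proposition~\ref{prop:proof-from-search-tree}.

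The main obstacle is not conceptual but bookkeeping: one has to check that the $\iota$-function, defined abstractly from the shape of a rule symbol, returns exactly the set of one-step extensions that Proposition~\ref{prop:proof-from-search-tree} actually produced in each of its (somewhat implicitly described) cases --- in particular in the even-length case where a whole auxiliary preproof from Lemma~\ref{lem:proofs-kp-axioms} is grafted at $\sigma^\frown 1$, and within those auxiliary preproofs themselves. Since the excerpt already remarks that this lemma ``will never be used'', I would keep the verification brief, explicitly treating one representative case (say, the existential rule) in detail and indicating that the others --- including the grafted Kripke--Platek subproofs --- are analogous and follow directly by matching Definition~\ref{def:interpretation-codes}'s list of $\iota$-values against the successor conditions in Definitions~\ref{def:construct-search-tree} and~\ref{def:L-proof}.
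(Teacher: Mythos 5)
Your proposal is correct and follows essentially the same route as the paper: induction on the length of $\sigma$, unravelling $\bar n(P_\alpha^u\langle\rangle,\sigma)=P_\alpha^u\sigma$ to identify the labels, and matching the $\iota$-values against the set of children of $\sigma$ in $P_\alpha^u$. The only simplification in the paper is that your ``key combinatorial fact'' ($\sigma^\frown a\in P_\alpha^u$ iff $a\in\iota(r_\alpha^u(\sigma))$) requires no renewed bookkeeping through the construction of Proposition~\ref{prop:proof-from-search-tree} and the grafted subproofs from Lemma~\ref{lem:proofs-kp-axioms}: it is precisely one of the local correctness conditions of Definition~\ref{def:L-proof}, which Proposition~\ref{prop:proof-from-search-tree} has already established for $P_\alpha^u$, so one can cite local correctness directly in each case of the rule $r_\alpha^u(\sigma)$.
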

\begin{proof}
 By induction on (the length of) a sequence $\sigma\in(\mathbf L_{\omega^\alpha}^u)^{<\omega}$ we verify
\begin{equation*}
 \sigma\in[P_\alpha^u\langle\rangle]\Leftrightarrow \sigma\in P_\alpha^u.
\end{equation*}
Note that, once we have seen $\sigma\in [P_\alpha^u\langle\rangle]\cap P_\alpha^u$, equality of the labels is immediate: Unravelling the definitions we get
\begin{equation*}
 l_{[P_\alpha^u\langle\rangle]}(\sigma)=l(P_\alpha^u\langle\rangle,\sigma)=l_{\langle\rangle}(\bar n(P_\alpha^u\langle\rangle,\sigma))=l_{\langle\rangle}(P_\alpha^u\sigma)=l_\alpha^u(\sigma),
\end{equation*}
where $l_\alpha^u$ is the labelling function of the proof $P_\alpha^u$ (the same holds for rules and ordinal labels). As for the base case of the induction, both $[P_\alpha^u\langle\rangle]$ and $P_\alpha^u$ contain the empty sequent. Now assume that the equivalence holds for $\sigma$. By definition $[P_\alpha^u\langle\rangle]$ and $P_\alpha^u$ are trees, so it suffices to consider the case where we have $\sigma\in[P_\alpha^u\langle\rangle]$ and $\sigma\in P_\alpha^u$. We distinguish cases according to the rule $r_{[P_\alpha^u\langle\rangle]}(\sigma)=r_\alpha^u(\sigma)$. Assume for example that $r_{[P_\alpha^u\langle\rangle]}(\sigma)$ is of the form $(\land,\psi_0,\psi_1)$. By the definition of $[P_\alpha^u\langle\rangle]$ we have $\sigma^\frown a\in[P_\alpha^u\langle\rangle]$ if and only if $a\in\{0,1\}$. However, the latter is also equivalent to $\sigma^\frown a\in P_\alpha^u$, by the local correctness of $P_\alpha^u$. The other cases are checked in the same way.
\end{proof}

We extend this result by showing that $[P]$ is an $\mathbf L_{\omega^\alpha}^u$-preproof for any basic $\mathbf L_{\omega^\alpha}^u$-code P. As we shall see, the proof of this is at least as important as the result.

\begin{lemma}\label{lem:local-correctness-codes}
The system of basic $\mathbf L_{\omega^\alpha}^u$-codes is locally correct, in the sense that the following --- which we will call condition (L) --- holds for any basic $\mathbf L_{\omega^\alpha}^u$-code $P$:
{\def\arraystretch{1.75}\tabcolsep=10pt
\setlength{\LTpre}{\baselineskip}\setlength{\LTpost}{\baselineskip}
\begin{longtable}{lp{0.68\textwidth}}\hline
If $r_{\langle\rangle}(P)$ is \dots & \dots\ then \dots\\ \hline
$\ax$ & $l_{\langle\rangle}(P)$ contains a true $\Delta_0$-formula,\\
$(\land,\psi_0,\psi_1)$ & we have $o_{\langle\rangle}(n(P,i))<o_{\langle\rangle}(P)$ for $i=0,1$;\newline also $\psi_0\land\psi_1\in l_{\langle\rangle}(P)$ and $l_{\langle\rangle}(n(P,i))\subseteq l_{\langle\rangle}(P),\psi_i$,\\
$(\lor_i,\psi_0,\psi_1)$ & we have $o_{\langle\rangle}(n(P,0))<o_{\langle\rangle}(P)$;\newline also $\psi_0\lor\psi_1\in l_{\langle\rangle}(P)$ and $l_{\langle\rangle}(n(P,0))\subseteq l_{\langle\rangle}(P),\psi_i$,\\
$(\forall_x,\psi)$ & we have $o_{\langle\rangle}(n(P,a))+\omega\leq o_{\langle\rangle}(P)$ for all $a\in\mathbf L_{\omega^\alpha}^u$;\newline also $\forall_x\psi\in l_{\langle\rangle}(P)$ and $l_{\langle\rangle}(n(P,a))\subseteq l_{\langle\rangle}(P),\psi(a)$,\\
$(\exists_x,b,\psi)$ & we have $o_{\langle\rangle}(n(P,0))+\omega\leq o_{\langle\rangle}(P)$ and $|b|<o_{\langle\rangle}(P)$;\newline also $\exists_x\psi\in l_{\langle\rangle}(P)$ and $l_{\langle\rangle}(n(P,0))\subseteq l_{\langle\rangle}(P),\psi(b)$,\\
$(\cut, \psi)$ & we have $o_{\langle\rangle}(n(P,i))<o_{\langle\rangle}(P)$ for $i=0,1$; also\newline $l_{\langle\rangle}(n(P,0))\subseteq l_{\langle\rangle}(P),\psi$ and $l_{\langle\rangle}(n(P,1))\subseteq l_{\langle\rangle}(P),\neg\psi$,\\
$(\rref,\exists_z\forall_{x\in a}\exists_{y\in z}\theta)$ & we have $o_{\langle\rangle}(n(P,0))<o_{\langle\rangle}(P)$ and $\Omega\leq o_{\langle\rangle}(P)$; also\newline $\exists_z\forall_{x\in a}\exists_{y\in z}\theta\in l_{\langle\rangle}(P)$ and $l_{\langle\rangle}(n(P,0))\subseteq l_{\langle\rangle}(P),\forall_{x\in a}\exists_y\theta$,\\
$(\rep, b)$ & we have $o_{\langle\rangle}(n(P,b))<o(s)$; also $l_{\langle\rangle}(n(P,b))\subseteq l_{\langle\rangle}(P)$.\\ \hline
\end{longtable}}
\end{lemma}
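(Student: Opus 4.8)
The plan is to establish condition~(L) by a routine case distinction on whether the sequence $\sigma$ is a node of the preproof $P_\alpha^u$ from Proposition~\ref{prop:proof-from-search-tree}. Recall from Definition~\ref{def:basic-codes} that a basic code is a term $P_\alpha^u\sigma$ for an \emph{arbitrary} finite sequence $\sigma$ with entries in $\mathbf L_{\omega^\alpha}^u$: if $\sigma\notin P_\alpha^u$ it carries the default labels $l_{\langle\rangle}(P_\alpha^u\sigma)=\langle 0=0\rangle$, $r_{\langle\rangle}(P_\alpha^u\sigma)=\ax$, $o_{\langle\rangle}(P_\alpha^u\sigma)=0$, whereas if $\sigma\in P_\alpha^u$ it carries the labels $l_\alpha^u(\sigma)$, $r_\alpha^u(\sigma)$, $o_\alpha^u(\sigma)$ of the actual proof node; and $n(P_\alpha^u\sigma,a)=P_\alpha^u(\sigma^\frown a)$ in either case.

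First I would dispatch the case $\sigma\notin P_\alpha^u$. Then $r_{\langle\rangle}(P_\alpha^u\sigma)=\ax$, so the only obligation in condition~(L) is that $l_{\langle\rangle}(P_\alpha^u\sigma)$ contain a true $\Delta_0$-formula; since $l_{\langle\rangle}(P_\alpha^u\sigma)=\langle 0=0\rangle$ and $0=0$ is a true closed $\Delta_0$-formula, this holds. (No premises are referred to, as $\iota(\ax)=\emptyset$, and so it is irrelevant what the basic codes $n(P_\alpha^u\sigma,a)$ look like.)

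Next, for $\sigma\in P_\alpha^u$ I would unfold the definitions: here $l_{\langle\rangle}(P_\alpha^u\sigma)=l_\alpha^u(\sigma)$, $r_{\langle\rangle}(P_\alpha^u\sigma)=r_\alpha^u(\sigma)$, $o_{\langle\rangle}(P_\alpha^u\sigma)=o_\alpha^u(\sigma)$. The key observation is that for every successor index $a$ occurring in condition~(L) for the rule $r_\alpha^u(\sigma)$ --- i.e.\ for $a\in\iota(r_\alpha^u(\sigma))$ --- the sequence $\sigma^\frown a$ is again a node of $P_\alpha^u$. This is exactly the ``$\Leftarrow$''-direction of the ``iff''-clauses in the local correctness conditions of Definition~\ref{def:L-proof}, which hold because $P_\alpha^u$ is a genuine $\mathbf L_{\omega^\alpha}^u$-preproof by Proposition~\ref{prop:proof-from-search-tree}. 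Hence $l_{\langle\rangle}(n(P_\alpha^u\sigma,a))=l_{\langle\rangle}(P_\alpha^u(\sigma^\frown a))=l_\alpha^u(\sigma^\frown a)$ and, similarly, $o_{\langle\rangle}(n(P_\alpha^u\sigma,a))=o_\alpha^u(\sigma^\frown a)$. Substituting these identities into condition~(L) for $P_\alpha^u\sigma$ turns it, row by row and for each of the eight possible forms of $r_\alpha^u(\sigma)$, into precisely the local correctness condition of the preproof $P_\alpha^u$ at the node $\sigma$ --- which holds, again by Proposition~\ref{prop:proof-from-search-tree}.

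The only thing needing genuine (if entirely mechanical) verification is that the premise-index set $\iota(r)$ from Definition~\ref{def:interpretation-codes} agrees, for each rule form $r$, with the set of immediate successors that Definition~\ref{def:L-proof} prescribes for a node with rule $r$: namely $\emptyset$ for $\ax$, $\{0,1\}$ for $\land$ and $\cut$, $\{0\}$ for $\lor_i$, $\exists$ and $\rref$, all of $\mathbf L_{\omega^\alpha}^u$ for $\forall$, and $\{a\}$ for $(\rep,a)$. I expect this bookkeeping to be the ``hard'' part only in the sense of being tedious; no real idea is involved, and --- crucially --- nothing in the argument uses that $\varepsilon(S_{\omega^\alpha}^u)$ is well-founded, which is the whole point of introducing the code formalism.
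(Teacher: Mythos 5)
Your proposal is correct and follows essentially the same route as the paper: split on whether $\sigma$ lies in $P_\alpha^u$, handle the default case via the true axiom $0=0$, and otherwise use that the ``iff''-clauses of Definition~\ref{def:L-proof} put $\sigma^\frown a$ back inside $P_\alpha^u$ for the relevant indices $a$, so that condition~(L) for $P_\alpha^u\sigma$ unfolds to the local correctness of the preproof $P_\alpha^u$ at $\sigma$. The paper just writes out one sample rule ($(\exists_x,b,\psi)$) and leaves the remaining rows to the reader, exactly the bookkeeping you describe.
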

\begin{proof}
Any basic $\mathbf L_{\omega^\alpha}^u$-code is of the form $P=P_\alpha^u\sigma$. We must consider two cases: For $\sigma\notin P_\alpha^u$ we have defined $r_{\langle\rangle}(P_\alpha^u\sigma)=\ax$ and $l_{\langle\rangle}(P_\alpha^u\sigma)=\langle 0=0\rangle$. Local correctness is given because $0=0$ is a true $\Delta_0$-formula. Now consider the case $\sigma\in P_\alpha^u$. Then we have $r_{\langle\rangle}(P_\alpha^u\sigma)=r_\alpha^u(\sigma)$, where $r_\alpha^u$ is the labelling function of the preproof $P_\alpha^u$. As an example, assume $r_\alpha^u(\sigma)=(\exists_x,b,\psi)$. By local correctness of $P_\alpha^u$ we get $\sigma^\frown 0\in P_\alpha^u$. It follows that we have
\begin{equation*}
 o_{\langle\rangle}(n(P_\alpha^u\sigma,0))=o_{\langle\rangle}(P_\alpha^u\sigma^\frown 0)=o_\alpha^u(\sigma^\frown 0),
\end{equation*}
as well as  $o_{\langle\rangle}(P_\alpha^u\sigma)=o_\alpha^u(\sigma)$. Thus
\begin{equation*}
 o_{\langle\rangle}(n(P_\alpha^u\sigma,0))+\omega\leq o_{\langle\rangle}(P_\alpha^u\sigma)
\end{equation*}
follows from $o_\alpha^u(\sigma^\frown 0)+\omega\leq o_\alpha^u(\sigma)$, as guaranteed by the local correctness of $P_\alpha^u$. The other conditions and cases are checked in the same way.
\end{proof}

\begin{corollary}\label{cor:correct-proofs-from-codes}
 For any basic $\mathbf L_{\omega^\alpha}^u$-code $P$ the tree $[P]$ is an $\mathbf L_{\omega^\alpha}^u$-preproof.
\end{corollary}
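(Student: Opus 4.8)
The plan is to reduce the claim to the local correctness statement that has just been proved, namely condition (L) from Lemma~\ref{lem:local-correctness-codes}. What we must show is that the labelled tree $[P]$ — with tree $[P]\subseteq(\mathbf L_{\omega^\alpha}^u)^{<\omega}$ and labellings $l_{[P]},r_{[P]},o_{[P]}$ from Definition~\ref{def:interpretation-codes} — satisfies the ``local correctness conditions'' of Definition~\ref{def:L-proof} at every node $\sigma\in[P]$. The point is that those conditions, spelled out at a node $\sigma$, concern only: the rule $r_{[P]}(\sigma)$; which immediate successors $\sigma^\frown a$ belong to $[P]$; and the labels $l_{[P]},o_{[P]}$ at $\sigma$ and at those successors. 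So everything is governed by the ``one-step'' behaviour of the code $\bar n(P,\sigma)$, and that behaviour is exactly what condition (L) describes.

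First I would fix a basic $\mathbf L_{\omega^\alpha}^u$-code $P$ and a node $\sigma\in[P]$, and set $Q:=\bar n(P,\sigma)$, which is again a basic $\mathbf L_{\omega^\alpha}^u$-code. By the definitions in Definition~\ref{def:interpretation-codes} we have $r_{[P]}(\sigma)=r(P,\sigma)=r_{\langle\rangle}(Q)$, $l_{[P]}(\sigma)=l_{\langle\rangle}(Q)$, $o_{[P]}(\sigma)=o_{\langle\rangle}(Q)$, and for each $a\in\mathbf L_{\omega^\alpha}^u$ we have $\bar n(P,\sigma^\frown a)=n(\bar n(P,\sigma),a)=n(Q,a)$, so the labels at the successor $\sigma^\frown a$ agree with $l_{\langle\rangle}(n(Q,a))$, $o_{\langle\rangle}(n(Q,a))$. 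Moreover, by the definition of $[P]$, we have $\sigma^\frown a\in[P]$ iff $a\in\iota(r_{[P]}(\sigma))=\iota(r_{\langle\rangle}(Q))$. Now I would go through the seven possible forms of $r_{\langle\rangle}(Q)$ one by one, and in each case observe that the requirement of Definition~\ref{def:L-proof} is the conjunction of (i) the statement about which successors $a$ are present, which holds by the definition of $\iota$ (e.g.\ $\iota((\land,\psi_0,\psi_1))=\{0,1\}$, $\iota((\forall_x,\psi))=\mathbf L_{\omega^\alpha}^u$, etc.), and (ii) the statements about sequents and ordinal labels, which are precisely the corresponding clauses of condition (L) for the code $Q$. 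For instance, if $r_{\langle\rangle}(Q)=(\forall_x,\psi)$: $\iota$ gives $\sigma^\frown a\in[P]$ for all $a\in\mathbf L_{\omega^\alpha}^u$; condition (L) gives $o_{\langle\rangle}(n(Q,a))+\omega\leq o_{\langle\rangle}(Q)$, hence $o_{[P]}(\sigma^\frown a)+\omega\leq o_{[P]}(\sigma)$; and it gives $\forall_x\psi\in l_{\langle\rangle}(Q)=l_{[P]}(\sigma)$ together with $l_{[P]}(\sigma^\frown a)=l_{\langle\rangle}(n(Q,a))\subseteq l_{\langle\rangle}(Q),\psi(a)=l_{[P]}(\sigma),\psi(a)$. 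The case $r_{\langle\rangle}(Q)=\ax$ uses in addition that $\iota(\ax)=\emptyset$, so $\sigma$ is a leaf of $[P]$, while (L) supplies the true $\Delta_0$-formula in $l_{[P]}(\sigma)$. The remaining cases — $(\lor_i,\cdot)$, $(\exists_x,b,\cdot)$, $(\cut,\psi)$, $(\rref,\cdot)$, $(\rep,b)$ — are handled identically, matching $\iota$ against the successor-existence clause and (L) against the label clauses. Finally I would note $[P]$ is non-empty since $\langle\rangle\in[P]$ always, and that all these verifications are primitive recursive, so the argument goes through in the meta-theory.

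I do not expect a serious obstacle here: the corollary is essentially a bookkeeping unravelling of Definitions~\ref{def:L-proof} and~\ref{def:interpretation-codes} against Lemma~\ref{lem:local-correctness-codes}. The only mildly delicate point is making sure the definition of $\iota$ has been chosen so that, rule by rule, the set of present successors in $[P]$ matches exactly what Definition~\ref{def:L-proof} demands; but this is visibly the case by inspection of the two tables, so the match is automatic. Thus the proof is just: unravel $\bar n(P,\sigma)=:Q$, apply condition (L) to $Q$, and read off the local correctness condition at $\sigma$; do this for each of the seven rule forms.

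\begin{proof}
 Let $P$ be a basic $\mathbf L_{\omega^\alpha}^u$-code and $\sigma\in[P]$. Set $Q:=\bar n(P,\sigma)$, which is again a basic $\mathbf L_{\omega^\alpha}^u$-code. By Definition~\ref{def:interpretation-codes} we have $r_{[P]}(\sigma)=r_{\langle\rangle}(Q)$, $l_{[P]}(\sigma)=l_{\langle\rangle}(Q)$, $o_{[P]}(\sigma)=o_{\langle\rangle}(Q)$, and for each $a\in\mathbf L_{\omega^\alpha}^u$ we have $\bar n(P,\sigma^\frown a)=n(Q,a)$, so that $l_{[P]}(\sigma^\frown a)=l_{\langle\rangle}(n(Q,a))$ and $o_{[P]}(\sigma^\frown a)=o_{\langle\rangle}(n(Q,a))$ whenever $\sigma^\frown a\in[P]$; moreover $\sigma^\frown a\in[P]$ iff $a\in\iota(r_{\langle\rangle}(Q))$. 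We distinguish cases according to $r_{\langle\rangle}(Q)$, comparing the local correctness conditions of Definition~\ref{def:L-proof} against the clauses of condition (L) from Lemma~\ref{lem:local-correctness-codes}.

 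If $r_{\langle\rangle}(Q)=\ax$ then $\iota(\ax)=\emptyset$, so $\sigma$ is a leaf of $[P]$, and condition (L) gives that $l_{[P]}(\sigma)=l_{\langle\rangle}(Q)$ contains a true $\Delta_0$-formula, as required.

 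If $r_{\langle\rangle}(Q)=(\forall_x,\psi)$ then $\iota((\forall_x,\psi))=\mathbf L_{\omega^\alpha}^u$, so $\sigma^\frown a\in[P]$ for all $a\in\mathbf L_{\omega^\alpha}^u$; condition (L) yields $o_{[P]}(\sigma^\frown a)+\omega\leq o_{[P]}(\sigma)$, $\forall_x\psi\in l_{[P]}(\sigma)$, and $l_{[P]}(\sigma^\frown a)\subseteq l_{[P]}(\sigma),\psi(a)$ for each $a$. This is exactly the condition of Definition~\ref{def:L-proof} for this rule.

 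If $r_{\langle\rangle}(Q)=(\land,\psi_0,\psi_1)$ or $(\cut,\psi)$ then $\iota=\{0,1\}$, so $\sigma^\frown a\in[P]$ iff $a\in\{0,1\}$; if $r_{\langle\rangle}(Q)=(\lor_i,\psi_0,\psi_1)$, $(\exists_x,b,\psi)$, or $(\rref,\exists_z\forall_{x\in a}\exists_{y\in z}\theta)$ then $\iota=\{0\}$, so $\sigma^\frown a\in[P]$ iff $a=0$; if $r_{\langle\rangle}(Q)=(\rep,b)$ then $\iota=\{b\}$, so $\sigma^\frown a\in[P]$ iff $a=b$. In each of these cases the remaining assertions of Definition~\ref{def:L-proof} --- the inequalities between ordinal labels and the inclusions between sequents (and, for $(\exists_x,b,\psi)$, the inequality $|b|<o_{[P]}(\sigma)$; for $(\rref,\dots)$, the inequality $\Omega\leq o_{[P]}(\sigma)$) --- are precisely the corresponding clauses of condition (L), with $o_{\langle\rangle}(Q)$, $l_{\langle\rangle}(Q)$, $o_{\langle\rangle}(n(Q,a))$, $l_{\langle\rangle}(n(Q,a))$ rewritten as $o_{[P]}(\sigma)$, $l_{[P]}(\sigma)$, $o_{[P]}(\sigma^\frown a)$, $l_{[P]}(\sigma^\frown a)$.

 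Thus $[P]$ satisfies all local correctness conditions at every node, and since $\langle\rangle\in[P]$ the tree $[P]$ is non-empty. Hence $[P]$, together with $l_{[P]},r_{[P]},o_{[P]}$, is an $\mathbf L_{\omega^\alpha}^u$-preproof.
\end{proof}
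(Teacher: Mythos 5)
Your proposal is correct and follows essentially the same route as the paper's own proof: unravel the definitions so that local correctness of $[P]$ at $\sigma$ reduces to condition (L) for the basic code $\bar n(P,\sigma)$, check that $\iota$ gives exactly the required successors, and verify the label conditions rule by rule (the paper writes out only the cut case and leaves the rest "in the same way", while you enumerate the cases a bit more explicitly). No gaps.
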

\begin{proof}
 To check local correctness at a node $\sigma\in[P]$, we distinguish cases according to the rule $r_{[P]}(\sigma)=r(P,\sigma)$. Assume for example that we have $r_{[P]}(\sigma)=(\cut,\psi)$. By the definition of $[P]$ we have $\sigma^\frown a\in [P]$ if and only if $a\in\{0,1\}$, so this part of the local correctness condition is satisfied. Concerning the remaining conditions, observe that we have
\begin{equation*}
 r_{\langle\rangle}(\bar n(P,\sigma))=r(P,\sigma)=(\cut,\psi).
\end{equation*}
Thus condition (L) for $\bar n(P,\sigma)$ gives
\begin{equation*}
 l_{\langle\rangle}(n(\bar n(P,\sigma),0))\subseteq l_{\langle\rangle}(\bar n(P,\sigma)),\psi
\end{equation*}
We can deduce
\begin{multline*}
 l_{[P]}(\sigma^\frown 0)=l(P,\sigma^\frown 0)=l_{\langle\rangle}(\bar n(P,\sigma^\frown 0))=\\
=l_{\langle\rangle}(n(\bar n(P,\sigma),0))\subseteq l_{\langle\rangle}(\bar n(P,\sigma)),\psi=l_{[P]}(\sigma),\psi,
\end{multline*}
as required for the local correctness of $[P]$ at $\sigma$. The other conditions are deduced in the same way.
\end{proof}

Note that Lemma~\ref{lem:local-correctness-codes} only involves the functions $l_{\langle\rangle},r_{\langle\rangle},o_{\langle\rangle},n$ from Definition~\ref{def:basic-codes}. This gives us an easy way to extend the system of basic $\mathbf L_{\omega^\alpha}^u$-codes by new codes: All we need to do is extend the functions $l_{\langle\rangle},r_{\langle\rangle},o_{\langle\rangle},n$ to the new codes and show that condition (L) is still satisfied. Definition~\ref{def:interpretation-codes} will automatically provide an interpretation of the new codes (based on the extended functions $l_{\langle\rangle},r_{\langle\rangle},o_{\langle\rangle},n$). The proof of the corollary ensures that the interpretations of the new codes are locally correct $\mathbf L_{\omega^\alpha}^u$-preproofs. As a first application, let us show how a proof of a universal statement $\forall_x\psi(x)$ can be transformed into a proof of any instance $\psi(a)$, keeping the same ordinal height; similarly, a proof of a conjunction can be transformed into a proof of either conjunct:

\begin{lemma}\label{lem:inversion}
 We can extend the system of basic $\mathbf L_{\omega^\alpha}^u$-codes in the following way:
\begin{enumerate}[label=(\alph*)]
 \item For each universal formula $\forall_x\psi$ and each $b\in\mathbf L_{\omega^\alpha}^u$ we can add a unary function symbol $\mathcal I_{\forall_x\psi,b}$ such that we have
\begin{align*}
 l_{\langle\rangle}(\mathcal I_{\forall_x\psi,b}P)&=(l_{\langle\rangle}(P)\backslash\{\forall_x\psi\})\cup\{\psi(b)\},\\
 o_{\langle\rangle}(\mathcal I_{\forall_x\psi,b}P)&=o_{\langle\rangle}(P)
\end{align*}
for any $\mathbf L_{\omega^\alpha}^u$-code $P$ (of the extended system).
 \item For each conjunction $\psi_0\land\psi_1$ and each $i\in\{0,1\}$ we can add a unary function symbol $\mathcal I_{\psi_0\land\psi_1,i}$ such that we have
\begin{align*}
 l_{\langle\rangle}(\mathcal I_{\psi_0\land\psi_1,i}P)&=(l_{\langle\rangle}(P)\backslash\{\psi_0\land\psi_1\})\cup\{\psi_i\},\\
 o_{\langle\rangle}(\mathcal I_{\psi_0\land\psi_1,i}P)&=o_{\langle\rangle}(P)
\end{align*}
for any $\mathbf L_{\omega^\alpha}^u$-code $P$ (of the extended system).
\end{enumerate}
\end{lemma}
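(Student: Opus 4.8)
The plan is to define the new function symbols $\mathcal I_{\forall_x\psi,b}$ and $\mathcal I_{\psi_0\land\psi_1,i}$ by specifying how they act on the auxiliary data $l_{\langle\rangle},r_{\langle\rangle},o_{\langle\rangle},n$, exactly as the paragraph after Corollary~\ref{cor:correct-proofs-from-codes} suggests: once condition~(L) is verified for the enlarged code system, Definition~\ref{def:interpretation-codes} and the proof of the corollary automatically yield locally correct interpretations. Since the two parts are entirely parallel, I would carry out part~(a) in detail and remark that part~(b) is the same with the conjunction redex in place of the universal redex. The guiding idea is the familiar inversion lemma for infinitary proof systems: given a proof of $\Gamma,\forall_x\psi$, at each node one ``strips off'' the formula $\forall_x\psi$ whenever it is the principal formula of a $(\forall_x,\psi)$-rule, replacing it by the single premise indexed by $b$ and substituting $\psi(b)$ for $\forall_x\psi$ throughout all sequents; at every other node one simply propagates the substitution and keeps the rule. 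The key point is that this operation does not increase ordinal labels, because when a $(\forall_x,\psi)$-rule is removed we pass to one of its premises, which already had a strictly smaller label (in fact label $+\,\omega\le$ the old one).

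Concretely, I would set $r_{\langle\rangle}(\mathcal I_{\forall_x\psi,b}P)=r_{\langle\rangle}(P)$ and $o_{\langle\rangle}(\mathcal I_{\forall_x\psi,b}P)=o_{\langle\rangle}(P)$ \emph{unless} $r_{\langle\rangle}(P)$ is the rule $(\forall_x,\psi)$ with that very formula $\forall_x\psi$ as principal formula, in which case I put $r_{\langle\rangle}(\mathcal I_{\forall_x\psi,b}P)=(\rep,0)$ (a repetition rule) and $o_{\langle\rangle}(\mathcal I_{\forall_x\psi,b}P)=o_{\langle\rangle}(P)$. For the sequent I always put $l_{\langle\rangle}(\mathcal I_{\forall_x\psi,b}P)=(l_{\langle\rangle}(P)\setminus\{\forall_x\psi\})\cup\{\psi(b)\}$, as demanded. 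For the children: in the generic case I set $n(\mathcal I_{\forall_x\psi,b}P,a)=\mathcal I_{\forall_x\psi,b}(n(P,a))$; in the special case where the principal formula was removed I set $n(\mathcal I_{\forall_x\psi,b}P,0)=\mathcal I_{\forall_x\psi,b}(n(P,b))$, i.e.\ I descend directly to the premise indexed by the chosen witness $b$. One subtlety to watch: after recursively applying $\mathcal I_{\forall_x\psi,b}$ the child sequent $l_{\langle\rangle}(n(P,b))$ already has $\forall_x\psi$ deleted and $\psi(b)$ inserted, so it is $\subseteq l_{\langle\rangle}(\mathcal I_{\forall_x\psi,b}P)$, which is precisely what the $(\rep,0)$ clause of~(L) requires; one must also recall from Definition~\ref{def:construct-search-tree} and Definition~\ref{def:L-proof} that $l(\sigma^\frown a)\subseteq l(\sigma),\psi(a)$ only, so after deletion the inclusion still holds. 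Another subtlety: the substitution $\forall_x\psi\mapsto\psi(b)$ must not interfere with the local-correctness bookkeeping of \emph{other} rules, e.g.\ a $(\lor_i,\chi_0,\chi_1)$ rule whose principal formula $\chi_0\lor\chi_1$ happens to equal neither $\forall_x\psi$ nor $\psi(b)$; here one uses that the principal formula of a non-$(\rep)$ rule is never affected (it is not of the form $\forall_x\psi$ unless the rule \emph{is} $(\forall_x,\psi)$), so the relevant membership assertions in~(L) are preserved under the substitution.

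With these definitions, verifying condition~(L) for $\mathcal I_{\forall_x\psi,b}P$ is a case distinction on $r_{\langle\rangle}(P)$ that mirrors the proof of Lemma~\ref{lem:local-correctness-codes}: in the generic cases the ordinal inequalities and sequent inclusions for $\mathcal I_{\forall_x\psi,b}P$ follow verbatim from those for $P$ since labels are unchanged and the substitution respects inclusions; in the special case $(\forall_x,\psi)$ we have turned the node into $(\rep,0)$, and the required inequality $o_{\langle\rangle}(n(\mathcal I_{\forall_x\psi,b}P,0))<o_{\langle\rangle}(\mathcal I_{\forall_x\psi,b}P)$ reads $o_{\langle\rangle}(n(P,b))<o_{\langle\rangle}(P)$, which is implied by $o_{\langle\rangle}(n(P,b))+\omega\le o_{\langle\rangle}(P)$ from condition~(L) for $P$. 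I expect the main obstacle to be purely bookkeeping: making sure the formula substitution $\forall_x\psi\mapsto\psi(b)$ is applied \emph{uniformly} down the whole tree (so that parent and child sequents still satisfy the $\subseteq$-relations of~(L)), and checking that no rule other than the targeted $(\forall_x,\psi)$-rule has $\forall_x\psi$ or $\psi(b)$ as a formula it explicitly references in a way that~(L) constrains --- for the $\exists$- and $\rref$-rules one must double-check that the witnessed or reflected formula is untouched, which holds because those are existential/bounded formulas and $\forall_x\psi$ is a genuine universal formula, hence distinct. Part~(b) is obtained by the same recipe, replacing the $(\forall_x,\psi)$ case by the $(\land,\psi_0,\psi_1)$ case and descending to the premise indexed by $i$ rather than by $b$; there the label bound $o_{\langle\rangle}(n(P,i))<o_{\langle\rangle}(P)$ is already strict, so even the $(\rep)$-clause is immediate.
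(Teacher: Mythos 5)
Your construction is essentially the paper's own proof: it likewise replaces the targeted $(\forall_x,\psi)$-rule by a repetition rule while keeping all ordinal labels and verifies condition (L) by induction on the length of codes with the same case distinction (the paper uses $(\rep,b)$ together with the uniform child clause $n(\mathcal I_{\forall_x\psi,b}P,a)=\mathcal I_{\forall_x\psi,b}\,n(P,a)$, which automatically selects the premise indexed by $b$ since $\iota((\rep,b))=\{b\}$, instead of your $(\rep,0)$ with an explicit redirection to $n(P,b)$ --- a purely cosmetic difference). The one sub-case you leave implicit is the axiom node whose true $\Delta_0$-formula is $\forall_x\psi$ itself (possible because the universal quantifier may be bounded); there one simply observes, as the paper does, that $\psi(b)$ is then also a true $\Delta_0$-formula contained in the new end-sequent.
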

 \begin{proof}
  (a) Informally, the idea is to replace any rule $(\forall_x,\psi)$, which infers the formula $\forall_x\psi$ from the premises $\psi(a)$ for $a\in\mathbf L_{\omega^\alpha}^u$, by the rule $(\rep,b)$, which repeats the premise $\psi(b)$. It is instructive to phrase this as a recursion on $P$: First, apply the operator $\mathcal I_{\forall_x\psi,b}$ to the immediate subproofs $n(P,a)$ of $P$, to replace any occurrences of the rule $(\forall_x,\psi)$ in these subproofs. Additionally, if $(\forall_x,\psi)$ is the last rule of $P$ then replace it by the rule $(\rep,b)$. Astonishingly, $\mathbf L_{\omega^\alpha}^u$-codes allow us to make this idea formal, even when the preproof $P$ is not well-founded and recursion on $P$ is not available: Formally, we define $l_{\langle\rangle}(P),r_{\langle\rangle}(P),o_{\langle\rangle}(P)$ and $n(P,a)$ by recursion on the length of the term $P$. Definition~\ref{def:basic-codes} accounts for a basic code $P$. Any other code is of the form $\mathcal I_{\forall_x\psi,b}P$, for some formula $\psi$ and some parameter $b$. Set
\begin{align*}
 r_{\langle\rangle}(\mathcal I_{\forall_x\psi,b}P)&:=\begin{cases}
                                                     (\rep,b)\quad &\text{if $r_{\langle\rangle}(P)=(\forall_x,\psi)$},\\
                                                     r_{\langle\rangle}(P)\quad &\text{otherwise},
                                                    \end{cases}\\
 n(\mathcal I_{\forall_x\psi,b}P,a)&:=\mathcal I_{\forall_x\psi,b}n(P,a).
\end{align*}
The recursive clauses for $l_{\langle\rangle}(\mathcal I_{\forall_x\psi,b}P)$ and $o_{\langle\rangle}(\mathcal I_{\forall_x\psi,b}P)$ can be copied from the statement of the lemma. We have thus extended $l_{\langle\rangle}(\cdot),r_{\langle\rangle}(\cdot),o_{\langle\rangle}(\cdot)$ and $n(\cdot,\cdot)$ to primitive recursive functions on all $\mathbf L_{\omega^\alpha}^u$-codes of the extended system. It remains to show that condition (L) holds for all new $\mathbf L_{\omega^\alpha}^u$-codes. Note that the statement ``condition (L) holds for a given code $P$'' is primitive recursive (even the quantification over all $a\in\mathbf L_{\omega^\alpha}^u$ in the case $r_{\langle\rangle}(P)=(\forall_y,\varphi)$ is harmless --- in contrast to the situation for first order number theory, where one has to be careful to avoid an unbounded quantification over $\omega$ at this place). We may thus establish condition (L) by induction on the length of $\mathbf L_{\omega^\alpha}^u$-codes. For basic codes condition (L) holds by Lemma~\ref{lem:local-correctness-codes}. In the induction step we must deduce condition (L) for the term $\mathcal I_{\forall_x\psi,b}P$ from condition (L) for $P$. We distinguish cases according to the rule $r_{\langle\rangle}(P)$:

\emph{Case $r_{\langle\rangle}(P)=\ax$:} Then we have $r_{\langle\rangle}(\mathcal I_{\forall_x\psi,b}P)=\ax$, so me must ensure that
\begin{equation*}
 l_{\langle\rangle}(\mathcal I_{\forall_x\psi,b}P)=(l_{\langle\rangle}(P)\backslash\{\forall_x\psi\})\cup\{\psi(b)\}
\end{equation*}
contains a true $\Delta_0$-formula. By induction hypothesis $l_{\langle\rangle}(P)$ contains a true $\Delta_0$-for\-mu\-la. This formula is still contained in $l_{\langle\rangle}(\mathcal I_{\forall_x\psi,b}P)$, unless it is the formula $\forall_x\psi$ itself. In the latter case $\psi(b)$ is also a true $\Delta_0$-formula, contained in $l_{\langle\rangle}(\mathcal I_{\forall_x\psi,b}P)$.

\emph{Case $r_{\langle\rangle}(P)=(\land,\psi_0,\psi_1)$:} Then we have $r_{\langle\rangle}(\mathcal I_{\forall_x\psi,b}P)=(\land,\psi_0,\psi_1)$. Using the induction hypothesis we get
\begin{equation*}
 o_{\langle\rangle}(n(\mathcal I_{\forall_x\psi,b}P,i))=o_{\langle\rangle}(\mathcal I_{\forall_x\psi,b}n(P,i))=o_{\langle\rangle}(n(P,i))<o_{\langle\rangle}(P)=o_{\langle\rangle}(\mathcal I_{\forall_x\psi,b}P).
\end{equation*}
The local correctness of $P$ also gives $\psi_0\land\psi_1\in l_{\langle\rangle}(P)$. As $\psi_0\land\psi_1$ and $\forall_x\psi$ are different formulas we still have $\psi_0\land\psi_1\in l_{\langle\rangle}(\mathcal I_{\forall_x\psi,b}P)$. Finally, we also have
\begin{multline*}
 l_{\langle\rangle}(n(\mathcal I_{\forall_x\psi,b}P,i))=l_{\langle\rangle}(\mathcal I_{\forall_x\psi,b}n(P,i))=\\
=(l_{\langle\rangle}(n(P,i))\backslash\{\forall_x\psi\})\cup\{\psi(b)\}\subseteq ((l_{\langle\rangle}(P)\cup\{\psi_i\})\backslash\{\forall_x\psi\})\cup\{\psi(b)\}\subseteq\\
\subseteq (l_{\langle\rangle}(P)\backslash\{\forall_x\psi\})\cup\{\psi(b),\psi_i\}=l_{\langle\rangle}(\mathcal I_{\forall_x\psi,b}P)\cup\{\psi_i\},
\end{multline*}
as demanded by condition (L) for $\mathcal I_{\forall_x\psi,b}P$.

\emph{Case $r_{\langle\rangle}(P)=(\lor_i,\psi_0,\psi_1)$:} Similar to the previous case.

\emph{Case $r_{\langle\rangle}(P)=(\forall_y,\varphi)\neq(\forall_x,\psi)$:} Then we have $r_{\langle\rangle}(\mathcal I_{\forall_x\psi,b}P)=(\forall_y,\varphi)$. By the induction hypothesis we get
\begin{multline*}
 o_{\langle\rangle}(n(\mathcal I_{\forall_x\psi,b}P,a))+\omega=o_{\langle\rangle}(\mathcal I_{\forall_x\psi,b}n(P,a))+\omega=\\
=o_{\langle\rangle}(n(P,a))+\omega\leq o_{\langle\rangle}(P)=o_{\langle\rangle}(\mathcal I_{\forall_x\psi,b}P)
\end{multline*}
for all $a\in\mathbf L_{\omega^\alpha}^u$. Also, the formula $\forall_y\varphi$ occurs in $l_{\langle\rangle}(P)$; as this formula is different from the formula $\forall_x\psi$ it still occurs in $l_{\langle\rangle}(\mathcal I_{\forall_x\psi,b}P)$. Finally we have
\begin{multline*}
 l_{\langle\rangle}(n(\mathcal I_{\forall_x\psi,b}P,a))=l_{\langle\rangle}(\mathcal I_{\forall_x\psi,b}n(P,a))=\\
=(l_{\langle\rangle}(n(P,a))\backslash\{\forall_x\psi\})\cup\{\psi(b)\}\subseteq ((l_{\langle\rangle}(P)\cup\{\varphi(a)\})\backslash\{\forall_x\psi\})\cup\{\psi(b)\}\subseteq\\
\subseteq (l_{\langle\rangle}(P)\backslash\{\forall_x\psi\})\cup\{\psi(b),\varphi(a)\}=l_{\langle\rangle}(\mathcal I_{\forall_x\psi,b}P)\cup\{\varphi(a)\},
\end{multline*}
as required for condition (L).

\emph{Case $r_{\langle\rangle}(P)=(\forall_x,\psi)$:} Here we have $r_{\langle\rangle}(\mathcal I_{\forall_x\psi,b}P)=(\rep,b)$. We deduce
\begin{multline*}
 o_{\langle\rangle}(n(\mathcal I_{\forall_x\psi,b}P,b))=o_{\langle\rangle}(\mathcal I_{\forall_x\psi,b}n(P,b))=\\
=o_{\langle\rangle}(n(P,b))<o_{\langle\rangle}(n(P,b))+\omega\leq o_{\langle\rangle}(P)=o_{\langle\rangle}(\mathcal I_{\forall_x\psi,b}P)
\end{multline*}
from the induction hypothesis. Furthermore we have
\begin{multline*}
 l_{\langle\rangle}(n(\mathcal I_{\forall_x\psi,b}P,b))=l_{\langle\rangle}(\mathcal I_{\forall_x\psi,b}n(P,b))=\\
(l_{\langle\rangle}(n(P,b))\backslash\{\forall_x\psi\})\cup\{\psi(b)\}\subseteq ((l_{\langle\rangle}(P)\cup\{\psi(b)\})\backslash\{\forall_x\psi\})\cup\{\psi(b)\}=\\
= (l_{\langle\rangle}(P)\backslash\{\forall_x\psi\})\cup\{\psi(b)\}=l_{\langle\rangle}(\mathcal I_{\forall_x\psi,b}P),
\end{multline*}
as condition (L) requires in case of the rule $(\rep,b)$.

\emph{Case $r_{\langle\rangle}(P)=(\exists_x,c,\varphi)$:} Then we have $r_{\langle\rangle}(\mathcal I_{\forall_x\psi,b}P)=(\exists_x,c,\varphi)$. From the induction hypothesis we get $|c|<o_{\langle\rangle}(P)=o_{\langle\rangle}(\mathcal I_{\forall_x\psi,b}P)$. The remaining conditions are checked as in the previous cases.

\emph{Case $r_{\langle\rangle}(P)=(\cut,\varphi)$:} Similar to the previous cases.

\emph{Case $r_{\langle\rangle}(P)=(\rref,\exists_z\forall_{v\in a}\exists_{w\in z}\theta)$:} We have $r_{\langle\rangle}(\mathcal I_{\forall_x\psi,b}P)=(\rref,\exists_z\forall_{v\in a}\exists_{w\in z}\theta)$. Using the induction hypothesis we get $\Omega\leq o_{\langle\rangle}(P)=o_{\langle\rangle}(\mathcal I_{\forall_x\psi,b}P)$. Also by the induction hypothesis we know that $\exists_z\forall_{v\in a}\exists_{w\in z}\theta$ occurs in $l_{\langle\rangle}(P)$. Crucially, the formulas $\exists_z\forall_{v\in a}\exists_{w\in z}\theta$ and $\forall_x\psi$ cannot be the same, so that $\exists_z\forall_{v\in a}\exists_{w\in z}\theta$ still occurs in $l_{\langle\rangle}(\mathcal I_{\forall_x\psi,b}P)$. The remaining conditions are checked as in the previous cases.

\emph{Case $r_{\langle\rangle}(P)=(\rep,c)$:} Similar to the previous cases.

We have established condition (L) for all $\mathbf L_{\omega^\alpha}^u$-codes of the extended system. By (the proof of) Corollary~\ref{cor:correct-proofs-from-codes} it follows that the interpretations of these codes are locally correct $\mathbf L_{\omega^\alpha}^u$-preproofs.

(b) Similar to (a) we set
\begin{align*}
 r_{\langle\rangle}(\mathcal I_{\psi_0\land\psi_1,i}P)&:=\begin{cases}
                                                     (\rep,i)\quad &\text{if $r_{\langle\rangle}(P)=(\land,\psi_0,\psi_1)$},\\
                                                     r_{\langle\rangle}(P)\quad &\text{otherwise},
                                                    \end{cases}\\
 n(\mathcal I_{\psi_0\land\psi_1,i}P,a)&:=\mathcal I_{\psi_0\land\psi_1,i}n(P,a).
\end{align*}
The verification of condition (L) is similar to part (a), and left to the reader. Note that we also want to allow codes of the form $\mathcal I_{\forall_x\psi,b}\mathcal I_{\psi_0\land\psi_1,i}P$. Thus we must repeat the recursive clauses from (a) after adding the symbols $\mathcal I_{\psi_0\land\psi_1,i}$. Similarly, we must repeat the arguments from (a) in the inductive verification of condition (L). From a formal viewpoint it might in fact be preferable to write down the recursive clauses and inductive verifications for all $\mathbf L_{\omega^\alpha}^u$-codes that we ever want to introduce at a single blow. However, a modular presentation seems to be more readable, and can clearly be converted into a formal proof in principle.
\end{proof}

We have developed a system of codes $P$ that allow us to construct various $\mathbf L_{\omega^\alpha}^u$-preproofs $[P]$ in a primitive recursive way. Now let us investigate how we can prove properties of these preproofs, at the example of cut-rank.

\begin{definition}\label{def:cut-ranks-codes}
 We define an assignment of cut ranks
\begin{equation*}
 d:\text{``$\mathbf L_{\omega^\alpha}^u$-codes''}\rightarrow\omega
\end{equation*}
by recursion over $\mathbf L_{\omega^\alpha}^u$-codes: For basic codes $P_\alpha^u\sigma$ we set $d(P_\alpha^u\sigma)=C$, where $C$ is the constant from Lemma~\ref{lem:cut-rank-search-trees}. We extend $d$ to the $\mathbf L_{\omega^\alpha}^u$-codes introduced in Lemma~\ref{lem:inversion} by setting
\begin{equation*}
 d(\mathcal I_{\forall_x\psi,b}P):=d(\mathcal I_{\psi_0\land\psi_1,i}P):=d(P).
\end{equation*}
\end{definition}

So far, the assigment of cut ranks to $\mathbf L_{\omega^\alpha}^u$-codes is constant; more interesting cases will appear shortly. The following is parallel to the development in Lemma~\ref{lem:local-correctness-codes} and Corollary~\ref{cor:correct-proofs-from-codes}:

\begin{lemma}\label{lem:cut-codes-correct}
 The assignment of cut ranks to $\mathbf L_{\omega^\alpha}^u$-codes is locally correct, in the sense that the following conditions hold for any $\mathbf L_{\omega^\alpha}^u$-code $P$:
\begin{enumerate}[label=(C\arabic*)]
 \item If $r_{\langle\rangle}(P)$ is of the form $(\cut,\varphi)$ then we have $\hth(\varphi)<d(P)$.
 \item We have $d(n(P,a))\leq d(P)$ for all $a\in\iota(r_{\langle\rangle}(P))$. 
\end{enumerate}
\end{lemma}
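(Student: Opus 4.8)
The plan is to follow the template already established in Lemma~\ref{lem:local-correctness-codes} and Corollary~\ref{cor:correct-proofs-from-codes}: conditions (C1) and (C2) refer only to the primitive recursive functions $l_{\langle\rangle},r_{\langle\rangle},o_{\langle\rangle},n$ from Definition~\ref{def:basic-codes} and the cut-rank function $d$ from Definition~\ref{def:cut-ranks-codes}, so ``the code $P$ satisfies (C1) and (C2)'' is a primitive recursive statement — the quantification over $a\in\iota(r_{\langle\rangle}(P))$ in (C2) is harmless, since $\iota$ always returns a set (either a finite set or $\mathbf L_{\omega^\alpha}^u$). Hence I would argue by induction on the length of the term $P$.

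For the base case, let $P=P_\alpha^u\sigma$ be a basic code. If $\sigma\notin P_\alpha^u$ then $r_{\langle\rangle}(P_\alpha^u\sigma)=\ax$, so (C1) is vacuous and $\iota(\ax)=\emptyset$ makes (C2) vacuous as well. If $\sigma\in P_\alpha^u$ then $r_{\langle\rangle}(P_\alpha^u\sigma)=r_\alpha^u(\sigma)$ and $l_{\langle\rangle}(P_\alpha^u\sigma)=l_\alpha^u(\sigma)$; should this rule be of the form $(\cut,\varphi)$, the cut-rank bound from Lemma~\ref{lem:cut-rank-search-trees} gives $\hth(\varphi)<C=d(P_\alpha^u\sigma)$, which is (C1). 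For (C2) one simply notes that $n(P_\alpha^u\sigma,a)=P_\alpha^u(\sigma^\frown a)$ is again a basic code, so $d(n(P,a))=C=d(P)$ (with equality, in particular $\leq$).

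For the inductive step I would treat the code $\mathcal I_{\forall_x\psi,b}P$ in detail and remark that $\mathcal I_{\psi_0\land\psi_1,i}P$ is handled identically. By Definition~\ref{def:cut-ranks-codes} we have $d(\mathcal I_{\forall_x\psi,b}P)=d(P)$, and by the recursive clauses of Lemma~\ref{lem:inversion} the root rule $r_{\langle\rangle}(\mathcal I_{\forall_x\psi,b}P)$ equals $r_{\langle\rangle}(P)$ unless $r_{\langle\rangle}(P)=(\forall_x,\psi)$, in which case it is $(\rep,b)$. For (C1): if $r_{\langle\rangle}(\mathcal I_{\forall_x\psi,b}P)$ is a cut $(\cut,\varphi)$, then we are not in the exceptional case, so $r_{\langle\rangle}(P)=(\cut,\varphi)$, and the induction hypothesis gives $\hth(\varphi)<d(P)=d(\mathcal I_{\forall_x\psi,b}P)$. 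For (C2): from $n(\mathcal I_{\forall_x\psi,b}P,a)=\mathcal I_{\forall_x\psi,b}n(P,a)$ and $d(\mathcal I_{\forall_x\psi,b}Q)=d(Q)$ we get $d(n(\mathcal I_{\forall_x\psi,b}P,a))=d(n(P,a))$. If $r_{\langle\rangle}(P)\neq(\forall_x,\psi)$ the relevant premises are unchanged, $\iota(r_{\langle\rangle}(\mathcal I_{\forall_x\psi,b}P))=\iota(r_{\langle\rangle}(P))$, and the induction hypothesis yields $d(n(P,a))\leq d(P)$; if $r_{\langle\rangle}(P)=(\forall_x,\psi)$ then $\iota(r_{\langle\rangle}(\mathcal I_{\forall_x\psi,b}P))=\{b\}$ while $b\in\mathbf L_{\omega^\alpha}^u=\iota((\forall_x,\psi))=\iota(r_{\langle\rangle}(P))$, so the induction hypothesis applied to $a=b$ again gives $d(n(P,b))\leq d(P)$. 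Either way $d(n(\mathcal I_{\forall_x\psi,b}P,a))\leq d(\mathcal I_{\forall_x\psi,b}P)$.

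There is no genuine obstacle here: since $d$ is still essentially constant on the codes defined so far, the lemma is really a bookkeeping step that sets up the framework in which the informative cases — the cut-elimination codes introduced in the remainder of the section, where $d$ actually strictly decreases — will be verified. The only point worth flagging in the argument is that a $(\forall_x,\psi)$-rule turning into a $(\rep,b)$-rule under $\mathcal I_{\forall_x\psi,b}$ is harmless for both conditions: repetition rules are never cuts, so (C1) stays vacuous, and (C2) collapses from a quantification over all of $\mathbf L_{\omega^\alpha}^u$ to the single premise $b$, which was one of the premises of $(\forall_x,\psi)$ anyway. As in the proof of Lemma~\ref{lem:inversion}, when further code constructors are added later one must re-run this same induction with the additional recursive clauses for $d$.
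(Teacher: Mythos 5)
Your proof is correct and follows essentially the same route as the paper: induction on the length of $\mathbf L_{\omega^\alpha}^u$-codes, with the base case settled by Lemma~\ref{lem:cut-rank-search-trees} and the step for $\mathcal I_{\forall_x\psi,b}$ (and $\mathcal I_{\psi_0\land\psi_1,i}$) using that $d$ is unchanged and that the relevant premises of the new root rule are among those of the old one. Your explicit case split on whether $r_{\langle\rangle}(P)=(\forall_x,\psi)$ is just a spelled-out version of the paper's single observation that $\iota(r_{\langle\rangle}(\mathcal I_{\forall_x\psi,b}P))\subseteq\iota(r_{\langle\rangle}(P))$ holds in all cases.
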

\begin{proof}
 We argue by induction on (the length of) $\mathbf L_{\omega^\alpha}^u$-codes. First consider the case of a basic code $P=P_\alpha^u\sigma$. Concerning (C1) we need to distinguish two cases: If $\sigma$ is a node in $P_\alpha^u$ then we have $r_{\langle\rangle}(P_\alpha^u\sigma)=r_\alpha^u(\sigma)$, where $r_\alpha^u$ is the labelling function of the preproof $P_\alpha^u$. In this case (C1) holds by Lemma~\ref{lem:cut-rank-search-trees}. If $\sigma$ lies outside of $P_\alpha^u$ then we have $r_{\langle\rangle}(P_\alpha^u\sigma)=\ax$ by definition, so (C1) does not apply. Concerning (C2) for the basic code $P=P_\alpha^u\sigma$, observe that we have
\begin{equation*}
 d(n(P_\alpha^u\sigma,a))=d(P_\alpha^u\sigma^\frown a)=C
\end{equation*}
because $P_\alpha^u\sigma^\frown a$ is also a basic code. In the induction step, consider $P=\mathcal I_{\forall_x\psi,b}P'$. Note that $r_{\langle\rangle}(\mathcal I_{\forall_x\psi,b}P')=(\cut,\varphi)$ can only occur if we have $r_{\langle\rangle}(P')=(\cut,\varphi)$. Using the induction hypothesis we thus get
\begin{equation*}
 \hth(\varphi)<d(P')=d(\mathcal I_{\forall_x\psi,b}P'),
\end{equation*}
as needed for (C1). Concerning (C2), observe that $\iota(r_{\langle\rangle}(\mathcal I_{\forall_x\psi,b}P'))\subseteq\iota(r_{\langle\rangle}(P'))$ holds in all possible cases. Thus the induction hypothesis implies
\begin{equation*}
 d(n(\mathcal I_{\forall_x\psi,b}P',a))=d(\mathcal I_{\forall_x\psi,b}n(P',a))=d(n(P',a))\leq d(P')=d(\mathcal I_{\forall_x\psi,b}P')
\end{equation*}
for all $a\in\iota(r_{\langle\rangle}(\mathcal I_{\forall_x\psi,b}P'))$. For a code $P=\mathcal I_{\psi_o\land\psi_1,u}P'$ one argues similarly.
\end{proof}

\begin{corollary}\label{cor:cut-rank-codes}
 For any $\mathbf L_{\omega^\alpha}^u$-code $P$ the $\mathbf L_{\omega^\alpha}^u$-preproof $[P]$ has cut-rank $d(P)$.
\end{corollary}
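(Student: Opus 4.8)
The plan is to mimic the passage from Lemma~\ref{lem:local-correctness-codes} to Corollary~\ref{cor:correct-proofs-from-codes}, but now using the ``cut-rank'' version (C1), (C2) of the local correctness conditions established in Lemma~\ref{lem:cut-codes-correct}. Recall that the cut-rank of a preproof is a property of its $(\cut,\psi)$-nodes only; so it will suffice to control the function $d$ along the subtree $[P]\subseteq(\mathbf L_{\omega^\alpha}^u)^{<\omega}$.

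First I would establish, by induction on the length of $\sigma$, that $d(\bar n(P,\sigma))\leq d(P)$ holds for every $\sigma\in[P]$. The base case $\sigma=\langle\rangle$ is trivial, since $\bar n(P,\langle\rangle)=P$. For the induction step, suppose $\sigma^\frown a\in[P]$. By the definition of $[P]$ this means $\sigma\in[P]$ and $a\in\iota(r(P,\sigma))=\iota(r_{\langle\rangle}(\bar n(P,\sigma)))$. Hence condition (C2) of Lemma~\ref{lem:cut-codes-correct}, applied to the code $\bar n(P,\sigma)$, yields $d(n(\bar n(P,\sigma),a))\leq d(\bar n(P,\sigma))$. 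Since $\bar n(P,\sigma^\frown a)=n(\bar n(P,\sigma),a)$ and $d(\bar n(P,\sigma))\leq d(P)$ by induction hypothesis, we get $d(\bar n(P,\sigma^\frown a))\leq d(P)$, as required.

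Next I would combine this with (C1). Consider a node $\sigma\in[P]$ such that $r_{[P]}(\sigma)$ is of the form $(\cut,\psi)$. Unfolding the definitions, $r_{[P]}(\sigma)=r(P,\sigma)=r_{\langle\rangle}(\bar n(P,\sigma))$, so condition (C1) of Lemma~\ref{lem:cut-codes-correct} applied to the code $\bar n(P,\sigma)$ gives $\hth(\psi)<d(\bar n(P,\sigma))$. Together with $d(\bar n(P,\sigma))\leq d(P)$ from the previous step, this yields $\hth(\psi)<d(P)$. Since $[P]$ is a locally correct $\mathbf L_{\omega^\alpha}^u$-preproof by Corollary~\ref{cor:correct-proofs-from-codes}, this is exactly the statement that $[P]$ has cut-rank $d(P)$.

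I do not expect any real obstacle: the argument is a routine two-step induction that pushes the local information (C1), (C2) along the tree $[P]$, in complete analogy with the proof of Corollary~\ref{cor:correct-proofs-from-codes}. The only point that requires a little care is to keep the induction confined to $\sigma\in[P]$, because it is precisely the constraint $a\in\iota(r_{\langle\rangle}(\bar n(P,\sigma)))$ built into the definition of $[P]$ that makes condition (C2) applicable at each step.
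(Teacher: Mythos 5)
Your proposal is correct and follows essentially the same route as the paper: an induction along $\sigma\in[P]$ using (C2) to obtain $d(\bar n(P,\sigma))\leq d(P)$, then (C1) applied at any cut node to conclude $\hth(\psi)<d(P)$. The extra care you take about restricting the induction to $\sigma\in[P]$ (so that $a\in\iota(r_{\langle\rangle}(\bar n(P,\sigma)))$ makes (C2) applicable) is exactly the point the paper relies on implicitly.
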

\begin{proof}
Recall the function $\bar n$ defined by the recursion
\begin{align*}
 \bar n(P,\langle\rangle)&=P,\\
 \bar n(P,\sigma^\frown a)&=n(\bar n(P,\sigma),a).
\end{align*}
By induction on the sequence $\sigma\in[P]$ one establishes $d(\bar n(P,\sigma))\leq d(P)$, using condition (C2) from the previous lemma as the induction step. Also recall that $r_{[P]}(\sigma)$ was defined to be the rule $r_{\langle\rangle}(\bar n(P,\sigma))$. Thus if $r_{[P]}(\sigma)$ is of the form $(\cut,\varphi)$ then we have
\begin{equation*}
 \hth(\varphi)<d(\bar n(P,\sigma))\leq d(P),
\end{equation*}
by condition (C1) from the previous lemma.
\end{proof}

It turns out that the corollary that we have just established is never used. This is because cut-rank is an auxiliary notion, which we could limit to the realm of codes rather than actual proofs. To handle cut-ranks for codes we will need Lemma~\ref{lem:cut-codes-correct} but not, strictly speaking, its corollary. In contrast to this, Corollary~\ref{cor:correct-proofs-from-codes} above will be used, namely to obtain an actual $\mathbf L_{\omega^\alpha}^u$-preproof to which Proposition~\ref{prop:countable-proof-sound} can be applied (of course we could renounce the notion of $\mathbf L_{\omega^\alpha}^u$-preproof altogether, and formulate everything in terms of codes --- but this seems somewhat forced in a set-theoretic context, where infinite proof trees exist as first-rate objects). Even though they are not strictly required we will continue to state results such as Corollary~\ref{cor:cut-rank-codes}, as they clarify the intended semantics of $\mathbf L_{\omega^\alpha}^u$-codes.

In the following we will extend our system of $\mathbf L_{\omega^\alpha}^u$-codes by a unary function symbol~$\mathcal E$ such that the $\mathbf L_{\omega^\alpha}^u$-preproof $[\mathcal EP]$ has lower cut-rank than $[P]$. To show that this is the case we will only need to extend the assignment $d$ from Definition~\ref{def:cut-ranks-codes} to the new codes. We will prove that this extension of $d$ satisfies the conditions (C1) and (C2). By the (proof of the) corollary it will immediately follow that the preproof~$[\mathcal EP]$ has cut-rank $<d(\mathcal EP)$. The following is a preparation:

\begin{lemma}\label{lem:cut-reduction}
 We can extend the system of $\mathbf L_{\omega^\alpha}^u$-codes in the following way:
\begin{enumerate}[label=(\alph*)]
 \item For each formula $\exists_x\psi$ with $\hth(\exists_x\psi)>1$ we add a binary function symbol $\mathcal R_{\exists_x\psi}$ such that we have
\begin{align*}
 l_{\langle\rangle}(\mathcal R_{\exists_x\psi}P_0P_1)&=(l_{\langle\rangle}(P_0)\backslash\{\exists_x\psi\})\cup(l_{\langle\rangle}(P_1)\backslash\{\forall_x\neg\psi\}),\\
 o_{\langle\rangle}(\mathcal R_{\exists_x\psi}P_0P_1)&=o_{\langle\rangle}(P_1)+o_{\langle\rangle}(P_0),\\
 d(\mathcal R_{\exists_x\psi}P_0P_1)&=\max\{d(P_0),d(P_1),\hth(\exists_x\psi)\}
\end{align*}
for any $\mathbf L_{\omega^\alpha}^u$-codes $P_0,P_1$.
 \item For each formula $\psi_0\lor\psi_1$ with $\hth(\psi_0\lor\psi_1)>1$ we add a binary function symbol $\mathcal R_{\psi_0\lor\psi_1}$ such that we have
\begin{align*}
 l_{\langle\rangle}(\mathcal R_{\psi_0\lor\psi_1}P_0P_1)&=(l_{\langle\rangle}(P_0)\backslash\{\psi_0\lor\psi_1\})\cup(l_{\langle\rangle}(P_1)\backslash\{\neg\psi_0\land\neg\psi_1\}),\\
 o_{\langle\rangle}(\mathcal R_{\psi_0\lor\psi_1}P_0P_1)&=o_{\langle\rangle}(P_1)+o_{\langle\rangle}(P_0),\\
 d(\mathcal R_{\exists_x\psi}P_0P_1)&=\max\{d(P_0),d(P_1),\hth(\psi_0\lor\psi_1)\}
\end{align*}
for any $\mathbf L_{\omega^\alpha}^u$-codes $P_0,P_1$.
\end{enumerate}
\end{lemma}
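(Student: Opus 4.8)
The plan is to mimic the proof of Lemma~\ref{lem:inversion}: I introduce the new function symbols $\mathcal R_{\exists_x\psi}$, $\mathcal R_{\psi_0\lor\psi_1}$ together with recursive clauses for $r_{\langle\rangle}$ and $n$ (the clauses for $l_{\langle\rangle}$, $o_{\langle\rangle}$ and $d$ are already prescribed by the statement and refer only to the proper subterms $P_0,P_1$), thereby extending $l_{\langle\rangle},r_{\langle\rangle},o_{\langle\rangle},n,d$ to primitive recursive functions on the enlarged system of codes, and then verify condition~(L) of Lemma~\ref{lem:local-correctness-codes} and conditions~(C1),~(C2) of Lemma~\ref{lem:cut-codes-correct} by induction on the length of codes. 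Definition~\ref{def:interpretation-codes} and the proofs of Corollaries~\ref{cor:correct-proofs-from-codes} and~\ref{cor:cut-rank-codes} then automatically turn each code $\mathcal R_{\exists_x\psi}P_0P_1$ into an $\mathbf L_{\omega^\alpha}^u$-preproof of cut-rank $d(\mathcal R_{\exists_x\psi}P_0P_1)$. I describe part~(a); part~(b) is the same argument with $\exists_x\psi$, $\forall_x\neg\psi$, $(\exists_x,b,\psi)$, $\mathcal I_{\forall_x\neg\psi,b}$ replaced by $\psi_0\lor\psi_1$, $\neg\psi_0\land\neg\psi_1$, $(\lor_i,\psi_0,\psi_1)$, $\mathcal I_{\neg\psi_0\land\neg\psi_1,i}$, and it will need $\hth(\psi_0\lor\psi_1)>1$ for precisely the reason that surfaces below.

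The recursive clauses encode the familiar ``reduction'' step, with the recursion driven by the last rule $r_{\langle\rangle}(P_0)$ of $P_0$. In the principal case $r_{\langle\rangle}(P_0)=(\exists_x,b,\psi)$ I would set $r_{\langle\rangle}(\mathcal R_{\exists_x\psi}P_0P_1):=(\cut,\psi(b))$ with $n(\mathcal R_{\exists_x\psi}P_0P_1,0):=\mathcal R_{\exists_x\psi}\,n(P_0,0)\,P_1$ and $n(\mathcal R_{\exists_x\psi}P_0P_1,1):=\mathcal I_{\forall_x\neg\psi,b}P_1$; thus the witnessing formula $\psi(b)$ produced on the left is cut against its negation $\neg\psi(b)=(\neg\psi)(b)$, which the inversion operator of Lemma~\ref{lem:inversion}(a) reads off $P_1$. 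In every other case I ``commute'' $\mathcal R_{\exists_x\psi}$ past the last rule of $P_0$, setting $r_{\langle\rangle}(\mathcal R_{\exists_x\psi}P_0P_1):=r_{\langle\rangle}(P_0)$ and $n(\mathcal R_{\exists_x\psi}P_0P_1,a):=\mathcal R_{\exists_x\psi}\,n(P_0,a)\,P_1$ for $a\in\iota(r_{\langle\rangle}(P_0))$. For the verification I would induct on code length, using condition~(L) (resp.~(C1),~(C2)) for the proper subterm $P_0$; note that the labels of the $n$-children of $\mathcal R_{\exists_x\psi}P_0P_1$ are computed from the defining equations purely symbolically, so no vicious circle arises from the fact that an $n$-child may itself be a longer code. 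In the principal case the new cut $(\cut,\psi(b))$ is locally correct: its height is $o_{\langle\rangle}(P_1)+o_{\langle\rangle}(P_0)$; since the $(\exists_x,b,\psi)$-rule of $P_0$ gives $o_{\langle\rangle}(n(P_0,0))+\omega\le o_{\langle\rangle}(P_0)$, and hence $o_{\langle\rangle}(P_0)>0$, the monotonicity of $+$ recorded above yields $o_{\langle\rangle}(P_1)+o_{\langle\rangle}(n(P_0,0))<o_{\langle\rangle}(P_1)+o_{\langle\rangle}(P_0)$ and $o_{\langle\rangle}(P_1)<o_{\langle\rangle}(P_1)+o_{\langle\rangle}(P_0)$; the sequent inclusions follow from $l_{\langle\rangle}(n(P_0,0))\subseteq l_{\langle\rangle}(P_0),\psi(b)$ together with the inversion equations; finally $\hth(\psi(b))=\hth(\psi)=\hth(\exists_x\psi)-1<\hth(\exists_x\psi)\le d(\mathcal R_{\exists_x\psi}P_0P_1)$ gives (C1), while (C2) reduces to $d(n(P_0,0))\le d(P_0)$ and $d(\mathcal I_{\forall_x\neg\psi,b}P_1)=d(P_1)$.

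In each commutation case one strips the outer $\mathcal R_{\exists_x\psi}$ off $l_{\langle\rangle},o_{\langle\rangle},d$ of the children via the defining equations and quotes condition~(L)/(C1)/(C2) for $P_0$; the ordinal side conditions then follow from monotonicity and associativity of $+$ together with $0\le o_{\langle\rangle}(P_1)$, which in particular gives $\Omega\le o_{\langle\rangle}(P_0)\le o_{\langle\rangle}(P_1)+o_{\langle\rangle}(P_0)$ in the case of a reflection rule. The one point that requires care — and the reason for the hypothesis $\hth(\exists_x\psi)>1$ — is that the principal formula of $r_{\langle\rangle}(P_0)$, which must survive in the end-sequent $l_{\langle\rangle}(\mathcal R_{\exists_x\psi}P_0P_1)$, had better never be the formula $\exists_x\psi$ that $\mathcal R_{\exists_x\psi}$ deletes. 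For the $\land$-, $\lor$- and $\forall$-rules this is immediate (a conjunction, disjunction or universal formula is not $\exists_x\psi$), for $\ax$ it holds because a true $\Delta_0$-formula has $\hth=0$, and for a cut the cut formula need not occur in the conclusion at all. The delicate case is the reflection rule $(\rref,\exists_z\forall_{x\in a}\exists_{y\in z}\theta)$, whose principal formula is again existential; but since $\theta$ is a bounded disjunction this formula has $\hth=1$, so it cannot equal $\exists_x\psi$ under the assumption $\hth(\exists_x\psi)>1$ — and the same computation shows the premise's new formula $\forall_{x\in a}\exists_y\theta$ is not $\exists_x\psi$ either. Once these clashes are excluded, the remaining bookkeeping is routine, and the lemma follows.
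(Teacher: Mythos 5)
Your proposal is correct and follows essentially the same route as the paper: the same recursive clauses (cut over $\psi(b)$ with children $\mathcal R_{\exists_x\psi}n(P_0,0)P_1$ and $\mathcal I_{\forall_x\neg\psi,b}P_1$ in the principal case, commutation otherwise), the same induction on code length verifying (L), (C1), (C2), and the same key observations that a true $\Delta_0$-formula and the reflection formula have height $\leq 1 < \hth(\exists_x\psi)$, so the deleted formula never clashes with a principal formula.
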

\begin{proof}
 (a) Let us first describe the proof idea in informal terms: Assume that the formula $\exists_x\psi$ is deduced from a premise $\psi(b)$ at some node of $P$. We want to avoid the introduction of $\exists_x\psi$, to the keep it out of the end-sequent of $\mathcal R_{\exists_x\psi}P_0P_1$. To achieve this we transform $P_1$ according to Lemma~\ref{lem:inversion}, deleting the formula $\forall_x\neg\psi$ in favour of $\neg\psi(b)$. Now we can apply a cut over $\psi(b)$, to remove the premise $\psi(b)$ in $P_0$ and the formula $\neg\psi(b)$ that we have added to $P_1$. Concerning the cut-rank, note that we have $\hth(\psi(b))<\psi(\exists_x\psi)$. It is helpful to phrase this as a recursion over the preproof $P_0$: First, form the preproofs $\mathcal R_{\exists_x\psi}n(P_0,a)P_1$, to remove the formula $\exists_x\psi$ from the immediate subtrees $n(P_0,a)$ of $P_0$. If $\exists_x\psi$ was introduces by the last rule of $P_0$, then remove it by a cut over $\psi(b)$, as described above. Formally, we extend the functions $l_{\langle\rangle},r_{\langle\rangle},o_{\langle\rangle},n,d$ to the new codes by the recursive clauses
\begin{align*}
 r_{\langle\rangle}(\mathcal R_{\exists_x\psi}P_0P_1)&:=\begin{cases}
                                                     (\cut,\psi(b))\quad &\text{if $r_{\langle\rangle}(P_0)=(\exists_x,b,\psi)$ for some $b$},\\
                                                     r_{\langle\rangle}(P_0)\quad &\text{otherwise},
                                                    \end{cases}\\
 n(\mathcal R_{\exists_x\psi}P_0P_1,a)&:=\begin{cases}
                                                     \mathcal I_{\forall_x\neg\psi,b}P_1\quad &\text{if $r_{\langle\rangle}(P_0)=(\exists_x,b,\psi)$ and $a=1$},\\
                                                     \mathcal R_{\exists_x\psi}n(P_0,a)P_1\quad &\text{otherwise}.
                                                    \end{cases}
\end{align*}
Conditions (L), (C1) and (C2) are verified by induction on the length of the new $\mathbf L_{\omega^\alpha}^u$-codes: Concerning (C1), assume that $r_{\langle\rangle}(\mathcal R_{\exists_x\psi}P_0P_1)$ is of the form $(\cut,\varphi)$. There are two possibilities: If $\varphi$ is a formula $\psi(b)$ then we have
\begin{equation*}
 \hth(\varphi)<\hth(\exists_x\psi)\leq d(\mathcal R_{\exists_x\psi}P_0P_1),
\end{equation*}
as required. Otherwise we must have $r_{\langle\rangle}(\mathcal R_{\exists_x\psi}P_0P_1)=r_{\langle\rangle}(P_0)$. Using the induction hypothesis for $P_0$ we get
\begin{equation*}
 \hth(\varphi)<d(P_0)\leq d(\mathcal R_{\exists_x\psi}P_0P_1).
\end{equation*}
As for condition (C2), we either have
\begin{equation*}
 d(n(\mathcal R_{\exists_x\psi}P_0P_1,a))=d(\mathcal I_{\forall_x\neg\psi,b}P_1)=d(P_1)\leq d(\mathcal R_{\exists_x\psi}P_0P_1)
\end{equation*}
or, using the induction hypothesis for $P_0$,
\begin{multline*}
 d(n(\mathcal R_{\exists_x\psi}P_0P_1,a))=d(\mathcal R_{\exists_x\psi}s(P_0,a)P_1)=\max\{d(s(P_0,a)),d(P_1),\hth(\exists_x\psi)\}\leq\\
\leq\max\{d(P_0),d(P_1),\hth(\exists_x\psi)\}=d(\mathcal R_{\exists_x\psi}P_0P_1).
\end{multline*}
To verify condition (L) we distinguish cases according to the rule $r_{\langle\rangle}(P_0)$:

\emph{Case $r_{\langle\rangle}(P_0)=\ax$:} Then we have $r_{\langle\rangle}(\mathcal R_{\exists_x\psi}P_0P_1)=\ax$. Condition (L) for $P_0$ tells us that $l_{\langle\rangle}(P_0)$ contains a true $\Delta_0$-formula. Our assumption $\hth(\exists_x\psi)>1$ implies that $\exists_x\psi$ is not a $\Delta_0$-formula. Thus the true $\Delta_0$-formula is still contained in $l_{\langle\rangle}(\mathcal R_{\exists_x\psi}P_0P_1)$, as required by condition (L) for $\mathcal R_{\exists_x\psi}P_0P_1$.

\emph{Case $r_{\langle\rangle}(P_0)=(\land,\psi_0,\psi_1)$:} Then we have $r_{\langle\rangle}(\mathcal R_{\exists_x\psi}P_0P_1)=(\land,\psi_0,\psi_1)$. By condition (L) for $P_0$ we get
\begin{multline*}
 o_{\langle\rangle}(n(\mathcal R_{\exists_x\psi}P_0P_1,i))=o_{\langle\rangle}(\mathcal R_{\exists_x\psi}s(P_0,i)P_1)=\\
=o_{\langle\rangle}(P_1)+o_{\langle\rangle}(s(P_0,i))<o_{\langle\rangle}(P_1)+o_{\langle\rangle}(P_0)=o_{\langle\rangle}(\mathcal R_{\exists_x\psi}P_0P_1)
\end{multline*}
for $i=0,1$. Condition (L) also tells us that $\psi_0\land\psi_1$ occurs in $l_{\langle\rangle}(P_0)$. As the formula $\psi_0\land\psi_1$ is different from $\exists_x\psi$ it still occurs in $l_{\langle\rangle}(\mathcal R_{\exists_x\psi}P_0P_1)$. Finally, again using condition (L) for $P_0$, we have
\begin{multline*}
 l_{\langle\rangle}(n(\mathcal R_{\exists_x\psi}P_0P_1,i))=l_{\langle\rangle}(\mathcal R_{\exists_x\psi}s(P_0,i)P_1)=\\
=(l_{\langle\rangle}(s(P_0,i))\backslash\{\exists_x\psi\})\cup(l_{\langle\rangle}(P_1)\backslash\{\forall_x\neg\psi\})\subseteq\\
\subseteq ((l_{\langle\rangle}(P_0)\cup\{\psi_i\})\backslash\{\exists_x\psi\})\cup(l_{\langle\rangle}(P_1)\backslash\{\forall_x\neg\psi\})\subseteq\\
\subseteq (l_{\langle\rangle}(P_0)\backslash\{\exists_x\psi\})\cup\{\psi_i\}\cup(l_{\langle\rangle}(P_1)\backslash\{\forall_x\neg\psi\})=l_{\langle\rangle}(\mathcal R_{\exists_x\psi}P_0P_1)\cup\{\psi_i\},
\end{multline*}
as required by condition (L) for $\mathcal R_{\exists_x\psi}P_0P_1$.

\emph{Case $r_{\langle\rangle}(P_0)=(\lor,\psi_0,\psi_1)$:} Similar to the previous case.

\emph{Case $r_{\langle\rangle}(P_0)=(\forall_y,\varphi)$:} Similar to the previous cases.

\emph{Case $r_{\langle\rangle}(P_0)=(\exists_y,b,\varphi)$ with $\exists_y\varphi\neq\exists_x\psi$:} Then $r_{\langle\rangle}(\mathcal R_{\exists_x\psi}P_0P_1)=(\exists_y,b,\varphi)$. By condition (L) for $P_0$ the formula $\exists_y\varphi$ occurs in $l_{\langle\rangle}(P_0)$. In view of $\exists_y\varphi\neq\exists_x\psi$ this formula is still contained in $l_{\langle\rangle}(\mathcal R_{\exists_x\psi}P_0P_1)$. The remaining conditions are checked as in the previous cases.

\emph{Case $r_{\langle\rangle}(P_0)=(\exists_x,b,\psi)$:} Then we have $r_{\langle\rangle}(\mathcal R_{\exists_x\psi}P_0P_1)=(\cut,\psi(b))$. We have
\begin{multline*}
 o_{\langle\rangle}(n(\mathcal R_{\exists_x\psi}P_0P_1,0))=o_{\langle\rangle}(\mathcal R_{\exists_x\psi}s(P_0,0)P_1)=\\
=o_{\langle\rangle}(P_1)+o_{\langle\rangle}(s(P_0,0))<o_{\langle\rangle}(P_1)+o_{\langle\rangle}(P_0)=o_{\langle\rangle}(\mathcal R_{\exists_x\psi}P_0P_1)
\end{multline*}
and
\begin{multline*}
 o_{\langle\rangle}(n(\mathcal R_{\exists_x\psi}P_0P_1,1))=o_{\langle\rangle}(\mathcal I_{\forall_x\neg\psi,b}P_1)=o_{\langle\rangle}(P_1)\leq\\ \leq o_{\langle\rangle}(P_1)+o_{\langle\rangle}(s(P_0,0))<o_{\langle\rangle}(P_1)+o_{\langle\rangle}(P_0)=o_{\langle\rangle}(\mathcal R_{\exists_x\psi}P_0P_1).
\end{multline*}
Condition (L) for $P_0$ yields
\begin{multline*}
 l_{\langle\rangle}(n(\mathcal R_{\exists_x\psi}P_0P_1,0))=l_{\langle\rangle}(\mathcal R_{\exists_x\psi}n(P_0,0)P_1)=\\
=(l_{\langle\rangle}(n(P_0,0))\backslash\{\exists_x\psi\})\cup(l_{\langle\rangle}(P_1)\backslash\{\forall_x\neg\psi\}\subseteq\\
\subseteq ((l_{\langle\rangle}(P_0)\cup\{\psi(b)\})\backslash\{\exists_x\psi\})\cup(l_{\langle\rangle}(P_1)\backslash\{\forall_x\neg\psi\}\subseteq\\
\subseteq (l_{\langle\rangle}(P_0)\backslash\{\exists_x\psi\})\cup(l_{\langle\rangle}(P_1)\backslash\{\forall_x\neg\psi\})\cup\{\psi(b)\}=l_{\langle\rangle}(\mathcal R_{\exists_x\psi}P_0P_1)\cup\{\psi(b)\}.
\end{multline*}
By Lemma~\ref{lem:inversion} we have
\begin{multline*}
 l_{\langle\rangle}(n(\mathcal R_{\exists_x\psi}P_0P_1,1))=l_{\langle\rangle}(\mathcal I_{\forall_x\neg\psi,b}P_1)=(l_{\langle\rangle}(P_1)\backslash\{\forall_x\neg\psi\})\cup\{\neg\psi(b)\}\subseteq\\
\subseteq (l_{\langle\rangle}(P_0)\backslash\{\exists_x\psi\})\cup(l_{\langle\rangle}(P_1)\backslash\{\forall_x\neg\psi\})\cup\{\neg\psi(b)\}=l_{\langle\rangle}(\mathcal R_{\exists_x\psi}P_0P_1)\cup\{\neg\psi(b)\},
\end{multline*}
as condition (L) requires in the case of the rule $(\cut,\psi(b))$.

\emph{Case $r_{\langle\rangle}(P_0)=(\rref,\exists_z\forall_{v\in a}\exists_{w\in z}\theta)$:} Then $r_{\langle\rangle}(\mathcal R_{\exists_x\psi}P_0P_1)$ is the same rule. We have
\begin{equation*}
 \Omega\leq o_{\langle\rangle}(P_0)\leq o_{\langle\rangle}(P_1)+o_{\langle\rangle}(P_0)=o_{\langle\rangle}(\mathcal R_{\exists_x\psi}P_0P_1).
\end{equation*}
Condition (L) for $P_0$ implies that $\exists_z\forall_{v\in a}\exists_{w\in z}\theta$ occurs in $l_{\langle\rangle}(P_0)$. Since $\theta$ is a $\Delta_0$-formula we have $\hth(\exists_z\forall_{v\in a}\exists_{w\in z}\theta)=1$. Thus our assumption $\hth(\exists_x\psi)>1$ implies that $\exists_z\forall_{v\in a}\exists_{w\in z}\theta$ and $\exists_x\psi$ are different formulas. It follows that $\exists_z\forall_{v\in a}\exists_{w\in z}\theta$ is still contained in $l_{\langle\rangle}(\mathcal R_{\exists_x\psi}P_0P_1)$, as demanded by condition (L) in the case of the rule $(\rref,\exists_z\forall_{v\in a}\exists_{w\in z}\theta)$. The other conditions are verified as in the previous cases.

\emph{Case $r_{\langle\rangle}(P_0)=(\rep,b)$:} Similar to the previous cases.

(b) Similar to part (a) we set
\begin{align*}
 r_{\langle\rangle}(\mathcal R_{\psi_0\lor\psi_1}P_0P_1)&:=\begin{cases}
                                                     (\cut,\psi_i)\quad &\text{if $r_{\langle\rangle}(P_0)=(\lor_i,\psi_0,\psi_1)$ for some $i\in\{0,1\}$},\\
                                                     r_{\langle\rangle}(P_0)\quad &\text{otherwise},
                                                    \end{cases}\\
 n(\mathcal R_{\psi_0\lor\psi_1}P_0P_1,a)&:=\begin{cases}
                                                     \mathcal I_{\neg\psi_0\land\neg\psi_1,i}P_1\quad &\text{if $r_{\langle\rangle}(P_0)=(\lor_i,\psi_0,\psi_1)$ and $a=1$},\\
                                                     \mathcal R_{\psi_0\lor\psi_1}s(P_0,a)P_1\quad &\text{otherwise}.
                                                    \end{cases}
\end{align*}
The verification of (L), (C1) and (C2) is similar to (a), and left to the reader.
\end{proof}

Finally we have all ingredients for the desired cut elimination operator:

\begin{proposition}\label{prop:cut-elimination-codes}
 We can extend the system of $\mathbf L_{\omega^\alpha}^u$-codes by a unary function symbol $\mathcal E$ such that we have
\begin{align*}
 l_{\langle\rangle}(\mathcal EP)&=l_{\langle\rangle}(P),\\
 o_{\langle\rangle}(\mathcal EP)&=\Omega^{o_{\langle\rangle}(P)},\\
 d(\mathcal EP)&=\max\{2,d(P)-1\}
\end{align*}
for each $\mathbf L_{\omega^\alpha}^u$-code $P$.
\end{proposition}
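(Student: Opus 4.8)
The plan is to proceed exactly as in Lemmas~\ref{lem:inversion} and~\ref{lem:cut-reduction}: extend the system of $\mathbf L_{\omega^\alpha}^u$-codes by a unary symbol $\mathcal E$, give recursive clauses for $l_{\langle\rangle},r_{\langle\rangle},o_{\langle\rangle},n$ (and extend the assignment $d$ from Definition~\ref{def:cut-ranks-codes}) on codes $\mathcal EP$, and then check that the new codes still satisfy condition~(L) from Lemma~\ref{lem:local-correctness-codes} and conditions~(C1), (C2) from Lemma~\ref{lem:cut-codes-correct}. By (the proofs of) Corollary~\ref{cor:correct-proofs-from-codes} and Corollary~\ref{cor:cut-rank-codes} this yields that $[\mathcal EP]$ is a locally correct $\mathbf L_{\omega^\alpha}^u$-preproof with the asserted end-sequent, height and cut-rank.

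The intended behaviour of $\mathcal E$ is a single round of Buchholz-style continuous cut elimination: it removes every cut whose cut formula has height exactly $d(P)-1$ (such cuts occur only when $d(P)\ge 3$) and commutes with every other rule. Accordingly I would put, in all cases, $l_{\langle\rangle}(\mathcal EP):=l_{\langle\rangle}(P)$, $o_{\langle\rangle}(\mathcal EP):=\Omega^{o_{\langle\rangle}(P)}$ and $d(\mathcal EP):=\max\{2,d(P)-1\}$. If $r_{\langle\rangle}(P)$ is anything other than a rule $(\cut,\psi)$ with $\hth(\psi)=d(P)-1\ge 2$, set $r_{\langle\rangle}(\mathcal EP):=r_{\langle\rangle}(P)$ and $n(\mathcal EP,a):=\mathcal E\,n(P,a)$. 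If $r_{\langle\rangle}(P)=(\cut,\psi)$ with $\hth(\psi)=d(P)-1\ge 2$, then $\psi$ is not $\Delta_0$, so exactly one of $\psi,\neg\psi$ is a disjunction or an existential formula; write $\mathcal R^\ast$ for $\mathcal R_\psi\,(\mathcal E\,n(P,0))\,(\mathcal E\,n(P,1))$ in the first case and for $\mathcal R_{\neg\psi}\,(\mathcal E\,n(P,1))\,(\mathcal E\,n(P,0))$ in the second, using the reduction operators of Lemma~\ref{lem:cut-reduction} (note $\hth(\psi)\ge 2>1$, so these operators apply); then set $r_{\langle\rangle}(\mathcal EP):=(\rep,0)$ and $n(\mathcal EP,0):=\mathcal R^\ast$. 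Informally, at a maximal cut one first cut-eliminates the two premises and then pushes the remaining cut over $\psi$ up to its introduction, replacing it there by a cut over a formula of strictly smaller height. The single repetition rule $(\rep,0)$ is a technical device that keeps the recursion primitive recursive: it lets $\mathcal R^\ast$ appear as the unique immediate subtree of $[\mathcal EP]$ without our having to evaluate its root rule, which would require the value of $r_{\langle\rangle}$ at a code built over a longer sequence than $P$. With this device every clause refers to $l_{\langle\rangle},r_{\langle\rangle},o_{\langle\rangle},n,d$ only at the proper subterm $P$, so the definition is a bona fide recursion on the length of codes, as in Lemmas~\ref{lem:inversion} and~\ref{lem:cut-reduction}.

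The verification of (L), (C1) and (C2) is then an induction on the length of codes with a case distinction on $r_{\langle\rangle}(P)$. In the commuting cases one copies the arguments of Lemma~\ref{lem:inversion}, using in addition that $o_{\langle\rangle}(n(P,a))<o_{\langle\rangle}(P)$ (resp.\ $o_{\langle\rangle}(n(P,a))+\omega\le o_{\langle\rangle}(P)$) implies $\Omega^{o_{\langle\rangle}(n(P,a))}<\Omega^{o_{\langle\rangle}(P)}$ (resp.\ $\Omega^{o_{\langle\rangle}(n(P,a))}+\omega<\Omega^{o_{\langle\rangle}(P)}$) by the monotonicity and additive-principality properties of $+$ and $s\mapsto\Omega^s$ on $\varepsilon(S_{\omega^\alpha}^u)$ established earlier, and that a cut that is not removed has cut formula of height either $<d(P)-1$ or $\le1$, hence $<\max\{2,d(P)-1\}$. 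In the maximal-cut case, (C1) is vacuous for the rule $(\rep,0)$; (C2) holds because $d(\mathcal R^\ast)=\max\{d(\mathcal E\,n(P,0)),d(\mathcal E\,n(P,1)),\hth(\psi)\}\le\max\{2,d(P)-1\}=d(\mathcal EP)$, by (C2) for $P$ and $\hth(\psi)=d(P)-1$; and (L) for $(\rep,0)$ reduces to $l_{\langle\rangle}(\mathcal R^\ast)\subseteq l_{\langle\rangle}(P)$ together with $o_{\langle\rangle}(\mathcal R^\ast)<\Omega^{o_{\langle\rangle}(P)}$. The first holds because (L) for $P$ gives $l_{\langle\rangle}(\mathcal E\,n(P,i))=l_{\langle\rangle}(n(P,i))\subseteq l_{\langle\rangle}(P),\psi$ resp.\ $l_{\langle\rangle}(P),\neg\psi$, so after deleting $\psi$ resp.\ $\neg\psi$ the union of the two residues lies in $l_{\langle\rangle}(P)$. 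The second holds since $o_{\langle\rangle}(\mathcal R^\ast)=\Omega^{o_{\langle\rangle}(n(P,1))}+\Omega^{o_{\langle\rangle}(n(P,0))}$ with both exponents below $o_{\langle\rangle}(P)$ (because $r_{\langle\rangle}(P)$ is a cut), while $\Omega^{o_{\langle\rangle}(P)}$ is additively principal and dominates each $\Omega^{o_{\langle\rangle}(n(P,i))}$.

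The step I expect to demand the most attention is not any single computation but the organisation of the recursion: choosing the packaging (here the $(\rep,0)$ wrapper, or some equivalent) so that the defining clauses never invoke the five code-functions at codes that are not subterms of $\mathcal EP$, and doing so while still meeting condition~(L) at the inserted repetition node. The accompanying ordinal estimates in $\varepsilon(S_{\omega^\alpha}^u)$ --- monotonicity of $\Omega$-powers, additive principality, and the interaction with the $+\omega$ gaps required by the $\forall$- and $\exists$-rules --- are elementary given the term-system lemmas, but they must be stated carefully because $o_{\langle\rangle}(P)$ may itself be an $\varepsilon$-term (so that $\Omega^{o_{\langle\rangle}(P)}=o_{\langle\rangle}(P)$). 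Beyond that, the case analysis is long but entirely parallel to the proofs of Lemmas~\ref{lem:inversion} and~\ref{lem:cut-reduction}.
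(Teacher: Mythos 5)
Your proposal matches the paper's proof in all essentials: the same recursion on the length of codes, the same commuting clauses $r_{\langle\rangle}(\mathcal EP)=r_{\langle\rangle}(P)$, $n(\mathcal EP,a)=\mathcal E n(P,a)$, the same replacement of a root cut by the reduction operators of Lemma~\ref{lem:cut-reduction} applied to $\mathcal En(P,0)$ and $\mathcal En(P,1)$ under a $(\rep,0)$ wrapper, and the same verification of (L), (C1), (C2) with exactly the ordinal estimates you cite. The only divergence is that the paper replaces \emph{every} cut over a formula of height $>1$, whereas you replace only cuts of height exactly $d(P)-1\geq 2$ and commute with the remaining ones; both choices are correct and give the stated values of $l_{\langle\rangle}$, $o_{\langle\rangle}$ and $d$, since any retained cut formula has height $<\max\{2,d(P)-1\}$ in either version.
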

\begin{proof}
The intuitive idea is straightforward: We replace any cut over a formula $\exists_x\psi$ by an application of the operator $\mathcal R_{\exists_x\psi}$, which only involves cuts of lower rank. Formally, the functions $l_{\langle\rangle}(P),r_{\langle\rangle}(P),o_{\langle\rangle}(P),n(P,a),d(P)$ are defined by recursion on the length of the code $P$. We distinguish cases according to the last rule of $P$, and verify conditions (L), (C1) and (C2) as we go along:

\emph{Case $r_{\langle\rangle}(P)=\ax$:} We set $r_{\langle\rangle}(\mathcal EP):=\ax$ and $n(\mathcal EP,a):=\mathcal E n(P,a)$ (the latter is irrelevant because of $\iota(\ax)=\emptyset$). By the induction hypothesis $l_{\langle\rangle}(\mathcal EP)=l_{\langle\rangle}(P)$ contains a true $\Delta_0$-formula, as required for condition (L). Conditions (C1) and (C2) do not apply.

\emph{Case $r_{\langle\rangle}(P)=(\land,\psi_0,\psi_1)$:} Set $r_{\langle\rangle}(\mathcal EP):=(\land,\psi_0,\psi_1)$ and $n(\mathcal EP,a):=\mathcal E n(P,a)$. Let us verify condition (L): As exponentiation to the base $\Omega$ is monotone we get
\begin{equation*}
 o(n(\mathcal EP,i))=o(\mathcal En(P,i))=\Omega^{o(n(P,i))}<\Omega^{o(P)}=o(\mathcal EP).
\end{equation*}
By condition (L) for $P$ the formula $\psi_0\land\psi_1$ is contained in $l_{\langle\rangle}(P)=l_{\langle\rangle}(\mathcal EP)$. Also, we have
\begin{equation*}
 l_{\langle\rangle}(n(\mathcal EP,i))=l_{\langle\rangle}(\mathcal En(P,i))=l_{\langle\rangle}(n(P,i))\subseteq l_{\langle\rangle}(P)\cup\{\psi_i\}=l_{\langle\rangle}(\mathcal EP)\cup\{\psi_i\}
\end{equation*}
for $i=0,1$, as required by condition (L) for $\mathcal EP$. Condition (C1) does not apply. Using (C2) for $P$ we get
\begin{equation*}
 d(n(\mathcal EP,i))=d(\mathcal En(P,i))=\max\{2,d(n(P,i))-1\}\leq\max\{2,d(P)-1\}=d(\mathcal EP),
\end{equation*}
as required by condition (C2) for $\mathcal EP$.

\emph{Case $r_{\langle\rangle}(P)=(\lor_i,\psi_0,\psi_1)$:} Set $r_{\langle\rangle}(\mathcal EP):=(\lor_i,\psi_0,\psi_1)$ and $n(\mathcal EP,a):=\mathcal E n(P,a)$. The verification of (L), (C1) and (C2) is similar to the previous case.

\emph{Case $r_{\langle\rangle}(P)=(\forall_x,\psi)$:} Set $r_{\langle\rangle}(\mathcal EP):=(\forall_x,\psi)$ and $n(\mathcal EP,a):=\mathcal E n(P,a)$. Concerning the ordinal height, we have
\begin{equation*}
 o(n(\mathcal EP,a))+\omega=\Omega^{o(n(P,a))}+\omega<\Omega^{o(n(P,a))+1}\leq\Omega^{o(P)}=o(\mathcal EP).
\end{equation*}
The other conditions are checked as in the previous cases.

\emph{Case $r_{\langle\rangle}(P)=(\exists_x,b,\psi)$:} Set $r_{\langle\rangle}(\mathcal EP):=(\exists_x,b,\psi)$ and $n(\mathcal EP,a):=\mathcal E n(P,a)$. Observe that we have
\begin{equation*}
 |b|<o(P)\leq\Omega^{o(P)}=o(\mathcal EP),
\end{equation*}
as demanded by condition (L). The other conditions are verified as in above.

\emph{Case $r_{\langle\rangle}(P)=(\rref,\exists_z\forall_{x\in a}\exists_{y\in z}\theta)$:} We set $r_{\langle\rangle}(\mathcal EP):=(\rref,\exists_z\forall_{x\in a}\exists_{y\in z}\theta)$ and $n(\mathcal EP,a):=\mathcal E n(P,a)$. Observe that we have
\begin{equation*}
 \Omega\leq o(P)\leq\Omega^{o(P)}=o(\mathcal EP).
\end{equation*}
The other conditions are checked as in the previous cases.

\emph{Case $r_{\langle\rangle}(P)=(\rep,b)$:} Set $r_{\langle\rangle}(\mathcal EP):=(\rep,b)$ and $n(\mathcal EP,a):=\mathcal E n(P,a)$. The verification of (L), (C1) and (C2) is similar to the previous cases.

\emph{Case $r_{\langle\rangle}(P)=(\cut,\psi)$ with $\hth(\psi)\leq 1$:} In this case we set $r_{\langle\rangle}(\mathcal EP):=(\cut,\psi)$ and $n(\mathcal EP,a)=\mathcal En(P,a)$. Condition (C1) follows from $\hth(\psi)\leq 1<2\leq d(\mathcal EP)$. Conditions (L) and (C2) are checked as in the previous cases.

\emph{Case $r_{\langle\rangle}(P)=(\cut,\exists_x\psi)$ with $\hth(\exists_x\psi)>1$:} Here we set
\begin{align*}
 r_{\langle\rangle}(\mathcal EP)&:=(\rep,0),\\
 n(\mathcal EP,a)&:=\mathcal R_{\exists_x\psi}(\mathcal En(P,0))(\mathcal En(P,1)).
\end{align*}
Let us verify condition (L): Using Lemma~\ref{lem:cut-reduction} and condition (L) for $P$ we get
\begin{multline*}
 o_{\langle\rangle}(n(\mathcal EP,0))=o_{\langle\rangle}(\mathcal En(P,1))+o_{\langle\rangle}(\mathcal En(P,0))=\\
=\Omega^{o_{\langle\rangle}(n(P,1))}+\Omega^{o_{\langle\rangle}(n(P,0))}<\Omega^{o_{\langle\rangle}(P)}=o_{\langle\rangle}(\mathcal EP).
\end{multline*}
Also, we have
\begin{multline*}
 l_{\langle\rangle}(n(\mathcal EP,0))=l_{\langle\rangle}(\mathcal R_{\exists_x\psi}(\mathcal En(P,0))(\mathcal En(P,1)))=\\
=(l_{\langle\rangle}(\mathcal En(P,0))\backslash\{\exists_x\psi\})\cup(l_{\langle\rangle}(\mathcal En(P,1))\backslash\{\forall_x\neg\psi\})=\\
=(l_{\langle\rangle}(n(P,0))\backslash\{\exists_x\psi\})\cup(l_{\langle\rangle}(n(P,1))\backslash\{\forall_x\neg\psi\})\subseteq l_{\langle\rangle}(P)=l_{\langle\rangle}(\mathcal EP).
\end{multline*}
Here, the last line relies on condition (L) for $P$, which provides the inclusions $l_{\langle\rangle}(n(P,0))\subseteq l_{\langle\rangle}(P)\cup\{\exists_x\psi\}$ and $l_{\langle\rangle}(n(P,1))\subseteq l_{\langle\rangle}(P)\cup\{\forall_x\neg\psi\}$. Condition (C1) does not apply. Condition (C2) holds by
\begin{multline*}
 d(n(\mathcal EP,0))=\max\{d(\mathcal En(P,0)),d(\mathcal En(P,1)),\hth(\exists_x\psi)\}=\\
=\max\{2,d(n(P,0))-1,d(n(P,1))-1,\hth(\exists_x\psi)\}=\\
=\max\{2,d(P)-1,\hth(\exists_x\psi)\}=\max\{2,d(P)-1\}=d(\mathcal EP).
\end{multline*}
The last line uses $\hth(\exists_x\psi)<d(P)$, as provided by condition (C1) for $P$.

\emph{Case $r_{\langle\rangle}(P)=(\cut,\forall_x\psi)$ with $\hth(\forall_x\psi)>1$:} We set
\begin{align*}
 r_{\langle\rangle}(\mathcal EP)&:=(\rep,0),\\
 n(\mathcal EP,a)&:=\mathcal R_{\exists_x\neg\psi}(\mathcal En(P,1))(\mathcal En(P,0)).
\end{align*}
The verification of conditions (L), (C1) and (C2) is similar to the previous case.

\emph{Case $r_{\langle\rangle}(P)=(\cut,\psi_0\lor\psi_1)$ with $\hth(\psi_0\lor\psi_1)>1$:} Set
\begin{align*}
 r_{\langle\rangle}(\mathcal EP)&:=(\rep,0),\\
 n(\mathcal EP,a)&:=\mathcal R_{\psi_0\lor\psi_1}(\mathcal En(P,0))(\mathcal En(P,1)).
\end{align*}
The verification of conditions (L), (C1) and (C2) is similar to the previous cases.

\emph{Case $r_{\langle\rangle}(P)=(\cut,\psi_0\land\psi_1)$ with $\hth(\psi_0\land\psi_1)>1$:} Set
\begin{align*}
 r_{\langle\rangle}(\mathcal EP)&:=(\rep,0),\\
 n(\mathcal EP,a)&:=\mathcal R_{\neg\psi_0\lor\neg\psi_1}(\mathcal En(P,1))(\mathcal En(P,0)).
\end{align*}
The verification of conditions (L), (C1) and (C2) is similar to the previous cases.
\end{proof}

In the following we write $\mathcal E^CP$ for the $\mathbf L_{\omega^\alpha}^u$-code $\mathcal E\cdots\mathcal EP$ with $C$ occurrences of the function symbol $\mathcal E$.

\begin{proposition}\label{prop:without-cuts}
 Let $C\in\omega$ be as in Lemma~\ref{lem:cut-rank-search-trees}. The $\mathbf L_{\omega^\alpha}^u$-preproof~$[\mathcal E^CP_\alpha^u\langle\rangle]$ has empty end-sequent, height $\varepsilon_{\langle\rangle}\in\varepsilon(S_{\omega^\alpha}^u)$, and cut-rank $2$.
\end{proposition}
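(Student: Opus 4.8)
The plan is simply to chain together Proposition~\ref{prop:proof-from-search-tree}, the code machinery of Section~5, and the cut-elimination operator $\mathcal E$ from Proposition~\ref{prop:cut-elimination-codes}. First I would record that, by Proposition~\ref{prop:proof-from-search-tree} and Lemma~\ref{lem:int-basic-codes-expected}, the interpretation of the basic code $P_\alpha^u\langle\rangle$ is the preproof $P_\alpha^u$ itself; unravelling Definition~\ref{def:interpretation-codes} this gives $l_{\langle\rangle}(P_\alpha^u\langle\rangle)=\langle\rangle$ and $o_{\langle\rangle}(P_\alpha^u\langle\rangle)=\varepsilon_{\langle\rangle}$, while $d(P_\alpha^u\langle\rangle)=C$ holds by Definition~\ref{def:cut-ranks-codes}. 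Since $\mathcal E$ leaves $l_{\langle\rangle}$ unchanged, $C$ applications give $l_{\langle\rangle}(\mathcal E^C P_\alpha^u\langle\rangle)=\langle\rangle$, and because the end-sequent of an interpreted code $[Q]$ equals $l_{\langle\rangle}(Q)$ (again Definition~\ref{def:interpretation-codes}), the preproof $[\mathcal E^C P_\alpha^u\langle\rangle]$ has empty end-sequent. That it is a genuine $\mathbf L_{\omega^\alpha}^u$-preproof at all follows from Corollary~\ref{cor:correct-proofs-from-codes}, since $\mathcal E$ was built so as to preserve condition~(L).

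Next I would deal with the height. By Proposition~\ref{prop:cut-elimination-codes} we have $o_{\langle\rangle}(\mathcal EQ)=\Omega^{o_{\langle\rangle}(Q)}$, and by the definition of $\Omega$-exponentiation in the term system $\varepsilon(S_{\omega^\alpha}^u)$ we have $\Omega^s=s$ whenever $s$ is of the form $\varepsilon_\sigma$. As $\varepsilon_{\langle\rangle}=\varepsilon_\sigma$ for $\sigma=\langle\rangle\in S_{\omega^\alpha}^u$, a trivial induction on $k\le C$ yields $o_{\langle\rangle}(\mathcal E^k P_\alpha^u\langle\rangle)=\varepsilon_{\langle\rangle}$; in particular the height of $[\mathcal E^C P_\alpha^u\langle\rangle]$ is $\varepsilon_{\langle\rangle}$. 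This is exactly the point at which the choice of $\varepsilon$-labels in Proposition~\ref{prop:proof-from-search-tree} is used: iterated $\Omega$-exponentiation does not inflate the height, because an $\varepsilon$-term is a fixed point of $s\mapsto\Omega^s$.

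Finally, for the cut-rank: combining $d(P_\alpha^u\langle\rangle)=C$ with the clause $d(\mathcal EQ)=\max\{2,d(Q)-1\}$ from Proposition~\ref{prop:cut-elimination-codes}, an induction on $k$ shows $d(\mathcal E^k P_\alpha^u\langle\rangle)=\max\{2,C-k\}$, so $d(\mathcal E^C P_\alpha^u\langle\rangle)=\max\{2,0\}=2$ (recall $C\ge 6$ from Lemma~\ref{lem:cut-rank-search-trees}). By Corollary~\ref{cor:cut-rank-codes} the preproof $[\mathcal E^C P_\alpha^u\langle\rangle]$ has cut-rank $d(\mathcal E^C P_\alpha^u\langle\rangle)=2$, as claimed. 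There is no real obstacle in this proposition; all the substantive work has already been done in Propositions~\ref{prop:proof-from-search-tree} and~\ref{prop:cut-elimination-codes} and Corollaries~\ref{cor:correct-proofs-from-codes} and~\ref{cor:cut-rank-codes}. The only item that needs any attention is the two elementary inductions on $k$ governing the behaviour of $o_{\langle\rangle}$ and $d$ under the iterated operator, together with the observation that an $\varepsilon$-number term is invariant under $s\mapsto\Omega^s$.
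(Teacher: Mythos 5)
Your proposal is correct and follows essentially the same route as the paper: read off $l_{\langle\rangle}$, $o_{\langle\rangle}$ and $d$ for the basic code $P_\alpha^u\langle\rangle$, iterate the clauses of Proposition~\ref{prop:cut-elimination-codes} (using that $\varepsilon_{\langle\rangle}$ is a fixed point of $s\mapsto\Omega^s$), and transfer the cut-rank to the interpreted preproof via Corollary~\ref{cor:cut-rank-codes}. The paper states the iterated computations directly where you spell out the two inductions on $k$, but the substance is identical.
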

\begin{proof}
 Using Proposition~\ref{prop:cut-elimination-codes} and Proposition~\ref{prop:proof-from-search-tree} we get
\begin{equation*}
 l_{[\mathcal E^CP_\alpha^u\langle\rangle]}(\langle\rangle)=l_{\langle\rangle}(\mathcal E^CP_\alpha^u\langle\rangle)=l_{\langle\rangle}(P_\alpha^u\langle\rangle)=l_\alpha^u(\langle\rangle)=\langle\rangle,
\end{equation*}
which means that $[\mathcal E^CP_\alpha^u\langle\rangle]$ has empty end-sequent. In view of $\Omega^{\varepsilon_{\langle\rangle}}=\varepsilon_{\langle\rangle}$ a similar argument shows $o_{[\mathcal E^CP_\alpha^u\langle\rangle]}(\langle\rangle)=\varepsilon_{\langle\rangle}$. Proposition~\ref{prop:cut-elimination-codes} and Definition~\ref{def:cut-ranks-codes} give
\begin{equation*}
 d(\mathcal E^CP_\alpha^u\langle\rangle)=\max\{2,d(P_\alpha^u\langle\rangle)-C\}=2.
\end{equation*}
By Corollary~\ref{cor:cut-rank-codes} this implies that $[\mathcal E^CP_\alpha^u\langle\rangle]$ has cut-rank $2$.
\end{proof}

Let us stress once more that the functions $l_{\langle\rangle},r_{\langle\rangle},o_{\langle\rangle},n$, defined by recursion on the length of $\mathbf L_{\omega^\alpha}^u$-codes, are primitive recursive. Thus a function such as
\begin{equation*}
 (\sigma,\alpha,u,C)\mapsto l_{[\mathcal E^CP_\alpha^u\langle\rangle]}(\sigma)=l_{\langle\rangle}(\bar n(\mathcal E^CP_\alpha^u\langle\rangle,\sigma))
\end{equation*}
is primitive recursive as well. Let us also point out that no Bachmann-Howard collapse $\vartheta:\varepsilon(S_{\omega^\alpha}^u)\bh\alpha$ was needed for the continuous cut-elimination procedure of the present section. This will be different in the next section, where we collapse $\mathbf L_{\omega^\alpha}^u$-preproofs to height below $\Omega$.

\section{Collapsing Proofs}

Let us recall our overall goal: We want to establish that any countable transitive set $u$ is contained in some admissible set. In the last paragraph of Section~\ref{sect:proof-trees} we have described the following plan to achieve this: By Proposition \ref{prop:proof-from-search-tree} we have $\mathbf L_{\omega^\alpha}^u$-preproofs $P_\alpha^u$ with empty end-sequent and ordinal height $\varepsilon_{\langle\rangle}>\Omega$. On the other hand, Proposition~\ref{prop:countable-proof-sound} tells us that no $\mathbf L_{\omega^\alpha}^u$-preproof with empty end-sequent can have ordinal height below $\Omega$. Now assume, aiming at a contradiction, that there is no admissible set $\mathbb A$ with $u\subseteq\mathbb A$. Then Proposition~\ref{prop:no-admissible-gives-wop} implies that $\alpha\mapsto\varepsilon(S_{\omega^\alpha}^u)$ is a well-ordering principle. So the higher Bachmann-Howard principle from Definition~\ref{def:higher-bh-principle} yields a collapsing function $\vartheta:\varepsilon(S_{\omega^\alpha}^u)\bh\alpha$, for some ordinal~$\alpha$. Using this function we want to collapse $P_\alpha^u$ to ordinal height below $\Omega$. This is the desired contradiction, showing that $u$ is contained in an admissible set. The previous section was devoted to a preliminary transformation, turning $P_\alpha^u$ it into a preproof of cut-rank $2$. In this section we will present the collapsing procedure itself.

A particularly elegant description of collapsing relies on the notion of operator control, due to Buchholz \cite{buchholz-local-predicativity}. As controlling operators we use the functions
\begin{equation*}
 \mathcal H_t^\vartheta[\beta]:\gamma\mapsto\mathcal H_t^\vartheta[\beta,\gamma]
\end{equation*}
introduced in Definition~\ref{def:operators}. Here we assume that $t\in\varepsilon(S_{\omega^\alpha}^u)$ satisfies $\Omega\leq t$, that we have $\beta,\gamma<\omega^\alpha$, and that $\vartheta:\varepsilon(S_{\omega^\alpha}^u)\bh\alpha$ is a Bachmann-Howard collapse. In Section~\ref{sect:higher-wop} we have seen that $\mathcal H_t^\vartheta[\beta,\gamma]$ can be computed by a primitive recursive function in $t,\beta,\gamma,\vartheta$ (and $u,\alpha$). The functions $\mathcal H_t^\vartheta[\beta]$, which restrict this primitive recursive class function to a set, will thus exist as sets. Before we can give a definition of operator controlled proof, we must specify which parameters we wish to control:

\begin{definition}
For an $\mathbf L_{\omega^\alpha}^u$-formula $\psi$ we set
\begin{equation*}
 k(\psi):=\max(\{|a|\,;\,\text{the parameter $a$ occurs in $\psi$}\}\cup\{0\}).
\end{equation*}
Given a sequent $\Gamma=\langle\psi_1,\dots,\psi_k\rangle$ we set
\begin{equation*}
 k(\Gamma):=\max\{k(\psi_1),\dots ,k(\psi_k),0\}.
\end{equation*}
Concerning parameters in rules, we put
\begin{gather*}
 k((\exists_x,a,\psi)):=k((\rep,a)):=|a|,\\
 k((\cut,\psi)):=k(\psi),\\
\end{gather*}
and $k(r):=0$ for any rule of a different form. Finally, we put
\begin{equation*}
 k_P(\sigma):=\max\{o(\sigma)^*,k(l(\sigma)),k(r(\sigma))\}.
\end{equation*}
for an $\mathbf L_{\omega^\alpha}^u$-preproof $(P,l,r,o)$ and a node $\sigma\in P$.
\end{definition}

The reader may wish to recall Notation~\ref{notation:abs-and-star}: In particular $o(\sigma)^*$ refers to the rank function of the well-ordering principle $\alpha\mapsto\varepsilon(S_{\omega^\alpha}^u)$. As we consider $o(\sigma)^*$ rather than $o(\sigma)$ we have $k_P(\sigma)<\omega^\alpha$. We can now say when an operator controls a proof:

\begin{definition}
 Assume $\vartheta:\varepsilon(S_{\omega^\alpha}^u)\bh\alpha$. We say that the operator $\mathcal H_t^\vartheta[\beta]$ controls the $\mathbf L_{\omega^\alpha}^u$-preproof $P$ if we have
\begin{equation*}
 k_P(\sigma)\in\mathcal H_t^\vartheta[\beta,|\sigma|]
\end{equation*}
for all nodes $\sigma\in P$.
\end{definition}

Our first goal is operator control for the ``basic'' preproofs $P_\alpha^u$ from Proposition~\ref{prop:proof-from-search-tree}. The following is a preparation (observe $\mathcal H_t^\vartheta[0,\gamma]=\mathcal H_t^\vartheta[\gamma]$, by Lemma~\ref{lem:operators-max-counts}):

\begin{lemma}
 Assume $\vartheta:\varepsilon(S_{\omega^\alpha}^u)\bh\alpha$. The $\mathbf L_{\omega^\alpha}^u$-preproofs of the Kripke-Platek axioms, as constructed in Lemma~\ref{lem:proofs-kp-axioms}, are controlled by the operator $\mathcal H_\Omega^\vartheta$.
\end{lemma}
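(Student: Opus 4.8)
The plan is to unwind the definition of ``controls'' and check, node by node, that each of the three quantities whose maximum is $k_P(\sigma)$ already lies in $\mathcal H_\Omega^\vartheta[|\sigma|]$. First I would record the closure properties of this set that get used repeatedly: by Lemma~\ref{lem:operators-closure}(i) and Lemma~\ref{lem:operators-nice}(i) it contains $|\sigma|$, $0$, $1$ and $\omega$; by Lemma~\ref{lem:operators-nice}(ii) it is closed under ordinal addition of its members; and by Lemma~\ref{lem:operators-max-counts}(i) it is downward closed below $\Omega$. Hence $|\sigma|+1+\omega\in\mathcal H_\Omega^\vartheta[|\sigma|]$, and since $|\sigma|+1+\omega<\omega^\alpha=\Omega$ (as $\omega^\alpha$ is additively principal and $\alpha>1$), the set $\mathcal H_\Omega^\vartheta[|\sigma|]$ in fact contains every ordinal $<|\sigma|+1+\omega$, and more generally every ordinal below any ordinal it contains. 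Since $k_P(\sigma)=\max\{o(\sigma)^*,k(l(\sigma)),k(r(\sigma))\}$ is literally one of its three arguments, it suffices to place each of $o(\sigma)^*$, $k(l(\sigma))$ and $k(r(\sigma))$ into $\mathcal H_\Omega^\vartheta[|\sigma|]$.

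For the ordinal labels I would observe that every label $o(\sigma)$ appearing in the proofs constructed in Lemma~\ref{lem:proofs-kp-axioms} is either an ordinal $<\omega^2$ (labels such as $0$ and $\omega\cdot n+m$) or of the shape $\Omega+\beta$ with $\beta<\omega^2$ (labels such as $\Omega+\omega\cdot(k+1)$ and the various $\Omega+\omega\cdot n+m$ in the collection case). Using $\Omega^*=(\Omega^1)^*\le 1^*=0$ from Lemma~\ref{lem:exp-add-ranks}(i), together with $\beta^*\le 1$ whenever $\beta<\omega^2$ and Lemma~\ref{lem:exp-add-ranks}(ii), one gets $o(\sigma)^*\le 1$ throughout; thus $o(\sigma)^*\in\{0,1\}\subseteq\mathcal H_\Omega^\vartheta[|\sigma|]$.

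For the sequent- and rule-parameters I would simply walk through the axioms. In an instance of $\Delta_0$-separation the nodes are $\langle\rangle$, the prefixes $\langle c_1,\dots,c_j\rangle$, the node $\langle c_1,\dots,c_k,a\rangle$, and the leaf $\langle c_1,\dots,c_k,a,0\rangle$; at the root the end-sequent is a closed formula with no parameters and the last rule names none, so $k(l(\langle\rangle))=k(r(\langle\rangle))=0$, while at every other node every parameter occurring in $l(\sigma)$ or named by $r(\sigma)$ is either an entry $c_i$ or $a$ of $\sigma$ — whence its $\mathbf L$-rank is $\le|\sigma|$ — or the separated set $b=\{z\in a\mid\theta(a,z,\vec c)\}$, whose rank satisfies $|b|<\gamma+\omega=|\sigma|+1+\omega$ by the construction of $b$ in Lemma~\ref{lem:proofs-kp-axioms}; in either case the value is in $\mathcal H_\Omega^\vartheta[|\sigma|]$. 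The axioms of equality, extensionality, pairing and union are treated identically (their computed witnesses, e.g.\ $\{x,y\}$ for pairing or $\bigcup\bigcup x$ for union, have $\mathbf L$-rank $\le|\sigma|+1$), for infinity the witness $\omega$ has $|\omega|\le\omega\in\mathcal H_\Omega^\vartheta[|\sigma|]$, and for $\Delta_0$-collection all witnesses ($b$ for $\exists_y\theta$, $a$ for the bounded $\exists_{x\in d}$, and the $\forall$-instantiations $\vec c,d$) are ranged over universally and hence appear as entries of the node carrying the corresponding rule, so $|b|\le|\sigma|$ etc., while the reflection rule $(\rref,\dots)$ contributes $k$-value $0$ and the reflected formula mentions only the parameters $\vec c,d$ already occurring in $\sigma$. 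Assembling the three estimates gives $k_P(\sigma)\in\mathcal H_\Omega^\vartheta[|\sigma|]$ at every node, which is the assertion.

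The work here is almost entirely bookkeeping; the only points that genuinely invoke the machinery of Section~\ref{sect:higher-wop} are the rank bound $o(\sigma)^*\le 1$ (via Lemma~\ref{lem:exp-add-ranks}) and the observation that the computed witnesses have $\mathbf L$-rank below $|\sigma|+1+\omega$, an ordinal that Lemmas~\ref{lem:operators-closure}, \ref{lem:operators-nice} and \ref{lem:operators-max-counts} force into the controlling operator. The main thing to be careful about is reading off $|\sigma|$ correctly at each node and confirming that the parameters relevant there really are bounded by it — mildly delicate only because $|0|=0$ means appending $0$ to a sequence leaves its $\mathbf L$-rank unchanged, so the leaf of the separation subproof has the same rank as its parent.
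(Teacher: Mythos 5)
Your proof is correct and takes essentially the same route as the paper: bound each constituent of $k_P(\sigma)$ by an ordinal below roughly $|\sigma|+\omega$ (using the witness-rank estimate from Lemma~\ref{lem:proofs-kp-axioms} and $\Omega^*=0$) and then push it into $\mathcal H_\Omega^\vartheta[|\sigma|]$ via the closure and downward-closure properties of Lemmas~\ref{lem:operators-closure}, \ref{lem:operators-nice} and \ref{lem:operators-max-counts}. The paper only works out the node $\langle c_1,\dots,c_k,a\rangle$ of the $\Delta_0$-separation proof and leaves the remaining cases to the reader; your write-up simply carries out that bookkeeping in full.
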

\begin{proof}
As an example, consider a node of the form $\sigma=\langle c_1,\dots ,c_k,a\rangle$ in a proof $P$ of $\Delta_0$-separation. There we have
\begin{equation*}
k_P(\sigma)=\max\{\Omega^*,|a|,|b|,|c_1|,\dots,|c_k|\}
\end{equation*}
with $b=\{z\in a\,|\,\theta(a,z,\vec c)\}$. We have observed $|b|<\max\{|a|,|c_1|,\dots,|c_k|\}+\omega$ in the proof of Lemma~\ref{lem:proofs-kp-axioms}. Together with $\Omega^*=0$ we get
\begin{equation*}
 k_P(\sigma)<\max\{|a|,|c_1|,\dots,|c_k|\}+\omega=|\sigma|+\omega.
\end{equation*}
From Lemma~\ref{lem:operators-closure} we know $|\sigma|\in\mathcal H_\Omega^\vartheta[|\sigma|]$. Lemma~\ref{lem:operators-nice} gives $\omega\in\mathcal H_\Omega^\vartheta[|\sigma|]$ and then $|\sigma|+\omega\in\mathcal H_\Omega^\vartheta[|\sigma|]$. In view of $k_P(\sigma)<|\sigma|+\omega<\Omega$ we get $k_P(\sigma)\in\mathcal H_\Omega^\vartheta[|\sigma|]$. The other cases are similar and left to the reader.
\end{proof}

Building on this, let us look at the preproofs from Proposition~\ref{prop:proof-from-search-tree}:

\begin{lemma}\label{lem:basic-proofs-controlled}
 Assume $\vartheta:\varepsilon(S_{\omega^\alpha}^u)\bh\alpha$. The $\mathbf L_{\omega^\alpha}^u$-preproof $P_\alpha^u$ is controlled by the operator $\mathcal H_\Omega^\vartheta$.
\end{lemma}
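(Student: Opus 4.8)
The plan is to check, for every node $\sigma$ of the preproof $P_\alpha^u$ built in Proposition~\ref{prop:proof-from-search-tree}, that $k_{P_\alpha^u}(\sigma)\in\mathcal H_\Omega^\vartheta[|\sigma|]$, distinguishing the two kinds of nodes that $P_\alpha^u$ contains: those $\sigma$ that already lie in the search tree $S_{\omega^\alpha}^u$ (the ``spine''), and those of the form $\sigma^\frown 1^\frown\tau$, where $\sigma\in S_{\omega^\alpha}^u$ has even length $2k$ and $\tau$ is a node of the $\mathbf L_{\omega^\alpha}^u$-preproof $Q$ of the axiom $\theta_k$ attached at the position $\sigma^\frown 1\notin S_{\omega^\alpha}^u$.

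For a spine node $\sigma$ I would bound each of the three ingredients of $k_{P_\alpha^u}(\sigma)=\max\{o_\alpha^u(\sigma)^*,k(l_\alpha^u(\sigma)),k(r_\alpha^u(\sigma))\}$ by $|\sigma|$. Since $o_\alpha^u(\sigma)=\varepsilon_\sigma$ and $\sigma\in S_{|\sigma|+1}^u\subseteq S_{\omega^{|\sigma|+1}}^u$, the term $\varepsilon_\sigma$ lies in $\varepsilon(S_{\omega^{|\sigma|+1}}^u)$, whence $(\varepsilon_\sigma)^*\leq|\sigma|$ exactly as in the proof of Lemma~\ref{lem:operators-nice}(iii). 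By the induction underlying Lemma~\ref{lem:properties-search-tree}(a), every parameter of $l_\alpha^u(\sigma)$ lies in $\rng(\sigma)\cup u$; an entry of $\sigma$ has $\mathbf L$-rank at most $|\sigma|$ by definition of $|\cdot|_S^u$, and an element of $u$ has $\mathbf L$-rank $0$, so $k(l_\alpha^u(\sigma))\leq|\sigma|$. Finally, going through the construction of $r_\alpha^u$: the rules $\ax$, $(\land,\dots)$, $(\lor_i,\dots)$, $(\forall_x,\dots)$, $(\cut,\neg\theta_k)$ have $k$-value $0$ (the $\theta_k$ are parameter-free sentences), $(\rep,0)$ has $k$-value $|0|=0$, and the only remaining rule $(\exists_x,b,\psi)$ has $k$-value $|b|$, where the witness $b$ chosen in the search tree is an entry of $\sigma$ or an element of $u$, so $|b|\leq|\sigma|$. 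Hence $k_{P_\alpha^u}(\sigma)\leq|\sigma|<\Omega$; as $|\sigma|\in\mathcal H_\Omega^\vartheta[|\sigma|]\cap\Omega$ by Lemma~\ref{lem:operators-closure}(i), Lemma~\ref{lem:operators-max-counts}(i) then gives $k_{P_\alpha^u}(\sigma)\in\mathcal H_\Omega^\vartheta[|\sigma|]$.

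For a node $\sigma^\frown 1^\frown\tau$ as above, the sequent, rule and ordinal labels attached at $\sigma^\frown 1^\frown\tau$ in $P_\alpha^u$ are precisely those of $\tau$ in $Q$, since $Q$ is attached verbatim (local correctness of the cut at $\sigma$ being guaranteed by $\Omega^1\cdot 2<\varepsilon_\sigma$); therefore $k_{P_\alpha^u}(\sigma^\frown 1^\frown\tau)=k_Q(\tau)$. The preceding lemma states that $Q$ is controlled by $\mathcal H_\Omega^\vartheta$, i.e.\ $k_Q(\tau)\in\mathcal H_\Omega^\vartheta[|\tau|]$. Since $1\in u$ has $\mathbf L$-rank $0$, the $S$-rank of $\sigma^\frown 1^\frown\tau$ equals $\max\{|\sigma|,|\tau|\}\geq|\tau|$, so Lemma~\ref{lem:operators-max-counts}(ii) yields $\mathcal H_\Omega^\vartheta[|\tau|]\subseteq\mathcal H_\Omega^\vartheta[|\sigma^\frown 1^\frown\tau|]$ and hence $k_{P_\alpha^u}(\sigma^\frown 1^\frown\tau)\in\mathcal H_\Omega^\vartheta[|\sigma^\frown 1^\frown\tau|]$, completing the verification.

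I do not expect a genuine obstacle: the argument is purely a matter of matching rank bounds, and every inequality involved has already been isolated --- the ranks of search-tree entries and of existential witnesses (Section~\ref{sect:search-trees}), the closure and niceness of $\mathcal H_t^\vartheta$, its monotonicity in the control argument, and the behaviour of $(\cdot)^*$ under $S_{|\sigma|+1}^u\subseteq S_{\omega^{|\sigma|+1}}^u$ (Section~\ref{sect:higher-wop}). The only point needing a little care is the bookkeeping for the attached subproofs: one must observe that replacing $\tau$ by $\sigma^\frown 1^\frown\tau$ can only increase the $S$-rank, so that the standalone control of $Q$ transfers, via Lemma~\ref{lem:operators-max-counts}(ii), to control of $Q$ sitting inside $P_\alpha^u$.
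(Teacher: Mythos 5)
Your proof is correct and follows essentially the same route as the paper: split the nodes of $P_\alpha^u$ into search-tree nodes and nodes of the attached axiom preproofs, bound all parameters of $l_\alpha^u(\sigma)$ and $r_\alpha^u(\sigma)$ by $|\sigma|$ via the argument of Lemma~\ref{lem:properties-search-tree}(a), and transfer the control of the axiom proofs from the preceding lemma using the monotonicity of Lemma~\ref{lem:operators-max-counts}(ii). The only (harmless) variation is that you bound $(\varepsilon_\sigma)^*\leq|\sigma|$ directly and then use downward closure below $\Omega$, whereas the paper obtains ${\varepsilon_\sigma}^*\in\mathcal H_\Omega^\vartheta[|\sigma|]$ via Lemma~\ref{lem:operators-nice}(iii) and Lemma~\ref{lem:basic-c-sets}(iii) --- both arguments rest on the same fact $\varepsilon_\sigma\in\varepsilon(S_{\omega^{|\sigma|+1}}^u)$.
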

\begin{proof}
 First, consider a node $\sigma$ in the search tree $S_{\omega^\alpha}^u\subseteq P_\alpha^u$. Let us begin by looking at the ``ordinal'' label $o_\alpha^u(\sigma)=\varepsilon_\sigma$. In view of $|\sigma|\in\mathcal H_\Omega^\vartheta[|\sigma|]$ Lemma~\ref{lem:operators-nice} gives $\varepsilon_\sigma\in\mathcal H_\Omega^\vartheta[|\sigma|]$. By Lemma~\ref{lem:basic-c-sets} we obtain ${\varepsilon_\sigma}^*\in\mathcal H_\Omega^\vartheta[|\sigma|]$. Thus we have established $k_{P_\alpha^u}(\sigma)\in\mathcal H_\Omega^\vartheta[|\sigma|]$ in case $k_P(\sigma)=o_\alpha^u(\sigma)^*$. Now let us look at $k(l_\alpha^u(\sigma))$ and $k(r_\alpha^u(\sigma))$: In the proof of Lemma~\ref{lem:properties-search-tree} we have seen that all parameters in $l_\alpha^u(\sigma)$ lie in the set $\rng(\sigma)\cup u$; the same argument applies to parameters of the rule $r_\alpha^u(\sigma)$. As we have $|u_i|=|u_i|_{\mathbf L}^u=0$ for all $u_i\in u$ we get $\max\{k(l_\alpha^u(\sigma)),k(r_\alpha^u(\sigma))\}\leq|\sigma|$, which implies $\max\{k(l_\alpha^u(\sigma)),k(r_\alpha^u(\sigma))\}\in\mathcal H_\Omega^\vartheta[|\sigma|]$.

It remains to look at a node $\sigma\in P_\alpha^u$ which does not lie in the search tree $S_\alpha^u$. Then $\sigma$ is of the form ${\sigma_0}^\frown \sigma_1$ where $\sigma_1$ lies in one of the proofs of the Kripke-Platek axioms, constructed in Lemma~\ref{lem:proofs-kp-axioms}. From the previous lemma we get $k_{P_\alpha^u}(\sigma)\in\mathcal H_\Omega^\vartheta[|\sigma_1|]$. In view of $|\sigma_1|\leq|\sigma|$ Lemma~\ref{lem:operators-max-counts} gives $\mathcal H_\Omega^\vartheta[|\sigma_1|]\subseteq\mathcal H_\Omega^\vartheta[|\sigma|]$, and the claim follows.
\end{proof}

In the previous section we have developed $\mathbf L_{\omega^\alpha}^u$-codes as an important tool for the construction of $\mathbf L_{\omega^\alpha}^u$-preproofs. We need to recast the notion of operator control in terms of these codes. The bound on parameters is easily defined for codes:

\begin{definition}
For any $\mathbf L_{\omega^\alpha}^u$-code $P$ we set
\begin{equation*}
 k_{\langle\rangle}(P):=\max\{o_{\langle\rangle}(P)^*,k(l_{\langle\rangle}(P)),k(r_{\langle\rangle}(P))\}.
\end{equation*}
\end{definition}

As for the controlling operators, we can of course represent $\mathcal H_t^\vartheta[\beta]$ by the pair $\langle t,\beta\rangle$. Let us restrict attention to the basic $\mathbf L_{\omega^\alpha}^u$-codes from Definition~\ref{def:basic-codes} first:

\begin{definition}\label{def:operators-basic-proofs}
 We define an assignment
\begin{align*}
 h_0:\text{``basic $\mathbf L_{\omega^\alpha}^u$-codes''}&\rightarrow\{t\in\varepsilon(S_{\omega^\alpha}^u)\,|\,\Omega\leq t\}, && h_0(P_\alpha^u\sigma):=\Omega,\\
 h_1:\text{``basic $\mathbf L_{\omega^\alpha}^u$-codes''}&\rightarrow\omega^\alpha, && h_1(P_\alpha^u\sigma):=|\sigma|
\end{align*}
of operators to basic $\mathbf L_{\omega^\alpha}^u$-codes.
\end{definition}

We will abbreviate
\begin{equation*}
 \mathcal H_P^\vartheta[\beta_1,\dots,\beta_k]=\mathcal H_{h_0(P)}^\vartheta[h_1(P),\beta_1,\dots,\beta_k].
\end{equation*}
As in the previous section, the point of the following is that it can be extended beyond basic codes:

\begin{lemma}\label{lem:locally-correct-codes}
 Assume $\vartheta:\varepsilon(S_{\omega^\alpha}^u)\bh\alpha$. The assigment of operators to basic $\mathbf L_{\omega^\alpha}^u$-codes is locally correct in the sense that we have
\begin{enumerate}[label=(H\arabic*)]
 \item $k_{\langle\rangle}(P)\in\mathcal H_P^\vartheta$,
 \item $h_0(n(P,a))\leq h_0(P)$,
 \item $\{h_1(n(P,a)),o_{\langle\rangle}(n(P,a))\}\subseteq\mathcal H_P^\vartheta[|a|]$
\end{enumerate}
for any basic $\mathbf L_{\omega^\alpha}^u$-code $P$ and all $a\in\iota(r_{\langle\rangle}(P))$.
\end{lemma}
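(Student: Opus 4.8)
The plan is to argue by a case distinction on whether the sequence $\sigma$ is a node of $P_\alpha^u$, in parallel with the proof of Lemma~\ref{lem:local-correctness-codes}. The operators in play are of the form $\mathcal H_{P_\alpha^u\sigma}^\vartheta[\ldots]=\mathcal H_{h_0(P_\alpha^u\sigma)}^\vartheta[h_1(P_\alpha^u\sigma),\ldots]$, so everything will come down to the control lemma for the preproof $P_\alpha^u$ itself (Lemma~\ref{lem:basic-proofs-controlled}) together with the closure and niceness properties of $\mathcal H_t^\vartheta[\ldots]$ established in Lemmas~\ref{lem:operators-closure}, \ref{lem:operators-max-counts} and \ref{lem:operators-nice}.

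If $\sigma\notin P_\alpha^u$ then Definition~\ref{def:basic-codes} gives $r_{\langle\rangle}(P_\alpha^u\sigma)=\ax$, $l_{\langle\rangle}(P_\alpha^u\sigma)=\langle 0=0\rangle$ and $o_{\langle\rangle}(P_\alpha^u\sigma)=0$, so $k_{\langle\rangle}(P_\alpha^u\sigma)=\max\{0^*,0,0\}=0$, which lies in every set $C^\vartheta(t,X)$ by the definition of $C_0^\vartheta$; this establishes (H1). Conditions (H2) and (H3) are vacuous because $\iota(\ax)=\emptyset$.

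Now suppose $\sigma\in P_\alpha^u$. Then, by Definitions~\ref{def:basic-codes} and~\ref{def:operators-basic-proofs}, the code $P_\alpha^u\sigma$ carries the labels $l_\alpha^u(\sigma)$, $r_\alpha^u(\sigma)$, $o_\alpha^u(\sigma)$ of the preproof $P_\alpha^u$ at $\sigma$, together with $h_0(P_\alpha^u\sigma)=\Omega$ and $h_1(P_\alpha^u\sigma)=|\sigma|$, so that $\mathcal H_{P_\alpha^u\sigma}^\vartheta=\mathcal H_\Omega^\vartheta[|\sigma|]$. Unfolding the definitions one sees $k_{\langle\rangle}(P_\alpha^u\sigma)=k_{P_\alpha^u}(\sigma)$, so (H1) is exactly the assertion of Lemma~\ref{lem:basic-proofs-controlled}. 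For (H2) there is nothing to prove, since $h_0(n(P_\alpha^u\sigma,a))=h_0(P_\alpha^u(\sigma^\frown a))=\Omega=h_0(P_\alpha^u\sigma)$. The content is in (H3). For $a\in\iota(r_{\langle\rangle}(P_\alpha^u\sigma))=\iota(r_\alpha^u(\sigma))$ the sequence $\sigma^\frown a$ is again a node of $P_\alpha^u$ (local correctness of $P_\alpha^u$, matching the function $\iota$ of Definition~\ref{def:interpretation-codes} via Lemma~\ref{lem:int-basic-codes-expected}). Hence $h_1(n(P_\alpha^u\sigma,a))=|\sigma^\frown a|=\max\{|\sigma|,|a|\}$ by the definition of the rank on sequences in Lemma~\ref{lem:search-trees-rank}, and this maximum lies in $\mathcal H_\Omega^\vartheta[|\sigma|,|a|]$ since $\{|\sigma|,|a|\}\subseteq\mathcal H_\Omega^\vartheta[|\sigma|,|a|]$ by Lemma~\ref{lem:operators-closure}(i). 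For the ordinal label I would prove, as an auxiliary step, that $o_\alpha^u(\tau)\in\mathcal H_\Omega^\vartheta[|\tau|]$ for every $\tau\in P_\alpha^u$: if $\tau$ lies in the search tree $S_{\omega^\alpha}^u$ then $o_\alpha^u(\tau)=\varepsilon_\tau$, and $|\tau|\in\mathcal H_\Omega^\vartheta[|\tau|]$ together with Lemma~\ref{lem:operators-nice}(iii) gives $\varepsilon_\tau\in\mathcal H_\Omega^\vartheta[|\tau|]$; if $\tau$ lies in one of the attached proofs of a Kripke-Platek axiom, then $o_\alpha^u(\tau)$ is one of the explicit terms below $\Omega^1\cdot 2$ occurring in Lemma~\ref{lem:proofs-kp-axioms}, all of which are generated inside $\mathcal H_\Omega^\vartheta$ from $\{0,\omega\}$ (Lemma~\ref{lem:operators-nice}(i)) by the operations $s\mapsto\Omega^s$ and $(s,s')\mapsto s+s'$ (Lemma~\ref{lem:operators-nice}(ii)) and by downward closure below $\Omega$ (Lemma~\ref{lem:operators-max-counts}(i)). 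Applying this auxiliary claim to $\tau=\sigma^\frown a$ and using $\mathcal H_\Omega^\vartheta[|\sigma^\frown a|]=\mathcal H_\Omega^\vartheta[\max\{|\sigma|,|a|\}]=\mathcal H_\Omega^\vartheta[|\sigma|,|a|]$ (Lemma~\ref{lem:operators-max-counts}(ii), applied in both directions) yields $o_{\langle\rangle}(n(P_\alpha^u\sigma,a))=o_\alpha^u(\sigma^\frown a)\in\mathcal H_{P_\alpha^u\sigma}^\vartheta[|a|]$, which completes (H3).

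The only step that is not pure unfolding is the auxiliary claim for (H3), and specifically its second case, where one must return to the concrete ordinal heights constructed in Lemma~\ref{lem:proofs-kp-axioms} (such as $\Omega$, $\Omega+\omega\cdot(k+1)$ and $\Omega+\omega\cdot 3+3$) and check that each of them is reachable inside $\mathcal H_\Omega^\vartheta$ by the listed operations. I expect this to be the main, though still routine, obstacle; everything else is a direct consequence of Lemma~\ref{lem:basic-proofs-controlled} and the algebraic properties of the operators $\mathcal H_t^\vartheta[\ldots]$.
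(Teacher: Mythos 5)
Your proof is correct, and for (H1), (H2) and the first half of (H3) it coincides with the paper's argument: reduce (H1) to Lemma~\ref{lem:basic-proofs-controlled} via $k_{\langle\rangle}(P_\alpha^u\sigma)=k_{P_\alpha^u}(\sigma)$, note (H2) is trivial since $h_0$ is constantly $\Omega$, and compute $h_1(n(P_\alpha^u\sigma,a))=\max\{|\sigma|,|a|\}\in\mathcal H_\Omega^\vartheta[|\sigma|,|a|]$. Where you diverge is the second half of (H3): you prove the auxiliary claim $o_\alpha^u(\tau)\in\mathcal H_\Omega^\vartheta[|\tau|]$ for all $\tau\in P_\alpha^u$ directly, splitting into search-tree nodes (via Lemma~\ref{lem:operators-nice}(iii)) and nodes of the attached Kripke--Platek preproofs, where you re-derive the concrete heights of Lemma~\ref{lem:proofs-kp-axioms} from $\{0,\omega\}$ by $s\mapsto\Omega^s$ and addition. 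That works, but the labour you flag as the ``main obstacle'' can be avoided: the paper instead applies (H1) to the child code $n(P_\alpha^u\sigma,a)=P_\alpha^u\sigma^\frown a$ (equivalently, Lemma~\ref{lem:basic-proofs-controlled} at the node $\sigma^\frown a$), which already yields $o_{\langle\rangle}(n(P_\alpha^u\sigma,a))^*\leq k_{\langle\rangle}(P_\alpha^u\sigma^\frown a)\in\mathcal H_\Omega^\vartheta[|\sigma|,|a|]=\mathcal H_{P_\alpha^u\sigma}^\vartheta[|a|]$, and then passes from the rank $o^*$ to the term $o$ itself using the closure clause $\{s\,|\,s^*\in C_n^\vartheta(t,X)\}\subseteq C_{n+1}^\vartheta(t,X)$ of Definition~\ref{def:bh-to-operators}. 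So the paper's route never returns to the explicit ordinal assignments of Lemma~\ref{lem:proofs-kp-axioms} (that information is already packaged, via $o^*\leq k_P$, in Lemma~\ref{lem:basic-proofs-controlled}), whereas your route establishes the slightly stronger fact that the labels $o_\alpha^u(\tau)$ themselves, not just their ranks, are captured by the operator --- at the price of duplicating part of the control lemma's case analysis. Incidentally, this is also why the paper keeps the clause $o_{\langle\rangle}(n(P,a))\in\mathcal H_P^\vartheta[|a|]$ in (H3) even though it looks redundant: it is needed as stated for the later induction over non-basic codes, exactly as the remark following the lemma explains.
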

\begin{proof}
Write $P=P_\alpha^u\sigma$. To verify (H1) we have to distinguish two cases: Assume first that $\sigma$ is a node in the preproof $P_\alpha^u$. Then we have $o_{\langle\rangle}(P_\alpha^u\sigma)=o_\alpha^u(\sigma)$, where $o_\alpha^u$ is the labelling function of $P_\alpha^u$. The same holds for the other labelling functions, so that we see $k_{\langle\rangle}(P_\alpha^u\sigma)=k_{P_\alpha^u}(\sigma)$. Using Lemma~\ref{lem:basic-proofs-controlled} we get
\begin{equation*}
 k_{\langle\rangle}(P_\alpha^u\sigma)\in\mathcal H_\Omega^\vartheta[|\sigma|]=\mathcal H_P^\vartheta.
\end{equation*}
If the node $\sigma$ does not lie in $P_\alpha^u$ then we have $k_{\langle\rangle}(P_\alpha^u\sigma)=0$ by default, so (H1) holds in this case as well. Condition (H2) is trivial for basic codes. As for (H3), recall that $n(P_\alpha^u\sigma,a)$ was defined to be the basic code $P_\alpha^u\sigma^\frown a$. Thus we have
\begin{equation*}
 h_1(n(P_\alpha^u\sigma,a))=|\sigma^\frown a|=\max\{|\sigma|,|a|\}\in\mathcal H_\Omega^\vartheta[|\sigma|,|a|]=\mathcal H_{P_\alpha^u\sigma}^\vartheta[|a|].
\end{equation*}
Finally, condition (H1) for $n(P_\alpha^u\sigma,a)=P_\alpha^u\sigma^\frown a$ gives
\begin{equation*}
 o_{\langle\rangle}(n(P_\alpha^u\sigma,a))^*\in\mathcal H_{P_\alpha^u\sigma^\frown a}^\vartheta=\mathcal H_{P_\alpha^u\sigma}^\vartheta[|a|].
\end{equation*}
By Lemma~\ref{lem:basic-c-sets} we get $o_{\langle\rangle}(n(P_\alpha^u\sigma,a))\in\mathcal H_{P_\alpha^u\sigma}^\vartheta[|a|]$, as required for the second part of (H3).
\end{proof}
Having seen the last part of this proof, the reader may wonder whether the condition $o_{\langle\rangle}(n(P,a))\in\mathcal H_P^\vartheta[|a|]$ in (H3) is redundant. This is indeed the case once we have established conditions (H1-H3) for \emph{all} $\mathbf L_{\omega^\alpha}^u$-codes. However, in order to establish (H1-H3) by induction on the length of codes we will need condition (H3) as it stands. The following relies on the interpretation of codes as proofs (see~Definition~\ref{def:interpretation-codes}):

\begin{corollary}\label{cor:operator-code-to-proof}
 Assume $\vartheta:\varepsilon(S_{\omega^\alpha}^u)\bh\alpha$. For any basic $\mathbf L_{\omega^\alpha}^u$-code $P$, the $\mathbf L_{\omega^\alpha}^u$-preproof $[P]$ is controlled by the operator $\mathcal H_P^\vartheta$.
\end{corollary}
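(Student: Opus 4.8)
The plan is to run the by-now-familiar scheme: just as Corollary~\ref{cor:correct-proofs-from-codes} lifts local correctness of codes (condition (L)) to preproofs and Corollary~\ref{cor:cut-rank-codes} lifts (C1)--(C2) to cut-ranks, here I would lift the local conditions (H1)--(H3) of Lemma~\ref{lem:locally-correct-codes} to operator control of $[P]$, by induction on the length of a sequence $\sigma\in[P]$. The observation that makes this go through cheaply is that, for a basic code $P$, the code $\bar n(P,\sigma)$ is again basic for every $\sigma$ (since $n$ sends basic codes to basic codes), so (H1)--(H3) are available at every node; moreover $h_0$ is constantly $\Omega$ on basic codes, so (H2) is vacuous and every controlling operator occurring in the argument has the form $\mathcal H_\Omega^\vartheta[\dots]$. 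All the actual work is then done by the closure properties of these operators recorded in Lemmas~\ref{lem:operators-closure}, \ref{lem:operators-max-counts} and \ref{lem:operators-nice}.

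Concretely I would prove, by induction on $\len(\sigma)$, the conjunction
\begin{equation*}
 k_{[P]}(\sigma)\in\mathcal H_P^\vartheta[|\sigma|]\qquad\text{and}\qquad\mathcal H_{\bar n(P,\sigma)}^\vartheta\subseteq\mathcal H_P^\vartheta[|\sigma|]
\end{equation*}
for all $\sigma\in[P]$; the second conjunct is an auxiliary that has to be carried because (H3) only locates the relevant data inside $\mathcal H_{\bar n(P,\sigma)}^\vartheta[|a|]$ and we must relay this back to $\mathcal H_P^\vartheta$. This statement is primitive recursive in $\sigma$ (with $\vartheta,\alpha,u$ as parameters), the $\mathcal H$-operators being restrictions of a primitive recursive class function to sets and set inclusion being $\Delta_0$, so ordinary induction is licit. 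For $\sigma=\langle\rangle$ we have $\bar n(P,\langle\rangle)=P$ and $|\langle\rangle|=0$, so the first conjunct is exactly (H1) and the second is the identity $\mathcal H_P^\vartheta=\mathcal H_P^\vartheta[0]$ (both follow from Lemma~\ref{lem:operators-max-counts}(ii), since $\max\{h_1(P)\}=\max\{h_1(P),0\}$). For the step, put $Q:=\bar n(P,\sigma)$, so that $\bar n(P,\sigma^\frown a)=n(Q,a)$ and $k_{[P]}(\sigma^\frown a)=k_{\langle\rangle}(n(Q,a))$ by the definitions of $l_{[P]},r_{[P]},o_{[P]}$ and $k_{\langle\rangle}$. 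From (H1) for the basic code $n(Q,a)$ I get $k_{\langle\rangle}(n(Q,a))\in\mathcal H_{n(Q,a)}^\vartheta$; from (H3) for $Q$ I get $h_1(n(Q,a))\in\mathcal H_Q^\vartheta[|a|]$, whence $\mathcal H_{n(Q,a)}^\vartheta\subseteq\mathcal H_Q^\vartheta[|a|]$ by Lemma~\ref{lem:operators-closure}(ii) (using $h_0\equiv\Omega$). Then, using the induction hypothesis $\mathcal H_Q^\vartheta\subseteq\mathcal H_P^\vartheta[|\sigma|]$ together with $h_1(Q)\in\mathcal H_Q^\vartheta$ (Lemma~\ref{lem:operators-closure}(i)) and $|a|\in\mathcal H_P^\vartheta[|\sigma|,|a|]$, Lemma~\ref{lem:operators-closure}(ii) gives $\mathcal H_Q^\vartheta[|a|]\subseteq\mathcal H_P^\vartheta[|\sigma|,|a|]$. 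Finally $|\sigma^\frown a|=\max\{|\sigma|,|a|\}$, so $\mathcal H_P^\vartheta[|\sigma|,|a|]=\mathcal H_P^\vartheta[|\sigma^\frown a|]$ by Lemma~\ref{lem:operators-max-counts}(ii); chaining the three inclusions yields both conjuncts for $\sigma^\frown a$. The assertion of the corollary is precisely the first conjunct, and it is meaningful because $[P]$ is an $\mathbf L_{\omega^\alpha}^u$-preproof by Corollary~\ref{cor:correct-proofs-from-codes}.

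I do not expect a genuine obstacle here: the proof is pure bookkeeping with the $\mathcal H$-operators, structurally parallel to Corollaries~\ref{cor:correct-proofs-from-codes} and \ref{cor:cut-rank-codes}. The only points that require a little care are (i) checking that every code appearing in the argument is basic, so that Lemma~\ref{lem:locally-correct-codes} applies throughout; (ii) the fact that $\mathcal H_\Omega^\vartheta[\vec\beta]$ depends, up to equality, only on $\max\vec\beta$ (Lemma~\ref{lem:operators-max-counts}(ii)), which is what collapses $\mathcal H_P^\vartheta[|\sigma|,|a|]$ to $\mathcal H_P^\vartheta[|\sigma^\frown a|]$; and (iii) remembering to carry the auxiliary inclusion in the induction. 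This corollary is the operator-control analogue of the earlier lifting lemmas, and the same modular pattern --- extend $h_0,h_1$ to new codes, re-verify (H1)--(H3) by induction on code length, invoke the (proof of the) corollary --- will later handle the codes introduced by cut elimination and collapsing.
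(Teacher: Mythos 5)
Your proof is correct and follows essentially the same route as the paper: an induction along (initial segments of) a node $\sigma\in[P]$, using (H1) at the node, (H3) for the transition, and the closure properties of Lemmas~\ref{lem:operators-closure} and~\ref{lem:operators-max-counts} to relay everything back to $\mathcal H_P^\vartheta[|\sigma|]$. The only divergence is that you exploit $h_0\equiv\Omega$ on basic codes (so a single base $t=\Omega$ and a vacuous (H2)), whereas the paper's proof passes through (H2) together with Proposition~\ref{prop:operators-special}(i) to compare $\mathcal H_{h_0(\bar n(P,\tau))}^\vartheta$ with $\mathcal H_{h_0(P)}^\vartheta$; that extra step is what allows the \emph{proof} of the corollary to be invoked unchanged for the later extended codes, where $h_0$ is no longer constant (e.g.\ the collapsing codes $\mathcal C_tP$ with $h_0(\mathcal C_tP)=t+\Omega^{o_{\langle\rangle}(P)}$). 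For the statement as given (basic codes only) your shortcut is perfectly fine, but if you intend your argument to serve as the template for the non-basic codes, you should replace the appeal to $h_0\equiv\Omega$ by the corresponding use of (H2) and Proposition~\ref{prop:operators-special}(i).
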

\begin{proof}
 Recall that we have $o_{[P]}(\sigma)=o_{\langle\rangle}(\bar n(P,\sigma))$ for any node $\sigma\in[P]$. The same holds for the other labels, so we get $k_{[P]}(\sigma)=k_{\langle\rangle}(\bar n(P,\sigma))$. By condition (H1) from the previous lemma we obtain
\begin{equation*}
 k_{[P]}(\sigma)\in\mathcal H_{\bar n(P,\sigma)}^\vartheta.
\end{equation*}
Thus it suffices to establish
\begin{equation*}
 \mathcal H_{\bar n(P,\sigma)}^\vartheta\subseteq\mathcal H_P^\vartheta[|\sigma|].
\end{equation*}
We prove the stronger claim
\begin{equation*}
 \mathcal H_{\bar n(P,\tau)}^\vartheta[|\sigma|]\subseteq\mathcal H_P^\vartheta[|\sigma|].
\end{equation*}
by induction on initial segments $\tau$ of $\sigma$: For $\tau=\langle\rangle$ we have $\bar n(P,\tau)=P$ and the claim is trivial. In the induction step, write $\tau=\rho^\frown a$. In view of $\bar n(P,\tau)=n(\bar n(P,\rho),a)$ condition (H3) yields
\begin{equation*}
 h_1(\bar n(P,\tau))\in\mathcal H_{\bar n(P,\rho)}^\vartheta[|a|].
\end{equation*}
Since $\tau=\rho^\frown a$ is an initial segment of $\sigma$ we have $|a|\leq|\sigma|$. Together with the induction hypothesis we obtain
\begin{equation*}
 h_1(\bar n(P,\tau))\in\mathcal H_{\bar n(P,\rho)}^\vartheta[|\sigma|]\subseteq\mathcal H_P^\vartheta[|\sigma|].
\end{equation*}
By Lemma~\ref{lem:operators-closure} this implies
\begin{equation*}
 \mathcal H_{h_0(P)}^\vartheta[h_1(\bar n(P,\tau)),|\sigma|]\subseteq\mathcal H_P^\vartheta[|\sigma|].
\end{equation*}
Iterative applications of (H2) yield $h_0(\bar n(P,\tau))\leq h_0(P)$. Using Proposition~\ref{prop:operators-special} we can thus conclude
\begin{equation*}
 \mathcal H_{\bar n(P,\tau)}^\vartheta[|\sigma|]=\mathcal H_{h_0(\bar n(P,\tau))}^\vartheta[h_1(\bar n(P,\tau)),|\sigma|]\subseteq\mathcal H_{h_0(P)}^\vartheta[h_1(\bar n(P,\tau)),|\sigma|]\subseteq\mathcal H_P^\vartheta[|\sigma|],
\end{equation*}
as required for the induction step.
\end{proof}

Similar to the treatment of cut-rank in the previous section, we can now extend the assignment of operators beyond basic codes: To do so it is enough to find an extension of $h_0$ and $h_1$ which remains locally correct in the sense of conditions (H1) to (H3). The (proof of the) corollary will guarantee that the assigned operators do indeed control the interpretations of the codes.

\begin{lemma}\label{lem:operators-cut-elimination}
 Assume $\vartheta:\varepsilon(S_{\omega^\alpha}^u)\bh\alpha$. The operator assigment $\langle h_0,h_1\rangle$ from Definition~\ref{def:operators-basic-proofs} can be extended to all $\mathbf L_{\omega^\alpha}^u$-codes constructed in the previous section. In particular this assigment gives $h_i(\mathcal EP)=h_i(P)$ for $i=0,1$ and any $\mathbf L_{\omega^\alpha}^u$-code $P$.
\end{lemma}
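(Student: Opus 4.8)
The plan is to extend $\langle h_0,h_1\rangle$ by recursion on the length of $\mathbf L_{\omega^\alpha}^u$-codes --- in exact parallel to the recursive clauses for $l_{\langle\rangle},r_{\langle\rangle},o_{\langle\rangle},n$ and $d$ in the previous section --- and then to verify conditions (H1)--(H3) simultaneously by induction on code length, with Lemma~\ref{lem:locally-correct-codes} as the base case. For the inversion codes I would set $h_0(\mathcal I_{\forall_x\psi,b}P):=h_0(P)$ and $h_1(\mathcal I_{\forall_x\psi,b}P):=\max\{h_1(P),k(\psi(b))\}$, and analogously $h_1(\mathcal I_{\psi_0\land\psi_1,i}P):=\max\{h_1(P),k(\psi_i)\}$; for the cut-reduction codes, $h_i(\mathcal R_{\exists_x\psi}P_0P_1):=\max\{h_i(P_0),h_i(P_1)\}$ and $h_i(\mathcal R_{\psi_0\lor\psi_1}P_0P_1):=\max\{h_i(P_0),h_i(P_1)\}$ for $i\in\{0,1\}$ (well defined since $<_{\varepsilon(S_{\omega^\alpha}^u)}$ is a linear ordering and $\Omega\leq h_0(P_0),h_0(P_1)$, so the maximum again lies above $\Omega$); and for the cut-elimination codes the clause $h_i(\mathcal EP):=h_i(P)$ singled out in the statement. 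All these clauses are primitive recursive.

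Before the induction I would isolate three facts. (a) Directly from the definition of $C^\vartheta(t,X)$: if $s^*\in\mathcal H_t^\vartheta[\beta_1,\dots,\beta_k]$ then $s\in\mathcal H_t^\vartheta[\beta_1,\dots,\beta_k]$, for any $s\in\varepsilon(S_{\omega^\alpha}^u)$ (the converse of Lemma~\ref{lem:basic-c-sets}(iii)). (b) Since $o_{\langle\rangle}(P)^*\leq k_{\langle\rangle}(P)<\Omega$, condition (H1) for $P$, the downward closure of operators below $\Omega$ (Lemma~\ref{lem:operators-max-counts}(i)), and (a) together give $o_{\langle\rangle}(P)\in\mathcal H_P^\vartheta$; hence membership of the full ordinal label need not be carried as a separate invariant. (c) For a cut-reduction code the operators of the two subcodes are absorbed: from $h_0(P_j)\leq\max\{h_0(P_0),h_0(P_1)\}$ and Proposition~\ref{prop:operators-special}(i) one gets $\mathcal H_{P_j}^\vartheta\subseteq\mathcal H_{\max\{h_0(P_0),h_0(P_1)\}}^\vartheta[h_1(P_j)]$, and then Lemma~\ref{lem:operators-max-counts}(ii) gives $\mathcal H_{\max\{h_0(P_0),h_0(P_1)\}}^\vartheta[h_1(P_j)]\subseteq\mathcal H_{\mathcal R_{\exists_x\psi}P_0P_1}^\vartheta$. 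The remaining ingredients are the niceness of operators (Lemma~\ref{lem:operators-nice}: closure under $s\mapsto\Omega^s$, $(s,s')\mapsto s+s'$, $\sigma\mapsto\varepsilon_\sigma$) and the rank bounds $(\Omega^s)^*\leq s^*$, $(s+t)^*\leq\max\{s^*,t^*\}$ of Lemma~\ref{lem:exp-add-ranks}.

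The induction itself splits on the outermost constructor and, within each case, on $r_{\langle\rangle}$ of the inner code(s), just as in Lemma~\ref{lem:local-correctness-codes}. The $\mathcal I$-cases are easy: the ordinal label is unchanged and $n(\mathcal I_{\forall_x\psi,b}P,a)=\mathcal I_{\forall_x\psi,b}n(P,a)$, so (H1)--(H3) for $\mathcal I_{\forall_x\psi,b}P$ follow from (H1)--(H3) for $P$, from Lemma~\ref{lem:operators-max-counts}(ii), and from the fact that $k(\psi(b))$ is by construction a parameter of $\mathcal H_{\mathcal I_{\forall_x\psi,b}P}^\vartheta$. For $\mathcal EP$ in the transparent subcases one has $n(\mathcal EP,a)=\mathcal En(P,a)$ and $o_{\langle\rangle}(\mathcal EP)=\Omega^{o_{\langle\rangle}(P)}$, and all three conditions follow by pushing through $s\mapsto\Omega^s$ (using niceness and $(\Omega^s)^*\leq s^*$) and using $h_i(\mathcal E\cdot)=h_i(\cdot)$; in the genuine cut-elimination subcase $n(\mathcal EP,0)=\mathcal R_{\exists_x\psi}(\mathcal En(P,0))(\mathcal En(P,1))$, one quotes the just-established $\mathcal R$-clauses and notes $|0|=|1|=0$, so $\mathcal H_{\mathcal EP}^\vartheta[|0|]=\mathcal H_P^\vartheta$ already contains $o_{\langle\rangle}(n(P,0))$, $o_{\langle\rangle}(n(P,1))$ by (b), hence their $\Omega$-powers and sum.

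The genuinely delicate case --- the one I would write out in detail --- is $\mathcal R_{\exists_x\psi}P_0P_1$ with $r_{\langle\rangle}(P_0)=(\exists_x,b,\psi)$: a fresh rule $(\cut,\psi(b))$ is created, whose parameter $b$ and subformula $\psi$ come from inside $P_0$, and the height becomes $o_{\langle\rangle}(P_1)+o_{\langle\rangle}(P_0)$. For (H1) the point is $k((\cut,\psi(b)))=\max\{|b|,k(\psi)\}\leq k_{\langle\rangle}(P_0)$: here $|b|=k(r_{\langle\rangle}(P_0))$, and condition (L) forces $\exists_x\psi\in l_{\langle\rangle}(P_0)$, so $k(\psi)\leq k(\exists_x\psi)\leq k(l_{\langle\rangle}(P_0))$; since $k_{\langle\rangle}(P_0)\in\mathcal H_{P_0}^\vartheta\subseteq\mathcal H_{\mathcal R_{\exists_x\psi}P_0P_1}^\vartheta$ by (c) and everything in sight is $<\Omega$, downward closure finishes it, and $(o_{\langle\rangle}(P_1)+o_{\langle\rangle}(P_0))^*\leq\max\{k_{\langle\rangle}(P_0),k_{\langle\rangle}(P_1)\}$ is handled the same way via Lemma~\ref{lem:exp-add-ranks}(ii). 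For (H3) the awkward child is $\mathcal I_{\forall_x\neg\psi,b}P_1$: one needs $\max\{h_1(P_1),k(\neg\psi(b))\}\in\mathcal H_{\mathcal R_{\exists_x\psi}P_0P_1}^\vartheta$, which again reduces to $k(\neg\psi(b))=k(\psi(b))\leq k_{\langle\rangle}(P_0)$ plus downward closure, while $o_{\langle\rangle}(\mathcal I_{\forall_x\neg\psi,b}P_1)=o_{\langle\rangle}(P_1)\in\mathcal H_{\mathcal R_{\exists_x\psi}P_0P_1}^\vartheta$ follows from (b) applied to $P_1$; the remaining children $\mathcal R_{\exists_x\psi}n(P_0,a)P_1$ use (H3) for $P_0$ together with niceness for the sum $o_{\langle\rangle}(P_1)+o_{\langle\rangle}(n(P_0,a))$. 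Having dealt with $\mathcal R$ and $\mathcal E$, the variants $\mathcal R_{\psi_0\lor\psi_1}$, $\mathcal I_{\psi_0\land\psi_1,i}$ go through verbatim, and, as in Lemma~\ref{lem:local-correctness-codes} and its corollary, I would leave them and the remaining purely clerical subcases to the reader.
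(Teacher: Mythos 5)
Your overall architecture is the paper's: define $h_0,h_1$ by recursion on the length of codes (your clauses agree with the paper's up to inessentials, since every code of that section receives $h_0=\Omega$ anyway) and verify the local conditions by induction on that length, with Lemma~\ref{lem:locally-correct-codes} as base case. The genuine problem is your item (b) and the way you deploy it inside the induction. Condition (H3) has two clauses, $h_1(n(P,a))\in\mathcal H_P^\vartheta[|a|]$ and $o_{\langle\rangle}(n(P,a))\in\mathcal H_P^\vartheta[|a|]$. You declare the second redundant, on the grounds that (H1) for a code $Q$ yields $o_{\langle\rangle}(Q)\in\mathcal H_Q^\vartheta$, and you then re-derive the needed ordinal memberships during the induction by applying (b) to \emph{child} codes: to $n(P,0),n(P,1)$ in the genuine cut-elimination subcase of $\mathcal EP$, and (implicitly) to $n(P_0,a)$ in the $\mathcal R$-case. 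But (b) applied to a child presupposes (H1) for that child, and the induction is on the length of code \emph{terms}: because of the clause $n(\mathcal EP,a)=\mathcal R_{\exists_x\psi}(\mathcal En(P,0))(\mathcal En(P,1))$, the child of a code can be a strictly longer term than the code itself. Concretely, take $P=\mathcal EP'$ with $r_{\langle\rangle}(P')=(\cut,\exists_x\psi)$ and $\hth(\exists_x\psi)>1$, and consider your $\mathcal E$-case for $Q=\mathcal EP$: the code $n(P,0)$ to which you apply (b) has length roughly twice that of $P$, hence larger than that of $Q$, so neither (H1) for it nor anything else about it is covered by the induction hypothesis. This is exactly the point the paper flags right after Lemma~\ref{lem:locally-correct-codes}: the $o_{\langle\rangle}$-clause of (H3) is redundant once (H1)--(H3) are known for \emph{all} codes, but it cannot be discharged this way \emph{during} the length induction.

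Two repairs are available. The paper's: keep the $o_{\langle\rangle}$-clause in the inductive invariant and verify it for each new code directly from the recursive clauses, since $o_{\langle\rangle}(n(Q,a))$ is always built by $+$ and $\Omega$-exponentiation from labels already controlled through (H3) for the \emph{proper subterms} (then Lemmas~\ref{lem:operators-nice} and~\ref{lem:exp-add-ranks} close the gap); this costs nothing beyond what you already do for $h_1$. Alternatively, your reduction can be made honest by a two-stage argument: carry only (H1), (H2) and the $h_1$-clause of (H3) through the induction (these are self-supporting, as your case analysis shows once the ordinal-label computations are deleted), and only afterwards, for the completed system --- which is closed under $n$ --- derive the $o_{\langle\rangle}$-clause for every code from (H1) for its children together with (H2), the $h_1$-clause and Lemma~\ref{lem:basic-c-sets}. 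As written, your proof does neither: it drops the clause from the invariant and then uses it, via (b), inside the induction. A minor further point: your clause $h_1(\mathcal I_{\forall_x\psi,b}P)=\max\{h_1(P),k(\psi(b))\}$ should be $\max\{h_1(P),k(\forall_x\psi),|b|\}$ as in the paper, because when $r_{\langle\rangle}(P)=(\forall_x,\psi)$ the new rule is $(\rep,b)$, and (H1) needs $|b|$ in the operator even in the degenerate case where $b$ does not actually occur in $\psi(b)$.
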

\begin{proof}
 We set $h_0(P)=\Omega$ for all codes $P$ from the previous section (different values of $h_0(P)$ will occur later in this section). So condition (H2) from Lemma~\ref{lem:locally-correct-codes} is immediate. The value $h_1(P)$ is defined by recursion over the code $P$: Definition~\ref{def:operators-basic-proofs} accounts for the base case of a basic $\mathbf L_{\omega^\alpha}^u$-code. As a first recursive case, consider a term of the form $\mathcal I_{\forall_x\psi,b}P$. We put
\begin{equation*}
 h_1(\mathcal I_{\forall_x\psi,b}P):=\max\{h_1(P),k(\forall_x\psi),|b|\}.
\end{equation*}
 This is designed to satisfy condition (H1): Observe
\begin{equation*}
 k(l_{\langle\rangle}(\mathcal I_{\forall_x\psi,b}P))=\max\{k(l_{\langle\rangle}(P)),k(\psi(b))\}\leq\max\{k_{\langle\rangle}(P),k(\forall_x\psi),|b|\}
\end{equation*}
and
\begin{equation*}
 k(r_{\langle\rangle}(\mathcal I_{\forall_x\psi,b}P))=\max\{k(r_{\langle\rangle}(P)),|b|\}\leq\max\{k_{\langle\rangle}(P),|b|\},
\end{equation*}
where $|b|$ accounts for the possibility of a new rule $r_{\langle\rangle}(\mathcal I_{\forall_x\psi,b}P)=(\rep,b)$. Together with $o_{\langle\rangle}(\mathcal I_{\forall_x\psi,b}P)=o_{\langle\rangle}(P)$ this implies
\begin{equation*}
 k_{\langle\rangle}(\mathcal I_{\forall_x\psi,b}P)\leq\max\{k_{\langle\rangle}(P),k(\forall_x\psi),|b|\}.
\end{equation*}
To establish (H1) for $\mathcal I_{\forall_x\psi,b}P$ it is thus enough to show
\begin{equation*}
 \{k_{\langle\rangle}(P),k(\forall_x\psi),|b|\}\subseteq\mathcal H_{\mathcal I_{\forall_x\psi,b}P}^\vartheta.
\end{equation*}
By the induction hypothesis (condition (H1) for $P$) and $h_1(P)\leq h_1(\mathcal I_{\forall_x\psi,b}P)$ we get
\begin{equation*}
 k_{\langle\rangle}(P)\in\mathcal H_P^\vartheta\subseteq\mathcal H_{\mathcal I_{\forall_x\psi,b}P}^\vartheta.
\end{equation*}
By definition of the operators we have $h_1(\mathcal I_{\forall_x\psi,b}P)\in\mathcal H_{\mathcal I_{\forall_x\psi,b}P}^\vartheta$. Together with $\max\{k(\forall_x\psi),|b|\}\leq h_1(\mathcal I_{\forall_x\psi,b}P)$ this implies
\begin{equation*}
 \{k(\forall_x\psi),|b|\}\subseteq\mathcal H_{\mathcal I_{\forall_x\psi,b}P}^\vartheta,
\end{equation*}
which completes the verification of (H1) for $\mathcal I_{\forall_x\psi,b}P$. As for condition (H3), in view of $n(\mathcal I_{\forall_x\psi,b}P,a)=\mathcal I_{\forall_x\psi,b}n(P,a)$ we have
\begin{equation*}
 h_1(n(\mathcal I_{\forall_x\psi,b}P,a))=\max\{h_1(n(P,a)),k(\forall_x\psi),|b|\}.
\end{equation*}
Observe $\iota(r_{\langle\rangle}(\mathcal I_{\forall_x\psi,b}P))\subseteq\iota(r_{\langle\rangle}(P))$. Thus condition (H3) for $P$ gives
\begin{equation*}
 h_1(n(P,a))\in\mathcal H_P^\vartheta[|a|]\subseteq\mathcal H_{\mathcal I_{\forall_x\psi,b}P}^\vartheta[|a|]
\end{equation*}
for all $a\in\iota(r_{\langle\rangle}(\mathcal I_{\forall_x\psi,b}P))$. Together with the above we get
\begin{equation*}
 h_1(n(\mathcal I_{\forall_x\psi,b}P,a))\in\mathcal H_{\mathcal I_{\forall_x\psi,b}P}^\vartheta[|a|].
\end{equation*}
Similarly, we can infer
\begin{equation*}
 o_{\langle\rangle}(n(\mathcal I_{\forall_x\psi,b}P,a))=o_{\langle\rangle}(n(P,a))\in\mathcal H_P^\vartheta[|a|]\subseteq\mathcal H_{\mathcal I_{\forall_x\psi,b}P}^\vartheta[|a|]
\end{equation*}
from condition (H3) for $P$.

For terms of the form $\mathcal I_{\psi_0\land\psi_1,i}P$ we set
\begin{equation*}
 h_1(\mathcal I_{\psi_0\land\psi_1,i}P):=\max\{h_1(P),k(\psi_i)\}.
\end{equation*}
Conditions (H1) to (H3) are verified as above (we now have to accomodate a new rule $(\rep,i)$, but $i\in\mathcal H_t[\beta]$ is automatic for $i=0,1$).

Let us come to the case of a term $\mathcal R_{\exists_x\psi}P_0P_1$: Here we put
\begin{equation*}
 h_1(\mathcal R_{\exists_x\psi}P_0P_1):=\max\{h_1(P_0),h_1(P_1)\}.
\end{equation*}
As a preparation for (H1), let us show
\begin{equation*}
 k_{\langle\rangle}(\mathcal R_{\exists_x\psi}P_0P_1)\leq\max\{k_{\langle\rangle}(P_0),k_{\langle\rangle}(P_1)\}.
\end{equation*}
Concerning the ``ordinal'' labels, we have
\begin{multline*}
 o_{\langle\rangle}(\mathcal R_{\exists_x\psi}P_0P_1)^*=(o_{\langle\rangle}(P_1)+o_{\langle\rangle}(P_0))^*\leq\\
\leq\max\{o_{\langle\rangle}(P_0)^*,o_{\langle\rangle}(P_1)^*\}\leq\max\{k_{\langle\rangle}(P_0),k_{\langle\rangle}(P_1)\}.
\end{multline*}
As for the end sequents, $l_{\langle\rangle}(\mathcal R_{\exists_x\psi}P_0P_1)\subseteq l_{\langle\rangle}(P_0)\cup l_{\langle\rangle}(P_1)$ implies
\begin{equation*}
 k(l_{\langle\rangle}(\mathcal R_{\exists_x\psi}P_0P_1))\leq\max\{k(l_{\langle\rangle}(P_0)),k(l_{\langle\rangle}(P_1))\}\leq\max\{k_{\langle\rangle}(P_0),k_{\langle\rangle}(P_1)\}.
\end{equation*}
Concerning the last rule of the preproof $\mathcal R_{\exists_x\psi}P_0P_1$, the only interesting case is $r_{\langle\rangle}(P_0)=(\exists,b,\psi)$ and $r_{\langle\rangle}(\mathcal R_{\exists_x\psi}P_0P_1)=(\cut,\psi(b))$. Here we have
\begin{equation*}
 |b|\leq k(r_{\langle\rangle}(P_0))\leq k_{\langle\rangle}(P_0).
\end{equation*}
Also, condition (L) for $P_0$ implies that the formula $\exists_x\psi$ occurs in $l_{\langle\rangle}(P_0)$. Thus we obtain
\begin{equation*}
 k(r_{\langle\rangle}(\mathcal R_{\exists_x\psi}P_0P_1))=k(\psi(b))\leq\max\{k(\exists_x\psi),|b|\}\leq k_{\langle\rangle}(P_0).
\end{equation*}
Now that we know $k_{\langle\rangle}(\mathcal R_{\exists_x\psi}P_0P_1)\leq\max\{k_{\langle\rangle}(P_0),k_{\langle\rangle}(P_1)\}$ condition (H1) is easily established: From the induction hypothesis
\begin{equation*}
 k_{\langle\rangle}(P_i)\in\mathcal H_{P_i}^\vartheta\subseteq\mathcal H_{\mathcal R_{\exists_x\psi}P_0P_1}^\vartheta
\end{equation*}
we can infer
\begin{equation*}
 k_{\langle\rangle}(\mathcal R_{\exists_x\psi}P_0P_1)\in\mathcal H_{\mathcal R_{\exists_x\psi}P_0P_1}^\vartheta.
\end{equation*}
To verify condition (H3) we need to distinguish two cases: Assume first that we have $r_{\langle\rangle}(P_0)=(\exists_x,b,\psi)$ and $a=1$. Then $n(\mathcal R_{\exists_x\psi}P_0P_1,a)$ was defined to be $\mathcal I_{\forall_x\neg\psi,b}P_1$. As observed above this implies $\max\{k(\forall_x\neg\psi),|b|\}\leq k_{\langle\rangle}(P_0)$. Thus we get
\begin{equation*}
 h_1(\mathcal I_{\forall_x\neg\psi,b}P_1)=\max\{h_1(P_1),k(\forall_x\neg\psi),|b|\}\leq\max\{h_1(P_1),k_{\langle\rangle}(P_0)\}.
\end{equation*}
Now $h_1(P_1)\in\mathcal H_{P_1}^\vartheta$ and $k_{\langle\rangle}(P_0)\in\mathcal H_{P_0}^\vartheta$ (condition (H1) for $P_0$) imply
\begin{equation*}
 h_1(n(\mathcal R_{\exists_x\psi}P_0P_1,a))=h_1(\mathcal I_{\forall_x\neg\psi,b}P_1)\in\mathcal H_{\mathcal R_{\exists_x\psi}P_0P_1}^\vartheta\subseteq\mathcal H_{\mathcal R_{\exists_x\psi}P_0P_1}^\vartheta[|a|],
\end{equation*}
as required for (H3). Using condition (H1) for $P_1$ we also get
\begin{equation*}
 o_{\langle\rangle}(n(\mathcal R_{\exists_x\psi}P_0P_1,a))=o_{\langle\rangle}(\mathcal I_{\forall_x\neg\psi,b}P_1)=o_{\langle\rangle}(P_1)\in\mathcal H_{P_1}^\vartheta\subseteq\mathcal H_{\mathcal R_{\exists_x\psi}P_0P_1}^\vartheta[|a|].
\end{equation*}
It remains to consider the case $n(\mathcal R_{\exists_x\psi}P_0P_1,a)=\mathcal R_{\exists_x\psi}n(P_0,a)P_1$. Here we have
\begin{equation*}
 h_1(n(\mathcal R_{\exists_x\psi}P_0P_1,a))=\max\{h_1(n(P_0,a)),h_1(P_1)\}.
\end{equation*}
We have $h_1(P_1)\in\mathcal H_{P_1}^\vartheta\subseteq\mathcal H_{\mathcal R_{\exists_x\psi}P_0P_1}^\vartheta[|a|]$ and condition (H3) for $P_0$ yields
\begin{equation*}
 h_1(n(P_0,a))\in\mathcal H_{P_0}^\vartheta[|a|]\subseteq\mathcal H_{\mathcal R_{\exists_x\psi}P_0P_1}^\vartheta[|a|].
\end{equation*}
Finally, we have $o_{\langle\rangle}(P_1)\in\mathcal H_{P_1}^\vartheta$ and $o_{\langle\rangle}(n(P_0,a))\in\mathcal H_{P_0}^\vartheta[|a|]$ from (H1) for $P_1$ resp.\ (H3) for $P_0$. This implies
\begin{multline*}
 o_{\langle\rangle}(n(\mathcal R_{\exists_x\psi}P_0P_1,a))=o_{\langle\rangle}(\mathcal R_{\exists_x\psi}n(P_0,a)P_1)=\\
=o_{\langle\rangle}(P_1)+o_{\langle\rangle}(n(P_0,a))\in\mathcal H_{\mathcal R_{\exists_x\psi}P_0P_1}^\vartheta[|a|],
\end{multline*}
as required by condition (H3) for $\mathcal R_{\exists_x\psi}P_0P_1$.

The last remaining case is that of a term $\mathcal EP$: As announced in the statement of the lemma we set
\begin{equation*}
 h_1(\mathcal EP):=h_1(P).
\end{equation*}
Observing $(\Omega^s)^*\leq s^*$ it is easy to see
\begin{equation*}
 k_{\langle\rangle}(\mathcal EP)\leq k_{\langle\rangle}(P).
\end{equation*}
Thus condition (H1) for $P$ implies
\begin{equation*}
 k_{\langle\rangle}(\mathcal EP)\in\mathcal H_P^\vartheta=\mathcal H_{\mathcal EP}^\vartheta,
\end{equation*}
as required by condition (H1) for $\mathcal EP$. Concerning (H3), let us first assume that $r_{\langle\rangle}(P)$ is not a cut, or a cut over a formula of height at most one. Then we have $n(\mathcal EP,a)=\mathcal En(P,a)$. Then condition (H3) for $P$ yields
\begin{equation*}
 h_1(n(\mathcal EP,a))=h_1(\mathcal En(P,a))=h_1(n(P,a))\in\mathcal H_P^\vartheta[|a|]=\mathcal H_{\mathcal EP}^\vartheta[|a|],
\end{equation*}
as required by condition (H3) for $\mathcal EP$. Also, (H3) for $P$ gives $o_{\langle\rangle}(n(P,a))\in\mathcal H_P^\vartheta[|a|]$. Thus we get
\begin{equation*}
 o_{\langle\rangle}(n(\mathcal EP,a))=\Omega^{o_{\langle\rangle}(n(P,a))}\in\mathcal H_{\mathcal EP}^\vartheta[|a|].
\end{equation*}
Next, consider the case $r_{\langle\rangle}(P)=(\cut,\exists_x\psi)$ with $\hth(\exists_x\psi)>1$. Here we have
\begin{equation*}
 n(\mathcal EP,a)=\mathcal R_{\exists_x\psi}(\mathcal En(P,0))(\mathcal En(P,1)).
\end{equation*}
Condition (H3) for $P$ gives
\begin{equation*}
 h_1(n(P,i))\in\mathcal H_P^\vartheta[|i|]=\mathcal H_{\mathcal EP}^\vartheta.
\end{equation*}
Thus we get
\begin{equation*}
 h_1(n(\mathcal EP,a))=\max\{h_1(n(P,0)),h_1(n(P,1))\}\in\mathcal H_{\mathcal EP}^\vartheta[|a|].
\end{equation*}
Also, $o_{\langle\rangle}(n(P,i))\in\mathcal H_P^\vartheta[|i|]=\mathcal H_{\mathcal EP}^\vartheta$ implies
\begin{equation*}
 o_{\langle\rangle}(n(\mathcal EP,a))=\Omega^{o_{\langle\rangle}(n(P,1))}+\Omega^{o_{\langle\rangle}(n(P,0))}\in\mathcal H_{\mathcal EP}^\vartheta[|a|],
\end{equation*}
as required by condition (H3) for $\mathcal EP$. The remaining cases (cut-formulas of the forms $\forall_x\psi$, $\psi_0\lor\psi_1$ and~$\psi_0\land\psi_1$) are similar and left to the reader.
\end{proof}

In particular we obtain operator control for the preproof $[\mathcal E^CP_\alpha^u\langle\rangle]$ from Proposition~\ref{prop:without-cuts}. Recall that this proof had empty end-sequent and cut rank $2$ (but height above $\Omega$).

\begin{corollary}
 Assume $\vartheta:\varepsilon(S_{\omega^\alpha}^u)\bh\alpha$. For any $C\in\omega$ the $\mathbf L_{\omega^\alpha}^u$-preproof $[\mathcal E^CP_\alpha^u\langle\rangle]$ is controlled by the operator $\mathcal H_\Omega^\vartheta$.
\end{corollary}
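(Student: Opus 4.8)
The plan is to observe that Corollary~\ref{cor:operator-code-to-proof}, though stated only for basic codes, is proved using nothing but the local correctness conditions (H1)--(H3) for the assignment $\langle h_0,h_1\rangle$, together with Lemma~\ref{lem:operators-closure} and Proposition~\ref{prop:operators-special}; the induction along initial segments of $\sigma$ carried out there never refers to the code being basic. By Lemma~\ref{lem:operators-cut-elimination} the assignment $\langle h_0,h_1\rangle$ extends to all $\mathbf L_{\omega^\alpha}^u$-codes of the cut-elimination calculus while still satisfying (H1)--(H3) --- this is precisely why (H3) was formulated with the a priori redundant conjunct $o_{\langle\rangle}(n(P,a))\in\mathcal H_P^\vartheta[|a|]$, which the induction needs in order to proceed. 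Hence the argument for Corollary~\ref{cor:operator-code-to-proof} upgrades verbatim to the statement: for every $\mathbf L_{\omega^\alpha}^u$-code $P$ of the extended system, the preproof $[P]$ is controlled by $\mathcal H_P^\vartheta=\mathcal H_{h_0(P)}^\vartheta[h_1(P)]$. So the first step I would carry out is to record this generalization, either as an explicit lemma or simply by invoking the earlier proof with the remark that it is generic in the code.

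The second step is a short computation. Applying the generalization to $P:=\mathcal E^CP_\alpha^u\langle\rangle$, it suffices to check $\mathcal H_P^\vartheta=\mathcal H_\Omega^\vartheta$. By Definition~\ref{def:operators-basic-proofs} we have $h_0(P_\alpha^u\langle\rangle)=\Omega$ and $h_1(P_\alpha^u\langle\rangle)=|\langle\rangle|=0$; iterating the clauses $h_0(\mathcal EP)=h_0(P)$ and $h_1(\mathcal EP)=h_1(P)$ from Lemma~\ref{lem:operators-cut-elimination} exactly $C$ times gives $h_0(\mathcal E^CP_\alpha^u\langle\rangle)=\Omega$ and $h_1(\mathcal E^CP_\alpha^u\langle\rangle)=0$. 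Finally $\mathcal H_\Omega^\vartheta[0]=\mathcal H_\Omega^\vartheta$, since $C_0^\vartheta(\Omega+1,\{0\})=\{0\}=C_0^\vartheta(\Omega+1,\emptyset)$, so the two $C$-hierarchies coincide from the ground up. This yields $\mathcal H_P^\vartheta=\mathcal H_\Omega^\vartheta$ and, via the generalized corollary, the claim.

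I do not anticipate a genuine obstacle here: the only point requiring care is the first step, namely confirming that the proof of Corollary~\ref{cor:operator-code-to-proof} is truly generic in the code and that Lemma~\ref{lem:operators-cut-elimination} really does propagate all three conditions (H1)--(H3) through each of the operators $\mathcal I_{\forall_x\psi,b}$, $\mathcal I_{\psi_0\land\psi_1,i}$, $\mathcal R_{\exists_x\psi}$, $\mathcal R_{\psi_0\lor\psi_1}$ and $\mathcal E$ --- but this has already been verified in the statement and proof of that lemma. The remaining evaluation of $h_0$ and $h_1$ on a tower of $\mathcal E$'s is immediate.
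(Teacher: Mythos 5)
Your proposal is correct and follows essentially the same route as the paper: the paper likewise computes $h_0(\mathcal E^CP_\alpha^u\langle\rangle)=\Omega$ and $h_1(\mathcal E^CP_\alpha^u\langle\rangle)=0$ via Lemma~\ref{lem:operators-cut-elimination} and Definition~\ref{def:operators-basic-proofs}, and then invokes (the proof of) Corollary~\ref{cor:operator-code-to-proof}, whose argument is generic in the code and depends only on conditions (H1)--(H3), exactly as you observe in your first step.
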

\begin{proof}
 The previous lemma and Definition~\ref{def:operators-basic-proofs} give $h_0(\mathcal E^CP_\alpha^u\langle\rangle)=h_0(P_\alpha^u\langle\rangle)=\Omega$ and $h_1(\mathcal E^CP_\alpha^u\langle\rangle)=h_1(P_\alpha^u\langle\rangle)=|\langle\rangle|=0$. By Corollary~\ref{cor:operator-code-to-proof} this means that $[\mathcal E^CP_\alpha^u\langle\rangle]$ is controlled by the operator $\mathcal H_\Omega^\vartheta[0]=\mathcal H_\Omega^\vartheta$.
\end{proof}

In the first part of this section we have defined the notion of operator controlled $\mathbf L_{\omega^\alpha}^u$-preproof. We have seen how this notion can be formulated in terms of \mbox{$\mathbf L_{\omega^\alpha}^u$-codes}. Also, we have established operator control for all $\mathbf L_{\omega^\alpha}^u$-codes constructed so far. Building on this, we can now move towards the collapsing procedure for proofs. Let us introduce some terminology: Consider an $\mathbf L_{\omega^\alpha}^u$-formula $\varphi$ and an ordinal $\beta<\omega^\alpha$. We write $\varphi^\beta$ for the formula that results from $\varphi$ when we replace each unbounded quantifier $\forall_x\dots$ resp.~$\exists_x\dots$ by the bounded quantifier $\forall_{x\in\mathbb L_\beta^u}\dots$ resp.~$\exists_{x\in\mathbb L_\beta^u}\dots$. In view of $|\mathbb L_\beta^u|=\beta$ we can conceive $\mathbb L_\beta^u$ as an element of $\mathbf L_{\omega^\alpha}^u$, and $\varphi^\beta$ as an $\mathbf L_{\omega^\alpha}^u$-formula. Note that we have
\begin{equation*}
 k(\varphi^\beta)\leq\max\{k(\varphi),|\mathbb L_\beta^u|\}=\max\{k(\varphi),\beta\}.
\end{equation*}
A $\Sigma(\mathbf L_{\omega^\alpha}^u)$-formula is an $\mathbf L_{\omega^\alpha}^u$-formula which contains no unbounded universal quantifier. A $\Pi_1(\mathbf L_{\omega^\alpha}^u)$-formula is an $\mathbf L_{\omega^\alpha}^u$-formula of the form $\forall_x\theta$ where $\theta$ is bounded. The following boundedness result is a final preparation for collapsing:

\begin{lemma}\label{lem:boundedness}
 Assume $\vartheta:\varepsilon(S_{\omega^\alpha}^u)\bh\alpha$. We can extend the system of $\mathbf L_{\omega^\alpha}^u$-codes in the following way:
\begin{enumerate}[label=(\alph*)]
 \item For each $\Sigma(\mathbf L_{\omega^\alpha}^u)$-formula $\varphi$ and each ordinal $\beta<\omega^\alpha$ we add a unary function symbol $\mathcal B_{\exists,\varphi}^\beta$ such that we have
\begin{align*}
 l_{\langle\rangle}(\mathcal B_{\exists,\varphi}^\beta P)&=(l_{\langle\rangle}(P)\backslash\{\varphi\})\cup\{\varphi^\beta\},\\
 o_{\langle\rangle}(\mathcal B_{\exists,\varphi}^\beta P)&=o_{\langle\rangle}(P),\\
 d(\mathcal B_{\exists,\varphi}^\beta P)&=d(P),\\
 h_0(\mathcal B_{\exists,\varphi}^\beta P)&=h_0(P),\\
 h_1(\mathcal B_{\exists,\varphi}^\beta P)&=\max\{h_1(P),k(\varphi^\beta)\}
\end{align*}
for any $\mathbf L_{\omega^\alpha}^u$-code $P$ with $o_{\langle\rangle}(P)\leq\beta$.
 \item For each unbounded $\Pi_1(\mathbf L_{\omega^\alpha}^u)$-formula $\psi$ and each ordinal $\beta<\omega^\alpha$ we add a unary function symbol $\mathcal B_{\forall,\psi}^\beta$ such that we have
\begin{align*}
 l_{\langle\rangle}(\mathcal B_{\forall,\psi}^\beta P)&=(l_{\langle\rangle}(P)\backslash\{\psi\})\cup\{\psi^\beta\},\\
 o_{\langle\rangle}(\mathcal B_{\forall,\psi}^\beta P)&=o_{\langle\rangle}(P),\\
 d(\mathcal B_{\forall,\psi}^\beta P)&=d(P),\\
 h_0(\mathcal B_{\forall,\psi}^\beta P)&=h_0(P),\\
 h_1(\mathcal B_{\forall,\psi}^\beta P)&=\max\{h_1(P),k(\psi^\beta)\}
\end{align*}
for any $\mathbf L_{\omega^\alpha}^u$-code $P$ (without any restriction on $o_{\langle\rangle}(P)$).
\end{enumerate}
\end{lemma}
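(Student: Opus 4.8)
The plan is to treat this exactly like the earlier extensions of the code calculus, namely Lemma~\ref{lem:inversion} and Lemma~\ref{lem:cut-reduction} for the combinatorial part and Lemma~\ref{lem:operators-cut-elimination} for the operator book-keeping. Concretely I would extend the primitive recursive functions $l_{\langle\rangle},r_{\langle\rangle},o_{\langle\rangle},n,d,h_0,h_1$ to all codes $\mathcal B_{\exists,\varphi}^\beta P$ and $\mathcal B_{\forall,\psi}^\beta P$ by recursion on the length of the code, with a case distinction on $r_{\langle\rangle}(P)$; for a code $\mathcal B_{\exists,\varphi}^\beta P$ violating the precondition $o_{\langle\rangle}(P)\le\beta$ one simply declares the operator to return a trivial axiom code (labelled $\langle 0=0\rangle$, $\ax$, $0$), which keeps the defining functions total and primitive recursive. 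Once the clauses are in place, I would verify conditions~(L), (C1), (C2) (from Lemmas~\ref{lem:local-correctness-codes} and~\ref{lem:cut-codes-correct}) and (H1), (H2), (H3) (from Lemma~\ref{lem:locally-correct-codes}) for the new codes by a single induction on the length of codes, basic codes being the base case and each inductive step adding one layer $\mathcal B_{\exists,\varphi}^\beta$ or $\mathcal B_{\forall,\psi}^\beta$. By the proofs of Corollaries~\ref{cor:correct-proofs-from-codes}, \ref{cor:cut-rank-codes} and~\ref{cor:operator-code-to-proof} this is enough: the interpretations $[\mathcal B_{\exists,\varphi}^\beta P]$ and $[\mathcal B_{\forall,\psi}^\beta P]$ are then automatically locally correct $\mathbf L_{\omega^\alpha}^u$-preproofs of cut rank $d(P)$, controlled by $\mathcal H_P^\vartheta$.

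The intended effect is the familiar boundedness move. On a rule that does not act on $\varphi$ the operator $\mathcal B_{\exists,\varphi}^\beta$ just recurses into the premises (as $\mathcal I_{\forall_x\psi,b}$ does in Lemma~\ref{lem:inversion}), replacing each occurrence of the $\Sigma(\mathbf L_{\omega^\alpha}^u)$-formula $\varphi$ by $\varphi^\beta$ throughout --- note that $\varphi^\beta$ is $\Delta_0$, since $\varphi$ has no unbounded universal quantifier. When $\varphi$ (or, in the course of the recursion, a subformula of $\varphi$) is decomposed, one mimics the decomposition on $\varphi^\beta$; the only subtle sub-case is the decomposition of an \emph{unbounded} existential quantifier in $\varphi$ by a rule $(\exists_x,b,\chi)$: here condition~(L) for $P$ gives $|b|<o_{\langle\rangle}(P)$, and the precondition $o_{\langle\rangle}(P)\le\beta$ then forces $b\in\mathbb L_{|b|+1}^u\subseteq\mathbb L_\beta^u$, so that ``$b\in\mathbb L_\beta^u$'' is a true $\Delta_0$-formula and $b$ is a legitimate witness for the now-bounded quantifier of $\varphi^\beta$. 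Two observations make the remaining cases painless: first, when the precondition holds $r_{\langle\rangle}(P)$ cannot be a reflection rule, since those require $\Omega\le o_{\langle\rangle}(P)$ whereas $o_{\langle\rangle}(P)\le\beta<\Omega$ under the identification of Lemma~\ref{lem:embed-alpha-s-alpha}; second, a cut formula is never required to appear in the displayed sequent, so a cut $(\cut,\varphi)$ over $\varphi$ causes no problem --- one keeps the rule and recurses into both premises. For part~(b), $\mathcal B_{\forall,\psi}^\beta$ bounds the unbounded $\Pi_1(\mathbf L_{\omega^\alpha}^u)$-formula $\psi\equiv\forall_x\theta$ to $\psi^\beta=\forall_{x\in\mathbb L_\beta^u}\theta$; there is no height restriction, because bounding a universal only weakens the end-sequent. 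At the rule $(\forall_x,\theta)$ introducing $\psi$ one splits the premise in direction $a$ on whether $|a|<\beta$: if so, the transformed premise, tagged with a $\lor$-inference for $a\notin\mathbb L_\beta^u\lor\theta(a)$, is kept; if not, ``$a\notin\mathbb L_\beta^u$'' is a true $\Delta_0$-formula and one inserts an $\ax$-leaf below a $\lor$-inference. This is Sch\"utte's resp.\ Buchholz's usual argument, transcribed into the code calculus.

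The main obstacle is organizational rather than conceptual: keeping the ordinal label exactly equal, $o_{\langle\rangle}(\mathcal B_{\exists,\varphi}^\beta P)=o_{\langle\rangle}(P)$, while the unfolding of a bounded quantifier $\exists_{x\in\mathbb L_\beta^u}(\cdots)\equiv\exists_x(x\in\mathbb L_\beta^u\land\cdots)$ resp.\ $\forall_{x\in\mathbb L_\beta^u}(\cdots)\equiv\forall_x(x\notin\mathbb L_\beta^u\lor\cdots)$ unavoidably inserts a bounded number of auxiliary $\land$, $\lor$ and $\ax$ inferences below the principal node. These fit under the original label because an $(\exists_x,\dots)$ or $(\forall_x,\dots)$ inference tolerates a gap of $\omega$ to its premises and $1+\omega=\omega$; formally this costs a couple of auxiliary code symbols (for the building-block inferences) together with the corresponding recursion clauses, in the spirit of the previous section. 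In parallel one must track $h_1$: the definition has it grow by at most $k(\varphi^\beta)$ resp.\ $k(\psi^\beta)$, and since $k(\varphi^\beta)\le\max\{k(\varphi),\beta\}$ is bounded by the value recorded in the statement, the closure and monotonicity properties of the operators $\mathcal H_t^\vartheta[\dots]$ (Lemmas~\ref{lem:operators-closure}, \ref{lem:operators-max-counts}, \ref{lem:operators-nice}) together with $o_{\langle\rangle}(n(P,a))\in\mathcal H_P^\vartheta[|a|]$ from (H3) for $P$ let conditions~(H1)--(H3) propagate just as in Lemma~\ref{lem:operators-cut-elimination}. All the remaining cases (rules not acting on $\varphi$ resp.\ $\psi$, and conditions (C1), (C2)) are verbatim analogues of cases already handled in Lemmas~\ref{lem:inversion}, \ref{lem:cut-codes-correct} and~\ref{lem:operators-cut-elimination}, and may safely be left to the reader.
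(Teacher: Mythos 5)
Your proposal is correct and follows essentially the same route as the paper's proof: the same recursion on code length with a case distinction on $r_{\langle\rangle}(P)$, a default treatment of the unintended case, auxiliary axiom/conjunction/disjunction building-block codes, the use of $|b|<o_{\langle\rangle}(P)\leq\beta$ to legitimise the witness of the unbounded existential, the $1+\omega=\omega$ absorption for the ordinal labels, the exclusion of the reflection rule under the height restriction, and the inductive verification of (L), (C1,C2), (H1--H3) followed by an appeal to the corollaries on interpretations of codes. The only real deviation is in part (b), where you split on $|a|<\beta$ and insert an $\ax$-leaf when $|a|\geq\beta$; the paper avoids any split by deriving $a\notin\mathbb L_\beta^u\lor\theta(a)$ from the transformed premise via a $\lor_1$-inference for \emph{every} $a$, which is slightly simpler and also sidesteps the edge case $\beta=0$, where $|a|\geq\beta$ does not imply $a\notin\mathbb L_\beta^u$ (elements of $u=\mathbb L_0^u$ have rank $0$).
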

\begin{proof}
 (a) Let us first say what happens in the unintended case~$\beta<o_{\langle\rangle}(P)$: There we stipulate that $\mathcal B_{\exists_y\theta}^\beta P$ behaves as $P$, i.e.~we set
\begin{align*}
 l_{\langle\rangle}(\mathcal B_{\exists_y\theta}^\beta P)&=l_{\langle\rangle}(P), & o_{\langle\rangle}(\mathcal B_{\exists_y\theta}^\beta P)&=o_{\langle\rangle}(P), & r_{\langle\rangle}(\mathcal B_{\exists_y\theta}^\beta P)&=r_{\langle\rangle}(P),\\
 n(\mathcal B_{\exists_y\theta}^\beta P,a)&=n(P,a), & d(\mathcal B_{\exists_y\theta}^\beta P)&=d(P), & h_i(\mathcal B_{\exists_y\theta}^\beta P)&=h_i(P).
\end{align*}
Then conditions (L), (C1,C2) and (H1-H3) for $\mathcal B_{\exists_y\theta}^\beta P$ immediately follow from the same conditions for $P$. In the intended case $o_{\langle\rangle}(P)\leq\beta$ it remains to extend the functions $r_{\langle\rangle}$ and $n$ to terms of the form $\mathcal B_{\exists_y\theta}^\beta P$. We do this by case distinction on the rule $r_{\langle\rangle}(P)$, checking local correctness as we go along:

\emph{Case $r_{\langle\rangle}(P)=(\exists_x,b,\theta)$ with $\exists_x\theta\equiv\varphi$ and the outer quantifier unbounded:} Intuitively, $P$ deduces $\exists_x\theta$ from $\theta(b)$. Using the induction hypothesis we will be able to obtain $\theta(b)^\beta$. Also, condition (L) for $P$ gives $|b|<o_{\langle\rangle}(P)\leq\beta$. Thus the bounded formula $b\in\mathbb L_\beta^u$ is true, and hence an axiom. We can introduce the conjunction $b\in\mathbb L_\beta^u\land\theta(b)^\beta$ and then the existential statement $\exists_{x\in\mathbb L_\beta^u}\theta^\beta\equiv\varphi^\beta$. To cast this in terms of codes we define a new constant $\mathbf L_{\omega^\alpha}^u$-code $\ax_\theta$ for each true bounded $\mathbf L_{\omega^\alpha}^u$-formula $\theta$, setting
\begin{align*}
 l_{\langle\rangle}(\ax_\theta)&=\langle\theta\rangle, & o_{\langle\rangle}(\ax_\theta)&=0, & r_{\langle\rangle}(\ax_\theta)&=\ax,\\
 n(\ax_\theta,a)&=\ax_\theta, & d(\ax_\theta)&=0, & \langle h_0(\ax_\theta),h_1(\ax_\theta)\rangle&=\langle\Omega,k(\theta)\rangle.
\end{align*}
Conditions (L), (C1,C2) and (H1-H3) are easily checked. Also, we introduce a binary function symbol $\bigwedge_{\psi_0,\psi_1}$ on $\mathbf L_{\omega^\alpha}^u$-codes, for all $\mathbf L_{\omega^\alpha}^u$-formulas~$\psi_0,\psi_1$. This will serve to introduce conjunctions:
\begingroup\allowdisplaybreaks
\begin{align*}
 l_{\langle\rangle}(\textstyle\bigwedge_{\psi_0,\psi_1}P_0P_1)&=l_{\langle\rangle}(P_0)\backslash\{\psi_0\}\cup l_{\langle\rangle}(P_1)\backslash\{\psi_1\}\cup\{\psi_0\land\psi_1\},\\
 o_{\langle\rangle}(\textstyle\bigwedge_{\psi_0,\psi_1}P_0P_1)&=\max\{o_{\langle\rangle}(P_0),o_{\langle\rangle}(P_1)\}+1,\\
 r_{\langle\rangle}(\textstyle\bigwedge_{\psi_0,\psi_1}P_0P_1)&=(\land,\psi_0,\psi_1),\\
 n(\textstyle\bigwedge_{\psi_0,\psi_1}P_0P_1,a)&=\begin{cases}
                                          P_0\quad&\text{if $a=0$},\\
                                          P_1\quad&\text{otherwise},
                                       \end{cases}\\
 d(\textstyle\bigwedge_{\psi_0,\psi_1}P_0P_1)&=\max\{d(P_0),d(P_1)\},\\
 h_0(\textstyle\bigwedge_{\psi_0,\psi_1}P_0P_1)&=\max\{h_0(P_0),h_0(P_1)\},\\
 h_1(\textstyle\bigwedge_{\psi_0,\psi_1}P_0P_1)&=\max\{h_1(P_0),h_1(P_1),k(\psi_0\land\psi_1))\}.
\end{align*}\endgroup%
Again, it is straightforward to check conditions (L), (C1,C2) and (H1-H3). After these preparations we can address the code $\mathcal B_{\exists,\varphi}^\beta P$ itself: Set
\begin{align*}
 r_{\langle\rangle}(\mathcal B_{\exists,\varphi}^\beta P)&=(\exists_x,b,x\in\mathbb L_\beta^u\land\theta^\beta),\\
 n(\mathcal B_{\exists_x\theta}^\beta P,a)&=\textstyle\bigwedge_{b\in\mathbb L_\beta^u,\theta(b)^\beta}\ax_{b\in\mathbb L_\beta^u} (\mathcal B_{\exists,\theta(b)}^\beta\mathcal B_{\exists,\varphi}^\beta n(P,0)).
\end{align*}
We have already observed $b\in\mathbb L_\beta^u$, which ensures that $\ax_{b\in\mathbb L_\beta^u}$ is an $\mathbf L_{\omega^\alpha}^u$-code. Also note that $\theta(b)$ is a $\Sigma(\mathbf L_{\omega^\alpha}^u)$-formula, so that $\mathcal B_{\exists,\theta(b)}^\beta$ is one of our new function symbols. As a preparation for local correctness, observe that condition (L) for $P$ implies $o_{\langle\rangle}(\mathcal B_{\exists,\varphi}^\beta n(P,0))=o_{\langle\rangle}(n(P,0))\leq o_{\langle\rangle}(P)\leq\beta$. Thus the codes $\mathcal B_{\exists,\theta(b)}^\beta\mathcal B_{\exists,\varphi}^\beta n(P,0)$ and $\mathcal B_{\exists,\varphi}^\beta n(P,0)$ fall under the ``intended case''. Now condition (L) for $\mathcal B_{\exists,\varphi}^\beta P$ holds by
\begin{multline*}
 o_{\langle\rangle}(n(\mathcal B_{\exists,\varphi}^\beta P,0))+\omega=o_{\langle\rangle}(\mathcal B_{\exists,\theta(b)}^\beta\mathcal B_{\exists,\varphi}^\beta n(P,0))+1+\omega=\\
=o_{\langle\rangle}(n(P,0))+1+\omega=o_{\langle\rangle}(n(P,0))+\omega\leq o_{\langle\rangle}(P)=o(\mathcal B_{\exists,\varphi}^\beta P)
\end{multline*}
and
\begin{equation*}
 |b|<o_{\langle\rangle}(P)=o_{\langle\rangle}(\mathcal B_{\exists,\varphi}^\beta P),
\end{equation*}
as well as $\exists_{x\in\mathbb L_\beta^u}\theta^\beta\equiv\varphi^\beta\in l_{\langle\rangle}(\mathcal B_{\exists,\varphi}^\beta P)$ (by definition) and
\begingroup\allowdisplaybreaks
\begin{multline*}
 l_{\langle\rangle}(n(\mathcal B_{\exists,\varphi}^\beta P,0))=l_{\langle\rangle}(\textstyle\bigwedge_{b\in\mathbb L_\beta^u,\theta(b)^\beta}\ax_{b\in\mathbb L_\beta^u} (\mathcal B_{\exists,\theta(b)}^\beta\mathcal B_{\exists,\varphi}^\beta n(P,0)))=\\
= l_{\langle\rangle}(\ax_{b\in\mathbb L_\beta^u})\backslash\{b\in\mathbb L_\beta^u\}\cup l_{\langle\rangle}(\mathcal B_{\exists,\theta(b)}^\beta\mathcal B_{\exists,\varphi}^\beta n(P,0))\backslash\{\theta(b)^\beta\}\cup\{b\in\mathbb L_\beta^u\land\theta(b)^\beta\}\subseteq\\
\subseteq \left[l_{\langle\rangle}(n(P,0))\backslash\{\varphi,\theta(b)\}\cup\{\varphi^\beta,\theta(b)^\beta\}\right]\backslash\{\theta(b)^\beta\}\cup\{b\in\mathbb L_\beta^u\land\theta(b)^\beta\}\subseteq\\
\subseteq l_{\langle\rangle}(P)\backslash\{\varphi\}\cup\{\varphi^\beta,b\in\mathbb L_\beta^u\land\theta(b)^\beta\}=l_{\langle\rangle}(\mathcal B_{\exists,\varphi}^\beta P)\cup\{b\in\mathbb L_\beta^u\land\theta(b)^\beta\}.
\end{multline*}\endgroup%
Condition (C1) is void and condition (C2) follows from same condition for $P$. Concerning condition (H1), we have
\begin{equation*}
 k_{\langle\rangle}(\mathcal B_{\exists,\varphi}^\beta P)\leq\max\{k_{\langle\rangle}(P),k(\varphi^\beta)\}\leq\max\{k_{\langle\rangle}(P),h_1(\mathcal B_{\exists,\varphi}^\beta P)\}.
\end{equation*}
By (H1) for $P$ we have $k_{\langle\rangle}(P)\in\mathcal H_P^\varphi\subseteq\mathcal H_{\mathcal B_{\exists,\varphi}^\beta P}^\vartheta$. Also, $h_1(\mathcal B_{\exists,\varphi}^\beta P)\in\mathcal H_{\mathcal B_{\exists,\varphi}^\beta P}^\vartheta$ follows from the definition of our operators. Together we obtain
\begin{equation*}
 k_{\langle\rangle}(\mathcal B_{\exists,\varphi}^\beta P)\in\mathcal H_{\mathcal B_{\exists,\varphi}^\beta P}^\vartheta,
\end{equation*}
as required by condition (H1) for $\mathcal B_{\exists,\varphi}^\beta P$. Condition (H2) is easily reduced to the same condition for $P$. Finally, let us verify condition (H3): From condition (L) for $P$ we know that $\varphi$ occurs in $l_{\langle\rangle}(P)$, so that we have $k(\theta)=k(\varphi)\leq k_{\langle\rangle}(P)$. Together with $|b|\leq k(r_{\langle\rangle}(P))\leq k_{\langle\rangle}(P)$ we get
\begin{multline*}
 h_1(n(\mathcal B_{\exists,\varphi}^\beta P,0))=h_1(\textstyle\bigwedge_{b\in\mathbb L_\beta^u,\theta(b)^\beta}\ax_{b\in\mathbb L_\beta^u} (\mathcal B_{\exists,\theta(b)}^\beta\mathcal B_{\exists,\varphi}^\beta n(P,0)))\leq\\
\leq\max\{h_1(\mathcal B_{\exists,\theta(b)}^\beta\mathcal B_{\exists,\varphi}^\beta n(P,0)),k(b\in\mathbb L_\beta^u\land\theta(b)^\beta)\}\leq\\
\leq\max\{h_1(n(P,0)),k(\varphi),|b|\}\leq\max\{h_1(n(P,0)),h_1(\mathcal B_{\exists,\varphi}^\beta P),k_{\langle\rangle}(P)\}.
\end{multline*}
Now $h_1(n(P,0))\in\mathcal H_P^\vartheta[|0|]\subseteq\mathcal H_{\mathcal B_{\exists,\varphi}^\beta P}^\vartheta[|0|]$ holds by (H3) for $P$; by definition of our operators we have $h_1(\mathcal B_{\exists,\varphi}^\beta P)\in\mathcal H_{\mathcal B_{\exists,\varphi}^\beta P}^\vartheta[|0|]$; and 
$k_{\langle\rangle}(P)\in\mathcal H_P^\vartheta\subseteq\mathcal H_{\mathcal B_{\exists,\varphi}^\beta P}^\vartheta[|0|]$ is due to (H1) for $P$. Together this gives
\begin{equation*}
 h_1(n(\mathcal B_{\exists,\varphi}^\beta P,0))\in\mathcal H_{\mathcal B_{\exists,\varphi}^\beta P}^\vartheta[|0|],
\end{equation*}
as required as required by condition (H3) for $\mathcal B_{\exists,\varphi}^\beta P$. Still concerning condition~(H3), we have
\begin{equation*}
 o_{\langle\rangle}(n(\mathcal B_{\exists,\varphi}^\beta P,0))=o_{\langle\rangle}(\mathcal B_{\exists,\theta(b)}^\beta\mathcal B_{\exists,\varphi}^\beta n(P,0))+1=o_{\langle\rangle}(n(P,0))+1.
\end{equation*}
Condition (H3) for $P$ provides $o_{\langle\rangle}(n(P,0))\in\mathcal H_P^\vartheta[|0|]\subseteq\mathcal H_{\mathcal B_{\exists,\varphi}^\beta P}^\vartheta[|0|]$, which implies
\begin{equation*}
 o_{\langle\rangle}(n(\mathcal B_{\exists,\varphi}^\beta P,0))\in\mathcal H_{\mathcal B_{\exists,\varphi}^\beta P}^\vartheta[|0|],
\end{equation*}
as required by condition (H3) for $\mathcal B_{\exists,\varphi}^\beta P$.

\emph{Case $r_{\langle\rangle}(P)=(\exists_x,b,\theta)$ with $\exists_x\theta\equiv\varphi$ and the outer quantifier bounded:} Note that we now have $\exists_x\theta^\beta\equiv\varphi^\beta$. We set
\begin{align*}
 r_{\langle\rangle}(\mathcal B_{\exists,\varphi}^\beta P)&=(\exists_x,b,\theta^\beta),\\
 n(\mathcal B_{\exists,\varphi}^\beta P,a)&=\mathcal B_{\exists,\theta(b)}^\beta\mathcal B_{\exists,\varphi}^\beta n(P,0).
\end{align*}
The verification of local correctness is easier than in the previous case.

\emph{Case $r_{\langle\rangle}(P)=(\exists_x,b,\theta)$ with $\exists_x\theta\not\equiv\varphi$:} Here we put
\begin{align*}
 r_{\langle\rangle}(\mathcal B_{\exists,\varphi}^\beta P)&=r_{\langle\rangle}(P),\\
 n(\mathcal B_{\exists,\varphi}^\beta P,a)&=\mathcal B_{\exists,\varphi}^\beta n(P,a).
\end{align*}
The verification of local correctness is easier than in the previous cases.

The rule $r_{\langle\rangle}(P)=(\land,\psi_0,\psi_1)$ is treated similarly, distinguishing the two cases $\varphi\equiv\psi_0\land\psi_1$ and $\varphi\not\equiv\psi_0\land\psi_1$. The same applies to the rules $r_{\langle\rangle}(P)=(\lor_i,\psi_0,\psi_1)$ and $r_{\langle\rangle}(P)=(\forall_x,\psi)$ (if $\forall_x\psi\equiv\varphi$ then the outer quantifier must be bounded, which implies $\forall_x(\psi^\beta)\equiv\varphi^\beta$). Let us look at the remaining cases:

\emph{Case $r_{\langle\rangle}(P)=\ax$:} We set
\begin{align*}
 r_{\langle\rangle}(\mathcal B_{\exists,\varphi}^\beta P)&=\ax,\\
 n(\mathcal B_{\exists,\varphi}^\beta P,a)&=\mathcal B_{\exists,\varphi}^\beta n(P,a).
\end{align*}
By condition (L) for $P$ the sequent $l_{\langle\rangle}(P)$ contains a true bounded formula. The same formula is still contained in $l_{\langle\rangle}(\mathcal B_{\exists,\varphi}^\beta P)=(l_{\langle\rangle}(P)\backslash\{\varphi\})\cup\{\varphi^\beta\}$, as we have $\varphi^\beta\equiv\varphi$ if $\varphi$ is bounded. The remaining conditions are verified as above.

\emph{Case $r_{\langle\rangle}(P)=(\cut,\psi)$:} Set
\begin{align*}
 r_{\langle\rangle}(\mathcal B_{\exists,\varphi}^\beta P)&=(\cut,\psi),\\
 n(\mathcal B_{\exists,\varphi}^\beta P,a)&=\mathcal B_{\exists,\varphi}^\beta n(P,a).
\end{align*}
The verification of local correctness is straightforward.

\emph{Case $r_{\langle\rangle}(P)=(\rref,\exists_z\forall_{x\in a}\exists_{y\in z}\theta)$:} By condition (L) for $P$ this would require $\Omega\leq o_{\langle\rangle}(P)\leq\beta$, which is incompatible with the assumption $\beta<\omega^\alpha$.

\emph{Case $r_{\langle\rangle}(P)=(\rep,b)$:} Set
\begin{align*}
 r_{\langle\rangle}(\mathcal B_{\exists,\varphi}^\beta P)&=(\rep,b),\\
 n(\mathcal B_{\exists,\varphi}^\beta P,a)&=\mathcal B_{\exists,\varphi}^\beta n(P,a).
\end{align*}
The verification of local correctness is straightforward.

(b) The proof is similar to that of part (a), but the assumption that $\psi$ is an unbounded $\Pi_1(\mathbf L_{\omega^\alpha}^u)$-formula saves us some cases: For example it implies that $\psi$ cannot be of the form $\psi_0\land\psi_1$. We write out the only interesting case, leaving the other cases to the reader:

\emph{Case $r_{\langle\rangle}(P)=(\forall_x,\theta)$ with $\forall_x\theta\equiv\psi$ and the outer quantifier unbounded:} Intuitively, $P$ deduces $\forall_x\theta$ from the assumptions $\theta(a)$ for all $a\in\mathbf L_{\omega^\alpha}^u$. Introducing disjunctions we get $a\notin\mathbb L_\beta^u\lor\theta(a)$, from which we obtain $\forall_{x\in\mathbb L_\beta^u}\theta\equiv\psi$. To formulate this in terms of codes we introduce a unary function symbol $\bigvee_{\psi_0,\psi_1}^i$ for $i=0,1$ and arbitrary $\mathbf L_{\omega^\alpha}^u$-formulas $\psi_0,\psi_1$. This serves to introduce disjunctions:
\begingroup\allowdisplaybreaks
\begin{align*}
 l_{\langle\rangle}(\textstyle\bigvee_{\psi_0,\psi_1}^iP)&=l_{\langle\rangle}(P)\backslash\{\psi_i\}\cup\{\psi_0\lor\psi_1\},\\
 o_{\langle\rangle}(\textstyle\bigvee^i_{\psi_0,\psi_1}P)&=o_{\langle\rangle}(P)+1,\\
 r_{\langle\rangle}(\textstyle\bigvee^i_{\psi_0,\psi_1}P)&=(\lor_i,\psi_0,\psi_1),\\
 n(\textstyle\bigvee^i_{\psi_0,\psi_1}P,a)&=P,\\
 d(\textstyle\bigvee^i_{\psi_0,\psi_1}P)&=d(P),\\
 h_0(\textstyle\bigvee^i_{\psi_0,\psi_1}P)&=h_0(P),\\
 h_1(\textstyle\bigvee^i_{\psi_0,\psi_1}P)&=\max\{h_1(P),k(\psi_0\lor\psi_1)\}.
\end{align*}\endgroup%
It is straightforward to check local correctness. Now put
\begin{align*}
 r_{\langle\rangle}(\mathcal B_{\forall,\psi}^\beta P)&=(\forall_x,x\notin\mathbb L_\beta^u\lor\theta),\\
 n(\mathcal B_{\forall,\psi}^\beta P,a)&=\textstyle\bigvee_{a\notin\mathbb L_\beta^u,\theta(a)}^1\mathcal B_{\forall,\psi}^\beta n(P,a).
\end{align*}
Let us verify condition (L) for $\mathcal B_{\forall,\psi}^\beta P$: Concerning the ``ordinal'' labels we have
\begin{multline*}
 o_{\langle\rangle}(n(\mathcal B_{\forall,\psi}^\beta P,a))+\omega=o_{\langle\rangle}(\mathcal B_{\forall,\psi}^\beta n(P,a))+1+\omega=o_{\langle\rangle}(n(P,a))+1+\omega=\\
=o_{\langle\rangle}(n(P,a))+\omega\leq o_{\langle\rangle}(P)=o_{\langle\rangle}(\mathcal B_{\forall,\psi}^\beta P).
\end{multline*}
The formula $\forall_{x\in\mathbb L_\beta^u}\theta\equiv\varphi^\beta$ occurs in $l_{\langle\rangle}(\mathcal B_{\forall,\psi}^\beta P)$ by definition. We also have
\begingroup\allowdisplaybreaks
\begin{multline*}
 l_{\langle\rangle}(n(\mathcal B_{\forall,\psi}^\beta P,a))=l_{\langle\rangle}(\mathcal B_{\forall,\psi}^\beta n(P,a))\backslash\{\theta(a)\}\cup\{a\notin\mathbb L_\beta^u\lor\theta(a)\}\subseteq\\
\subseteq l_{\langle\rangle}(n(P,a))\backslash\{\varphi,\theta(a)\}\cup\{\varphi^\beta,a\notin\mathbb L_\beta^u\lor\theta(a)\}\subseteq\\
\subseteq (l_{\langle\rangle}(P)\cup\{\theta(a)\})\backslash\{\varphi,\theta(a)\}\cup\{\varphi^\beta,a\notin\mathbb L_\beta^u\lor\theta(a)\}\subseteq\\
\subseteq l_{\langle\rangle}(P)\backslash\{\varphi\}\cup\{\varphi^\beta,a\notin\mathbb L_\beta^u\lor\theta(a)\}=l_{\langle\rangle}(n(\mathcal B_{\forall,\psi}^\beta P)\cup\{a\notin\mathbb L_\beta^u\lor\theta(a)\},
\end{multline*}\endgroup
as required by condition (L) at the rule $(\forall_x,x\notin\mathbb L_\beta^u\lor\theta)$. Condition (C1) does not apply, and (C2) is easily reduced to the same condition for $P$. As for (H1), similar to part (a) one sees
\begin{equation*}
 k_{\langle\rangle}(\mathcal B_{\forall,\psi}^\beta P)\leq\max\{k_{\langle\rangle}(P),k(\psi_0\lor\psi_1)\}\leq\max\{k_{\langle\rangle}(P),h_1(\mathcal B_{\forall,\psi}^\beta P)\}.
\end{equation*}
By (H1) for $P$ we have $k_{\langle\rangle}(P)\in\mathcal H_P^\vartheta\subseteq\mathcal H_{\mathcal B_{\forall,\psi}^\beta P}^\vartheta$. Also, $h_1(\mathcal B_{\forall,\psi}^\beta P)\in\mathcal H_{\mathcal B_{\forall,\psi}^\beta P}^\vartheta$ holds by the definition of our operators. Together we obtain
\begin{equation*}
 k_{\langle\rangle}(\mathcal B_{\forall,\psi}^\beta P)\in\mathcal H_{\mathcal B_{\forall,\psi}^\beta P}^\vartheta,
\end{equation*}
as required by condition (H1) for $\mathcal B_{\forall,\psi}^\beta P$. Condition (H2) is easily reduced to the same condition for $P$. As for (H3), it is straightforward to see
\begin{equation*}
 h_1(n(\mathcal B_{\forall,\psi}^\beta P,a))\leq\max\{h_1(n(P,a)),k(\psi^\beta),|a|\}\leq\max\{h_1(n(P,a)),h_1(\mathcal B_{\forall,\psi}^\beta P),|a|\}.
\end{equation*}
Condition (H3) for $P$ gives $h_1(n(P,a))\in\mathcal H_P^\vartheta[|a|]\subseteq\mathcal H_{\mathcal B_{\forall,\psi}^\beta P}^\vartheta[|a|]$. Also, the definition of our operators yields $\{h_1(\mathcal B_{\forall,\psi}^\beta P),|a|\}\subseteq\mathcal H_{\mathcal B_{\forall,\psi}^\beta P}^\vartheta[|a|]$. Together we get
\begin{equation*}
 h_1(n(\mathcal B_{\forall,\psi}^\beta P,a))\in\mathcal H_{\mathcal B_{\forall,\psi}^\beta P}^\vartheta[|a|],
\end{equation*}
as required by condition (H3) for $\mathcal B_{\forall,\psi}^\beta P$. Still concerning (H3), we have
\begin{equation*}
 o_{\langle\rangle}(n(\mathcal B_{\forall,\psi}^\beta P,a))=o_{\langle\rangle}(n(P,a))+1.
\end{equation*}
Condition (H3) for $P$ gives $o_{\langle\rangle}(n(P,a))\in\mathcal H_P^\vartheta[|a|]\subseteq\mathcal H_{\mathcal B_{\forall,\psi}^\beta P}^\vartheta[|a|]$. This implies
\begin{equation*}
 o_{\langle\rangle}(n(\mathcal B_{\forall,\psi}^\beta P,a))\in\mathcal H_{\mathcal B_{\forall,\psi}^\beta P}^\vartheta[|a|],
\end{equation*}
as required by condition (H3) for $\mathcal B_{\forall,\psi}^\beta P$. 
\end{proof}

The last two sections have been a preparation of the following collapsing result:

\begin{theorem}\label{thm:collapsing}
 Assume $\vartheta:\varepsilon(S_{\omega^\alpha}^u)\bh\alpha$. We can extend the system of $\mathbf L_{\omega^\alpha}^u$-codes by a unary function symbol~$\mathcal C_t$ for each $t\in\varepsilon(S_{\omega^\alpha}^u)$ with $\Omega\leq t$, in such a way that we have
\begin{align*}
 l_{\langle\rangle}(\mathcal C_tP)&=l_{\langle\rangle}(P), & o_{\langle\rangle}(\mathcal C_tP)&=\vartheta(t+\Omega^{o_{\langle\rangle}(P)}),\\
 d(\mathcal C_tP)&=1, & \langle h_0(\mathcal C_tP),h_1(\mathcal C_tP)\rangle&=\langle t+\Omega^{o_{\langle\rangle}(P)},0\rangle
\end{align*}
whenever the following conditions are satisfied:
\begin{enumerate}[label=(\roman*)]
 \item $l_{\langle\rangle}(P)$ contains only $\Sigma(\mathbf L_{\omega^\alpha}^u)$-formulas and $d(P)\leq 2$,
 \item $h_0(P)\leq t$ and $\{t,h_1(P)\}\subseteq\mathcal H_t^\vartheta$.
\end{enumerate}
\end{theorem}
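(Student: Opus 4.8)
The plan is to define $\mathcal C_t$ by recursion on the length of the code $P$, as with all previous operators, distinguishing cases according to the last rule $r_{\langle\rangle}(P)$. This is the heart of the ordinal analysis of Kripke-Platek set theory (the ``collapsing'' or ``impredicative cut elimination'' step), carried out in the code formalism, so I would follow the standard pattern: for a ``small'' rule (one of $\ax$, $(\land,\dots)$, $(\lor_i,\dots)$, $(\exists_x,b,\psi)$, $(\cut,\psi)$ with $\hth(\psi)\le 1$, $(\rep,b)$) we push $\mathcal C_t$ inside, setting $n(\mathcal C_tP,a):=\mathcal C_{t'}n(P,a)$ for the appropriate $t'=t$ or $t'=t+\Omega^{o_{\langle\rangle}(n(P,a))}$-style argument, and keep the rule. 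For a universal rule $(\forall_x,\psi)$: since $l_{\langle\rangle}(P)$ is a $\Sigma$-sequent the formula $\forall_x\psi$ would be $\Pi_1$ and unbounded, contradicting condition (i) unless the quantifier is bounded — but a bounded $\forall$ is not a redex of this rule, so this case does not occur (or only with a bounded matrix, in which case we proceed as for a small rule). For a cut $(\cut,\psi)$ with $\hth(\psi)>1$: after cut elimination we have $d(P)\le 2$, so $\hth(\psi)=2$, which forces $\psi$ to be (up to negation) a $\Pi_1$ or $\Sigma_1$ formula. Here is the delicate point: using boundedness (Lemma~\ref{lem:boundedness}) we replace the unbounded cut by a bounded cut over $\psi^\beta$ with a well-chosen $\beta<\omega^\alpha$ — typically $\beta:=\vartheta(t+\Omega^{o_{\langle\rangle}(n(P,i))})$ or similar — so that the new cut-rank drops to $1$. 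The reflection rule $(\rref,\exists_z\forall_{x\in a}\exists_{y\in z}\theta)$ is the genuinely impredicative case and requires the full strength of the Bachmann-Howard collapse.

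The reflection case is the main obstacle, and I would handle it along these lines: suppose $r_{\langle\rangle}(P)=(\rref,\exists_z\forall_{x\in a}\exists_{y\in z}\theta)$, so $\Omega\le o_{\langle\rangle}(P)=:s$ and the premise $n(P,0)$ proves $\forall_{x\in a}\exists_y\theta$ (a $\Sigma$-formula) with height $<s$. Let $s_0:=o_{\langle\rangle}(n(P,0))$ and set $\gamma:=\vartheta(t+\Omega^{s_0})$; the idea is that $\mathbb L_\gamma^u$ reflects the $\Sigma$-statement, i.e.\ from a proof of $\forall_{x\in a}\exists_y\theta$ we can bound the witnesses below $\gamma$ and hence prove $\exists_z\forall_{x\in a}\exists_{y\in z}\theta$ using $z=\mathbb L_\gamma^u$. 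Concretely, $n(\mathcal C_tP,0)$ should be built by first applying $\mathcal C_{t}$ to $n(P,0)$ to collapse its height to $\gamma$, then applying the boundedness operator $\mathcal B_{\forall_{x\in a}\exists_y\theta}^\gamma$ (more precisely the $\Pi_1$ or $\Sigma$ variant appropriate to the relativised matrix) to turn the witnesses into elements of $\mathbb L_\gamma^u$, and finally introducing the existential quantifier $\exists_z(\dots)$ with witness $z=\mathbb L_\gamma^u\in\mathbf L_{\omega^\alpha}^u$, whose rank $|\mathbb L_\gamma^u|=\gamma$ must satisfy $\gamma<o_{\langle\rangle}(\mathcal C_tP)=\vartheta(t+\Omega^{s})$. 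The key ordinal inequalities — $\gamma=\vartheta(t+\Omega^{s_0})<\vartheta(t+\Omega^{s})$ when $s_0<s$, and $\gamma<\omega^\alpha$ — follow from Proposition~\ref{prop:operators-special}(iv) and Lemma~\ref{lem:recover-c-sets}, once we know $\{s_0,t\}\subseteq\mathcal H_t^\vartheta$, which is exactly what condition (H3) and the hypotheses (ii) give us. This is the place where all the $\mathcal H$-control bookkeeping pays off.

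For the verification of local correctness I would, as in Lemmas~\ref{lem:inversion}, \ref{lem:cut-reduction}, \ref{lem:operators-cut-elimination} and \ref{lem:boundedness}, prove conditions (L), (C1), (C2) and (H1)--(H3) simultaneously by induction on the length of codes. The cut-rank claim $d(\mathcal C_tP)=1$ is forced because every new cut we introduce (either a bounded cut over $\psi^\beta$, which is $\Delta_0$ hence of height $0$, or no cut at all) has height $<1$; one has to check that the ``push inside'' clauses never raise the rank above $1$ on the relevant premises, using $d(P)\le 2$ together with (C1) for $P$ so that any surviving cut in $P$ of height $\ge 2$ is actually the one being eliminated. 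The ordinal-label conditions are the routine but delicate part: one repeatedly uses that $\vartheta$ is strictly increasing under the side condition $|s|_T^u<\vartheta(t)$ (Definition~\ref{def:bh-collapse}), that $t+\Omega^{s'}<t+\Omega^{s}$ whenever $s'<s$ (from the previous lemmas on $+$ and $\Omega^{(\cdot)}$), and Proposition~\ref{prop:operators-special}(iv) for the crucial $\vartheta(t+\Omega^{s'})<\vartheta(t+\Omega^{s})$ step, plus $(t+\Omega^{s})^*\in\mathcal H_{t}^\vartheta$ via Lemma~\ref{lem:exp-add-ranks} to keep everything inside the controlling operator. The $h_0$-label jumps from $t$ on $P$ to $t+\Omega^{o_{\langle\rangle}(P)}$ on $\mathcal C_tP$, and on recursive calls $n(\mathcal C_tP,a)$ it must be $t+\Omega^{o_{\langle\rangle}(n(P,a))}$ or smaller, so (H2) needs $o_{\langle\rangle}(n(P,a))<o_{\langle\rangle}(P)$ (or the $+\omega$ variants), which is precisely (L) for $P$; this is where condition (ii), giving $h_0(P)\le t$ and $\{t,h_1(P)\}\subseteq\mathcal H_t^\vartheta$, is used to get the base case off the ground.
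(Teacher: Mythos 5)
Your overall skeleton --- recursion on the length of codes, pushing $\mathcal C_t$ through the benign rules, and the collapse--bound--witness treatment of reflection with witness $\mathbb L^u_{\vartheta(t+\Omega^{o_{\langle\rangle}(n(P,0))})}$ --- is the paper's argument, and your reflection case in particular matches it exactly. However, there are two concrete gaps. First, your case split on cuts is off by one, and it matters. Since $d(P)\le 2$, condition (C1) forces $\hth(\psi)\le 1$ for every cut occurring in $P$, so the case $\hth(\psi)>1$ that you reserve for the boundedness trick never arises; conversely the case $\hth(\psi)=1$, which you classify as ``small'' and propose to push $\mathcal C_t$ through, is precisely the critical one. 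Pushing inside fails there: if $\psi\equiv\exists_x\theta$ with the outer quantifier unbounded, the premise $n(P,1)$ carries the $\Pi_1$-formula $\neg\psi$, so assumption (i) fails for it and $\mathcal C_t n(P,1)$ is not a collapsed proof; moreover, keeping the rule $(\cut,\psi)$ would contradict the claim $d(\mathcal C_tP)=1$. The correct treatment at $\hth(\psi)=1$ is asymmetric: collapse the $\Sigma$-premise $n(P,0)$ to height $\gamma=\vartheta(t+\Omega^{o_{\langle\rangle}(n(P,0))})$ and bound $\psi$ at $\gamma$; bound $\neg\psi$ at the same $\gamma$ in $n(P,1)$ (the $\Pi_1$-version of Lemma~\ref{lem:boundedness} has no height restriction) and only then collapse it, with the shifted index $t+\Omega^{o_{\langle\rangle}(n(P,0))}$ in place of $t$ --- the shift is needed because $\gamma$ lies in $\mathcal H^\vartheta_{t+\Omega^{o_{\langle\rangle}(n(P,0))}}$ but is not provably in $\mathcal H_t^\vartheta$ --- and finally cut over the bounded formula $\psi^\gamma$, whose height is $0<1$.

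Second, the bounded universal rule cannot be handled ``as a small rule''. If $r_{\langle\rangle}(P)=(\forall_x,\psi)$ with $\psi\equiv x\notin b\lor\theta$, the rule still has one premise $n(P,a)$ for \emph{every} $a\in\mathbf L^u_{\omega^\alpha}$, and for $a$ of large rank you cannot verify assumption (ii) for $n(P,a)$: condition (H3) only yields $h_1(n(P,a))\in\mathcal H_P^\vartheta[|a|]$, and without $|a|\in\mathcal H_t^\vartheta$ this does not give membership in $\mathcal H_t^\vartheta$. Then $\mathcal C_t n(P,a)$ falls under the unintended case, its height is the uncollapsed $o_{\langle\rangle}(n(P,a))$ (possibly at least $\Omega$), and condition (L) at the node $\mathcal C_tP$ breaks, since $o_{\langle\rangle}(\mathcal C_tP)=\vartheta(t+\Omega^{o_{\langle\rangle}(P)})<\Omega$ cannot dominate it. The paper's fix exploits the bound of the quantifier: $|b|\le k_{\langle\rangle}(P)\in\mathcal H_t^\vartheta$, so for $|a|\le|b|$ one has $\mathcal H_P^\vartheta[|a|]=\mathcal H_P^\vartheta$ and the push-inside goes through, while for $|a|>|b|$ the prime formula $a\notin b$ is true and the premise $\psi(a)$ is replaced by the trivial code $\bigvee^0_{a\notin b,\theta(a)}\ax_{a\notin b}$ of height $1$. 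Without this split (or something equivalent) the construction does not go through at the universal rule, even though the quantifier is bounded.
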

\begin{proof}
 First, in the unintended case where one of the conditions (i,ii) fails we stipulate that $\mathcal C_tP$ behaves like $P$ (see~the previous proof). In this case, the local correctness of $\mathcal C_tP$ follows from the local correctness of $P$. Now assume that conditions (i) and (ii) hold for $P$. We define $r_{\langle\rangle}(\mathcal C_tP)$ and $n(\mathcal C_tP,a)$ by case distinction on $r_{\langle\rangle}(P)$, verifying local correctness as we go along:

\emph{Case $r_{\langle\rangle}(P)=\ax$:} We set $r_{\langle\rangle}(\mathcal C_tP)=\ax$ and $n(\mathcal C_tP,a)=\mathcal C_tn(P,a)$. Condition (L) for $P$ implies that $l_{\langle\rangle}(P)$ contains a true bounded formula. Thus the same holds for $l_{\langle\rangle}(\mathcal C_tP)=l_{\langle\rangle}(P)$, as required by condition (L) for $\mathcal C_t(P)$. Condition (C1) is trivial as $r_{\langle\rangle}(\mathcal C_tP)$ is not a cut rule. Condition (C2) does not apply as $\iota(\ax)=\emptyset$. Concerning condition (H1), it is easy to see
\begin{equation*}
 k_{\langle\rangle}(\mathcal C_tP)\leq\max\{k_{\langle\rangle}(P),\vartheta(t+\Omega^{o_{\langle\rangle}(P)})^*\}.
\end{equation*}
By condition (H1) for $P$ and assumption (ii) we get
\begin{equation*}
 k_{\langle\rangle}(P)\in\mathcal H_P^\vartheta\subseteq\mathcal H_t^\vartheta[h_1(P)]=\mathcal H_t^\vartheta\subseteq\mathcal H_{t+\Omega^{o_{\langle\rangle}(P)}}^\vartheta=\mathcal H_{\mathcal C_tP}^\vartheta.
\end{equation*}
In particular we have $o_{\langle\rangle}(P)\in\mathcal H_{t+\Omega^{o_{\langle\rangle}(P)}}^\vartheta$. Together with assumption (ii) we obtain
\begin{equation*}
 \vartheta(t+\Omega^{o_{\langle\rangle}(P)})\in\mathcal H_{t+\Omega^{o_{\langle\rangle}(P)}}^\vartheta=\mathcal H_{\mathcal C_tP}^\vartheta
\end{equation*}
and then $\vartheta(t+\Omega^{o_{\langle\rangle}(P)})^*\in\mathcal H_{\mathcal C_tP}^\vartheta$, completing the proof of condition (H1) for $\mathcal C_tP$. Conditions (H2) and (H3) do not apply, because of $\iota(\ax)=\emptyset$.

\emph{Case $r_{\langle\rangle}(P)=(\land,\psi_0,\psi_1)$:} Set $r_{\langle\rangle}(\mathcal C_tP)=(\land,\psi_0,\psi_1)$ and $n(\mathcal C_tP,a)=\mathcal C_tn(P,a)$. Before we can verify local correctness we must check that $n(P,i)$ satisfies assumptions (i,ii) for~$i=0,1$: By condition (L) for $P$ the formula $\psi_0\land\psi_1$ occurs in $l_{\langle\rangle}(P)$. Thus assumption (i) for $P$ implies that $\psi_0\land\psi_1$ is a $\Sigma(\mathbf L_{\omega^\alpha})$-formula, and so are $\psi_0$ and $\psi_1$. Also by condition (L) for $P$ we have
\begin{equation*}
 l_{\langle\rangle}(n(P,i))\subseteq l_{\langle\rangle}(P)\cup\{\psi_i\}.
\end{equation*}
Thus $l_{\langle\rangle}(n(P,i))$ consists of $\Sigma(\mathbf L_{\omega^\alpha})$-formulas. Next, by condition (C2) and assumption (i) for $P$ we have $d(n(P,i))\leq d(P)\leq 2$. So $n(P,i)$ satisfies assumption (i). Concerning assumption (ii), using (H2) for $P$ we obtain $h_0(n(P,i))\leq h_0(P)\leq t$. Also, condition (H3) and assumption (ii) for $P$ yield
\begin{equation*}
 h_1(n(P,i))\in\mathcal H_P^\vartheta[|i|]=\mathcal H_P^\vartheta\subseteq\mathcal H_t^\vartheta.
\end{equation*}
We have established that $n(P,i)$ satisfies assumptions (i,ii). So $n(\mathcal C_tP,a)=\mathcal C_tn(P,a)$ falls under the ``intended case''. Now we can verify condition (L) for $\mathcal C_tP$: By condition (H3) and assumption (ii) for $P$ we get
\begin{equation*}
 o_{\langle\rangle}(n(P,i))\in\mathcal H_P^\vartheta[|i|]=\mathcal H_P^\vartheta\subseteq\mathcal H_t^\vartheta.
\end{equation*}
Assumption (ii) provides $t\in\mathcal H_t^\vartheta$ and condition (L) for $P$ gives $o_{\langle\rangle}(n(P,i))<o_{\langle\rangle}(P)$. In this situation Proposition~\ref{prop:operators-special} yields
\begin{equation*}
 o_{\langle\rangle}(n(\mathcal C_tP,i))=\vartheta(t+\Omega^{o_{\langle\rangle}(n(P,i))})<\vartheta(t+\Omega^{o_{\langle\rangle}(P)})=o_{\langle\rangle}(\mathcal C_tP),
\end{equation*}
as required by condition (L) for $\mathcal C_tP$. Still concerning condition (L) for $\mathcal C_tP$, by the local correctness of $P$ the formula $\psi_0\land\psi_1$ occurs in $l_{\langle\rangle}(P)=l_{\langle\rangle}(\mathcal C_tP)$. Also we have
\begin{equation*}
 l_{\langle\rangle}(n(\mathcal C_tP,i))=l_{\langle\rangle}(n(P,i))\subseteq l_{\langle\rangle}(P)\cup\{\psi_i\}=l_{\langle\rangle}(\mathcal C_tP)\cup\{\psi_i\},
\end{equation*}
completing the verification of (L) for $\mathcal C_tP$. Condition (H1) ist established as in the case of an axiom above. In view of $o_{\langle\rangle}(n(P,i))<o_{\langle\rangle}(P)$ we get
\begin{equation*}
 h_0(n(\mathcal C_tP,i))=t+\Omega^{o_{\langle\rangle}(n(P,i))}<t+\Omega^{o_{\langle\rangle}(P)}=h_0(\mathcal C_tP),
\end{equation*}
which settles condition (H2) for $\mathcal C_tP$. The first part of (H3) is automatic by
\begin{equation*}
 h_1(n(\mathcal C_tP,i))=h_1(\mathcal C_tn(P,i))=0\in\mathcal H_{\mathcal C_tP}^\vartheta[|i|].
\end{equation*}
We have already seen $o_{\langle\rangle}(n(P,i))\in\mathcal H_t^\vartheta$ above. Together with $o_{\langle\rangle}(n(P,i))<o_{\langle\rangle}(P)$ and $t\in\mathcal H_t^\vartheta$ this implies
\begin{equation*}
 o_{\langle\rangle}(n(\mathcal C_tP,i))=\vartheta(t+\Omega^{o_{\langle\rangle}(n(P,i))})\in\mathcal H_{t+\Omega^{o_{\langle\rangle}(P)}}^\vartheta\subseteq\mathcal H_{\mathcal C_tP}^\vartheta[|i|].
\end{equation*}
This completes the verification of (H3) for $\mathcal C_tP$.

\emph{Case $r_{\langle\rangle}(P)=(\lor_i,\psi_0,\psi_1)$:} Set $r_{\langle\rangle}(\mathcal C_tP)=(\lor_i,\psi_0,\psi_1)$ and $n(\mathcal C_tP,a)=\mathcal C_tn(P,a)$. Local correctness is verified as in the previous case.

\emph{Case $r_{\langle\rangle}=(\forall_x,\psi)$:} By assumption (i) the outer quantifier of $\forall_x\psi$ must be bounded, i.e.~we must have $\psi\equiv x\notin b\lor\theta$ for some $\Sigma(\mathbf L_{\omega^\alpha}^u)$-formula $\theta$. We set
\begin{align*}
 r_{\langle\rangle}(\mathcal C_tP)&=(\forall_x,\psi),\\
 n(\mathcal C_tP,a)&=\begin{cases}
                     \mathcal C_t n(P,a)\quad &\text{if $|a|\leq |b|$},\\
                     \textstyle\bigvee^0_{a\notin b,\theta(a)}\ax_{a\notin b}\quad &\text{if $|a|>|b|$}.
                    \end{cases}
\end{align*}
The code $\bigvee^0_{a\notin b,\theta(a)}\ax_{a\notin b}$ has been introduced in the proof of Lemma~\ref{lem:boundedness}. To see that $\ax_{a\notin b}$ is indeed an $\mathbf L_{\omega^\alpha}^u$-code we must check that $|a|>|b|$ implies $a\notin b$: Assume $a\in b$. By definition of the rank we have $b\in\mathbb L_{|b|+1}^u$. As the stages of the constructible hierarchy are transitive we obtain $a\in\mathbb L_{|b|+1}^u$. Now $|a|\leq |b|$ follows by the minimality of the rank. Next, let us verify that assumptions (i) and (ii) hold for $n(P,a)$ whenever we have $|a|\leq |b|$: Indeed, assumption (i) and the first part of assumption (ii) hold for any $a\in\mathbf L_{\omega^\alpha}^u$, as in the previous cases. As for the last part of (ii), observe that $b$ occurs in $\forall_x\psi\in l_{\langle\rangle}(P)$, which implies $|b|\leq k_{\langle\rangle}(P)$. Condition (H1) for $P$ provides $k_{\langle\rangle}(P)\in\mathcal H_P^\vartheta$. Thus we see that $\mathcal H_P^\vartheta[|a|]=\mathcal H_P^\vartheta$ holds whenever we have $|a|\leq|b|$. Using condition (H3) and assumption (ii) for $P$ we can conclude
\begin{equation*}
 h_1(n(P,a))\in\mathcal H_P^\vartheta[|a|]=\mathcal H_P^\vartheta\subseteq\mathcal H_t^\vartheta,
\end{equation*}
as required by assumption (ii) for $n(P,a)$. Now let us verify condition (L) for the code $\mathcal C_tP$: For $|a|\leq |b|$ we have $o_{\langle\rangle}(n(\mathcal C_tP,a))<o_{\langle\rangle}(\mathcal C_tP)$ as in the previous cases. By Proposition~\ref{prop:bh-epsilon-numbers} the ordinal $o_{\langle\rangle}(\mathcal C_tP)=\vartheta(t+\Omega^{o_{\langle\rangle}(P)})$ is additively principal and bigger than $\omega$. Thus we even have $o_{\langle\rangle}(n(\mathcal C_tP,a))+\omega\leq o_{\langle\rangle}(\mathcal C_tP)$, as required by condition (L) at the rule $(\forall_x,\psi)$. For $|a|>|b|$ we compute
\begin{equation*}
 o_{\langle\rangle}(n(\mathcal C_tP,a))+\omega=o_{\langle\rangle}(\textstyle\bigvee^0_{a\notin b,\theta(a)}\ax_{a\notin b})+\omega=1+\omega=\omega<o_{\langle\rangle}(\mathcal C_tP).
\end{equation*}
Still concerning (L), the formula $\forall_x\psi$ occurs in $l_{\langle\rangle}(P)=l_{\langle\rangle}(\mathcal C_tP)$. For $|a|\leq|b|$ we get
\begin{equation*}
 l_{\langle\rangle}(n(\mathcal C_tP,a))=l_{\langle\rangle}(n(P,a))\subseteq l_{\langle\rangle}(P)\cup\{\psi(a)\}=l_{\langle\rangle}(\mathcal C_tP)\cup\{\psi(a)\}
\end{equation*}
from condition (L) for $P$, and for $|a|>|b|$ we compute
\begin{multline*}
 l_{\langle\rangle}(n(\mathcal C_tP,a))=l_{\langle\rangle}(\textstyle\bigvee^0_{a\notin b,\theta(a)}\ax_{a\notin b})=l_{\langle\rangle}(\ax_{a\notin b})\backslash\{a\notin b\}\cup\{a\notin b\lor\theta(a)\}=\\
=\{a\notin b\lor\theta(a)\}=\{\psi(a)\}\subseteq l_{\langle\rangle}(\mathcal C_tP)\cup\{\psi(a)\}.
\end{multline*}
Condition (H1) is verified as in the previous cases. The same holds for condition (H2) in case $|a|\leq |b|$. For $|a|>|b|$ we compute
\begin{equation*}
 h_0(n(\mathcal C_tP,a))=h_0(\textstyle\bigvee^0_{a\notin b,\theta(a)}\ax_{a\notin b})=\Omega\leq t\leq h_1(\mathcal C_tP).
\end{equation*}
Condition (H3) for $|a|\leq|b|$ holds as in the previous cases. For $|a|>|b|$ we have
\begin{equation*}
 h_1(n(\mathcal C_tP,a))\leq k(a\notin b\lor\theta(a))\leq\max\{k(\forall_x\psi),|a|\}\leq\max\{k_{\langle\rangle}(P),|a|\}.
\end{equation*}
By condition (H1) and assumption (ii) for $P$ we get $k_{\langle\rangle}(P)\in\mathcal H_P^\vartheta\subseteq\mathcal H_{\mathcal C_tP}^\vartheta[|a|]$. Also, $|a|\in\mathcal H_{\mathcal C_tP}^\vartheta[|a|]$ holds by the definition of our operators. Finally, we have seen above that $|a|>|b|$ implies $o_{\langle\rangle}(n(\mathcal C_tP,a))=1$, and $1\in\mathcal H_{\mathcal C_tP}^\vartheta[|a|]$ is automatic. This completes the verification of condition (H3) for $\mathcal C_tP$.

\emph{Case $r_{\langle\rangle}(P)=(\exists_x,b,\varphi)$:} We set $r_{\langle\rangle}(\mathcal C_tP)=(\exists_x,b,\varphi)$ and $n(\mathcal C_tP,a)=\mathcal C_tn(P,a)$. Observe $|b|\leq k_{\langle\rangle}(P)$. By condition (H1) and assumption (ii) for $P$ we get
\begin{equation*}
 |b|\in\mathcal H_P^\vartheta\subseteq\mathcal H_t^\vartheta.
\end{equation*}
Using Proposition~\ref{prop:operators-special} we can deduce
\begin{equation*}
 |b|<\vartheta(t+\Omega^0)\leq\vartheta(t+\Omega^{o_{\langle\rangle}(P)})=o_{\langle\rangle}(\mathcal C_tP),
\end{equation*}
as condition (L) requires at the rule $(\exists_x,b,\varphi)$. The remaining conditions are verified as in the previous cases.

\emph{Case $r_{\langle\rangle}(P)=(\cut,\psi)$:} Note that assumption (i) and condition (C1) for $P$ guarantee $\hth(\psi)< d(P)\leq 2$. In case $\hth(\psi)=0$ we set $r_{\langle\rangle}(\mathcal C_tP)=(\cut,\psi)$ and $n(\mathcal C_tP,a)=\mathcal C_t n(P,a)$. To see that $n(P,i)$ satisfies assumptions (i) and (ii), observe $l_{\langle\rangle}(n(P,i))\subseteq l_{\langle\rangle}(P)\cup\{\psi,\neg\psi\}$ and recall that formulas of height zero are bounded. Condition (C1) for $\mathcal C_tP$ holds by
\begin{equation*}
 \hth(\psi)=0<1=d(\mathcal C_tP).
\end{equation*}
The other conditions are verified as above. Now assume that we have $\hth(\psi)=1$. It is easy to see that $\psi$ must be of the form $\exists_x\theta$ or $\forall_x\theta$ with the outer quantifier unbounded and $\theta$ a bounded formula. We only consider the case $\psi\equiv\exists_x\theta$, because the case $\psi\equiv\forall_x\theta$ is symmetric. Let us give an informal description of the proof idea first: The premise $n(P,1)$ of the cut rule may contain  the formula $\neg\psi\equiv\forall_x\neg\theta$. As this is not a $\Sigma(\mathbf L_{\omega^\alpha}^u)$-formula condition (i) fails for $n(P,1)$, and we cannot apply the operation $\mathcal C_t$ to $n(P,1)$. Instead, apply $\mathcal C_t$ to the premise $n(P,0)$, which contains the $\Sigma(\mathbf L_{\omega^\alpha}^u)$-formula $\psi$. This yields a deduction of $\psi$ with height $\vartheta(t+\Omega^{o_{\langle\rangle}(n(P,0))})<\Omega$. By the boundedness lemma we obtain a deduction of $\psi^{\vartheta(t+\Omega^{o_{\langle\rangle}(n(P,0))})}$. On the other hand we can apply boundedness to the premise $n(P,1)$, to get a deduction of $\neg\psi^{\vartheta(t+\Omega^{o_{\langle\rangle}(n(P,0))})}$ (it makes no difference whether we negate or relativize first). As $\neg\psi^{\vartheta(t+\Omega^{o_{\langle\rangle}(n(P,0))})}$ is a $\Sigma(\mathbf L_{\omega^\alpha}^u)$-formula (in fact a bounded formula) we may now collapse the ``bounded premise'' $n(P,1)$. Finally, we apply a cut over the bounded formula $\psi^{\vartheta(t+\Omega^{o_{\langle\rangle}(n(P,0))})}$. Formally, if $\psi\equiv\exists_x\theta$ with $\theta$ bounded then we set
\begin{align*}
 r_{\langle\rangle}(\mathcal C_tP)&=(\cut,\psi^{\vartheta(t+\Omega^{o_{\langle\rangle}(n(P,0))})}),\\
 n(\mathcal C_tP,a)&=\begin{cases}
                     \mathcal B_{\exists,\psi}^{\vartheta(t+\Omega^{o_{\langle\rangle}(n(P,0))})}\mathcal C_t n(P,0)\quad &\text{if $a=0$},\\
                     \mathcal C_{t+\Omega^{o_{\langle\rangle}(n(P,0))}}\mathcal B_{\forall,\neg\psi}^{\vartheta(t+\Omega^{o_{\langle\rangle}(n(P,0))})} n(P,1)\quad &\text{if $a\neq 0$}.
                    \end{cases}
\end{align*}
Let us show that this does not lead out of the ``intended cases'': Assumptions (i) and (ii) hold for $n(P,0)$ because of $l_{\langle\rangle}(n(P,0))\subseteq l_{\langle\rangle}(P)\cup\{\psi\}$ and because $\psi\equiv\exists_x\theta$ is a $\Sigma(\mathbf L_{\omega^\alpha}^u)$-formula. It follows that we have $o_{\langle\rangle}(\mathcal C_t n(P,0))=\vartheta(t+\Omega^{o_{\langle\rangle}(n(P,0))})$, which fits with the superscript of the function symbol $\mathcal B_{\exists,\psi}^{\vartheta(t+\Omega^{o_{\langle\rangle}(n(P,0))})}$. Concerning the case $a\neq 0$, we must verify that assumptions (i) and (ii) hold for the code $\mathcal B_{\forall,\neg\psi}^{\vartheta(t+\Omega^{o_{\langle\rangle}(n(P,0))})} n(P,1)$, with $t+\Omega^{o_{\langle\rangle}(n(P,0))}$ at the place of $t$. Now (i) holds by
\begin{multline*}
 l_{\langle\rangle}(\mathcal B_{\forall,\neg\psi}^{\vartheta(t+\Omega^{o_{\langle\rangle}(n(P,0))})} n(P,1))=l_{\langle\rangle}(n(P,1))\backslash\{\neg\psi\}\cup\{\neg\psi^{\vartheta(t+\Omega^{o_{\langle\rangle}(n(P,0))})}\}\subseteq\\
\subseteq (l_{\langle\rangle}(P)\cup\{\neg\psi\})\backslash\{\neg\psi\}\cup\{\neg\psi^{\vartheta(t+\Omega^{o_{\langle\rangle}(n(P,0))})}\}\subseteq l_{\langle\rangle}(P)\cup\{\neg\psi^{\vartheta(t+\Omega^{o_{\langle\rangle}(n(P,0))})}\},
\end{multline*}
where $(\neg\psi)^{\vartheta(t+\Omega^{o_{\langle\rangle}(n(P,0))})}\equiv\neg(\psi^{\vartheta(t+\Omega^{o_{\langle\rangle}(n(P,0))})})$ is a bounded formula, and by
\begin{equation*}
 d(\mathcal B_{\forall,\neg\psi}^{\vartheta(t+\Omega^{o_{\langle\rangle}(n(P,0))})} n(P,1))=d(n(P,1))\leq d(P)\leq 2.
\end{equation*}
Concerning assumption (ii), we have
\begin{equation*}
 h_0(\mathcal B_{\forall,\neg\psi}^{\vartheta(t+\Omega^{o_{\langle\rangle}(n(P,0))})} n(P,1))=h_0(n(P,1))\leq h_0(P)\leq t.
\end{equation*}
Also, condition (H3) and assumption (ii) for $P$ yield
\begin{equation*}
 o_{\langle\rangle}(n(P,0))\in\mathcal H_P^\vartheta\subseteq\mathcal H_t^\vartheta.
\end{equation*}
Together with $t\in\mathcal H_t^\vartheta$ this implies
\begin{equation*}
 t+\Omega^{o_{\langle\rangle}(n(P,0))}\in\mathcal H_t^\vartheta\subseteq\mathcal H_{t+\Omega^{o_{\langle\rangle}(n(P,0))}}^\vartheta,
\end{equation*}
as required by assumption (ii) with $t+\Omega^{o_{\langle\rangle}(n(P,0))}$ at the place of $t$. Finally, in view of $k(\psi)\leq k((\cut,\psi))\leq k_{\langle\rangle}(P)$ we get
\begin{multline*}
 h_1(\mathcal B_{\forall,\neg\psi}^{\vartheta(t+\Omega^{o_{\langle\rangle}(n(P,0))})} n(P,1))=\max\{h_1(n(P,1)),k(\neg\psi^{\vartheta(t+\Omega^{o_{\langle\rangle}(n(P,0))})})\}\leq\\
\leq\max\{h_1(n(P,1)),k_{\langle\rangle}(P),\vartheta(t+\Omega^{o_{\langle\rangle}(n(P,0))})\}.
\end{multline*}
By conditions (H1,H3) and assumption (ii) for $P$ we obtain
\begin{equation*}
 \{h_1(n(P,1)),k_{\langle\rangle}(P)\}\subseteq\mathcal H_P^\vartheta\subseteq\mathcal H_{t+\Omega^{o_{\langle\rangle}(n(P,0))}}^\vartheta.
\end{equation*}
We have already seen $t+\Omega^{o_{\langle\rangle}(n(P,0))}\in\mathcal H_{t+\Omega^{o_{\langle\rangle}(n(P,0))}}^\vartheta$, which implies
\begin{equation*}
 \vartheta(t+\Omega^{o_{\langle\rangle}(n(P,0))})\in\mathcal H_{t+\Omega^{o_{\langle\rangle}(n(P,0))}}^\vartheta.
\end{equation*}
This completes the verification of conditions (i) and (ii) for $\mathcal B_{\forall,\neg\psi}^{\vartheta(t+\Omega^{o_{\langle\rangle}(n(P,0))})} n(P,1)$ and $t+\Omega^{o_{\langle\rangle}(n(P,0))}$. Thus $n(\mathcal C_tP,1)=\mathcal C_{t+\Omega^{o_{\langle\rangle}(n(P,0))}}\mathcal B_{\forall,\neg\psi}^{\vartheta(t+\Omega^{o_{\langle\rangle}(n(P,0))})} n(P,1)$ falls under the ``intended case''. We can now verify condition (L) for $\mathcal C_tP$: Similar to the previous cases we have
\begin{multline*}
 o_{\langle\rangle}(n(\mathcal C_tP,0))=o_{\langle\rangle}(\mathcal B_{\exists,\psi}^{\vartheta(t+\Omega^{o_{\langle\rangle}(n(P,0))})}\mathcal C_t n(P,0))=o_{\langle\rangle}(\mathcal C_t n(P,0))=\\
=\vartheta(t+\Omega^{o_{\langle\rangle}(n(P,0))})<\vartheta(t+\Omega^{o_{\langle\rangle}(P)})=o_{\langle\rangle}(\mathcal C_tP).
\end{multline*}
In view of $o_{\langle\rangle}(n(P,1))\in\mathcal H_P^\vartheta\subseteq\mathcal H_{t+\Omega^{o_{\langle\rangle}(n(P,0))}}^\vartheta$ we also have
\begin{multline*}
 o_{\langle\rangle}(n(\mathcal C_tP,1))=o_{\langle\rangle}(\mathcal C_{t+\Omega^{o_{\langle\rangle}(n(P,0))}}\mathcal B_{\forall,\neg\psi}^{\vartheta(t+\Omega^{o_{\langle\rangle}(n(P,0))})} n(P,1))=\\
=\vartheta(t+\Omega^{o_{\langle\rangle}(n(P,0))}+\Omega^{o_{\langle\rangle}(n(P,1))})<\vartheta(t+\Omega^{o_{\langle\rangle}(n(P,0))}+\Omega^{o_{\langle\rangle}(P)})=\\
=\vartheta(t+\Omega^{o_{\langle\rangle}(P)})=o_{\langle\rangle}(\mathcal C_tP).
\end{multline*}
Concerning the end-sequents we have
\begin{multline*}
 l_{\langle\rangle}(n(\mathcal C_tP,0))=l_{\langle\rangle}(n(P,0))\backslash\{\psi\}\cup\{\psi^{\vartheta(t+\Omega^{o_{\langle\rangle}(n(P,0))})}\}\subseteq\\
\subseteq(l_{\langle\rangle}(P)\cup\{\psi\})\backslash\{\psi\}\cup\{\psi^{\vartheta(t+\Omega^{o_{\langle\rangle}(n(P,0))})}\}\subseteq l_{\langle\rangle}(\mathcal C_tP)\cup\{\psi^{\vartheta(t+\Omega^{o_{\langle\rangle}(n(P,0))})}\}
\end{multline*}
and
\begin{multline*}
 l_{\langle\rangle}(n(\mathcal C_tP,1))=l_{\langle\rangle}(n(P,1))\backslash\{\neg\psi\}\cup\{\neg\psi^{\vartheta(t+\Omega^{o_{\langle\rangle}(n(P,0))})}\}\subseteq\\
\subseteq(l_{\langle\rangle}(P)\cup\{\neg\psi\})\backslash\{\neg\psi\}\cup\{\neg\psi^{\vartheta(t+\Omega^{o_{\langle\rangle}(n(P,0))})}\}\subseteq l_{\langle\rangle}(\mathcal C_tP)\cup\{\neg\psi^{\vartheta(t+\Omega^{o_{\langle\rangle}(n(P,0))})}\},
\end{multline*}
as required by condition (L) at the rule $(\cut,\psi^{\vartheta(t+\Omega^{o_{\langle\rangle}(n(P,0))})})$. Coming to condition (C1), as $\psi^{\vartheta(t+\Omega^{o_{\langle\rangle}(n(P,0))})}$ is a bounded formula we have
\begin{equation*}
 \hth(\psi^{\vartheta(t+\Omega^{o_{\langle\rangle}(n(P,0))})})=0<1=d(\mathcal C_tP).
\end{equation*}
Conditions (C2) holds by
\begin{equation*}
 d(n(\mathcal C_tP,0))=d(\mathcal B_{\exists,\psi}^{\vartheta(t+\Omega^{o_{\langle\rangle}(n(P,0))})}\mathcal C_t n(P,0))=d(\mathcal C_t n(P,0))=1=d(\mathcal C_tP)
\end{equation*}
and
\begin{equation*}
 d(n(\mathcal C_tP,1))=d(\mathcal C_{t+\Omega^{o_{\langle\rangle}(n(P,0))}}\mathcal B_{\forall,\neg\psi}^{\vartheta(t+\Omega^{o_{\langle\rangle}(n(P,0))})} n(P,1))=1=d(\mathcal C_tP).
\end{equation*}
Concerning condition (H1), due to the new parameter in the cut formula we now have
\begin{equation*}
 k_{\langle\rangle}(\mathcal C_tP)\leq\max\{k_{\langle\rangle}(P),\vartheta(t+\Omega^{o_{\langle\rangle}(P)})^*,\vartheta(t+\Omega^{o_{\langle\rangle}(n(P,0))})\}.
\end{equation*}
As in the previous cases we have
\begin{equation*}
 \{k_{\langle\rangle}(P),\vartheta(t+\Omega^{o_{\langle\rangle}(P)})^*\}\subseteq\mathcal H_{\mathcal C_tP}^\vartheta.
\end{equation*}
Above we have seen $o_{\langle\rangle}(n(P,0))\in\mathcal H_t^\vartheta$. Together with $o_{\langle\rangle}(n(P,0))<o_{\langle\rangle}(P)$ and $t\in\mathcal H_t^\vartheta$ this implies
\begin{equation*}
 \vartheta(t+\Omega^{o_{\langle\rangle}(n(P,0))})\in\mathcal H_{t+\Omega^{o_{\langle\rangle}(P)}}^\vartheta=\mathcal H_{\mathcal C_tP}^\vartheta,
\end{equation*}
completing the verification of (H1). Condition (H2) holds by
\begin{multline*}
 h_0(n(\mathcal C_tP,0))=h_0(\mathcal B_{\exists,\psi}^{\vartheta(t+\Omega^{o_{\langle\rangle}(n(P,0))})}\mathcal C_t n(P,0))=h_0(\mathcal C_t n(P,0))=\\
=t+\Omega^{o_{\langle\rangle}(n(P,0))}<t+\Omega^{o_{\langle\rangle}(P)}=h_0(\mathcal C_tP)
\end{multline*}
and
\begin{multline*}
 h_0(n(\mathcal C_tP,1))=h_0(\mathcal C_{t+\Omega^{o_{\langle\rangle}(n(P,0))}}\mathcal B_{\forall,\neg\psi}^{\vartheta(t+\Omega^{o_{\langle\rangle}(n(P,0))})} n(P,1))=\\
=t+\Omega^{o_{\langle\rangle}(n(P,0))}+\Omega^{o_{\langle\rangle}(\mathcal B_{\forall,\neg\psi}^{\vartheta(t+\Omega^{o_{\langle\rangle}(n(P,0))})} n(P,1))}=t+\Omega^{o_{\langle\rangle}(n(P,0))}+\Omega^{o_{\langle\rangle}(n(P,1))}<\\
<t+\Omega^{o_{\langle\rangle}(n(P,0))}+\Omega^{o_{\langle\rangle}(P)}=t+\Omega^{o_{\langle\rangle}(P)}=h_0(\mathcal C_tP).
\end{multline*}
Concerning (H3), in view of $k(\psi)=k((\cut,\psi))\leq k_{\langle\rangle}(P)$ we have
\begin{multline*}
 h_1(n(\mathcal C_tP,0))=h_1(\mathcal B_{\exists,\psi}^{\vartheta(t+\Omega^{o_{\langle\rangle}(n(P,0))})}\mathcal C_t n(P,0))=k(\psi^{\vartheta(t+\Omega^{o_{\langle\rangle}(n(P,0))})})\leq\\
\leq\max\{k(\psi),\vartheta(t+\Omega^{o_{\langle\rangle}(n(P,0))})\}\leq\max\{k_{\langle\rangle}(P),\vartheta(t+\Omega^{o_{\langle\rangle}(n(P,0))})\}\in\mathcal H_{\mathcal C_tP}^\vartheta.
\end{multline*}
Also, we have
\begin{equation*}
 h_1(n(\mathcal C_tP,1))=h_1(\mathcal C_{t+\Omega^{o_{\langle\rangle}(n(P,0))}}\mathcal B_{\forall,\neg\psi}^{\vartheta(t+\Omega^{o_{\langle\rangle}(n(P,0))})} n(P,1))=0\in\mathcal H_{\mathcal C_tP}^\vartheta.
\end{equation*}
Finally, we have seen $o_{\langle\rangle}(n(\mathcal C_tP,0))=\vartheta(t+\Omega^{o_{\langle\rangle}(n(P,0))})\in\mathcal H_{\mathcal C_tP}^\vartheta$ above. Also, in view of $o_{\langle\rangle}(n(P,i))\in\mathcal H_P^\vartheta\subseteq\mathcal H_{t+\Omega^{o_{\langle\rangle}(P)}}^\vartheta$ we have
\begin{equation*}
 o_{\langle\rangle}(n(\mathcal C_tP,1))=\vartheta(t+\Omega^{o_{\langle\rangle}(n(P,0))}+\Omega^{o_{\langle\rangle}(n(P,1))})\in\mathcal H_{t+\Omega^{o_{\langle\rangle}(P)}}^\vartheta=\mathcal H_{\mathcal C_tP}^\vartheta,
\end{equation*}
completing the verification of (H3).

\emph{Case $r_{\langle\rangle}(P)=(\rref,\exists_z\forall_{x\in a}\exists_{y\in z}\theta)$:} Recall that we have only allowed this rule if~$\theta$ is a disjunction. This has the effect that the outer quantifier of the formula $\exists_y\theta$ must be unbounded. Arguing informally, the premise $n(P,0)$ deduces the $\Sigma(\mathbf L_{\omega^\alpha}^u)$-formula $\forall_{x\in a}\exists_y\theta$. By the induction hypothesis we can collapse this preproof to height $\vartheta(t+\Omega^{o_{\langle\rangle}(n(P,0))})<\Omega$. Boundedness yields a deduction of $\forall_{x\in a}\exists_{y\in\mathbb L_{\vartheta(t+\Omega^{o_{\langle\rangle}(n(P,0))})}^u}\theta$. We can use $\mathbb L_{\vartheta(t+\Omega^{o_{\langle\rangle}(n(P,0))})}^u$ as a witness to introduce the existential quantifier over~$z$. Formally we set
\begin{align*}
 r_{\langle\rangle}(\mathcal C_tP)&=(\exists_z,\mathbb L_{\vartheta(t+\Omega^{o_{\langle\rangle}(n(P,0))})}^u,\forall_{x\in a}\exists_{y\in z}\theta),\\
 n(\mathcal C_tP,a)&=\mathcal B_{\exists,\forall_{x\in a}\exists_y\theta}^{\vartheta(t+\Omega^{o_{\langle\rangle}(n(P,0))})}\mathcal C_tn(P,0).
\end{align*}
In view of $l_{\langle\rangle}(n(P,0))\subseteq l_{\langle\rangle}(P)\cup\{\forall_{x\in a}\exists_y\theta\}$ the code $n(P,0)$ satisfies assumptions (i) and (ii). It follows that $o_{\langle\rangle}(\mathcal C_tn(P,0))=\vartheta(t+\Omega^{o_{\langle\rangle}(n(P,0))})$ fits with the superscript of the function symbol $\mathcal B_{\exists,\forall_{x\in a}\exists_y\theta}^{\vartheta(t+\Omega^{o_{\langle\rangle}(n(P,0))})}$. Now let us verify local correctness: As in the previous cases we have $o_{\langle\rangle}(n(\mathcal C_tP,0))+\omega\leq o_{\langle\rangle}(\mathcal C_tP)$. In particular this implies
\begin{equation*}
 |\mathbb L_{\vartheta(t+\Omega^{o_{\langle\rangle}(n(P,0))})}^u|=\vartheta(t+\Omega^{o_{\langle\rangle}(n(P,0))})=o_{\langle\rangle}(n(\mathcal C_tP,0))<o_{\langle\rangle}(\mathcal C_tP),
\end{equation*}
as condition (L) requires at the rule $(\exists_z,\mathbb L_{\vartheta(t+\Omega^{o_{\langle\rangle}(n(P,0))})}^u,\forall_{x\in a}\exists_{y\in z}\theta)$. We also have
\begin{multline*}
 l_{\langle\rangle}(n(\mathcal C_tP,0))=l_{\langle\rangle}(n(P,0))\backslash\{\forall_{x\in a}\exists_y\theta\}\cup\{\forall_{x\in a}\exists_{y\in\mathbb L_{\vartheta(t+\Omega^{o_{\langle\rangle}(n(P,0))})}^u}\theta\}\subseteq\\
\subseteq (l_{\langle\rangle}(P)\cup\{\forall_{x\in a}\exists_y\theta\})\backslash\{\forall_{x\in a}\exists_y\theta\}\cup\{\forall_{x\in a}\exists_{y\in\mathbb L_{\vartheta(t+\Omega^{o_{\langle\rangle}(n(P,0))})}^u}\theta\}\subseteq\\
\subseteq l_{\langle\rangle}(P)\cup\{\forall_{x\in a}\exists_{y\in\mathbb L_{\vartheta(t+\Omega^{o_{\langle\rangle}(n(P,0))})}^u}\theta\}=l_{\langle\rangle}(\mathcal C_tP)\cup\{\forall_{x\in a}\exists_{y\in\mathbb L_{\vartheta(t+\Omega^{o_{\langle\rangle}(n(P,0))})}^u}\theta\}.
\end{multline*}
Condition (C1) does not apply, and (C2) holds by
\begin{equation*}
 d(n(\mathcal C_tP,0))=d(\mathcal B_{\exists,\forall_{x\in a}\exists_y\theta}^{\vartheta(t+\Omega^{o_{\langle\rangle}(n(P,0))})}\mathcal C_tn(P,0))=d(\mathcal C_tn(P,0))=1=d(\mathcal C_tP).
\end{equation*}
As for (H1), due to the new parameter $\mathbb L_{\vartheta(t+\Omega^{o_{\langle\rangle}(n(P,0))})}^u$ in the rule we have
\begin{equation*}
 k_{\langle\rangle}(\mathcal C_tP)\leq\max\{k_{\langle\rangle}(P),\vartheta(t+\Omega^{o_{\langle\rangle}(P)})^*,\vartheta(t+\Omega^{o_{\langle\rangle}(n(P,0))})\}.
\end{equation*}
We can deduce $k_{\langle\rangle}(\mathcal C_tP)\in\mathcal H_{\mathcal C_tP}^\vartheta$ as in the previous case. Condition (H2) holds by
\begin{equation*}
 h_0(n(\mathcal C_tP,0))=h_0(\mathcal C_tn(P,0))=t+\Omega^{o_{\langle\rangle}(n(P,0))}<t+\Omega^{o_{\langle\rangle}(P)}=h_0(\mathcal C_tP).
\end{equation*}
Concerning (H3), as $\exists_z\forall_{x\in a}\exists_{y\in z}\theta$ occurs in $l_{\langle\rangle}(P)$ we have $k(\forall_{x\in a}\exists_y\theta)\leq k_{\langle\rangle}(P)$. We can deduce
\begin{multline*}
 h_1(n(\mathcal C_tP,0))=h_1(\mathcal B_{\exists,\forall_{x\in a}\exists_y\theta}^{\vartheta(t+\Omega^{o_{\langle\rangle}(n(P,0))})}\mathcal C_tn(P,0))=k(\forall_{x\in a}\exists_{y\in\mathbb L_{\vartheta(t+\Omega^{o_{\langle\rangle}(n(P,0))})}^u}\theta)\leq\\
\leq\max\{k(\forall_{x\in a}\exists_y\theta),\vartheta(t+\Omega^{o_{\langle\rangle}(n(P,0))})\}\leq\max\{k_{\langle\rangle}(P),\vartheta(t+\Omega^{o_{\langle\rangle}(n(P,0))})\}.
\end{multline*}
As in the previous cases we see
\begin{equation*}
 \{k_{\langle\rangle}(P),\vartheta(t+\Omega^{o_{\langle\rangle}(n(P,0))})\}\subseteq\mathcal H_{\mathcal C_tP}^\vartheta.
\end{equation*}
Thus we get $h_1(n(\mathcal C_tP,0))\in\mathcal H_{\mathcal C_tP}^\vartheta$, and also
\begin{equation*}
 o_{\langle\rangle}(n(\mathcal C_tP,0))=\vartheta(t+\Omega^{o_{\langle\rangle}(n(P,0))})\in\mathcal H_{\mathcal C_tP}^\vartheta,
\end{equation*}
as required by condition (H3).

\emph{Case $r_{\langle\rangle}(P)=(\rep,b)$:} We set $r_{\langle\rangle}(\mathcal C_tP)=(\rep,b)$ and $n(\mathcal C_tP,a)=\mathcal C_tn(P,a)$. Note that we have $|b|\leq k_{\langle\rangle}(P)$. Thus condition (H1) for $P$ implies $|b|\in\mathcal H_P^\vartheta$, which is equivalent to $\mathcal H_P^\vartheta[|b|]=\mathcal H_P^\vartheta$. By condition (H3) for $P$ we get
\begin{equation*}
 o_{\langle\rangle}(n(P,b))\in\mathcal H_P^\vartheta[|b|]=\mathcal H_P^\vartheta\subseteq\mathcal H_t^\vartheta.
\end{equation*}
With this in mind we can check local correctness as in the previous cases.
\end{proof}

Putting pieces together we obtain the first theorem from the introduction:

\begin{theorem}
 Working in primitive recursive set theory, consider a countable transitive set $u$. If the implication
\begin{equation*}
 \wop(\alpha\mapsto\varepsilon(S_{\omega^\alpha}^u))\rightarrow\exists_\alpha\exists_\vartheta\,\vartheta:\varepsilon(S_{\omega^\alpha}^u)\bh\alpha
\end{equation*}
holds then there is an admissible set $\mathbb A$ with $u\subseteq\mathbb A$.
\end{theorem}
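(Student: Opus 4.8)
The plan is to assemble the constructions of the last three sections into the contradiction outlined at the end of Section~\ref{sect:proof-trees}. Assume the displayed implication holds and, aiming at a contradiction, suppose that no admissible set contains $u$. Then Proposition~\ref{prop:no-admissible-gives-wop} tells us that $\alpha\mapsto\varepsilon(S_{\omega^\alpha}^u)$ is a well-ordering principle, so the hypothesis provides an ordinal $\alpha$ and a Bachmann-Howard collapse $\vartheta:\varepsilon(S_{\omega^\alpha}^u)\bh\alpha$. Since $0\in\varepsilon(S_{\omega^\alpha}^u)$ forces $\vartheta(0)\in\alpha$ we have $\alpha>0$, and Lemma~\ref{lem:bh-epsilon-number} then yields $\alpha=\omega^\alpha$; in particular $\alpha>1$, so all the proof-theoretic constructions are available for this $\alpha$.

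First I would feed $\alpha$ into Proposition~\ref{prop:proof-from-search-tree} to obtain the $\mathbf L_{\omega^\alpha}^u$-preproof $P_\alpha^u$ with empty end-sequent and height $\varepsilon_{\langle\rangle}$, and then into Proposition~\ref{prop:without-cuts} to pass to the code $\mathcal E^CP_\alpha^u\langle\rangle$, where $C$ is the constant of Lemma~\ref{lem:cut-rank-search-trees}. By Proposition~\ref{prop:cut-elimination-codes} and Definition~\ref{def:cut-ranks-codes} this code satisfies $l_{\langle\rangle}(\mathcal E^CP_\alpha^u\langle\rangle)=\langle\rangle$, $o_{\langle\rangle}(\mathcal E^CP_\alpha^u\langle\rangle)=\varepsilon_{\langle\rangle}$ and $d(\mathcal E^CP_\alpha^u\langle\rangle)=2$, while Lemma~\ref{lem:operators-cut-elimination} together with Definition~\ref{def:operators-basic-proofs} gives $h_0(\mathcal E^CP_\alpha^u\langle\rangle)=\Omega$ and $h_1(\mathcal E^CP_\alpha^u\langle\rangle)=0$, so that the controlling operator attached to this code is $\mathcal H_\Omega^\vartheta$.

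Next I would apply the collapsing operator of Theorem~\ref{thm:collapsing} with $t=\Omega$ to the code $\mathcal E^CP_\alpha^u\langle\rangle$. Hypothesis~(i) of that theorem holds because the empty end-sequent contains only $\Sigma(\mathbf L_{\omega^\alpha}^u)$-formulas (vacuously) and $d(\mathcal E^CP_\alpha^u\langle\rangle)=2\leq 2$, and hypothesis~(ii) holds because $h_0(\mathcal E^CP_\alpha^u\langle\rangle)=\Omega\leq\Omega=t$ and $\{\Omega,0\}\subseteq\mathcal H_\Omega^\vartheta$, the membership $\Omega\in\mathcal H_\Omega^\vartheta$ following from $\Omega^*=0$ and the closure clauses of $C^\vartheta(\Omega+1,\emptyset)$. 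The theorem then produces a code $\mathcal C_\Omega\mathcal E^CP_\alpha^u\langle\rangle$ with empty end-sequent and ordinal height $o_{\langle\rangle}(\mathcal C_\Omega\mathcal E^CP_\alpha^u\langle\rangle)=\vartheta(\Omega+\Omega^{\varepsilon_{\langle\rangle}})$ (note $\Omega^{\varepsilon_{\langle\rangle}}=\varepsilon_{\langle\rangle}$), which is an ordinal below $\alpha=\omega^\alpha$ and hence, by Lemma~\ref{lem:embed-alpha-s-alpha}, a term strictly below $\Omega$. Finally, Corollary~\ref{cor:correct-proofs-from-codes} turns this code into a genuine $\mathbf L_{\omega^\alpha}^u$-preproof with empty end-sequent and height below $\Omega$, contradicting Proposition~\ref{prop:countable-proof-sound}. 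Hence the assumption was false and some admissible set contains $u$.

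The genuinely difficult work --- continuous cut elimination and the operator-controlled collapsing procedure --- has already been carried out in the two preceding sections, so the only points that still require care here are checking that the code $\mathcal E^CP_\alpha^u\langle\rangle$ under the operator $\mathcal H_\Omega^\vartheta$ meets hypotheses~(i) and~(ii) of Theorem~\ref{thm:collapsing}, and that $\vartheta$ landing inside $\alpha=\omega^\alpha$ turns ``height above $\Omega$'' into ``height below $\Omega$'' after collapsing; neither is hard, and I expect no substantial remaining obstacle.
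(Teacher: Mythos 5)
Your proposal is correct and follows essentially the same route as the paper: contradiction via Proposition~\ref{prop:no-admissible-gives-wop}, the cut-free code $\mathcal E^CP_\alpha^u\langle\rangle$ from Proposition~\ref{prop:without-cuts}, collapsing with Theorem~\ref{thm:collapsing} at $t=\Omega$, and the clash between Corollary~\ref{cor:correct-proofs-from-codes} and Proposition~\ref{prop:countable-proof-sound}. Your extra checks (that $\alpha$ is an $\varepsilon$-number so $\alpha>1$, that $\Omega\in\mathcal H_\Omega^\vartheta$ via $\Omega^*=0$, and that the collapsed height is a term below $\Omega$) are correct details the paper leaves implicit.
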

\begin{proof}
 Fix an enumeration $u=\{u_i\,|\,i\in\omega\}$. Aiming at a contradiction, assume that there is no admissible set $\mathbb A$ with $u\subseteq\mathbb A$. By Proposition~\ref{prop:no-admissible-gives-wop} this makes $\alpha\mapsto\varepsilon(S_{\omega^\alpha}^u)$ a well-ordering principle. Then the assumption yields a Bachmann-Howard collapse $\vartheta:\varepsilon(S_{\omega^\alpha}^u)\bh\alpha$, for some ordinal $\alpha$. Now consider the $\mathbf L_{\omega^\alpha}^u$-code $\mathcal C_\Omega\mathcal E^CP_\alpha^u\langle\rangle$, where $C\in\omega$ is as in Proposition~\ref{prop:without-cuts}. Let us check that $\mathcal E^CP_\alpha^u\langle\rangle$ satisfies assumptions (i) and (ii) of Theorem~\ref{thm:collapsing}, with $t=\Omega$: Proposition~\ref{prop:without-cuts} tells us that the sequent $l_{\langle\rangle}(\mathcal E^CP_\alpha^u\langle\rangle)$ is empty, so trivially it contains only $\Sigma(\mathbf L_{\omega^\alpha}^u)$-formulas. The same proposition ensures $d(\mathcal E^CP_\alpha^u\langle\rangle)=2$, as required by assumption (i) of Theorem~\ref{thm:collapsing}. As for assumption (ii), Definition~\ref{def:operators-basic-proofs} and Lemma~\ref{lem:operators-cut-elimination} give $h_0(\mathcal E^CP_\alpha^u\langle\rangle)=h_0(P_\alpha^u\langle\rangle)=\Omega$ and $h_1(\mathcal E^CP_\alpha^u\langle\rangle)=h_1(P_\alpha^u\langle\rangle)=|\langle\rangle|=0$. Together with $\Omega\in\mathcal H_\Omega^\vartheta$ this establishes assumption (ii) of Theorem~\ref{thm:collapsing}. The theorem thus yields
\begin{align*}
 l_{\langle\rangle}(\mathcal C_\Omega\mathcal E^CP_\alpha^u\langle\rangle)&=\langle\rangle,\\
 o_{\langle\rangle}(\mathcal C_\Omega\mathcal E^CP_\alpha^u\langle\rangle)&=\vartheta(\Omega+\Omega^{\varepsilon_{\langle\rangle}}).
\end{align*}
By Corollary~\ref{cor:correct-proofs-from-codes} this means that $[\mathcal C_\Omega\mathcal E^CP_\alpha^u\langle\rangle]$ is an $\mathbf L_{\omega^\alpha}^u$-preproof with empty end-sequent and height $\vartheta(\Omega+\Omega^{\varepsilon_{\langle\rangle}})<\Omega$. However, Proposition~\ref{prop:countable-proof-sound} tells us that such a proof cannot exist. Thus we have reached the desired contradiction.
\end{proof}

It is easy to deduce the implication ``(ii) $\Rightarrow$ (iii)'' in the second theorem of the introduction: To establish (iii), consider an arbitrary set $v$. By the axiom of countability there is a countable transitive set $u$ with $v\in u$. Part (ii) provides the implication
\begin{equation*}
 \wop(\alpha\mapsto\varepsilon(S_{\omega^\alpha}^u))\rightarrow\exists_\alpha\exists_\vartheta\,\vartheta:\varepsilon(S_{\omega^\alpha}^u)\bh\alpha.
\end{equation*}
Thus the previous theorem yields an admissible set $\mathbb A$ with $u\subseteq\mathbb A$. In particular we have $v\in\mathbb A$, as desired.

\section{A Well-Ordering Proof}\label{sect:wo-proof}

In the previous sections we have shown that the existence of a Bachmann-Howard collapse $\vartheta:\varepsilon(S_{\omega^\alpha}^u)\bh\alpha$ for one particular well-ordering principle $\alpha\mapsto\varepsilon(S_{\omega^\alpha}^u)$ entails the existence of an admissible set $\mathbb A$ with $u\subseteq\mathbb A$. The goal of the present section is to establish a converse: Consider some well-ordering principle \mbox{$T^u:\alpha\mapsto T_\alpha^u$} with rank function $s\mapsto|s|_T^u$. Assume that $\mathbb A$ is an admissible set with $u\in\mathbb A$, and write
\begin{equation*}
 o(\mathbb A):=\min\{\alpha\in\ordi\,|\,\alpha\notin\mathbb A\}=\ordi\cap\mathbb A.
\end{equation*}
We will prove that there is a Bachmann-Howard collapse $\vartheta_{\mathbb A}:T_{o(\mathbb A)}^u\rightarrow o(\mathbb A)$. The idea is similar to \cite[Section~4]{rathjen92}: Perform the construction from Remark~\ref{rmk:Bachmann-Howard-semantically} inside $\mathbb A$, with $o(\mathbb A)$ at the place of $\aleph_1$ and ``element of $\mathbb A$'' at the place of ``countable''. This will lead to a $\Sigma$-formula $D_T(u,s,\alpha)$ such that we can set
\begin{equation*}
 \vartheta_{\mathbb A}(s)=\alpha\quad:\Leftrightarrow\quad\mathbb A\vDash D_T(u,s,\alpha)
\end{equation*}
for all $s\in T_{o(\mathbb A)}^u$ and $\alpha<o(\mathbb A)$. As a preparation we need to recover primitive recursive functions, in particular the function $\alpha\mapsto T_\alpha^u$, inside $\mathbb A$:

\begin{lemma}
 For each primitive recursive function $F$ there is a $\Sigma$-formula $\varphi_F$ (in the language of pure set theory, i.e.~without primitive recursive function symbols) such that the following is provable in primitive recursive set theory: For any admissible set $\mathbb A$ and any $\vec x,y\in\mathbb A$ we have
\begin{equation*}
 F(\vec x)=y\quad\Leftrightarrow\quad\mathbb A\vDash\varphi_F(\vec x,y),
\end{equation*}
and indeed $F(\vec x)\in\mathbb A$.
\end{lemma}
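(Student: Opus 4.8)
The statement is a standard fact about the interplay of primitive recursion and admissibility; the plan is to prove it by induction on the build-up of primitive recursive functions, after first recalling that admissible sets are closed under (set-)recursion. I would start by fixing a presentation of the class of primitive recursive (set) functions as being generated from a list of initial functions---projections, the constant $\emptyset$, the successor-like pairing operations, the characteristic function of $\in$, perhaps a case-distinction function---by composition and primitive recursion along $\in$ (this is the presentation from \cite[Section~6]{rathjen-set-functions}, which is the meta-theory we work in). For the base case, each initial function $F$ is $\Delta_0$-definable by a formula $\delta_F(\vec x,y)$ with the property that primitive recursive set theory proves $F(\vec x)=y\leftrightarrow\delta_F(\vec x,y)$ and $F(\vec x)\in\operatorname{TC}(\{\vec x\})$ or similar, so that absoluteness of $\Delta_0$-formulas between $\mathbb A$ and the universe gives both $F(\vec x)=y\Leftrightarrow\mathbb A\vDash\delta_F(\vec x,y)$ and $F(\vec x)\in\mathbb A$ (using transitivity of $\mathbb A$ and closure of $\mathbb A$ under pairing/union to see the output lands in $\mathbb A$). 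Here $\varphi_F:=\delta_F$, which is trivially $\Sigma$.

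For the composition step, if $F(\vec x)=G(H_1(\vec x),\dots,H_k(\vec x))$ and we already have $\Sigma$-formulas $\varphi_G,\varphi_{H_1},\dots,\varphi_{H_k}$ with the stated properties, I would set
\begin{equation*}
 \varphi_F(\vec x,y):\equiv\exists_{z_1}\cdots\exists_{z_k}\bigl(\varphi_{H_1}(\vec x,z_1)\land\cdots\land\varphi_{H_k}(\vec x,z_k)\land\varphi_G(z_1,\dots,z_k,y)\bigr).
\end{equation*}
This is again $\Sigma$ (a block of existential quantifiers in front of a conjunction of $\Sigma$-formulas is $\Sigma$). The induction hypothesis for the $H_i$ gives $H_i(\vec x)\in\mathbb A$, so the witnesses $z_i$ can be taken inside $\mathbb A$; then the induction hypothesis for $G$ gives $G(\vec z)\in\mathbb A$ and the equivalence. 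Conversely, if $\mathbb A\vDash\varphi_F(\vec x,y)$ then $\mathbb A$ furnishes the $z_i$, which must equal $H_i(\vec x)$ by the induction hypothesis, and then $y=G(\vec z)=F(\vec x)$. The element-hood claim $F(\vec x)\in\mathbb A$ falls out of the same bookkeeping.

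The main work, and the only genuinely delicate point, is the primitive-recursion step: suppose $F(n,\vec x)$ is defined by recursion on $n\in\omega$ (in our setting ``primitive recursive'' means ``primitive recursive relative to $\omega$'', so the recursion variable ranges over $\omega$) from $F(0,\vec x)=G(\vec x)$ and $F(n+1,\vec x)=H(n,\vec x,F(n,\vec x))$, with $\varphi_G,\varphi_H$ already constructed. The natural definition uses a ``computation sequence'': $\varphi_F(n,\vec x,y)$ should say that there exists a finite function $g$ with domain $n+1$ such that $\varphi_G(\vec x,g(0))$, for all $m<n$ we have $\varphi_H(m,\vec x,g(m),g(m+1))$, and $g(n)=y$. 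This $g$ is a finite set, and since $\mathbb A$ is admissible and $\omega\in\mathbb A$, one shows by (external) induction on $n$ that $F(m,\vec x)\in\mathbb A$ for all $m\le n$ and hence the computation sequence $g=\{\langle m,F(m,\vec x)\rangle\,|\,m\le n\}$ is itself an element of $\mathbb A$---this is exactly where admissibility (specifically $\Sigma$-collection/replacement, to collect the values $F(m,\vec x)$ for $m\le n$ into a set, together with $\Delta_0$-separation to carve out $g$) is used, and it is the step one must be careful about because primitive recursive set theory on its own does not prove that $\mathbb A$ is closed under arbitrary recursions---only the ones that are genuinely primitive recursive, which is what we have. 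The quantifier ``$\exists g$'' keeps $\varphi_F$ in the class $\Sigma$ provided the matrix is $\Sigma$; the matrix contains the subformula $\forall_{m<n}\varphi_H(\dots)$, which is a bounded quantifier in front of a $\Sigma$-formula---this is $\Sigma$ in Kripke--Platek set theory, using $\Sigma$-collection to bound the witnesses of the $\varphi_H$-instances, so we are fine. Having set this up, the equivalence $F(n,\vec x)=y\Leftrightarrow\mathbb A\vDash\varphi_F(n,\vec x,y)$ and $F(n,\vec x)\in\mathbb A$ are proved simultaneously by induction on $n$, feeding through the induction hypotheses for $G$ and $H$ exactly as in the composition case. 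Finally I would remark that the key application---the function $(u,\alpha)\mapsto(T_\alpha^u,<_{T_\alpha^u})$, and likewise $\alpha\mapsto\mathbb L_\alpha^u$ and the rank functions---are primitive recursive by hypothesis, so the lemma applies to them verbatim; in particular $T_\alpha^u\in\mathbb A$ whenever $u,\alpha\in\mathbb A$, which is what the well-ordering proof in the rest of this section will rely on.
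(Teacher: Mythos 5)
Your base case and composition step match the paper, but the recursion step addresses the wrong recursion scheme, and this is a genuine gap. In the meta-theory cited in the paper (Rathjen's primitive recursive set functions, \cite[Definition~2.1]{rathjen-set-functions}), the recursion clause is recursion along $\in$: $F(z,\vec x)=H(\bigcup\{F(w,\vec x)\,|\,w\in z\},z,\vec x)$, where $z$ is an arbitrary set. The phrase ``primitive recursive relative to $\omega$'' only means that $\omega$ is available as a parameter/initial constant; it does not mean the recursion variable ranges over $\omega$. Your treatment — a finite computation sequence $g$ with domain $n+1$, collected in $\mathbb A$ by external induction on $n\in\omega$ — only covers recursion along natural numbers. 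It does not apply to the functions the lemma is actually needed for, such as $\alpha\mapsto\mathbb L_\alpha^u$, $(u,\alpha)\mapsto T_\alpha^u$, the rank functions, or transitive closure itself, all of which are obtained by transfinite ($\in$-)recursion; for these the ``computation object'' must be a function with domain $\operatorname{TC}(z)$, which can be infinite, and its existence inside $\mathbb A$ cannot be established by a finite bookkeeping argument.

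The missing content is exactly what the paper supplies: first obtain a $\Sigma$-formula defining $\operatorname{TC}$ that is absolute for admissible sets (so that $\operatorname{TC}$ can be used as a $\Delta$-notion inside $\mathbb A$), then set $\varphi_F(z,\vec x,y):\equiv\exists_f\theta(z,\vec x,y,f)$ where $\theta$ says that $f$ is a function with domain $\operatorname{TC}(z)$ satisfying the recursion equations expressed via $\varphi_H$, and prove the equivalence $F(z,\vec x)=y\Leftrightarrow\mathbb A\vDash\varphi_F(z,\vec x,y)$ together with $F(z,\vec x)\in\mathbb A$ by induction on $\operatorname{TC}(z)$ (a primitive recursive induction statement, hence available in the meta-theory). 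The direction producing the witness $f\in\mathbb A$ is where admissibility enters: one uses uniqueness (from the other direction) to get $\mathbb A\vDash\forall_{w\in\operatorname{TC}(z)}\exists!_y\varphi_F(w,\vec x,y)$ and then $\Sigma$-replacement inside $\mathbb A$ to collect $f=F(\cdot,\vec x)\restriction_{\operatorname{TC}(z)}$ as an element of $\mathbb A$ — in effect re-running the proof of the $\Sigma$-recursion theorem inside $\mathbb A$. Your $\omega$-recursion argument is a correct special case of this, but as written the induction on the build-up of primitive recursive functions does not go through.
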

\begin{proof}
 We argue by (meta-) induction on the build up of primitive recursive set functions (see e.g.~\cite[Definition~2.1]{rathjen-set-functions}). The basic functions and composition are easily accomodated. To prepare definitions by recursion, consider the notion of transitive closure: Let $\operatorname{TC}(\cdot)$ be the canonical primitive recursive function that computes transitive closures. By the proof of \cite[Theorem~I.6.1]{barwise-admissible} there is a $\Sigma$-formula $\varphi_{\operatorname{TC}}$ such that primitive recursive set theory proves
\begin{equation*}
 \forall_{x,y}(\operatorname{TC}(x)=y\leftrightarrow\varphi_{\operatorname{TC}}(x,y)),
\end{equation*}
as well as
\begin{equation*}
 \mathbb A\vDash\forall_x\exists_y\varphi_{\operatorname{TC}}(x,y)
\end{equation*}
for any admissible set $\mathbb A$. By upward absoluteness of $\Sigma$-formulas $\mathbb A\vDash\varphi_{\operatorname{TC}}(x,y)$ implies $\operatorname{TC}(x)=y$. It follows that $\mathbb A$ is closed under transitive closures, and that we have
\begin{equation*}
 \operatorname{TC}(x)=y\quad\Leftrightarrow\quad\mathbb A\vDash\varphi_{\operatorname{TC}}(x,y).
\end{equation*}
In particular $\varphi_{\operatorname{TC}}$ is a $\Delta$-formula from the viewpoint of $\mathbb A$, namely
\begin{equation*}
 \mathbb A\vDash\varphi_{\operatorname{TC}}(x,y)\leftrightarrow\forall_{y'}(y'\neq y\rightarrow\neg\varphi_{\operatorname{TC}}(x,y')).
\end{equation*}
Misusing notation we will use the function symbol $\operatorname{TC}(\cdot)$ in formulas of pure set theory: For example $\mathbb A\vDash z\in\operatorname{TC}(x)$ abbreviates either the $\Sigma$-formula
\begin{equation*}
 \mathbb A\vDash\exists_y(\varphi_{\operatorname{TC}}(x,y)\land z\in y)
\end{equation*}
or the $\Pi$-formula
\begin{equation*}
 \mathbb A\vDash\forall_y(\varphi_{\operatorname{TC}}(x,y)\rightarrow z\in y),
\end{equation*}
depending on the context. After this preparation, consider a function
\begin{equation*}
 F(z,\vec x)=H(\bigcup\{F(w,\vec x)\,|\,w\in z\},z,\vec x)
\end{equation*}
defined by recursion. We repeat the usual proof of $\Sigma$-recursion (see~\cite[Theorem~I.6.4]{barwise-admissible}) inside $\mathbb A$: Let $\varphi_H$ be the $\Sigma$-definition of $H$ provided by the induction hypothesis. Set $\varphi_F(z,\vec x,y):\equiv\exists_f\theta(z,\vec x,y,f)$ where
\begin{multline*}
 \theta(z,\vec x,y,f):\equiv\text{``$f$ is a function with domain $\operatorname{TC}(z)$''}\land\\
\land\forall_{w\in\operatorname{TC}(z)}\varphi_H(\bigcup\rng(f\!\restriction_w),w,\vec x,f(w))\land\varphi_H(\bigcup\rng(f\!\restriction_z),z,\vec x,y).
\end{multline*}
The claims of the lemma are established by induction on $\operatorname{TC}(z)$ (see~\cite[Theorem~I.6.3]{barwise-admissible}; note that our induction statement is primitive recursive): Concerning ``$\Leftarrow$'', assume that we have $\mathbb A\vDash\varphi_F(z,\vec x,y)$, i.e.~$\mathbb A\vDash\theta(z,\vec x,y,f)$ for some~$f\in\mathbb A$. By the definition of $\theta$ we get $\mathbb A\vDash\theta(w,\vec x,f(w),f\!\restriction_{\operatorname{TC}(w)})$ for all $w\in z$. This implies $\mathbb A\vDash\varphi_F(w,\vec x,f(w))$, so that the induction hypothesis yields $F(w,\vec x)=f(w)$. Furthermore, from $\mathbb A\vDash\theta(z,\vec x,y,f)$ we get $\mathbb A\vDash\varphi_H(\bigcup\rng(f\!\restriction_z),z,\vec x,y)$. Using the claim for $H$ we obtain
\begin{equation*}
 y=H(\bigcup\rng(f\!\restriction_z),z,\vec x)=H(\bigcup\{F(w,\vec x)\,|\,w\in z\},z,\vec x)=F(z,\vec x),
\end{equation*}
as required for direction ``$\Leftarrow$'' of the lemma. As for direction ``$\Rightarrow$'', the induction hypothesis gives $F(w,\vec x)\in\mathbb A$ and $\mathbb A\vDash\varphi_F(w,\vec x,F(w,\vec x))$ for all $w\in\operatorname{TC}(z)$. Since direction ``$\Leftarrow$'' provides unicity we obtain
\begin{equation*}
 \mathbb A\vDash\forall_{w\in\operatorname{TC}(z)}\exists !_y\varphi_F(w,\vec x,y).
\end{equation*}
Now $\Sigma$-replacement (see~\cite[Theorem~I.4.6]{barwise-admissible}) inside $\mathbb A$ yields a function $f\in\mathbb A$ with domain $\operatorname{TC}(z)$ and
\begin{equation*}
\mathbb A\vDash\forall_{w\in\operatorname{TC}(z)}\varphi_F(w,\vec x,f(w)).
\end{equation*}
Direction ``$\Leftarrow$'' tells us $f=F(\cdot,\vec x)\!\restriction_{\operatorname{TC}(z)}$. By the claim for $H$ the value $F(z,\vec x)=H(\bigcup\rng(f\!\restriction_z),z,\vec x)$ lies in $\mathbb A$. It remains to show
\begin{equation*}
 \mathbb A\vDash\theta(z,\vec x,H(\bigcup\rng(f\!\restriction_z),z,\vec x),f).
\end{equation*}
The first conjunct of $\theta(z,\vec x,H(\bigcup\rng(f\!\restriction_z),z,\vec x),f)$ is immediate. The third conjunct, i.e.~the statement
\begin{equation*}
 \mathbb A\vDash\varphi_H(\bigcup\rng(f\!\restriction_z),z,\vec x,H(\bigcup\rng(f\!\restriction_z),z,\vec x)),
\end{equation*}
holds by the claim for $H$. Similarly, the second conjunct
\begin{equation*}
 \mathbb A\vDash\forall_{w\in\operatorname{TC}(z)}\varphi_H(\bigcup\rng(f\!\restriction_w),w,\vec x,f(w))
\end{equation*}
reduces to
\begin{equation*}
 H(\bigcup\rng(f\!\restriction_w),w,\vec x)=f(w).
\end{equation*}
This is the same as
\begin{equation*}
 H(\bigcup\{F(u,\vec x)\,|\,u\in w\},w,\vec x)=F(w,\vec x),
\end{equation*}
which is the defining clause for $F$.
\end{proof}

In formulas of pure set theory (which are not supposed to contain symbols for primitive recursive functions) we will use $F(\vec x)=y$ as an abbreviation for $\varphi_F(\vec x,y)$. As we have seen in the case of transitive closure, the lemma implies that $\varphi_F$ is a $\Delta$-formula from the viewpoint of any admissible set. We will also use functional notation, e.g.~writing $z\in F(\vec x)$ for either of the formulas \mbox{$\exists_y(y=F(\vec x)\land z\in y)$} or $\forall_y(y=F(\vec x)\rightarrow z\in y)$, which are equivalent in any admissible set. The formula $D_T(u,\alpha,s)$ that we have described above will be constructed via the second recursion theorem, applied inside our admissible set:

\begin{lemma}
 Let $C(x_1,\dots,x_n,\vec y,R)$ be a $\Sigma$-formula involving an $n$-ary relation symbol $R$ which only occurs positively. Then there is a $\Sigma$-formula $D(x_1,\dots,x_n,\vec y)$ such that we have
\begin{equation*}
 \mathbb A\vDash\forall_{x_1,\dots,x_n,\vec y}(D(x_1,\dots,x_n,\vec y)\leftrightarrow C(x_1,\dots,x_n,\vec y,\{x_1,\dots,x_n\,|\,D(x_1,\dots,x_n,\vec y)\}))
\end{equation*}
for any admissible set $\mathbb A$.
\end{lemma}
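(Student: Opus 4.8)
The statement is a fixed-point lemma for $\Sigma$-formulas relative to an admissible set, essentially a recursion-theoretic diagonalization carried out inside $\mathbb A$. The plan is to mimic the classical second recursion theorem (Kleene's version, or rather the version for $\Sigma$-definable partial functions on admissible sets, see \cite[Section~V.2]{barwise-admissible}), but phrased for relations rather than partial functions and with the "positivity" hypothesis doing the work that monotonicity normally does.

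First I would fix a $\Sigma$-definable coding of finite sequences, formulas and satisfaction that is available inside any admissible set; by the previous lemma every primitive recursive function (in particular Gödel-numbering operations and the substitution function) is $\Delta$ on $\mathbb A$, so there is a $\Sigma$-satisfaction predicate $\operatorname{Sat}_\Sigma(\ulcorner\varphi\urcorner,\vec a)$ for $\Sigma$-formulas with parameters, which is itself $\Sigma$ and which $\mathbb A$ evaluates correctly (this is where admissibility, via $\Sigma$-reflection and $\Sigma$-replacement, is used). Next I would consider, for a given $\Sigma$-formula $C(x_1,\dots,x_n,\vec y,R)$ with $R$ occurring only positively, the $\Sigma$-formula obtained by substituting for the atomic subformulas $R(t_1,\dots,t_n)$ the expression $\operatorname{Sat}_\Sigma(e,t_1,\dots,t_n,\vec y)$, where $e$ is a free variable standing for a code. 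Call the result $C^*(x_1,\dots,x_n,\vec y,e)$; because $R$ was positive and $\operatorname{Sat}_\Sigma$ is $\Sigma$, $C^*$ is again $\Sigma$. Now apply the usual diagonal trick: let $d$ be a code for the $\Sigma$-formula $C^*(x_1,\dots,x_n,\vec y,\operatorname{diag}(v))$ in the variables $x_1,\dots,x_n,\vec y,v$, where $\operatorname{diag}$ is the ($\Delta$-definable) function sending a code to the code of the formula obtained by self-substitution; and set $D(x_1,\dots,x_n,\vec y):\equiv \operatorname{Sat}_\Sigma(\operatorname{diag}(d),x_1,\dots,x_n,\vec y)$, which unwinds to $\operatorname{Sat}_\Sigma(e_0,x_1,\dots,x_n,\vec y)$ for the fixed code $e_0=\operatorname{diag}(d)$, hence is a genuine $\Sigma$-formula.

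The verification that $D$ has the stated fixed-point property is then a computation with the satisfaction predicate inside $\mathbb A$: by the choice of $d$ and $\operatorname{diag}$ we have, provably in primitive recursive set theory and hence in $\mathbb A$,
\begin{equation*}
 \mathbb A\vDash \operatorname{Sat}_\Sigma(e_0,\vec x,\vec y)\leftrightarrow C^*(\vec x,\vec y,e_0),
\end{equation*}
and $C^*(\vec x,\vec y,e_0)$ is by construction the formula $C$ with each occurrence of $R(\vec t)$ replaced by $\operatorname{Sat}_\Sigma(e_0,\vec t,\vec y)$, i.e.\ by $D(\vec t,\vec y)$; so the right-hand side is exactly $C(\vec x,\vec y,\{\vec x\mid D(\vec x,\vec y)\})$, giving the desired equivalence for all $\vec x,\vec y\in\mathbb A$. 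The positivity hypothesis is what guarantees that this substitution keeps the formula $\Sigma$ and — more importantly — that the correctness of $\operatorname{Sat}_\Sigma$ (which, being a $\Sigma$-truth predicate, may undershoot on negative occurrences) is not needed for the $R$-occurrences; for the rest of $C$, which is an honest $\Sigma$-formula in the remaining variables, correctness of $\operatorname{Sat}_\Sigma$ on $\Sigma$-formulas is exactly what the admissibility of $\mathbb A$ buys us.

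The main obstacle is setting up the $\Sigma$-satisfaction predicate and proving its basic adequacy (Tarski clauses, and that $\mathbb A$ computes it correctly) within primitive recursive set theory in a way that is uniform in $\mathbb A$ — this is routine but somewhat tedious, and one has to be careful that the arithmetization of syntax is done by primitive recursive functions so that the previous lemma applies and makes everything $\Delta$ over $\mathbb A$. Once that infrastructure is in place, the diagonalization itself is short. I would cite \cite[Chapter~V]{barwise-admissible} for the satisfaction apparatus and present the substitution-plus-diagonal argument in detail, since that is the content specific to the positivity hypothesis.
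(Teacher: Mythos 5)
Your proposal is correct and amounts to the same route as the paper: the paper proves this lemma simply by citing the Second Recursion Theorem (Theorem~V.2.3 of \cite{barwise-admissible}), and your $\Sigma$-satisfaction-plus-diagonalization argument is precisely the standard proof of that cited theorem, with positivity used, as you say, to keep the substituted formula $\Sigma$. The one point to phrase more carefully is that the adequacy of $\operatorname{Sat}_\Sigma$ and hence the fixed-point equivalence for your fixed formula $D$ should be obtained as theorems of Kripke--Platek set theory (established by induction on formulas in the meta-theory, which is exactly the ``certain proofs, feasible in primitive recursive set theory'' that the paper's proof alludes to), so that they hold in every admissible $\mathbb A$ because $\mathbb A\vDash\mathrm{KP}$ --- rather than ``provably in primitive recursive set theory and hence in $\mathbb A$''.
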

\begin{proof}
 This is the second recursion theorem (see~\cite[Theorem~V.2.3]{barwise-admissible}). In the meta-theory we need to construct certain proofs in Kripke-Platek set theory. This is done by induction on formulas, and clearly feasible in primitive recursive set theory (and in much weaker theories).
\end{proof}

In the following we assume that $\alpha\mapsto T_\alpha^u$ is a well-ordering principle with rank function $s\mapsto|s|_T^u$. In particular this means that these functions are primitive recursive. Recall that $T^u=\bigcup_{\alpha\in\ordi}T_\alpha^u$ is a primitive recursive class (namely $s\in T^u$ precisely if $s\in T_{|s|_T^u+1}^u$). We observe that
\begin{equation*}
 \mathbb A\cap T^u=T_{o(\mathbb A)}^u
\end{equation*}
holds for any admissible set $\mathbb A$: If we have $s\in\mathbb A\cap T^u$ then the rank $|s|_T^u$ of $s$ lies in $\mathbb A$ as well, by the closure of admissible sets under primitive recursive functions. By the properties of the rank we have $s\in T_{|s|_T^u+1}^u\subseteq T_{o(\mathbb A)}^u$. On the other hand, $s\in T_{o(\mathbb A)}^u$ implies $s\in T_\alpha^u$ for some $\alpha\in\mathbb A$. Again by closure under primitive recursive functions we get $T_\alpha^u\in\mathbb A$. As $\mathbb A$ is transitive this implies $s\in T_\alpha^u\subseteq\mathbb A$. Using the lemma we can construct a $\Sigma$-formula $D_T(u,s,\alpha)$ such that we have
\begin{align*}
\mathbb A\vDash D_T(u,s,\alpha)\leftrightarrow{} & s\in T^u\land\alpha\in\ordi\,\land\\
& \begin{aligned}
\exists_a(&\text{``$a:\omega\rightarrow\ordi$ is a function"}\,\land\\
& a(0)=|s|_T^u+1\,\land\\
& \begin{aligned}\forall_{n\in\omega}\exists_d(&\text{``$d:\{t\in T_{a(n)}^u\,|\,t<_{T^u} s\}\rightarrow\ordi$ is a function"}\,\land\\
		&\forall_{t\in\dom(d)}D_T(u,t,d(t))\,\land\\
		&a(n+1)=\sup\{d(t)+1\,|\,t\in\dom(d)\})\,\land\end{aligned}\\
& \alpha=\textstyle\sup_{n\in\omega} a(n))
\end{aligned}
\end{align*}
for any admissible set $\mathbb A$ with $u\in\mathbb A$. Let us begin with uniqueness:

\begin{lemma}
 Let $\mathbb A\ni u$ be an admissible set. Then we have
\begin{equation*}
 \mathbb A\vDash D_T(u,s,\alpha_0)\land D_T(u,s,\alpha_1)\rightarrow\alpha_0=\alpha_1
\end{equation*}
for all $s\in T_{o(\mathbb A)}^u$.
\end{lemma}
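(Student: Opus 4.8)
The plan is to prove uniqueness by induction along the well-ordering $<_{T^u}$, working entirely inside the admissible set $\mathbb A$. Concretely, I would fix $s \in T^u_{o(\mathbb A)}$ and assume as induction hypothesis that for all $t <_{T^u} s$ there is at most one $\beta$ with $\mathbb A \vDash D_T(u,t,\beta)$; I then want to show the same for $s$. Since $T_{o(\mathbb A)}^u = \mathbb A \cap T^u$ is well-ordered by $<_{T^u}$ (being a subset of a set well-ordered by hypothesis, intersected with $\mathbb A$), and the uniqueness statement is primitive recursive — hence available for induction over the well-ordering by the separation principle — this induction is legitimate in primitive recursive set theory reasoning about $\mathbb A$.

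The heart of the argument is to unfold the defining equivalence for $D_T$. Suppose $\mathbb A \vDash D_T(u,s,\alpha_0) \land D_T(u,s,\alpha_1)$. Then inside $\mathbb A$ we obtain witnessing functions $a_0, a_1 : \omega \to \ordi$ with $a_0(0) = a_1(0) = |s|_T^u + 1$, together with, for each $n$, witnessing functions $d_0^n, d_1^n$ defined on $\{t \in T^u_{a_i(n)} \mid t <_{T^u} s\}$ such that $\mathbb A \vDash D_T(u,t,d_i^n(t))$ for all $t$ in the domain, and $a_i(n+1) = \sup\{d_i^n(t)+1 \mid t \in \dom(d_i^n)\}$; finally $\alpha_i = \sup_n a_i(n)$. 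The key claim is that $a_0(n) = a_1(n)$ for all $n$, proved by an inner induction on $n \in \omega$. The base case is immediate. For the successor step: assuming $a_0(n) = a_1(n)$, the domains $\{t \in T^u_{a_0(n)} \mid t <_{T^u} s\}$ of $d_0^n$ and $d_1^n$ coincide; every $t$ in this common domain satisfies $t <_{T^u} s$, so the outer induction hypothesis forces $d_0^n(t) = d_1^n(t)$; hence the suprema agree and $a_0(n+1) = a_1(n+1)$. Taking suprema over $n$ gives $\alpha_0 = \sup_n a_0(n) = \sup_n a_1(n) = \alpha_1$.

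One point that needs care, and which I expect to be the main (though modest) obstacle, is the interplay between the two induction layers and the fact that $D_T$ is only a $\Sigma$-formula rather than something $\mathbb A$ decides outright: the inner induction on $\omega$ invokes the outer induction hypothesis for parameters $t <_{T^u} s$, and we must make sure that the quantifiers "$\exists a$", "$\forall n \exists d$" in the definition are being handled correctly — in particular that the witnessing $d_i^n$ really do take values satisfying $D_T$, and that "uniqueness for $t$" is applied to precisely those values. Since $D_T$ is $\Sigma$ and the base functions $T_\alpha^u$, $|s|_T^u$ etc.\ are $\Delta$ in $\mathbb A$ by the preceding lemmata, all the relevant statements (including the uniqueness statement itself and the inner induction statement "$a_0(n) = a_1(n)$", which is $\Delta_0$ in the parameters $a_0, a_1$) are of low enough complexity to support the inductions. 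Once these complexity bookkeeping points are dispatched, the proof is the straightforward double induction sketched above; no new ideas beyond the standard uniqueness argument for $\Sigma$-recursion (cf.\ the recursion theorems cited from \cite{barwise-admissible}) are required.
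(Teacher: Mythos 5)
Your proposal is correct and follows essentially the same route as the paper: an outer induction on $s$ along $<_{T_{o(\mathbb A)}^u}$, unfolding the witnesses $a_0,a_1$ and proving $a_0(n)=a_1(n)$ by an inner induction on $n$, using that the domains of the witnessing functions $d_i$ coincide and applying the outer hypothesis to their values. Your additional bookkeeping (satisfaction in $\mathbb A$ is primitive recursive, so the induction statement is admissible for induction over the well-ordering) is a valid elaboration of what the paper leaves implicit.
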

\begin{proof}
 We argue by induction on $s$ (i.e.~induction over the well-ordering~$<_{T_{o(\mathbb A)}^u}$). Assume that $\mathbb A\vDash D_T(u,s,\alpha_0)$ and $\mathbb A\vDash D_T(u,s,\alpha_1)$ are witnessed by functions $a_0,a_1:\omega\rightarrow\ordi$ with $\alpha_i=\sup_{n\in\omega}a(n)$. To establish the claim we show $a_0(n)=a_1(n)$ by induction on $n$. The base $n=0$ is immediate. Concerning the step, we have
\begin{equation*}
a_i(n+1)=\sup\{d_i(t)+1\,|\,t\in\dom(d_i)\}
\end{equation*}
for some functions $d_i:\{t\in T_{a_i(n)}^u\,|\,t<_{T^u} s\}\rightarrow\ordi$ which satisfy
\begin{equation*}
\mathbb A\vDash D_T(u,t,d_i(t))
\end{equation*}
for all $t\in\dom(d_i)$. By induction hypothesis we have $a_0(n)=a_1(n)$, so that the domains of $d_0$ and $d_1$ are equal. As $t\in\dom(d_i)$ implies $t<_{T_{o(\mathbb A)}^u} s$ the main induction hypothesis yields $d_0(t)=d_1(t)$ for all such $t$. This clearly implies $a_0(n+1)=a_1(n+1)$, as required.
\end{proof}

After uniqueness we establish existence:

\begin{proposition}
 Let $\mathbb A\ni u$ be an admissible set. For all $s\in T_{o(\mathbb A)}^u$ there is an ordinal $\alpha<o(\mathbb A)$ with $\mathbb A\vDash D_T(u,s,\alpha)$.
\end{proposition}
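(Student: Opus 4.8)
The plan is to prove the proposition by transfinite induction on $s$ along the well-ordering $<_{T_{o(\mathbb A)}^u}$, mirroring the construction in Remark~\ref{rmk:Bachmann-Howard-semantically} but carried out inside $\mathbb A$, with $o(\mathbb A)$ playing the role of $\aleph_1$ and ``lies in $\mathbb A$'' playing the role of ``is countable''. The induction statement ``there is $\alpha\in o(\mathbb A)$ with $\mathbb A\vDash D_T(u,s,\alpha)$'' is primitive recursive in the parameters $u$ and $\mathbb A$ (the satisfaction relation is primitive recursive and the quantifier $\exists_{\alpha\in o(\mathbb A)}$ is bounded by the set $o(\mathbb A)$), so the induction is available in primitive recursive set theory. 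Fix $s\in T_{o(\mathbb A)}^u$ and assume the claim for all $t<_{T_{o(\mathbb A)}^u}s$.

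First I would build the witnessing function $a\colon\omega\to\ordi$ inside $\mathbb A$ by recursion, checking simultaneously that $a(n)<o(\mathbb A)$ holds for all $n$. Put $a(0)=|s|_T^u+1$; this lies in $\mathbb A$ because $s\in T_{o(\mathbb A)}^u$ and $o(\mathbb A)>0$ force $|s|_T^u<o(\mathbb A)$ (by the observation following Definition~\ref{def:wop}), and admissible sets are closed under successor. For the step, assume $a(n)<o(\mathbb A)$. Since $\mathbb A$ is closed under primitive recursive functions we have $T_{a(n)}^u\in\mathbb A$, and $\Delta_0$-separation yields $X:=\{t\in T_{a(n)}^u\mid t<_{T^u}s\}\in\mathbb A$; every $t\in X$ satisfies $t<_{T_{o(\mathbb A)}^u}s$. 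By the induction hypothesis, for each $t\in X$ there is some $\delta\in o(\mathbb A)$ with $\mathbb A\vDash D_T(u,t,\delta)$, and by the previous lemma this $\delta$ is unique; hence $\mathbb A\vDash\forall_{t\in X}\exists_\delta D_T(u,t,\delta)$, and $\Sigma$-replacement (\cite[Theorem~I.4.6]{barwise-admissible}) inside $\mathbb A$ produces a function $d\in\mathbb A$ with $\dom(d)=X$ and $\mathbb A\vDash\forall_{t\in X}D_T(u,t,d(t))$. Uniqueness gives $d(t)<o(\mathbb A)$ for each $t$, so $a(n+1):=\sup\{d(t)+1\mid t\in X\}=\bigcup\{\delta+1\mid\delta\in\rng(d)\}$ is again an element of $\mathbb A$ (the displayed set is the image of $\rng(d)\in\mathbb A$ under a primitive recursive map, and admissible sets are closed under $\bigcup$); that is, $a(n+1)<o(\mathbb A)$. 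The operation $a(n)\mapsto a(n+1)$ just described is single-valued (again by the uniqueness lemma) and $\Sigma$-definable in $a(n),u,s$, so by $\Sigma$-recursion along $\omega$ inside $\mathbb A$ (\cite[Theorem~I.6.4]{barwise-admissible}) the whole function $a$ exists as an element of $\mathbb A$.

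Finally I would set $\alpha:=\sup_{n\in\omega}a(n)=\bigcup\rng(a)$. Since $a\in\mathbb A$ and admissible sets are closed under $\bigcup$ and contain the ranges of their functions, $\alpha\in\mathbb A$, i.e.\ $\alpha<o(\mathbb A)$. Unwinding the defining equivalence of $D_T$, the function $a$ witnesses $\mathbb A\vDash D_T(u,s,\alpha)$: it is a function $\omega\to\ordi$ with $a(0)=|s|_T^u+1$; at each stage $n$ the function $d$ constructed above lies in $\mathbb A$ and has the required domain and the property $\mathbb A\vDash\forall_{t\in\dom(d)}D_T(u,t,d(t))$ with $a(n+1)=\sup\{d(t)+1\mid t\in\dom(d)\}$; and $\alpha=\sup_n a(n)$. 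This completes the induction step, hence the proof.

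The main point requiring care — rather than genuine difficulty — is keeping the whole recursion inside the admissible set: one must verify at every stage that $\Sigma$-replacement applies, which hinges on combining the induction hypothesis with the uniqueness lemma to obtain $\mathbb A\vDash\forall_{t\in X}\exists_\delta D_T(u,t,\delta)$, and that the ordinals produced never exceed $o(\mathbb A)$. The latter is the analogue of the ``all $C_n(t,\alpha)$ are countable'' step in Remark~\ref{rmk:Bachmann-Howard-semantically}, and here it reduces to closure of $\mathbb A$ under $\bigcup$ and under primitive recursive functions. Everything else is a routine unravelling of the fixed-point definition of $D_T$.
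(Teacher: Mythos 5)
Your proof is correct and follows essentially the same route as the paper: transfinite induction on $s$, with each step using closure of $\mathbb A$ under primitive recursive functions, separation, and the uniqueness lemma together with $\Sigma$-replacement to obtain the auxiliary function $d$ and the next value of $a$. The only cosmetic difference is at the end: you invoke $\Sigma$-recursion along $\omega$ inside $\mathbb A$ to obtain $a\in\mathbb A$ in one stroke, whereas the paper unfolds this into explicit finite approximations $a_m\colon m+1\rightarrow\ordi$, checks their uniqueness, and collects the map $m\mapsto a_m$ into $\mathbb A$ by $\Sigma$-replacement -- the same argument in expanded form.
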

\begin{proof}
Again we argue by induction on $s$. To establish the claim for $s$ we construct functions $a_m:m+1\rightarrow\ordi$ in $\mathbb A$ such that we have $a_m(0)=|s|_T^u+1$ and
\begin{multline*}
\mathbb A\vDash\exists_d(\text{``$d:\{t\in T_{a_m(n)}^u\,|\,t<_{T^u} s\}\rightarrow\ordi$ is a function"}\,\land\\
		\forall_{t\in\dom(d)}D_T(u,t,d(t))\,\land a_m(n+1)=\sup\{d(t)+1\,|\,t\in\dom(d)\})
\end{multline*}
for all $n<m$. The function $a_0$ is simply the pair $\langle 0,|s|_T^u+1\rangle$. To extend $a_m$ to $a_{m+1}$ it suffices to construct a value $a_{m+1}(m+1)$ which satisfies the above condition. First, observe that the set $T_{a_m(m)}^u$ lies in $\mathbb A$ by closure under primitive recursive functions. Using $\Delta$-separation in $\mathbb A$ we see that $\{t\in T_{a_m(m)}^u\,|\,t<_{T^u} s\}$ is an element of $\mathbb A$. By the induction hypothesis and the previous lemma we get
\begin{equation*}
\mathbb A\vDash\forall_{t\in\{t\in T_{a_m(m)}^u\,|\,t<_{T^u} s\}}\exists!_\alpha D_T(u,t,\alpha).
\end{equation*}
Then $\Sigma$-replacement (see~\cite[Theorem~4.6]{barwise-admissible}) in the admissible set $\mathbb A$ provides a function $d:\{t\in T_{a_m(m)}^u\,|\,t<_{T^u} s\}\rightarrow\ordi$ in $\mathbb A$ such that we have $\mathbb A\vDash D_T(u,t,d(t))$ for all $t\in\dom(d)$. Setting
\begin{equation*}
a_{m+1}:=a_m\cup\{\langle m+1,\sup\{d(t)+1\,|\,t\in\dom(d)\}\rangle\}
\end{equation*}
completes the induction step. Using the previous lemma one checks that the functions $a_m\in\mathbb A$ are unique. Thus, by $\Sigma$-replacement, the function $m\mapsto a_m$ lies itself in $\mathbb A$. Finally, it follows that the admissible set $\mathbb A$ contains the function $a:\omega\rightarrow\ordi$ defined by $a(n):=a_n(n)$. This function witnesses
\begin{equation*}
\mathbb A\vDash D_T(u,s,\textstyle\sup_{n\in\omega}a(n)),
\end{equation*}
which establishes the claim for $s$.
\end{proof}

The previous two results justify the following:

\begin{definition}
Consider a well-ordering principle $\alpha\mapsto T_\alpha^u$ and an admissible set $\mathbb A$ with $u\in\mathbb A$. We define a function $\vartheta_{\mathbb A}:T_{o(\mathbb A)}^u\rightarrow o(\mathbb A)$ by setting
\begin{equation*}
\vartheta_{\mathbb A}=\{\langle s,\alpha\rangle\in T_{o(\mathbb A)}^u\times o(\mathbb A)\,|\,\mathbb A\vDash D_T(u,s,\alpha)\}.
\end{equation*} 
\end{definition}

Finally, we verify that we have indeed constructed a Bachmann-Howard collapse:

\begin{theorem}
Assume that $\alpha\mapsto T_\alpha^u$ is a well-ordering principle, and that $\mathbb A$ is an admissible set with $u\in\mathbb A$. Then the function $\vartheta_{\mathbb A}:T_{o(\mathbb A)}^u\rightarrow o(\mathbb A)$ is a Bachmann-Howard collapse.
\end{theorem}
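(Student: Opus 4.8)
The plan is to verify directly the two defining conditions of a Bachmann-Howard collapse (Definition~\ref{def:bh-collapse}) for the function $\vartheta_{\mathbb A}$, by unwinding the recursive characterisation of the $\Sigma$-formula $D_T$ once and invoking the existence proposition and uniqueness lemma proved just above.

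For clause~(i), fix $s\in T_{o(\mathbb A)}^u$. The existence proposition produces an ordinal $\alpha<o(\mathbb A)$ with $\mathbb A\vDash D_T(u,s,\alpha)$, so that $\vartheta_{\mathbb A}(s)=\alpha$. Unfolding $D_T$, a witnessing function $a:\omega\to\ordi$ satisfies $a(0)=|s|_T^u+1$ and $\alpha=\sup_{n\in\omega}a(n)$, whence $\vartheta_{\mathbb A}(s)\geq a(0)>|s|_T^u$. (Here $|s|_T^u$ is a genuine ordinal, since $s\in T_{o(\mathbb A)}^u$ and $o(\mathbb A)>0$.)

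For clause~(ii), assume $s<_{T_{o(\mathbb A)}^u}t$ and $|s|_T^u<\vartheta_{\mathbb A}(t)$; the goal is $\vartheta_{\mathbb A}(s)<\vartheta_{\mathbb A}(t)$. I would fix a function $a:\omega\to\ordi$ in $\mathbb A$ witnessing $\mathbb A\vDash D_T(u,t,\vartheta_{\mathbb A}(t))$, so that $\vartheta_{\mathbb A}(t)=\sup_n a(n)$ and, for each $n$, there is $d_n\in\mathbb A$ with domain $\{r\in T_{a(n)}^u\mid r<_{T^u}t\}$ such that $\mathbb A\vDash D_T(u,r,d_n(r))$ for all $r\in\dom(d_n)$ and $a(n+1)=\sup\{d_n(r)+1\mid r\in\dom(d_n)\}$. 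Since $|s|_T^u<\sup_n a(n)$, I would pick $n$ with $|s|_T^u+1\leq a(n)$; then the defining property of the rank function together with the compatibility clause (ii) of Definition~\ref{def:wop} gives $s\in T_{|s|_T^u+1}^u\subseteq T_{a(n)}^u$, while $s<_{T^u}t$ holds because $<_{T_{o(\mathbb A)}^u}$ is the restriction of $<_{T^u}$ to $T_{o(\mathbb A)}^u$. Hence $s\in\dom(d_n)$, so $\mathbb A\vDash D_T(u,s,d_n(s))$, and the uniqueness lemma forces $d_n(s)=\vartheta_{\mathbb A}(s)$. Therefore $\vartheta_{\mathbb A}(s)=d_n(s)<d_n(s)+1\leq a(n+1)\leq\sup_m a(m)=\vartheta_{\mathbb A}(t)$, which is exactly clause~(ii).

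I do not expect a genuine obstacle here: all the substantial work is already contained in the existence proposition and the uniqueness lemma, which together guarantee that $\vartheta_{\mathbb A}$ is a well-defined total function $T_{o(\mathbb A)}^u\to o(\mathbb A)$; the present argument merely reads off the fixed-point equation for $D_T$ a single time. The only point demanding mild care is keeping the membership and order relations aligned with the compatible family $\langle T_\alpha^u\rangle_\alpha$ and its rank function --- in particular the equivalence of $|s|_T^u<\beta$ with $s\in T_\beta^u$ for $s\in T^u$ and $\beta>0$ --- which is precisely what Definition~\ref{def:wop} is designed to supply.
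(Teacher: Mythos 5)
Your proposal is correct and follows essentially the same route as the paper: unwind the fixed-point characterisation of $D_T(u,t,\vartheta_{\mathbb A}(t))$ once, read off clause (i) from $a(0)=|s|_T^u+1$, and for clause (ii) locate $s$ in the domain of the auxiliary function $d$ at a suitable stage $a(n)$ and use uniqueness of $D_T$-values to identify $d(s)$ with $\vartheta_{\mathbb A}(s)$. The only difference is that you spell out the compatibility/rank argument for $s\in T_{a(n)}^u$ and the appeal to the uniqueness lemma, which the paper leaves implicit.
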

\begin{proof}
We must verify the two conditions from Definition~\ref{def:bh-collapse}. Condition (i) requests $|s|_T^u<\vartheta_{\mathbb A}(s)$ for all $s\in T_{o(\mathbb A)}^u$. To see that this is satisfied, let $a:\omega\rightarrow\ordi$ be a function which witnesses $\mathbb A\vDash D_T(u,s,\vartheta_{\mathbb A}(s))$. Then we have
\begin{equation*}
|s|_T^u<|s|_T^u+1=a(0)\leq\textstyle\sup_{n\in\omega} a(n)=\vartheta_{\mathbb A}(s).
\end{equation*}
Condition (ii) asks us to deduce $\vartheta_{\mathbb A}(s)<\vartheta_{\mathbb A}(t)$ from $s<_{T_{o(\mathbb A)}^u} t$ and $|s|_T^u<\vartheta_{\mathbb A}(t)$. Let $a:\omega\rightarrow\ordi$ be a witness for $\mathbb A\vDash D_T(u,t,\vartheta_{\mathbb A}(t))$. In particular we have $\vartheta_{\mathbb A}(t)=\sup_{n\in\omega}a(n)$, and thus $|s|_T^u<a(n)$ for some $n\in\omega$. So we see
\begin{equation*}
s\in \{r\in T_{a(n)}^u\,|\,r<_{T^u} t\}.
\end{equation*}
The definition of $D_T(u,t,\vartheta_{\mathbb A}(t))$ yields $a(n+1)=\sup\{d(r)+1\,|\,r\in\dom(d)\}$ for some function $d:\{r\in T_{a(n)}^u\,|\,r<_{T^u} t\}\rightarrow\ordi$ which satisfies $\mathbb A\vDash D_T(u,r,d(r))$ for all $r\in\dom(d)$. The latter means that $d$ coincides with $\vartheta_{\mathbb A}$, so that we get
\begin{equation*}
\vartheta_{\mathbb A}(s)=d(s)<a(n+1)\leq\textstyle\sup_{n\in\omega} a(n)=\vartheta_{\mathbb A}(t),
\end{equation*}
as required.
\end{proof}

This establishes ``(iii) $\Rightarrow$ (ii)'' of the theorem in the introduction.

\bibliographystyle{alpha}
\bibliography{Higher_Bachmann-Howard_Principle.bib}

\end{document}